\DeclareMathAlphabet{\mathpzc}{OT1}{pzc}{m}{it}
\newtheoremstyle{notes} {} {} {} {} {\bfseries} {.} {.5em} {}
\theoremstyle{plain}
\newtheorem{prop}[subsubsection]{Proposition}
\newtheorem{lemma}[subsubsection]{Lemma}
\newtheorem{cor}[subsubsection]{Corollary}
\newtheorem{thm}[subsubsection]{Theorem}
\newtheorem{thmA}{Theorem}
\theoremstyle{remark}
\newtheorem{qu}[subsubsection]{Question} 
\newtheorem{rem}[subsubsection]{Remark} 
\newtheorem{ddef}[subsubsection]{Definition} 
\newtheorem{ex}[subsubsection]{Example} 
\newcommand{\mB}{\mathbb{B}}
\newcommand{\charr}{\mathrm{char}}
\newcommand{\ev}{\mathrm{ev}}
\newcommand{\co}{\mathrm{co}}
\newcommand{\tto}{\twoheadrightarrow}
\newcommand{\mZ}{\mathbb{Z}}
\newcommand{\mO}{\mathbb{O}}
\newcommand{\mF}{\mathbb{F}}
\newcommand{\mN}{\mathbb{N}}
\newcommand{\Funct}{\mathrm{Funct}}
\newcommand{\cO}{\mathcal{O}}
\newcommand{\cA}{\mathcal{A}}
\newcommand{\scA}{\mathscr{A}}
\newcommand{\scR}{\mathscr{R}}
\newcommand{\scB}{\mathscr{B}}
\newcommand{\op}{\mathrm{op}}
\newcommand{\Autt}{\underline{\mathrm{Aut}}^\otimes}
\newcommand{\oRepG}{\overline{\mathrm{Rep}G}}
\newcommand{\Fr}{\mathtt{Fr}}
\newcommand{\innn}{\mathrm{in}}
\newcommand{\End}{{\rm End}}
\newcommand{\mk}{\Bbbk}
\newcommand{\Hom}{{\rm Hom}}
\newcommand{\Rep}{{\rm Rep}}
\newcommand{\Nat}{{\rm Nat}}
\newcommand{\Ext}{{\rm Ext}}
\newcommand{\unit}{{\mathbbm{1}}}
\newcommand{\bC}{{\mathbf{C}}}
\newcommand{\bD}{{\mathbf{D}}}
\newcommand{\cN}{{\mathcal{N}}}
\newcommand{\cC}{{\mathcal{C}}}
\newcommand{\Ob}{{\mathrm{Ob}}}
\newcommand{\Id}{{\mathrm{Id}}}
\newcommand{\Res}{\mathrm{Res}}
\newcommand{\sgn}{\mathrm{sgn}}
\newcommand{\oa}{\bar{0}}
\newcommand{\ob}{\bar{1}}
\newcommand{\Ab}{\mathbf{A\hspace{-0.4mm}b}}
\newcommand{\vecc}{\mathbf{vec}}
\newcommand{\Vecc}{\mathbf{Vec}}
\newcommand{\Grp}{\mathbf{G\hspace{-0.13mm}r\hspace{-0.13mm}p}}
\newcommand{\Set}{\mathbf{S\hspace{-0.2mm}e\hspace{-0.2mm}t}}
\newcommand{\Spec}{\mathrm{Spec}}
\newcommand{\Aut}{\mathrm{Aut}}
\newcommand{\Mod}{\mathbf{Mod}}
\newcommand{\Dim}{\mathrm{Dim}}
\newcommand{\Alg}{\mathrm{Alg}}
\newcommand{\alg}{\mathrm{alg}}
\newcommand{\bT}{{\bf T}}
\newcommand{\bV}{{\bf V}}
\newcommand{\Sym}{\mathrm{Sym}}
\newcommand{\id}{\mathrm{id}}
\newcommand{\bA}{{\mathbf{A}}}
\newcommand{\bB}{{\mathbf{B}}}
\newcommand{\gr}{{\mathrm{gr}}}
\newcommand{\GL}{{\mathsf{GL}}}
\newcommand{\SG}{{\mathsf{S}}}
\newcommand{\CG}{{\mathsf{C}}}
\newcommand{\Ind}{{\mathrm{Ind}}}
\newcommand{\Triv}{\mathrm{Triv}}
\newcommand{\svec}{\mathbf{svec}}
\newcommand{\ver}{\mathbf{ver}}
\title[Tannakian categories]{Tannakian categories in positive characteristic} 
\author{Kevin Coulembier}
\keywords{Tensor category, fibre functor, affine group scheme, Frobenius twist, modular representation theory}
\subjclass[2010]{18D10, 14L15, 16T05, 16D90, 20C05}
\begin{document} 
\date{}

\maketitle 



\begin{abstract}
We determine internal characterisations for when a tensor category is (super) tannakian, over fields of positive characteristic. This generalises the corresponding characterisations in characteristic zero by P. Deligne. We also explore notions of Frobenius twists in tensor categories in positive characteristic.
\end{abstract}

\section*{Introduction}
For a field $\mk$, a tensor category over $\mk$ is a $\mk$-linear abelian rigid symmetric monoidal category where the endomorphism algebra of the tensor unit is $\mk$ (contrary to some authors we do not require objects to have finite length). The standard example is the category $\Rep G$ of finite dimensional algebraic representations of an affine group scheme $G$ over $\mk$. An affine group scheme over an algebraically closed field is determined (up to inner automorphism) by its representation category, so tensor categories can be seen as a generalisation of affine group schemes. 
This motivates looking for internal characterisations which determine when a tensor category is equivalent to such a representation category.
By combining results in \cite{Del90, Del02, Del11}, we have the following theorem in characteristic zero.
\begin{thmA}[Deligne]
Let $\mk$ be an algebraically closed field of characteristic zero and $\bT$ a tensor category over $\mk$. The following are equivalent.
\begin{enumerate}[(i)]
\item As a tensor category, $\bT$ is equivalent to $\Rep G$ for some affine group scheme $G/\mk$.
\item For each $X\in\bT$ there exists $n\in\mN$ such that $\Lambda^nX=0$.
\end{enumerate}
\end{thmA}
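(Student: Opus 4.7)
The implication (i) $\Rightarrow$ (ii) is immediate: if $V$ has $\mk$-dimension $d$, then $\Lambda^n V$ has dimension $\binom{d}{n}$, which vanishes for $n>d$.

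For the converse, I would split (ii) $\Rightarrow$ (i) into two stages, each appealing to work of Deligne. Stage one shows that $\bT$ is super-tannakian, i.e., equivalent to $\Rep(G,\epsilon)$ for an affine supergroup scheme $(G,\epsilon)$ over $\mk$; stage two upgrades this to tannakian. The key input for stage one is Deligne's criterion in \cite{Del02} that a symmetric tensor category over $\mk$ is super-tannakian provided each object has subexponential growth, meaning that the length $\ell(X^{\otimes k})$ grows at most exponentially in $k$. To verify this from (ii): if $\Lambda^{n+1}X=0$, Schur--Weyl duality (available in characteristic zero) gives a decomposition $X^{\otimes k}\cong \bigoplus_{\lambda} (S^\lambda X)^{\oplus \dim V^\lambda}$ with $V^\lambda$ the corresponding Specht module and $\lambda$ ranging over partitions of $k$ with $\ell(\lambda)\leq n$. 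From the identity $n^k=\sum_{\ell(\lambda)\leq n}\dim V^\lambda\cdot \dim S^\lambda(\mk^n)$, standard bookkeeping yields an exponential bound on $\ell(X^{\otimes k})$ of the order $n^k$ up to polynomial factors.

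For stage two, a super-tannakian category $\Rep(G,\epsilon)$ is tannakian precisely when $\epsilon$ is trivial, equivalently when the categorical dimension $\dim X := \tr(\id_X)\in \mk$ lies in $\mN$ for every object $X$ (since a non-trivial odd object has strictly negative super-dimension). But (ii) delivers this at once: in any rigid symmetric monoidal category one has $\dim \Lambda^k X = \binom{\dim X}{k}$, so $\Lambda^{n+1}X=0$ forces $\binom{\dim X}{n+1}=0$ in $\mk$, and in characteristic zero this means $\dim X\in\{0,1,\ldots,n\}\subset \mN$. Combining the two stages yields (i).

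The main obstacle, by a wide margin, is the internal content of stage one: producing an actual fibre functor on an abstract $\bT$ from a purely combinatorial hypothesis. This is the heart of \cite{Del02}, which proceeds via the interpolation categories $\Rep \GL_t$, an ultrafilter/limit argument, and the Deligne--Milne formalism. By contrast, the combinatorial verifications of subexponential growth and integrality of categorical dimensions from (ii) are essentially routine.
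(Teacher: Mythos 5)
Your direction (i) $\Rightarrow$ (ii) is fine, and your two-stage outline for (ii) $\Rightarrow$ (i) follows a legitimate route via [Del02], but both stages have genuine gaps as written. In stage one, the Schur--Weyl decomposition $X^{\otimes k}\cong\bigoplus_{\ell(\lambda)\le n}(S^\lambda X)^{\oplus\dim V^\lambda}$ together with the identity $n^k=\sum_{\ell(\lambda)\le n}\dim V^\lambda\cdot\dim S^\lambda(\mk^n)$ bounds the multiplicities $\dim V^\lambda$, but gives no bound on the lengths $\ell(S^\lambda X)$, which you would need in order to estimate $\ell(X^{\otimes k})=\sum_\lambda\dim V^\lambda\cdot\ell(S^\lambda X)$. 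There is no a priori control on $\ell(S^\lambda X)$; in [Del02] the implication from Schur functor annihilation to moderate growth is obtained only \emph{after} the fibre functor is constructed, so it is not the ``routine bookkeeping'' you describe. Fortunately you do not need it: $\Lambda^{n+1}$ \emph{is} the Schur functor $S_{(1^{n+1})}$, so hypothesis (ii) says outright that every object is annihilated by a Schur functor, which is condition (c) of [Del02, Th\'eor\`eme 0.6] and yields super-tannakian directly, without detouring through growth estimates. In stage two, the claimed equivalence ``$\epsilon$ trivial $\Leftrightarrow$ $\dim X\in\mN$ for all $X$'' is not justified: the direction you need would require, from non-negativity of all super-dimensions, a purely odd object, but the odd part of $\omega(X)$ is in general not the image of a subobject of $X$ in $\Rep(G,\epsilon)$ (consider $\Rep\GL(1|1)$). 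A clean fix again uses (ii) directly: for a super vector space $V=V_{\bar 0}\oplus V_{\bar 1}$ one has $\Lambda^mV\supseteq\Sym^mV_{\bar 1}$, so $\Lambda^mV\ne 0$ for all $m$ whenever $V_{\bar 1}\ne 0$; applying this to $V=\omega(X)$ shows every $\omega(X)$ is even, so $\omega$ factors through $\vecc$ and $\bT$ is tannakian. Finally, to land on (i) as stated you also need neutrality of (super)tannakian categories over algebraically closed fields, which was only announced in [Del02] and is the content of [Del11] and Theorem~\ref{NewThmNeut} of this paper.

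On the comparison: the route the paper has in mind is Deligne's direct argument in [Del90, Th\'eor\`eme 7.1] (of which Theorem~\ref{Thm3} is the positive-characteristic generalisation, via locally free objects and splitting algebras), followed by neutrality over $\overline{\mk}=\mk$. That argument never passes through the super-tannakian theorem. Your route through [Del02] is shorter to state but invokes heavier machinery and then needs the extra step of killing the parity; both are legitimate once the gaps above are repaired.
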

It is known that the same statement does not hold true for fields of positive characteristic, see~\cite{BE, GK, GM}.
Fix a field $\mk$ of characteristic $p$ and a tensor category $\bT$ over $\mk$. Since the group algebra $\mk\SG_n$ is not semisimple when $n\ge p$, we need to distinguish between the symmetric power $\Sym^nX={\mathrm{H}}_0(\SG_n,\otimes^nX)$ of $X\in\bT$, which is a quotient of $\otimes^nX$, and the divided power $\Gamma^nX={\mathrm{H}}^0(\SG_n,\otimes^nX)$, which is a subobject of $\otimes^nX$. For $j\in\mN$, we define the object $\Fr_+^{(j)}X$ in $\bT$ as the image of the composite morphism
$$\Gamma^{p^j}X\;\hookrightarrow\;\otimes^{p^j}X\;\tto\; \Sym^{p^j}X.$$
The choice of notation $\Fr_+^{(j)}$ is motivated by the fact that for a vector space $V$, the space $\Fr_+^{(j)}V$ is canonically identified with the $j$-th Frobenius twist $V^{(j)}$ of $V$. Using this construction, we can now formulate our first main result, proved in Theorems~\ref{Thm3} and \ref{NewThmNeut}. For a definition of the exterior power $\Lambda^n$ we refer to Section~\ref{defSym}.
\begin{thmA}
Let $\mk$ be an algebraically closed field of positive characteristic and $\bT$ a tensor category over $\mk$. The following are equivalent.
\begin{enumerate}[(i)]
\item As a tensor category, $\bT$ is equivalent to $\Rep G$ for some affine group scheme $G/\mk$.
\item For each $X\in \bT$
\begin{enumerate}[(a)]
\item there exists $n\in \mN$ such that $\Lambda^nX=0$;
\item we have $\Lambda^nX=0$ when $\Lambda^n\Fr_+^{(j)}(X)=0$, for $j,n\in\mN$.
\end{enumerate}
\end{enumerate}
\end{thmA}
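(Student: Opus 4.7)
The implication (i)$\Rightarrow$(ii) is immediate. In $\Rep G$, a finite-dimensional representation $X$ of $\mk$-dimension $d$ has $\Lambda^n X = 0$ for all $n > d$, giving (a). Moreover, $\Fr_+^{(j)}X$ is canonically identified on underlying vector spaces with the usual Frobenius twist $X^{(j)}$, which again has dimension $d$, so $\Lambda^n \Fr_+^{(j)} X = 0$ iff $n > d$ iff $\Lambda^n X = 0$, giving (b).

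For (ii)$\Rightarrow$(i), my plan mirrors the split indicated in the statement into Theorems~\ref{Thm3} and \ref{NewThmNeut}. The goal is to produce a fibre functor $\omega : \bT \to \Vecc_\mk$; Tannakian reconstruction then provides $G = \Autt(\omega)$ with $\bT \simeq \Rep G$. Step~1 (Theorem~\ref{Thm3}): use only condition (a)---the characteristic-$p$ analogue of Deligne's exterior-vanishing hypothesis---to produce a tensor functor $F : \bT \to \cC$ into an appropriate ``universal'' semisimple target $\cC$ in characteristic $p$, such as a Verlinde-type category $\ver_{p^n}$ or a related incompressible symmetric tensor category, along with a description of how $\Fr_+^{(j)}$ acts on the simple objects of $\cC$. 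Step~2 (Theorem~\ref{NewThmNeut}): use condition (b) to force $F$ to land in the full tensor subcategory generated by $\unit$, i.e.\ in $\Vecc_\mk \subseteq \cC$. For Step~2, the guiding idea is that any simple object of $\cC$ outside $\Vecc_\mk$ is ``Frobenius-degenerate'' in the sense that some iterate $\Fr_+^{(j)}$ either annihilates it or shrinks its categorical rank; one then exhibits an $n$ with $\Lambda^n \Fr_+^{(j)} FX = 0$ while $\Lambda^n FX \ne 0$, contradicting (b).

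The main obstacle is Step~1: extracting any fibre-like tensor functor $F$ from condition (a) alone in characteristic $p$. Whereas in characteristic zero Deligne's symmetric-function calculus converts exterior-vanishing directly into integrality of categorical dimension, in positive characteristic one must climb a tower of Frobenius twists and invoke the recent machinery on incompressible symmetric tensor categories, together with a careful analysis of how the composite $\Gamma^{p^j}X \hookrightarrow \otimes^{p^j} X \twoheadrightarrow \Sym^{p^j}X$ interacts with this tower. By contrast, once $\cC$ is identified in Step~1, Step~2 should reduce to a bookkeeping computation of dimensions of $\Fr_+^{(j)}$-twists of simples of $\cC$ and matching them against the vanishing of exterior powers.
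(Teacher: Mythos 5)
Your proof of (i)$\Rightarrow$(ii) is fine and matches the paper in spirit. However, your plan for (ii)$\Rightarrow$(i) misassigns the roles of conditions (a), (b) and of the two cited theorems, and Step~1 as you describe it is at least as hard as an open problem.

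Concretely: the paper's Theorem~\ref{Thm3} uses conditions (a) and (b) \emph{together}, not (a) alone, and what it extracts from them is not a tensor functor to a Verlinde-type category but local freeness of every object. The chain is: (a) and (b) together are equivalent (Theorem~\ref{ThmTanObj}) to $X$ being \emph{locally free}, meaning $\scA\otimes X\simeq\scA^{\oplus d}$ for some commutative algebra $\scA\in\Alg\bT$; if \emph{every} object is locally free, Proposition~\ref{PropApp} produces a fibre functor $\bT\to\Mod_{\scR}^{\vecc}$ for some $\scR\in\Alg\vecc$, so $\bT$ is tannakian. Theorem~\ref{NewThmNeut} is then \emph{only} about neutrality: it upgrades a fibre functor over a commutative $\mk$-algebra $\scR$ to one over $\mk$ itself, via splitting algebras, a Zorn's lemma argument, and facts about quotients of affine group schemes. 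It has nothing to do with restricting the image inside a semisimple category $\cC$ from some non-$\Vecc$ simples down to $\Vecc$.

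Your Step~1 --- extract from condition (a) alone a tensor functor $F:\bT\to\cC$ into a universal semisimple target such as $\ver_{p^n}$ --- is essentially Ostrik's Conjecture~1.3, which the paper explicitly flags as open outside the fusion case. The paper deliberately sidesteps this by requiring (b) from the outset: the quantity $[X]_{\unit}$ in Definition~\ref{DefTanObj} keeps track of how the $\Fr_+^{(j)}$-tower behaves, and the vanishing pattern in (b) is what makes the exterior bound in (a) actually compute the local rank. Without (b) one cannot extract local freeness, and there is no known route to the conclusion. So your decomposition does not merely differ from the paper's; as stated, it would require proving a stronger unproven statement before you even start Step~2.

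Two smaller corrections: (1) in your Step~2 the intended conclusion ``$F$ lands in $\Vecc_\mk\subseteq\cC$'' is not what neutrality over algebraically closed fields means --- the issue is that a priori the fibre functor takes values in $\scR$-modules for some $\scR$, not that it hits nontrivial simples of a semisimple category --- and (2) the ``Frobenius-degenerate'' heuristic about simples of $\cC$ is not the mechanism; the paper's Proposition~\ref{PropGreen} (categorical Green correspondence) and Lemma~\ref{LemNormSyl} are the actual tools that control the passage from $\Sym^{p^j}$ iterates to $\Triv_{\SG_{p^j}}$.
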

In Theorem~\ref{ThmSuper}, we prove a similar characterisation of the representation categories of affine supergroup schemes among all tensor categories.
In characteristic zero, such an internal characterisation follows from the main result of \cite{Del02}, which states that any tensor category `of sub-exponential growth' is equivalent to such a representation category.

The assignment $X\mapsto \Fr_+^{(1)}X$ actually yields an additive functor $\Fr_+:\bT\to\bT $.
The following is proved in Theorems~\ref{Thm1} and~\ref{Thm1p} and Proposition~\ref{CoolThm}.

\begin{thmA}
Let $\mk$ be a field of characteristic $p>0$ and $\bT$ a tensor category over $\mk$. The following are equivalent.
\begin{enumerate}[(i)]
\item The functor $\Fr_+:\bT\to\bT $ is exact.
\item For each filtered object $X\in\bT$, the canonical epimorphism $\Sym^\bullet (\gr X)\tto \gr (\Sym^\bullet X)$ is an isomorphism.
\item For each monomorphism $\unit\hookrightarrow X$, the induced morphism $\unit\to\Sym^pX$ is non-zero.
\item There exists an abelian $\mk$-linear symmetric monoidal category $\bC$ and an exact $\mk$-linear symmetric monoidal functor $F:\bT\to\bC$ which splits every short exact sequence in $\bT$.
\end{enumerate}
\end{thmA}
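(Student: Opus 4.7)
The plan is to establish the cycle $(iv)\Rightarrow(i)\Rightarrow(ii)\Rightarrow(iii)\Rightarrow(iv)$, with the final implication being the principal obstacle. For $(iv)\Rightarrow(i)$, first $F$ is faithful: if $F(Y)=0$ for some $Y\neq 0$, apply $F$ to the short exact sequence $0\to\unit\hookrightarrow Y\otimes Y^\ast\to (Y\otimes Y^\ast)/\unit\to 0$ induced by the coevaluation (non-zero since $\unit$ is simple and rigidity forces $\mathrm{coev}\neq 0$ when $Y\neq 0$); this would force $F(\unit)=0$, trivialising $\bC$. Faithful exact functors reflect exactness, so it suffices to verify $\Fr_+$ is exact on $\bC$. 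But $\Fr_+$ is additive on direct sums: the cross-term maps $\Gamma^iA\otimes\Gamma^{p-i}B\to\Sym^iA\otimes\Sym^{p-i}B$ appearing in $\Gamma^p(A\oplus B)\to\Sym^p(A\oplus B)$ involve the binomial coefficients $\binom{p}{i}$, which vanish modulo $p$ for $0<i<p$. Since every short exact sequence in $\bC$ splits, additivity yields exactness.

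For $(i)\Rightarrow(ii)$, induct on filtration length to reduce to a two-step filtration $0\subseteq A\subseteq X$. The canonical epimorphism $\Sym^\bullet(A\oplus X/A)\tto\gr\Sym^\bullet X$ is always surjective; its kernel in degree $p$ is controlled by a natural Frobenius-twist obstruction attached to the extension class of $0\to A\to X\to X/A\to 0$, which vanishes by exactness of $\Fr_+$. Higher degrees are handled by combining multiplicativity of $\Sym^\bullet$ with exactness of the iterated $\Fr_+^{(j)}$ (which follows formally from $j=1$). For $(ii)\Rightarrow(iii)$: apply (ii) to the filtration $0\subset\unit\subset X$, obtaining $\gr\Sym^p X\cong \Sym^p(\unit\oplus X/\unit)$, which contains $\Sym^p\unit=\unit$ as the lowest filtration piece; this copy is identified with the image of $\unit=\unit^{\otimes p}\hookrightarrow X^{\otimes p}\tto\Sym^p X$, which is therefore non-zero.

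The main obstacle is $(iii)\Rightarrow(iv)$. The task is to construct an abelian $\mk$-linear symmetric monoidal category $\bC$ and an exact monoidal $F:\bT\to\bC$ splitting every short exact sequence, using only (iii) as input. The plan is to form a $2$-colimit of $\bT$ along an endofunctor tower that formally trivialises extensions while preserving the tensor structure; a natural candidate is the Frobenius tower $\bT\xrightarrow{\Fr_+}\bT\xrightarrow{\Fr_+}\cdots$, whose $2$-colimit inherits an exact symmetric monoidal structure with canonical exact monoidal $F$. Hypothesis (iii) is precisely the non-degeneracy ensuring $F(\unit)\neq 0$ in the colimit, so that $\bC$ is non-trivial. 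Verifying that short exact sequences split in $\bC$ amounts to showing that their extension classes are annihilated after sufficiently many iterates of $\Fr_+$: (iii) forces the canonical maps $\unit\to\Sym^p X$ to be non-zero for any $\unit\hookrightarrow X$, which inductively splits extensions of the form $0\to\unit\to X\to X/\unit\to 0$ in the colimit, and the general case reduces to this via rigidity (replacing an extension $0\to A\to B\to C\to 0$ by its pushout along maps $\unit\to A^\ast\otimes A$). Making this construction rigorous -- verifying abelianness of $\bC$, establishing universal splitting, and carrying out the reduction from extensions of $\unit$ to arbitrary short exact sequences -- is the technical heart of the argument.
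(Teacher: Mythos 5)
Your cycle $(iv)\Rightarrow(i)\Rightarrow(ii)\Rightarrow(iii)\Rightarrow(iv)$ front-loads the two easy implications essentially correctly (the $(iv)\Rightarrow(i)$ argument via additivity of $\Fr_+$ is exactly the paper's Lemma~\ref{AddFrob} plus the commutative square~\eqref{eqCommD}, and your $(ii)\Rightarrow(iii)$ is the observation that $\alpha^p$ is the bottom graded piece of $\theta^p_\Sigma$). However, the two hard implications have genuine gaps.

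For $(i)\Rightarrow(ii)$ you invoke a ``Frobenius-twist obstruction attached to the extension class'' and claim that exactness of $\Fr_+^{(j)}$ ``follows formally from $j=1$''. Neither is correct. The functors $\Fr_+^{(j)}$ are \emph{not} iterates of $\Fr_+$; the paper proves only that $\Fr_+^{(j)}X$ is a subquotient of $\Fr_+^jX$ (Lemma~\ref{Lempowers}), and that argument requires Lemma~\ref{LemNormSyl}, Proposition~\ref{PropGreen} and the wreath-product subgroups $Q_j<\SG_{p^j}$; it is not formal. The paper in fact never proves $(i)\Rightarrow(ii)$ directly. It goes $(i)\Rightarrow(iv)\Rightarrow(ii)$: exactness of $\Fr_+$ gives $\alpha^p\neq 0$, one passes to local semisimplicity via Theorem~\ref{Thm1p}, and then $\theta_\Sigma$ being an isomorphism is checked after applying a splitting functor $F$.

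The real problem is $(iii)\Rightarrow(iv)$, where the proposal is not just incomplete but the proposed construction cannot work. You suggest forming a 2-colimit of the tower $\bT\xrightarrow{\Fr_+}\bT\xrightarrow{\Fr_+}\cdots$ and claim the colimit ``inherits an exact symmetric monoidal structure''. But $\Fr_+$ is not a monoidal functor: the paper notes explicitly (after Example~\ref{ExamTwist}) that it is the image of a natural transformation from a lax to an oplax monoidal functor and fails to be monoidal already for $\svec$, $p>2$. A colimit along a non-monoidal endofunctor does not produce a symmetric monoidal category with a monoidal functor from $\bT$, so this route collapses at the very first step. The paper's actual mechanism is entirely different: it constructs a commutative algebra object $\scA\in\Alg\bT=\alg\Ind\bT$ such that $\scA\otimes\Sigma$ splits in $\Mod_{\scA}$ for every short exact sequence $\Sigma$, and sets $F=\scA\otimes-$. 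This in turn requires two non-trivial inputs you have not touched: (a) the bootstrapping from $\alpha^p\neq 0$ to $\alpha^n\neq 0$ for all $n$, which uses the modular-representation machinery of Section~2 (Sylow $p$-subgroups, $P_j$ and $Q_j$, the categorical Green correspondence of Proposition~\ref{PropGreen}); and (b) the translation, via the symmetric algebra and Lemma~\ref{CommSplit}, of the non-vanishing of $\alpha^n$ into the non-vanishing of the pushout $\Sym^\bullet X\otimes_{\Sym^\bullet\unit}\unit$, which is the splitting algebra for one extension, followed by an infinite tensor product over all $\Sigma$. These are the technical heart of the argument, and the 2-colimit idea does not substitute for them.
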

Property C(ii) is relevant in the study of the $p$-adic categorical dimensions $\Dim_{\pm}:\Ob\bT\to \mZ_p$ as defined in \cite{EHO}. If it (or hence any other property in Theorem~C) is satisfied, the $p$-adic dimensions are additive along short exact sequences.

As explained above, in this paper we study when a tensor category is equivalent to the representation category of an affine (super)group scheme. In~\cite[Conjecture~1.3]{Ostrik}, Ostrik proposed a different conjectural extension of the results in \cite{Del90, Del02}. The conjecture states that tensor categories over algebraically closed fields of characteristic $p$ which are of sub-exponential growth are equivalent to representation categories of affine group schemes in the `universal Verlinde category' Ver${}_p$.
In \cite{Ostrik} this conjecture is proved for symmetric fusion categories.
The proof relied in an essential way on a generalisation of the classical Frobenius twist to fusion categories. We prove that our functor $\Fr_+$ is a direct summand of a functor $\Fr$ which, when applied to fusion categories, recovers the functor in \cite{Ostrik}. We hope that our generalisation of Ostrik's Frobenius twist to arbitrary tensor categories might be useful in the exploration of \cite[Conjecture~1.3]{Ostrik}.

The rest of the paper is organised as follows. In Section~\ref{SecPrel} we review some properties of tensor categories. In Section~\ref{SecCat} we study (modular) representation theory of finite groups in abelian categories. This will be used later on to deal with the representations of the symmetric group, and its subgroups, which originate from the symmetric braiding on tensor categories. In Section~\ref{LSSTC} we define and study `locally semisimple' tensor categories, which are the ones in which the equivalent conditions in Theorem~C are satisfied. We also derive abstract criteria for existence of tensor functors to semisimple tensor categories.  In Section~\ref{SecFrob}, we study the Frobenius twists. In Section~\ref{SecTanObj} we give internal characterisations for objects which are `locally free', that is objects which become isomorphic to (super) vector spaces after internal extension of scalars. As a consequence of those results we obtain internal characterisations of (super) tannakian categories (as defined in \ref{DefTann}) in Section~\ref{SecTanCat}. We also show that each tensor category has a unique maximal (super) tannakian subcategory. Finally we prove the result, announced in \cite{Del02}, that over algebraically closed fields, super tannakian categories are always representation categories of affine supergroup schemes, by adapting an unpublished argument from Deligne in \cite{Del11} using some results of Section~\ref{LSSTC}.

\section{Preliminaries and notation}\label{SecPrel}

Unless further specified, $\mk$ denotes an arbitrary field. We set $\mN=\{0,1,2,\ldots\}$.

\subsection{Symmetric and cyclic groups}
For a finite group $G$ we denote by $\Rep_{\mk}G$ the category of finite dimensional $\mk G$-modules.
\subsubsection{The symmetric and cyclic group}\label{SecSG} We denote the symmetric group acting on $\{1,2,\ldots, n\}$ by $\SG_n$. This defines a canonical embedding $\SG_m<\SG_n$ for $m<n$. 

\label{SecCG}We denote by $\CG_n$ the cyclic group of order $n$ and we fix the embedding $\CG_n< \SG_n$ which maps the generator of $\CG_n$ to the cycle $(1,2,\ldots,n)\in \SG_n$. 
Assume that $\charr(\mk)=p>0$. We denote by $M_i$ the indecomposable $\mk\CG_p$-module of dimension~$i$, for $1\le i\le p$.  In particular $M_1\simeq\mk$ and $M_p\simeq\mk\CG_p$.
 Every object in $\Rep\CG_p$ is a direct sum of these modules.

\subsubsection{Wreath products}\label{pgroups} Fix a prime number $p$ and $a\in\mZ_{>0}$. We define subgroups of $\SG_{p^a}$
$$\iota_a:\SG_{p^{a-1}} \hookrightarrow \SG_{p^a}\quad\mbox{and}\quad \iota_a':\SG_p\hookrightarrow \SG_{p^a},$$
as follows. For every $i\in\{1,\ldots, p^a\}$, there are unique $1\le m\le p^{a-1}$ and $1\le n\le p$ such that $i=m+(n-1)p^{a-1}$. Then $\iota_a'(\SG_p)$ acts on $n\in\{1,\ldots, p\}$ and $\iota_a(\SG_{p^{a-1}})$ acts on $m$.


For $j\in\mZ_{>0}$, we define the subgroups $P_j<\SG_{p^j}$ and $Q_j<\SG_{p^j}$ iteratively. We set $ P_1=\CG_p$, $Q_1=\SG_p$ and let $P_{j+1}$ be generated by $\iota'_{j+1}(\CG_p)$ and $P_j$ interpreted as $P_j<\SG_{p^j}<\SG_{p^{j+1}}$, with the latter inclusion as in \ref{SecSG}. Similarly, $Q_{j+1}$ is generated by $Q_j$ and $\iota'_{j+1}(\SG_p)$. In other words
$$P_{j+1}:=P_j\wr \CG_p\simeq P_j^{\times p} \rtimes \CG_p\quad\mbox{and}\quad Q_{j+1}:=Q_j\wr \SG_p\simeq Q_j^{\times p} \rtimes \SG_p,$$

\begin{lemma}\label{LemNormSyl}
The group $P_j$ is a Sylow $p$-subgroup of $\SG_{p^j}$, and $Q_j$ contains its normaliser $N_{\SG_{p^j}}(P_j)$.
\end{lemma}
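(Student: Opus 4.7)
The plan is to treat the two claims separately, each by induction on $j$.

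\emph{Sylow claim.} From the recursive construction, $|P_j|=p\cdot|P_{j-1}|^p$, so by induction $|P_j|=p^{(p^j-1)/(p-1)}$. Legendre's formula yields $v_p((p^j)!)=\sum_{i\ge 1}\lfloor p^j/p^i\rfloor=(p^j-1)/(p-1)$, which matches $\log_p|P_j|$, so the $p$-subgroup $P_j\le\SG_{p^j}$ has maximal possible $p$-power order and is Sylow.

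\emph{Normaliser claim.} Induct on $j$; the base $j=1$ is the trivial inclusion $N_{\SG_p}(\CG_p)\subseteq\SG_p=Q_1$. For $j\ge 2$, consider the partition of $\{1,\ldots,p^j\}$ into blocks $B_i:=\{(i-1)p^{j-1}+1,\ldots,ip^{j-1}\}$ for $1\le i\le p$. These are exactly the orbits of the base $K:=P_{j-1}^{\times p}\triangleleft P_j$, and the quotient $P_j/K\simeq\CG_p$ cyclically permutes them. The heart of the argument is to show that every $g\in N_{\SG_{p^j}}(P_j)$ permutes $\{B_1,\ldots,B_p\}$ as a set. Granted this, choose $\tau\in Q_j$ realising the induced block permutation via the top factor $\SG_p\le Q_j=Q_{j-1}^{\times p}\rtimes\SG_p$; then $g\tau^{-1}$ stabilises each $B_i$ setwise. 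Its restriction to $B_i$ normalises the $i$-th copy of $P_{j-1}$ acting on $B_i\simeq\{1,\ldots,p^{j-1}\}$, and by induction lies in $Q_{j-1}$. Hence $g\tau^{-1}\in Q_{j-1}^{\times p}\subseteq Q_j$, and $g\in Q_j$.

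\emph{The key step and main obstacle.} To show $g$ permutes the $B_i$, I would identify $K$ group-theoretically inside $P_j$ as the subgroup generated by point stabilisers, $K=\langle\mathrm{Stab}_{P_j}(k):1\le k\le p^j\rangle$. The inclusion $\supseteq$ holds because any element with non-trivial image in $P_j/K\simeq\CG_p$ permutes the blocks non-trivially and so fixes no point. For the reverse, $\mathrm{Stab}_{P_j}(k)$ for $k\in B_i$ equals $P_{j-1}^{\times(i-1)}\times\mathrm{Stab}_{P_{j-1}}(k)\times P_{j-1}^{\times(p-i)}$; letting $k$ range over blocks $B_{i'}\ne B_i$ already yields the full factor $P_{j-1}$ in the $i$-th slot, and combining across all $i$ generates $K$. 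Invariance of $K$ under conjugation by $N_{\SG_{p^j}}(P_j)$ is then immediate, since $g\,\mathrm{Stab}_{P_j}(k)\,g^{-1}=\mathrm{Stab}_{P_j}(gk)$, and block preservation follows because the orbits of the invariant $K$ on $\{1,\ldots,p^j\}$ are precisely the $B_i$. The delicate point is this identification of $K$ via point stabilisers: it genuinely requires $j\ge 2$, since at the bottom level $\CG_p$ acts regularly and has only trivial stabilisers. Once this is established the induction closes cleanly.
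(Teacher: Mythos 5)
Your proof is correct, but it takes a genuinely different route from the paper. The paper's argument is a counting argument: it cites from Berkovic that $|N_{\SG_{p^j}}(P_j):P_j|=(p-1)^j$, iteratively constructs an explicit subgroup $M_j<Q_j$ of order $(p(p-1))^j$ normalising $P_j$, and then concludes via Lagrange that $\langle M_j,P_j\rangle$ must equal the full normaliser. Your argument is structural: you characterise the base $K=P_{j-1}^{\times p}$ of $P_j$ intrinsically as the subgroup generated by point stabilisers (valid for $j\ge 2$, with the computation you give), deduce that any element of $N_{\SG_{p^j}}(P_j)$ normalises $K$ and hence permutes the $K$-orbits, which are precisely the blocks $B_i$, and then close the induction by correcting with an element of the top $\SG_p$ and applying the inductive hypothesis blockwise. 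Each approach has its merits: the paper's is shorter granted the cited normaliser index formula, while yours is self-contained and more conceptual, making transparent why the normaliser respects the imprimitivity block system. The one step you left slightly implicit — that $g\tau^{-1}$ normalises each factor $P_{j-1}$, not just $K$ — does go through, since $g\tau^{-1}$ normalises $K$ and stabilises each block, so conjugating an element of $K$ supported on $B_i$ produces an element of $K$ supported on $B_i$, i.e.\ lying in the $i$-th factor.
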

\begin{proof}
That $P_j$ is a Sylow subgroup is well-known, see \cite{Ber}, and follows immediately from Legendre's theorem on the prime factorisation of factorials. We set $N_j:=N_{\SG_{p^j}}(P_j)$. It is also proved {\it loc. cit.} that
$|N_j:P_j|=(p-1)^{j}.$
In the proof of $N_j< Q_j$  we will freely use the fact that the images of $\iota_j$ and $\iota_j'$ are commuting subgroups which generate a copy of $\SG_{p^{j-1}}\times \SG_p$ in $\SG_{p^{j}}$, and that $\iota_j(Q_{j-1})<Q_j>\iota'_j(\SG_p)$.

We define iteratively subgroups $M_j<Q_j$. We set $M_1=N_1$, so $|N_1|=p(p-1)$, and $M_j=\iota_j(M_{j-1})\times \iota'_j(N_{1})$. By construction, $M_j$ normalises $P_j$ (meaning $M_j<N_j$) and satisfies $|M_j|=(p(p-1))^j$.
By Lagrange's theorem, the group generated by $M_j$ and $P_j$ has order divisible by $(p-1)^j|P_j|=|N_j|$. Hence the group coincides with $N_j$ which concludes the proof.
\end{proof}

\subsection{Monoidal categories}

\subsubsection{Categories}When clear in which category we are working, we will denote the morphism sets simply by $\Hom$, $\End$ or $\Aut$. 
For $\mk$-linear categories $\bA$ and $\bB$, we denote by $\bA\otimes_{\mk}\bB$ the $\mk$-linear category with objects $(X,Y)$ for $X\in\bA$ and $Y\in\bB$ and the space of morphisms from $(X,Y)$ to $(Z,W)$ given by $\bA(X,Z)\otimes_{\mk}\bB(Y,W)$. Then we denote by $\bA\boxtimes_{\mk}\bB$, or $\bA\boxtimes\bB$, the Karoubi envelope of $\bA\otimes_{\mk}\bB$. The object $(X,Y)$ as considered in $\bA\boxtimes\bB$ will be written as $X\boxtimes Y$.

An abelian $\mk$-linear category in which the endomorphism algebra of each simple object is $\mk$ is called {\bf schurian}. A semisimple schurian category is thus equivalent to a direct sum of copies of the category $\vecc_{\mk}$ of finite dimensional vector spaces.
If $\bA$ and $\bB$ are $\mk$-linear abelian with $\bB$ semisimple schurian, then $\bA\boxtimes \bB$ is again abelian.

An object $X$ in an abelian category with subobjects
$$0=X_{0}\subset X_1\subset X_2\subset\cdots\subset X_d=X$$
will be called an object with filtration of length $d$. The associated graded object is
$$\gr X =\oplus_{i=1}^d \gr_i X,\quad\mbox{with}\quad \gr_i=X_i/X_{i-1}.$$
Following, \cite[\S I.8.2]{AGV}, for a locally small category $\bC$, we denote by $\Ind \bC$ the full subcategory of the category of functors $\bC^{\op}\to\Set$ consisting of ind-objects.

\subsubsection{}\label{SMC} We will work with symmetric monoidal categories $(\bC,\otimes,\unit,\gamma)$ where
\begin{enumerate}[(i)]
\item $\bC$ is $\mk$-linear abelian (with $\unit\not=0$);
\item $-\otimes -$ is right exact and $\mk$-linear in both variables.
\end{enumerate}
Here $\gamma$ refers to the binatural family of braiding morphisms $\gamma_{XY}:X\otimes Y\stackrel{\sim}{\to} Y\otimes X$ which satisfy the constraints of \cite[\S 1]{DM}. For $X\in\bC$ and $n\in\mZ_{\ge 1}$, we write
$$\otimes^nX\;=\;\stackrel{n}{\overbrace{X\otimes X\otimes\cdots\otimes X}}\qquad\mbox{and}\qquad \otimes^0X\;=\;\unit,$$
and use similar notation for morphisms. An object $X$ in $\bC$ is {\bf flat} if $X\otimes-:\bC\to\bC$ is exact.

\subsubsection{}\label{dual} Let $\bC$ be as in \ref{SMC}. For an object $X\in \bC$, a dual $X^\vee$ is an object equipped with morphisms $\co_X:\unit\to X\otimes X^\vee$ and $\ev_X:X^\vee\otimes X\to\unit$ satisfying the two snake relations in \cite[(0.1.4)]{Del02}. Following \cite[\S 1.4]{Del02}, we then have bi-adjoint functors $(-\otimes X,-\otimes X^\vee)$. In particular, dualisable objects are flat. For dualisable $X,Y\in\bC$, there is an isomorphism
$$\Hom(X,Y)\stackrel{\sim}{\to}\Hom(Y^\vee,X^\vee),\quad f\mapsto f^t=(\ev_Y\otimes \Id_{X^\vee})\circ (\Id_{Y^\vee}\otimes f\otimes \Id_{X^\vee})\circ (\Id_{Y^\vee}\otimes \co_X).$$
A direct summand of a dualisable object is also dualisable, see~\cite[\S 1.15]{Del02}.

\subsubsection{}\label{defTensor}Following \cite{Del02, Del90}, $\bC$ as in \ref{SMC} is a {\bf tensor category over $\mk$} if additionally it is essentially small and
\begin{enumerate}[(i)] \setcounter{enumi}{2}
\item the canonical morphism $\mk\to\End(\unit)$ is an isomorphism;
\item every object in $\bC$ is dualisable.
\end{enumerate}
Now let $\bT$ be a tensor category. By \ref{dual}, the functor $-\otimes -$ is bi-exact  and by \cite[Proposition~1.17]{DM}, the unit object $\unit$ is simple.
If every object has finite length, then every morphism space is finite dimensional, see~\cite[Proposition~1.1]{Del02}.
If $\mk$ is algebraically closed and every object in $\bT$ has finite length, $\bT$ is therefore schurian.

\subsubsection{} A right exact $\mk$-linear functor $F:\bC\to \bC'$ between two categories $\bC$ and $\bC'$ as in \ref{SMC} is a {\bf  tensor functor} if it is equipped with natural isomorphisms $c_{XY}^F:F(X)\otimes F(Y)\stackrel{\sim}{\to}F(X\otimes Y)$ and $F(\unit)\stackrel{\sim}{\to}\unit$ satisfying the compatibility conditions of \cite[\S 2.7]{Del90}, see also \cite[Definition~1.8]{DM}. In particular tensor functors are {\em symmetric} monoidal functors. 
We will usually just write $F$ for the tensor functor, rather than $(F,c^F)$. The following lemma, see \cite[Corollaire~2.10]{Del90}, is a straightforward consequence of the definitions and the fact that $\unit$ is simple in a tensor category. 

\begin{lemma}\label{LemFaith}
Consider a tensor functor $F:\bC\to\bC'$. If $X \in\bC$ has a dual $X^\vee$ then $F(X^\vee)$ is a dual of $F(X)$.
Any tensor functor out of a tensor category is exact and faithful.
\end{lemma}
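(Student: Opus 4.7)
The plan is to split the proof into three steps. In Step 1, I would establish that $F(X^\vee)$ is a dual of $F(X)$. Set $\ev'_X := u \circ F(\ev_X) \circ c^F_{X^\vee, X} : F(X^\vee) \otimes F(X) \to \unit$ and similarly $\co'_X : \unit \to F(X) \otimes F(X^\vee)$, where $u : F(\unit) \xrightarrow{\sim} \unit$ and $c^F$ denote the monoidal structure isomorphisms. The two snake identities for $(F(X), F(X^\vee), \ev'_X, \co'_X)$ reduce, by functoriality of $F$ and the coherence axioms of a tensor functor \cite[Def.~1.8]{DM}, to applying $F$ to the snake identities for $(X, X^\vee)$. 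This is the most notationally tedious step but formally routine; an immediate consequence is that $F(f^\vee) = F(f)^\vee$ under the identification of duals, and that the adjunction $\Hom(X, Y) \cong \Hom(X \otimes Y^\vee, \unit)$ commutes with $F$.

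In Step 2, I would show exactness, now using that $\bC$ is a tensor category. Since $F$ is right exact by assumption, it suffices to show it preserves monomorphisms. Let $i : A \hookrightarrow B$ be mono in $\bC$. Rigidity makes duality an exact anti-equivalence on $\bC$, so $i^\vee : B^\vee \twoheadrightarrow A^\vee$ is epi; combining Step 1 with right exactness of $F$ yields that $F(i)^\vee : F(B)^\vee \twoheadrightarrow F(A)^\vee$ is epi in $\bC'$. To conclude $F(i)$ is mono I would take $g : Z \to F(A)$ with $F(i) \circ g = 0$, transpose $g$ via the adjunction furnished by dualisability of $F(A)$ to obtain $\hat g : Z \otimes F(A)^\vee \to \unit$, and translate $F(i) \circ g = 0$ into $\hat g \circ (\Id_Z \otimes F(i)^\vee) = 0$. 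Right exactness of $Z \otimes -$ makes $\Id_Z \otimes F(i)^\vee$ epi, hence right-cancellable, forcing $\hat g = 0$ and therefore $g = 0$.

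In Step 3, I would deduce faithfulness from Step 1. Given $F(f) = 0$ for $f : X \to Y$ in $\bC$, transposing yields $F(\tilde f) = 0$ for the adjunct $\tilde f : X \otimes Y^\vee \to \unit$. The image of $\tilde f$ is a subobject of $\unit$, and by simplicity of $\unit$ in $\bC$ (\ref{defTensor}) is either $0$ or all of $\unit$. In the latter case $\tilde f$ is epi, so $F(\tilde f)$ is epi (by right exactness) into $\unit \neq 0$, contradicting $F(\tilde f) = 0$. Hence $\tilde f = 0$ and $f = 0$. The main obstacle in the whole argument is the snake-identity verification in Step 1 --- conceptually trivial but notationally intricate, as one must track the associator, unitor, and the coherence isomorphisms $c^F$ and $u$ --- while Steps 2 and 3 reduce quickly to rigidity, simplicity of $\unit$, and right exactness of $F$.
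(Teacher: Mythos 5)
Your proof is correct. The paper does not give its own argument: it cites \cite[Corollaire~2.10]{Del90} and simply observes that the lemma ``is a straightforward consequence of the definitions and the fact that $\unit$ is simple in a tensor category.'' Your three steps --- preservation of duals from the coherence axioms, exactness from right exactness plus the duality anti-equivalence (preservation of monos via the transpose and right-cancellability), and faithfulness from simplicity of $\unit$ together with right exactness --- are precisely a write-up of the standard argument the paper is deferring to.
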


\subsubsection{}\label{Defeta} For categories $\bC,\bC'$ as in \ref{SMC} and tensor functors $F,G:\bC\to\bC'$, a natural transformation $F\Rightarrow G$ is one of tensor functors if it exchanges the monoidal structures as in \cite[\S 2.7]{Del90}. As pointed out {\it loc. cit.}, for such $\eta:F\Rightarrow G$ and dualisable $X\in \bC$,
the morphisms $\eta_X$ and $(\eta_{X^\vee})^t$ are mutually inverse.
In particular, a natural transformation of tensor functors out of a tensor category is {\em automatically an isomorphism,} see also~\cite[Proposition~1.13]{DM}.

\subsubsection{} \label{AlgC}
For $\bC$ as in \ref{SMC}.
We denote the category of commutative algebras in $\bC$ by $\alg{\bC}$.
Such an algebra is a triple $(\scA,m,\eta)$, with $\scA$ an object in $\bC$ and morphisms $m:\scA\otimes\scA\to\scA$ and $\eta:\unit\to\scA$ satisfying the traditional commutative (with respect to $\gamma$) algebra relations.

For non-zero $\scA\in \alg{\bC}$, we denote the category of $\scA$-modules in $\bC$ by $\bC_{\scA}$. Then $(\bC_{\scA},\otimes_{\scA},\scA)$ is a monoidal category as in \ref{SMC} with right exact tensor product $-\otimes_{\scA}-$, as the coequaliser of $-\otimes\scA\otimes-\rightrightarrows -\otimes-$, introduced in  \cite[\S 7.5]{Del90}. For an algebra morphism $\scA\to\scB$ we have the corresponding tensor functor $\scB\otimes_\scA-:\bC_\scA\to\bC_{\scB}$.


\subsubsection{} For the remainder of this subsection, fix a tensor category $\bT$. By \cite[\S 7.5]{Del90} the category $\Ind \bT$ is again naturally a symmetric monoidal category satisfying (i)-(iii) above. Furthermore, the functor $-\otimes-$ is bi-exact (and cocontinuous), even though only objects in the subcategory $\bT$ are dualisable. The latter claim can easily be proved by showing that dualisable objects must be compact, or by an immediate generalisation of the proof in \cite[\S 2.2]{Del02} of the special case where all objects in $\bT$ have finite length. 
We abbreviate the notation of \ref{AlgC} as $\Alg{\bT}=\alg\Ind\bT$ and $\Mod_{\scA}=(\Ind\bT)_{\scA}$, or $\Mod_{\scA}^{\bT}$ when there is risk of ambiguity.  We have a tensor functor
\begin{equation}\label{extsca}F_{\scA}=\scA\otimes-:\;\bT\to\Mod_{\scA}.\end{equation}

For the tensor category $\vecc=\vecc_{\mk}$, we have that $\Vecc=\Ind\vecc$ is the category of all vector spaces and $\Alg\vecc$ is the category $\mathbf{Alg}_{\mk}$ of commutative $\mk$-algebras.


A non-zero algebra $\scA$ is {\bf simple} if its only ideal subobjects are $0$ and $\scA$ itself. This is equivalent to $\scA$ being simple as an object in $\Mod_{\scA}$.

\begin{lemma}\label{LemMaxId}
Every non-zero algebra $\scA$ in $\Alg\bT$ has a simple quotient.
\end{lemma}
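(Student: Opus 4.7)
The plan is to construct a maximal proper ideal in $\scA$ via Zorn's lemma and then quotient by it. First I characterise proper ideals categorically: an ideal $i:\cJ\hookrightarrow\scA$ is proper (i.e.\ $\cJ\neq\scA$, equivalently $\scA/\cJ\neq 0$) if and only if the unit $\eta:\unit\to\scA$ does not factor through $i$. Indeed, a factorisation $\eta=i\circ\tilde\eta$ lets us form the composition $\scA\simeq\scA\otimes\unit\xrightarrow{\id\otimes\tilde\eta}\scA\otimes\cJ\to\cJ$, where the last arrow uses that $\cJ$ is an ideal; composing with $i$ recovers $m\circ(\id\otimes\eta)=\id_\scA$. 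Hence $i$ is a split epimorphism, and being also a monomorphism, is an isomorphism, so $\cJ=\scA$.

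Next I verify Zorn's chain condition on the poset of proper ideals of $\scA$ (ordered by inclusion). Given a chain $\{\cJ_\alpha\}$, its filtered colimit $\cJ=\varinjlim\cJ_\alpha$ is again a subobject of $\scA$ because filtered colimits of monomorphisms are monomorphisms in $\Ind\bT$; it is an ideal because $\scA\otimes-$ commutes with filtered colimits; and it is proper because $\unit\in\bT$ is a compact object of $\Ind\bT$, yielding
\[\Hom_{\Ind\bT}(\unit,\cJ)\;\cong\;\varinjlim\Hom_{\Ind\bT}(\unit,\cJ_\alpha).\]
Thus any factorisation of $\eta$ through $\cJ$ would factor through some $\cJ_\alpha$, contradicting properness of the latter.

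The subobjects of $\scA$ in the Grothendieck category $\Ind\bT$ form a set, so Zorn's lemma produces a maximal proper ideal $\cJ_{\max}$; the quotient algebra $\scB:=\scA/\cJ_{\max}$ is then non-zero, and any non-trivial ideal of $\scB$ would pull back to an ideal of $\scA$ strictly between $\cJ_{\max}$ and $\scA$, contradicting maximality. Hence $\scB$ is a simple quotient of $\scA$. The main technical point is the properness of the colimit in the second step, where compactness of $\unit$ in $\Ind\bT$ (a direct consequence of $\unit$ lying in $\bT$) is the essential ingredient; everything else reduces to standard bookkeeping about ideals in commutative algebras in symmetric monoidal abelian categories.
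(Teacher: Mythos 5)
Your proof is correct and follows the same Zorn's lemma strategy the paper uses (take a maximal proper ideal and quotient by it, noting that subobjects of a fixed object in $\Ind\bT$ form a set). You additionally supply the key verification the paper leaves implicit — that the filtered colimit of a chain of proper ideals is still a proper ideal, using compactness of $\unit$ in $\Ind\bT$ to see that a factorisation of $\eta:\unit\to\scA$ through the colimit would factor through some member of the chain — which is precisely what makes Zorn's chain condition hold.
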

\begin{proof}
Since $\bT$ is essentially small, $\Ind\bT$ has a generator and the class of subobjects of any object in $\Ind\bT$ forms a set. By Zorn's lemma, $\scA$ contains a maximal ideal subobject $J$. The algebra $\scB=\scA/J$ is simple.
\end{proof}

\subsubsection{Tensor subcategories}\label{TensSub} A full subcategory $\bT'$ of $\bT$ is a tensor subcategory if it is closed under the operations of taking subquotients, tensor products, duals and direct sums. In particular $\bT'$ is replete in $\bT$ and a tensor category itself. For $E$ a collection of objects or full subcategories of $\bT$, we denote by $\langle E\rangle$ the minimal tensor subcategory of $\bT$ which contains all objects in $E$. We say that $\bT$ is {\bf finitely generated} if $\bT=\langle X\rangle$, for some object $X\in\bT$.

We denote by $\Gamma^{\bT}_{\bT'}$, or simply $\Gamma_{\bT'}$, the right adjoint to the inclusion functor $\Ind\bT'\to\Ind\bT$. In other words, $\Gamma_{\bT'}$ is the left exact lax monoidal functor which sends an object in $\Ind\bT$ to its maximal subobject in $\Ind\bT'$.

If $\bT$ and $\bV$ are tensor categories where $\bV$ is semisimple schurian, then $\bT\boxtimes\bV$ is again a tensor category. We can and will identify $\bT$ and $\bV$ with tensor subcategories of $\bT\boxtimes\bV$.

The following observations are standard, see e.g. \cite[\S 2.11]{Del02}.
\begin{lemma}\label{LemLax}
Consider a tensor subcategory $\bT'\subset\bT$. 
\begin{enumerate}[(i)]
\item If $\scA$ is in $\Alg\bT$, then $\Gamma_{\bT'}\scA$ is a subalgebra, and hence an object in $\Alg\bT'$.
\item For $X\in\Ind\bT$ and $Y\in\bT'$, the natural morphism $\Gamma_{\bT'}(X)\otimes Y\to\Gamma_{\bT'}(X\otimes Y)$ is an isomorphism.
\item For $\scA\in\Alg\bT$, set $\scR:=\Gamma_{\bT'}\scA$. The lax monoidal structure of $\Gamma_{\bT'}:\Ind\bT\to\Ind\bT'$ induces one as a functor $\Gamma_{\bT'}:\Mod_{\scA}^{\bT}\to\Mod_{\scR}^{\bT'}$ via the universality of the coequaliser $-\otimes_{\scR}-$. Furthermore,
$$\Gamma_{\bT'}(M)\otimes_{\scR}\Gamma_{\bT'}(N)\;\to\; \Gamma_{\bT'}(M\otimes_{\scA}N)$$
is an isomorphism if $M\simeq \scA\otimes M_0$ and $N\simeq\scA\otimes N_0$ for $M_0,N_0\in\bT'$.
\end{enumerate}
\end{lemma}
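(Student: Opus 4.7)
The plan is to treat the three parts in sequence, exploiting throughout that $\Gamma_{\bT'}$, being right adjoint to the strong monoidal inclusion $\iota\colon \Ind\bT' \hookrightarrow \Ind\bT$, carries a canonical lax symmetric monoidal structure, and that $\Gamma_{\bT'}\scA$ is concretely the maximal subobject of $\scA$ lying in $\Ind\bT'$. For (i), a right adjoint of a strong monoidal functor is automatically lax symmetric monoidal, and thus sends commutative algebras to commutative algebras. More directly, since $\Ind\bT'$ is closed under $\otimes$, the composite $\Gamma_{\bT'}\scA \otimes \Gamma_{\bT'}\scA \hookrightarrow \scA \otimes \scA \to \scA$ has source in $\Ind\bT'$, so by maximality its image lies inside $\Gamma_{\bT'}\scA$; the unit $\unit \to \scA$ factors through $\Gamma_{\bT'}\scA$ because $\unit \in \bT'$.

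For (ii), I would use that $Y \in \bT'$ is dualisable, so $(-\otimes Y, -\otimes Y^\vee)$ is a biadjoint pair on both $\Ind\bT$ and $\Ind\bT'$, and $-\otimes Y$ restricts from $\Ind\bT$ to $\Ind\bT'$. For $Z \in \Ind\bT'$, composing these adjunctions with the $(\iota, \Gamma_{\bT'})$-adjunction yields
$$\Hom_{\Ind\bT'}(Z, \Gamma_{\bT'}(X\otimes Y)) \simeq \Hom_{\Ind\bT}(Z \otimes Y^\vee, X) \simeq \Hom_{\Ind\bT'}(Z, \Gamma_{\bT'}(X)\otimes Y),$$
and Yoneda produces a natural isomorphism $\Gamma_{\bT'}(X)\otimes Y \stackrel{\sim}{\to} \Gamma_{\bT'}(X\otimes Y)$. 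A direct check (at the component $Z = \Gamma_{\bT'}(X)\otimes Y$ of the identity) confirms that this isomorphism is the canonical morphism in the statement.

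For (iii), I would first construct the lax morphism. Since $M\otimes_\scA N$ is the coequaliser of the $\scA$-actions $M \otimes \scA \otimes N \rightrightarrows M \otimes N$, composing the lax map $\Gamma_{\bT'}(M)\otimes\Gamma_{\bT'}(N) \to \Gamma_{\bT'}(M\otimes N)$ with $\Gamma_{\bT'}$ of the coequaliser projection gives an arrow $\Gamma_{\bT'}(M)\otimes\Gamma_{\bT'}(N) \to \Gamma_{\bT'}(M\otimes_\scA N)$. To see that this coequalises the two $\scR$-actions $\Gamma_{\bT'}(M)\otimes\scR\otimes\Gamma_{\bT'}(N) \rightrightarrows \Gamma_{\bT'}(M)\otimes\Gamma_{\bT'}(N)$, one observes that both pathways produce the same composite through $\Gamma_{\bT'}(M\otimes\scA\otimes N)$ by naturality and the associativity of the lax structure; hence the universal property of the coequaliser delivers the required morphism $\Gamma_{\bT'}(M)\otimes_\scR \Gamma_{\bT'}(N) \to \Gamma_{\bT'}(M\otimes_\scA N)$. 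For the isomorphism assertion with $M\simeq \scA\otimes M_0$ and $N\simeq \scA\otimes N_0$, (ii) gives $\Gamma_{\bT'}(M) \simeq \scR\otimes M_0$ and $\Gamma_{\bT'}(N) \simeq \scR\otimes N_0$; since $M\otimes_\scA N \simeq \scA\otimes M_0\otimes N_0$, (ii) also yields $\Gamma_{\bT'}(M\otimes_\scA N)\simeq \scR\otimes M_0\otimes N_0$, and $(\scR\otimes M_0)\otimes_\scR (\scR\otimes N_0) \simeq \scR\otimes M_0\otimes N_0$. Tracking the construction shows the lax morphism realises this identification.

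The main obstacle is the diagram chase in (iii) verifying that the $\otimes$-level lax morphism descends through the coequaliser defining $\otimes_\scR$ and that, under the free-module hypothesis, this descent coincides with the canonical identification obtained from (ii). Parts (i) and (ii) are essentially formal consequences of the adjunction together with the biadjoint structure attached to dualisable objects.
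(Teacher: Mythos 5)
The paper does not prove this lemma: it declares the statement ``standard'' and refers the reader to \cite[\S 2.11]{Del02}, so there is no in-paper argument to compare against. Your proof is correct and is the natural self-contained version of the cited facts. Part (i) is exactly the closure-under-quotients argument (the image of a map from an object of $\Ind\bT'$ to $\scA$ lies in $\Ind\bT'$, hence in the maximal subobject $\Gamma_{\bT'}\scA$), together with $\unit\in\bT'$ for the unit. Part (ii) is the projection formula for a right adjoint across a dualisable object; composing the $(\iota,\Gamma_{\bT'})$ adjunction with the biadjunction $(-\otimes Y,-\otimes Y^\vee)$ in both categories, and then using Yoneda, is the standard way to get it, and you correctly flag that one must trace the chain at the identity to see that the resulting isomorphism is the canonical lax morphism (adjoint to $\mathrm{counit}_X\otimes\Id_Y$). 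Part (iii) descends the lax structure through the coequaliser defining $\otimes_{\scR}$, and your remark that the two $\scR$-actions agree after composing into $\Gamma_{\bT'}(M\otimes\scA\otimes N)$ is the right reduction: the $\scR$-action on $\Gamma_{\bT'}(M)$ is itself defined via the lax map $\scR\otimes\Gamma_{\bT'}(M)\to\Gamma_{\bT'}(\scA\otimes M)\to\Gamma_{\bT'}(M)$, so the required commutativity is naturality plus associativity coherence of the lax structure. The free-module case then follows from (ii) exactly as you say. No gaps.
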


\subsection{Symmetric and divided powers}\label{defSym}
Let $\bC$ be a symmetric monoidal category as in \ref{SMC} and fix $X\in\bC$.

\subsubsection{} We define $\Lambda^2X$ as the image of the morphism $\gamma_{XX}-1$ in $\End({\otimes^2X})$. By definition, $\Lambda^2X$ is a subobject of $\otimes^2 X$. If $2\not=\charr(\mk)$, then $\Lambda^2X$ is a direct summand. If $\bC$ admits countable coproducts, the symmetric algebra
$$\Sym^\bullet X\;=\;\bigoplus_{i\in\mN}\Sym^iX\;\in\;\alg{\bC}$$
is the maximal commutative quotient of the tensor algebra of $X$. It is therefore the quotient with respect to the ideal generated by $\Lambda^2X$. Concretely, for $n\in\mN$, $\Sym^nX$ is the cokernel of
\begin{equation}\label{eqSym} \bigoplus_{a+b=n-2} \otimes^{a}X\otimes \Lambda^2X\otimes \otimes^{b}X\to  \otimes^nX, \quad\mbox{ or }\quad\bigoplus_{i=1}^{n-1} \otimes^nX\to \otimes^nX,\end{equation}
where the used endomorphisms of $\otimes^nX$ are $\otimes^{i-1}X\otimes(\gamma_{XX}-1)\otimes \otimes^{n-i-1}X$.

Dually we define the divided power $\Gamma^nX$ by the exact sequence
$$0\; \to\;\Gamma^nX\;\to\; \otimes^nX\;\to\; \bigoplus_{i=1}^{n-1} \otimes^nX$$
using the same endomorphisms of $\otimes^nX$. Equivalently we can set $\Sym^nX={\mathrm{H}}_0(\SG_n,\otimes^nX)$ and $\Gamma^nX={\mathrm{H}}^0(\SG_n,\otimes^nX)$, with notation as in Section~\ref{DefSecCat}.

If $X$ is dualisable and $\Sym^nX$ is flat, $\Sym^nX$ is dualisable with dual $\Gamma^n(X^\vee)$. This follows from \eqref{eqSym} and the following lemma.

\begin{lemma}
If the cokernel $C$ of $u:A\to B$ is flat for dualisable $A,B\in\bC$, then $C$ is dualisable with dual given by the kernel $K$ of $v:=u^t:B^\vee\to A^\vee$.
\end{lemma}
\begin{proof}
It follows by definition of $v$ that the composition $K \otimes B \hookrightarrow  B^\vee \otimes B  \stackrel{\ev}{\to}\unit$ factors as $K \otimes B \tto K \otimes  C \stackrel{\varepsilon}{\to}\unit$, for some morphism $\varepsilon$. The flatness of $ C $ allows to conclude that the bottom vertical arrow in the diagram 
$$\xymatrix{
\unit\ar[rr]^{\co}\ar@{-->}[d]^{\delta}&&B \otimes  B^\vee \ar@{->>}[d] \\
 C \otimes K \ar@{^{(}->}[rr]&& C \otimes  B^\vee 
}$$
 is a monomorphism. 
The existence of a morphism $\delta$ to create a commutative diagram then follows as for $\varepsilon$. That $\varepsilon$ and $\delta$ satisfy the snake relations now follows from construction. For instance, the commutative diagram
$$\xymatrix{
 B^\vee \ar@{^{(}->}[rr]^-{ B^\vee \otimes\co}&& B^\vee \otimes B \otimes  B^\vee \ar@{->>}[rr]^-{\ev\otimes  B^\vee }&&  B^\vee \\
&& K \otimes B \otimes  B^\vee \ar@{^{(}->}[u]\ar@{->>}[d]\\
&& K \otimes C \otimes  B^\vee \ar@/_/[uurr]_-{\varepsilon\otimes  B^\vee }\\
 K \ar@{^{(}->}[uuu]\ar[rr]^-{ K \otimes\delta}\ar@/^/[uurr]^-{ K \otimes\co}&& K \otimes C \otimes K \ar[u]\ar[rr]^-{\varepsilon\otimes K }&&K\ar@{^{(}->}[uuu]
}$$
ensures that one of the snake relations is inherited from $(\ev,\co)$.
\end{proof}

\subsubsection{Skew symmetric powers} For $n\in\mN$, we define $\Lambda^nX$ as the image of the anti-symmetriser $\sum_{\sigma\in\SG_n}(-1)^{|\sigma|}\sigma:\otimes^nX\to \otimes^nX$. The essential properties for us are $\Lambda^n\unit=0$ for $n>1$, that $\Lambda^nZ=0$ implies $\Lambda^{n+1}Z=0$, and 
\begin{equation}
\label{PropLam}
\Lambda^n(X\oplus Y)\;\simeq\;\bigoplus_{i=0}^n \Lambda^{n-i}X\,\otimes\,\Lambda^i Y,
\end{equation}
which is satisfied whenever $\Lambda^j X$ and $\Lambda^j Y$ are flat for $1\le j\le n$, or when $X=\unit$.

Now assume $\charr(\mk)\not=2$. Then we could alternatively use ${\mathrm{H}}^0(\SG_n,\sgn\otimes\otimes^nX)$ or ${\mathrm{H}}_0(\SG_n,\sgn\otimes\otimes^nX)$, with notation of Section~\ref{SecCat}, as notion of skew symmetric power and all our results remain valid with these alternative definitions. These two powers satisfy \eqref{PropLam} without any flatness condition. Note that already for $\bC=\svec$ (see \ref{svec}) with $\charr(\mk)>2$, these (co)invariants differ from the $\Lambda^\bullet$-definition.

\subsection{Semisimplification and the universal Verlinde category}\label{Verp}

\subsubsection{Semisimplification}\label{semisimp} For a tensor category $\bT$, let $\cN$ denote the ideal of negligible morphisms, see \cite[\S 7.1]{AK}, which is the unique maximal tensor ideal. We have a canonical symmetric monoidal $\mk$-linear functor $X\mapsto \overline{X}$ from $\bT$ to the quotient $\overline{\bT}:=\bT/\cN$, which maps an object to itself and a morphism to its equivalence class. As a special case of \cite[Th\'eor\`eme~8.2.2]{AK}, we find that $\oRepG$ is abelian semisimple, for a finite group $G$. We stress that $X\mapsto \overline{X}$ is in general (for instance in the example $\Rep\CG_p\to\ver_p$ below) not exact, so not a tensor functor. 

\subsubsection{The category of super vector spaces}\label{svec} Assume $\charr(\mk)\not=2$. The monoidal category $\svec$ is defined as the category of $\mZ/2$-graded vector spaces, or equivalently as $\Rep\CG_2$. The braiding is defined via the graded isomorphisms
$$\gamma_{VW}:\;V\otimes_{\mk} W\;\to\;W\otimes_{\mk}V,\; \;v\otimes w\mapsto (-1)^{|v||w|}w\otimes v,$$
where $|v|\in\mZ/2$ denotes the parity of a homogeneous vector. We denote the one-dimensional super space concentrated in odd degree by $\bar{\unit}$.

\subsubsection{Verlinde category} \label{VerpSub}Assume that $p:=\charr(\mk)>0$. In \cite[Definition~3.1]{Ostrik}, the universal Verlinde category is defined as $\ver_p:=\overline{\Rep \CG_p}$. With notation as in \ref{SecCG}, the simple objects of $\ver_p$ correspond, up to isomorphism, to $\overline{M}_i$, for $1\le i<p$. 
If $p>2$, by \cite[Proposition~2.4]{EVO},  we have
\begin{equation}\label{GammaVer}\Gamma^{p-j+1} (\overline{M}_{j})\;=\;\Sym^{p-j+1} (\overline{M}_{j})\;=\;0,\quad\mbox{for all $1<j<p$}. \end{equation}

\subsection{Fibre functors}\label{SecFibre}

For the entire subsection we consider tensor categories $\bT,\bV$, with $\bV$ schurian and semisimple. Sometimes we will require the additional assumption, satisfied for instance by $\svec$ and $\ver_p$, that
\begin{equation}
\label{CondVer}\mbox{for every simple object $\bV\ni S\not\simeq \unit$ there exists $N\in\mN$ for which $\Sym^NS=0$.}
\end{equation}
We recall the following definition from \cite[\S 3.1]{Del02}.
\begin{ddef}\label{DefFibre}
Assume that $\bV$ satisfies \eqref{CondVer}. A {\bf fibre functor of $\bT$ over $\scR$}, for a non-zero $\scR$ in $\Alg{\bV}$, is a tensor functor
$\bT\to\Mod_{\scR}^{\bV}.$
\end{ddef}

\begin{lemma}\label{SimQuoField}
If $\bV$ satisfies \eqref{CondVer}, then every simple algebra $\scA$ in $\Alg\bV$ is a field extension of $\mk$, interpreted in $\Ind\bV\supset\Vecc$.
\end{lemma}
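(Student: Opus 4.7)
The plan is to show that a simple $\scA$ must lie in $\Vecc\subset\Ind\bV$ (equivalently, the only simple of $\bV$ appearing as a subquotient of $\scA$ is $\unit$), and then to conclude by an elementary argument that a simple commutative $\mk$-algebra is a field.

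Assume for contradiction that some simple $S\in\bV$ with $S\not\simeq\unit$ admits a non-zero morphism $f\colon S\to\scA$. Form
$$\varphi:=\mu\circ(\Id_{\scA}\otimes f)\colon\;\scA\otimes S\to\scA,$$
where $\mu$ is the multiplication of $\scA$. Associativity of $\mu$ makes $\varphi$ a morphism of left $\scA$-modules (with respect to multiplication on the first factor), so $\im\varphi$ is an ideal subobject of $\scA$. Precomposing with $\eta_{\scA}\otimes\Id_S\colon S\to\scA\otimes S$ recovers $f$ via the unit axiom, so $\varphi\neq 0$, and simplicity of $\scA$ forces $\varphi$ to be an epimorphism.

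Iterating, the composite
$$\varphi_n\colon\;\scA\otimes\otimes^nS\;\xrightarrow{\varphi\otimes\Id}\;\scA\otimes\otimes^{n-1}S\;\to\;\cdots\;\to\;\scA$$
is an epimorphism for each $n\ge 1$, using right exactness of the tensor product. Commutativity of $\mu$ makes $\varphi_n$ invariant under the $\SG_n$-action permuting the $S$-factors of $\otimes^nS$, hence $\varphi_n$ factors through the quotient $\scA\otimes\otimes^nS\twoheadrightarrow\scA\otimes\Sym^nS$. Choosing $n=N$ with $\Sym^NS=0$ from \eqref{CondVer} yields an epimorphism $0\twoheadrightarrow\scA$, contradicting non-vanishing of a simple algebra.

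Hence no non-trivial simple of $\bV$ occurs in the isotypic decomposition of $\scA$, i.e.~$\scA\in\Vecc\subset\Ind\bV$. The unit $\eta_{\scA}\colon\unit\to\scA$ is a monomorphism (else $\Id_{\scA}=\mu\circ(\Id\otimes\eta)=0$, contradicting $\scA\neq 0$), so $\mk=\End(\unit)$ embeds in $\scA$, which is then a non-zero commutative $\mk$-algebra with no proper non-zero ideals; any $0\neq a\in\scA$ generates the unit ideal and is therefore invertible. We conclude that $\scA$ is a field extension of $\mk$. The only subtle step is the factorisation through $\Sym^nS$, which relies essentially on both commutativity of $\scA$ and on coinvariants commuting with tensoring by $\scA$; everything else is formal manipulation with universal properties.
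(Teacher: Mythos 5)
Your proof is correct and follows essentially the same approach as the paper: both form the ideal generated by a nontrivial simple $S$, namely the image of $\scA\otimes S\to\scA$, use commutativity of the multiplication to pass to symmetric powers of $S$, and invoke \eqref{CondVer} together with the simplicity of $\scA$ to force a contradiction. The only difference is the direction in which simplicity is used -- the paper argues that this ideal is nilpotent, hence proper, hence zero; you argue that $\varphi\neq 0$ forces $\varphi$ to be epic, whence $\scA$ is a quotient of $\scA\otimes\Sym^N S=0$ -- and your explicit factorisation of $\varphi_n$ through $\scA\otimes\Sym^n S$ makes precise the step the paper compresses into the phrase ``factors through $\Sym^n\scA$''.
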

\begin{proof}
In order to find a contradiction, we assume that there exists a simple subobject $S\not\simeq \unit$ of $\scA$. Denote by $J$ the ideal generated by $S$, meaning the image of
$$\scA\otimes S\;\hookrightarrow\; \scA\otimes\scA \;\stackrel{ m}{\to} \; \scA.$$
Since the $n$-fold multiplication $\otimes^n\scA\to\scA$ factors through $\Sym^n\scA$ we find by \eqref{CondVer} that $J$ is nilpotent in the sense that any subobject in $\bV$ of $J$ is sent to zero when multiplied (inside $\scA$) with itself enough times.
In particular, $J$ does not contain the image of $\eta:\unit\to\scA$, so $J\not=\scA$, which means $J=0$, a contradiction. In conclusion, $\scA$ is contained in $\Vecc$ and hence a commutative simple $\mk$-algebra.
\end{proof}

\begin{rem}\label{RemTriv}
Consider tensor categories $\bT_1,\bT_2$ and non-zero algebras $\scR,\scR'\in\Alg\bT_2$ with an algebra morphism $\scR\to\scR'$. The composition of any tensor functor $F:\bT_1\to\Mod_{\scR}^{\bT_2}$ with $\scR'\otimes_{\scR}-$ yields, by definition, a tensor functor $\bT_1\to\Mod_{\scR'}^{\bT_2}$.
\end{rem}

\begin{lemma}\label{LemFinLen}
If $\bT$ admits a fibre functor as in Definition~\ref{DefFibre} then we have the following.
\begin{enumerate}[(i)]
\item Each object in $\bT$ has finite length.
\item There exists a tensor functor $\bT\to\bV'$ for $\bV'$ the semisimple tensor category over $K$, for some field extension $K/\mk$, given by
 $\bV'=\vecc_K\boxtimes_{\mk}\bV$.\end{enumerate} 
\end{lemma}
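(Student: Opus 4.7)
The plan is to reduce the base algebra $\scR$ to an honest field extension of $\mk$ by taking a simple quotient, identify $K$-module objects in $\Ind\bV$ (in the dualisable range) with the category $\bV'=\vecc_K\boxtimes_{\mk}\bV$, and then deduce finite length from faithfulness of the resulting tensor functor.

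First, by Lemma~\ref{LemMaxId} applied in the category $\bV$ (whose proof is quite general, only requiring essential smallness of $\bV$), the algebra $\scR$ admits a simple quotient $\scR\tto K$ in $\Alg\bV$. Because $\bV$ satisfies \eqref{CondVer}, Lemma~\ref{SimQuoField} identifies $K$ with a commutative field extension of $\mk$, lying in $\Vecc=\Ind\vecc\subset\Ind\bV$. By Remark~\ref{RemTriv}, post-composing the given fibre functor $F$ with the base change $K\otimes_\scR-:\Mod_\scR^\bV\to\Mod_K^\bV$ produces a tensor functor $G:\bT\to\Mod_K^\bV$.

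Next, I would argue that, on dualisable objects, $\Mod_K^\bV$ coincides with $\bV':=\vecc_K\boxtimes_{\mk}\bV$. Since $K$ lives in the subcategory $\Vecc\subset\Ind\bV$, for each simple $T\in\bV$ the object $K\otimes T$ is a $K$-module in $\Ind\bV$ with $\End(K\otimes T)=K\otimes_{\mk}\End_\bV(T)=K$; these correspond to the simple objects $\unit_{\vecc_K}\boxtimes T$ of $\bV'$. Because $\bV$ is semisimple schurian and $K$ is a field, any dualisable $K$-module in $\Ind\bV$ decomposes as a finite direct sum of objects of the form $W\otimes_K(K\otimes T)$ with $W$ a finite dimensional $K$-vector space, hence lies in the essential image of the natural fully faithful symmetric monoidal embedding $\bV'\hookrightarrow\Mod_K^\bV$. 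Composing $G$ with the inverse on its image gives the desired tensor functor $\bT\to\bV'$, proving (ii); note $\bV'$ is a semisimple tensor category over $K$ since both $\vecc_K$ and $\bV$ are.

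For (i), $G$ is a tensor functor, hence exact and faithful by Lemma~\ref{LemFaith}. For any $X\in\bT$, the object $G(X)$ is dualisable in the semisimple schurian category $\bV'$, so has finite length. Any strictly ascending chain $0\subsetneq X_1\subsetneq X_2\subsetneq\cdots\subset X$ in $\bT$ has nonzero successive quotients $X_{i+1}/X_i$; by exactness $G(X_{i+1})/G(X_i)=G(X_{i+1}/X_i)$ and by faithfulness these are nonzero, so the image chain is also strictly ascending, contradicting the finite length of $G(X)$; the same reasoning handles descending chains, so $X$ has finite length.

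The main obstacle is the identification in the second paragraph: one must check carefully that $K$-module objects in $\Ind\bV$, when $K$ lives in the sub-$\otimes$-category $\Vecc$, decompose as sums of the $K\otimes T$ for simples $T\in\bV$, and that this correspondence is monoidal, turning the abstract functor to $\Mod_K^\bV$ into a tensor functor to $\vecc_K\boxtimes_{\mk}\bV$. Everything else amounts to standard bookkeeping with exactness and faithfulness of tensor functors (Lemma~\ref{LemFaith}).
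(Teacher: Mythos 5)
Your proposal is correct and follows essentially the same route as the paper: reduce $\scR$ to a field extension $K/\mk$ via Lemma~\ref{LemMaxId} and Lemma~\ref{SimQuoField}, base change along Remark~\ref{RemTriv}, identify the dualisable $K$-modules in $\Ind\bV$ with $\vecc_K\boxtimes_\mk\bV$, and deduce (i) from (ii) via exactness and faithfulness of tensor functors. The paper simply compresses your careful second paragraph into ``It follows quickly that $\Mod_K^\bV\simeq\bV'$''; your more explicit justification of that equivalence (on the dualisable range) is a correct expansion, not a different argument.
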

\begin{proof}
Part (i) is a direct consequence of part (ii) and the fact that fibre functors are faithful, see Lemma~\ref{LemFaith}. Now we prove part (ii). Assume we have a fibre functor $\bT\to\Mod_{\scR}^{\bV}$.
By Remark~\ref{RemTriv} and Lemmata~\ref{LemMaxId} and~\ref{SimQuoField}, we can replace $\scR$ by a field extension $K/\mk$. It follows quickly that $\Mod_{K}^{\bV}\simeq \Ind\bV'$. Since the only dualisable objects in $\Ind\bV'$ are in $\bV'$, the lemma follows. \end{proof}

\begin{rem} 

In \cite[\S 3.1]{Del02} the condition that $\bV$ be semisimple is not required, but it is assumed that  all objects in $\bT$ and $\bV$ have finite length. By Lemma~\ref{LemFinLen}(i) we thus find that our notion of fibre functor is a special case of the one {\it loc. cit.}

\end{rem}

\subsubsection{}\label{DefTann} In case we take $\bV=\vecc$ in Definition~\ref{DefFibre} we recover the classical notion of a fibre functor of \cite[\S~1.9]{Del90}. A tensor category with such a fibre functor is a {\bf tannakian category}, see \cite[\S 2.8]{Del90}. When the $\mk$-algebra $\scR$ is simply $\mk$, meaning we have a tensor functor to $\vecc$, the category is {\bf neutral tannakian}. 
A tensor category admitting a fibre functor over an algebra in $\bV=\svec$ is a {\bf super tannakian category}, see \cite[\S 0.9]{Del02}. Neutral super tannakian categories are defined similarly.

\subsubsection{}For the reader's convenience we recall some essential facts about the (neutral) tannakian formalism from \cite[\S 8]{Del90}, in our limited generality. We need a couple of definitions first.  An affine group scheme in $\bT$ is a functor
$$G:\;\Alg\bT\,\to\,\Grp,$$
which is representable by a commutative Hopf algebra $\mk[G]$ in $\Ind\bT$. For a tensor functor $F:\bT\to\bT'$ to a second tensor category $\bT'$, the group functor $\Autt(F)$ sends an algebra $\scR$ in $\Alg\bT'$ to the group of automorphisms of the tensor functor $(\scR\otimes-)\circ F:\bT\to\Mod_{\scR}^{\bT'}$, and is an affine group scheme in $\bT'$ represented by the co-end algebra of $F(-)^\vee\otimes F(-): \bT^{\op}\times \bT\to\bT'$. As an example, we have the `fundamental group' $\pi(\bT)=\Autt(\Id_{\bT})$.

\begin{lemma}[Deligne]\label{LemTannForm}
Consider a tensor functor $\omega_0:\bT\to\bV$.
\begin{enumerate}[(i)]
\item  Consider the group scheme $G:=\Autt(\omega_0)$ with the canonical homomorphism $p:\pi(\bV)\to G$. Let $\Rep(G,p)$ be the category of representations of $G$ in $\bV$ which restrict to the evaluation action of $\pi(\bV)$ under $p$. We have a tensor equivalence $\bT\stackrel{\sim}{\to}\Rep(G,p)$, which yields a commutative diagram with $\omega_0$ and the forgetful functor $\Rep(G,p)\to\bV$.
\item There exists a tensor equivalence $\bT\boxtimes\bV\stackrel{\sim}{\to}\Rep G$, which yields a commutative diagram with $\omega:\bT\boxtimes\bV\to\bV$ induced from $(\omega_0,\Id_{\bV})$ and the forgetful functor $\Rep G \to\bV$.

\end{enumerate}
\end{lemma}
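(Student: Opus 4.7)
The plan is to follow Deligne's tannakian reconstruction from \cite[\S 8]{Del90}, adapted from the classical case $\bV=\vecc$ to a general semisimple schurian $\bV$.

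For part (i), the first step is to construct a tensor functor $\Phi:\bT\to\Rep(G,p)$ sending $X\in\bT$ to $\omega_0(X)$ endowed with the tautological $G$-action: by the definition of $G=\Autt(\omega_0)$, every $\scR$-point $g$ of $G$ supplies an $\scR$-linear automorphism $g_X$ of $\scR\otimes\omega_0(X)$, natural in $X$ and compatible with tensor products, and the restriction along $p$ recovers, by construction, the evaluation action of $\pi(\bV)$ on $\omega_0(X)$. Hence $\Phi(X)$ lies in $\Rep(G,p)$ and the diagram with the forgetful functor commutes.

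To see $\Phi$ is a tensor equivalence, faithfulness is immediate from Lemma~\ref{LemFaith}. For fullness, the key tool is the coend presentation of $\mk[G]$: the Hopf algebra representing $\Autt(\omega_0)$ is the coend of $\omega_0(-)^\vee\otimes\omega_0(-):\bT^{\op}\times\bT\to\bV$ in $\Ind\bV$, so via duality in $\bV$, any $G$-equivariant morphism $\omega_0(X)\to\omega_0(Y)$ in $\bV$ is forced, by Yoneda applied to the coend formula, to come from a morphism in $\Hom_\bT(X,Y)$. Essential surjectivity then follows because every object of $\Rep(G,p)$ is a subquotient of some $V\otimes\omega_0(X)$ with $V\in\bV$ and $X\in\bT$: one views $\mk[G]$ as a filtered union of finite-dimensional subrepresentations, each arising from matrix coefficients of objects of $\bT$, and uses that every $(G,p)$-representation embeds into a copy of $\mk[G]\otimes V$ for suitable $V$.

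For part (ii), I would reduce to (i) via a general de-equivariantization equivalence $\Rep(G,p)\boxtimes\bV\stackrel{\sim}{\to}\Rep G$ sending $Y\boxtimes V$ to $Y\otimes V$ with $G$ acting on the first factor and trivially on the second. Essential surjectivity uses that the $\pi(\bV)$-action through $p$ on any $M\in\Rep G$ decomposes $M$ isotypically in $\bV$, and by semisimplicity each isotypic component is naturally of the form $Y\otimes S$ with $Y\in\Rep(G,p)$ and $S\in\bV$ simple; full faithfulness follows from the exactness of tensoring with objects of $\bV$ together with the universal property of $\boxtimes$ as a Karoubi envelope. The compatibility with $\omega$ and the forgetful functor is then automatic from the construction.

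The main obstacle is the fullness of $\Phi$ in part (i), which is the heart of the tannakian formalism and rests on the coend identification of $G$-equivariant morphisms in $\bV$ with morphisms in $\bT$; the essential surjectivity and part (ii) are either formal or direct generalisations of the classical case $\bV=\vecc$ treated in \cite{Del90}.
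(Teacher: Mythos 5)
The paper does not reprove this lemma at all: its proof is a two-line citation, attributing part (i) to Deligne's Th\'eor\`eme~8.17 (via equation (8.15.1)) and part (ii) to Proposition~8.22 of \cite{Del90}. Your proposal instead sketches the underlying Tannakian reconstruction arguments, which is a genuinely different thing to do. Your outline of (i) captures the right ingredients — the tautological action on $\omega_0(X)$, the coend presentation of $\mk[G]$ for fullness, and matrix coefficients for essential surjectivity — and is a fair summary of how Th\'eor\`eme~8.17 is proved. For part (ii), your de-equivariantization route also differs from Deligne's presentation (which applies (i) to $\omega:\bT\boxtimes\bV\to\bV$ and then identifies $\Autt(\omega)$); your approach, decomposing $M\in\Rep G$ isotypically via the $\pi(\bV)$-action through $p$, is a legitimate alternative but leans on two facts you leave implicit: that $p:\pi(\bV)\to G$ factors through the centre of $G$ (otherwise the $\pi(\bV)$-action via $p$ does not commute with the full $G$-action and the isotypic decomposition is not $G$-stable), and the precise twist identifying each isotypic component with an object $Y\otimes S$ where $Y$ satisfies the $\Rep(G,p)$-constraint. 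Both are standard (centrality is built into Deligne's \S 8.19, and the twist uses the naturality of $\ev(t)$ across all of $\bV$ together with the commutativity it provides), but they are the load-bearing steps and should be stated. Given that the paper itself defers entirely to \cite{Del90}, your sketch is a reasonable reconstruction of the cited material rather than a reproduction of the paper's proof.
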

\begin{proof}
Part (i) is \cite[Th\'eor\`eme~8.17]{Del90}, using \cite[(8.15.1)]{Del90}. Part (ii) is \cite[Proposition 8.22]{Del90}. \end{proof}

We end this section with a result which is implicit in Sections 3 and 4 of \cite{Del02}.
\begin{prop}[Deligne]\label{PropUnique}
If $\mk$ is algebraically closed, $\bV$ satisfies \eqref{CondVer} and $\bT$ is finitely generated, then any two fibre functors $\bT\to\bV$ are isomorphic.
\end{prop}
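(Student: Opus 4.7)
Given two fibre functors $\omega_1,\omega_2:\bT\to\bV$, the approach is to construct a universal algebra $\scA\in\Alg\bV$ whose $\scR$-valued points (algebra maps $\scA\to\scR$) correspond to tensor natural transformations $\scR\otimes\omega_1\to\scR\otimes\omega_2$, and then to produce an algebra map $\scA\to\unit$. By~\ref{Defeta} any such tensor natural transformation is automatically an isomorphism, so such a point delivers the desired $\omega_1\simeq\omega_2$. Concretely, $\scA$ is the coend $\int^{X\in\bT}\omega_1(X)^\vee\otimes\omega_2(X)$ computed in $\Ind\bV$, with commutative algebra structure induced from the tensor structures on $\omega_1$ and $\omega_2$.

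The first key step is $\scA\ne 0$. I would apply Lemma~\ref{LemTannForm}(ii) to $\omega_1$ to realise $\bT\boxtimes\bV\simeq\Rep G$ for $G=\Autt(\omega_1)$, and extend $\omega_2$ tensorially to $\tilde\omega_2:\bT\boxtimes\bV\to\bV$ via $X\boxtimes V\mapsto\omega_2(X)\otimes V$. Transported along the equivalence, $\tilde\omega_2$ becomes a fibre functor of $\Rep G$ whose coefficient algebra is naturally identified with $\scA$; concretely $\scA\simeq\tilde\omega_2(\mk[G])$, where $\mk[G]$ is the regular representation viewed as an ind-object of $\Rep G$. Since $\mk[G]\ne 0$ and $\tilde\omega_2$ is faithful by Lemma~\ref{LemFaith}, this forces $\scA\ne 0$.

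By Lemma~\ref{LemMaxId}, $\scA$ admits a simple quotient $\scA\tto K$, and by Lemma~\ref{SimQuoField} (applicable because $\bV$ satisfies~\eqref{CondVer}) the algebra $K$ is a field extension of $\mk$ lying inside $\Vecc\subset\Ind\bV$. It remains to show $K=\mk$; this is where the two hypotheses on $\bT$ and $\mk$ enter. Since $\bT=\langle Y\rangle$, every object of $\bT$ is a subquotient of a tensor power of $Y\oplus Y^\vee$, so $\scA$ is generated as an algebra in $\Ind\bV$ by the finite-dimensional subobject $W=\omega_1(Y\oplus Y^\vee)^\vee\otimes\omega_2(Y\oplus Y^\vee)\in\bV$. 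The map $W\to K$ factors through $\Gamma_\vecc W$, a finite-dimensional $\mk$-vector space, and since $K\in\Vecc$ its multiplication is the usual $\mk$-bilinear one, so the image of $\Gamma_\vecc W$ generates $K$ as an ordinary $\mk$-algebra. A finitely generated $\mk$-algebra that is a field is a finite extension of $\mk$ by Zariski's lemma, and algebraic closure of $\mk$ then gives $K=\mk=\unit$, producing the desired algebra map $\scA\to\unit$.

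The main obstacle in this plan is the construction and identification of $\scA$ in the non-neutral setting: verifying carefully that the coend exists as a commutative algebra in $\Ind\bV$ representing the $\mathrm{Isom}^\otimes$-functor, and that it coincides with $\tilde\omega_2(\mk[G])$ so that faithfulness of $\tilde\omega_2$ yields non-vanishing. The tannakian formalism required for fibre functors valued in a general $\bV\ne\vecc$ is largely developed in Sections~3--4 of~\cite{Del02}, which is precisely the reference that underlies the attribution of the proposition.
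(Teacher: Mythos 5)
Your proposal follows essentially the same route as the paper: construct the coend algebra $\Lambda=\int^{X\in\bT}\omega_1(X)^\vee\otimes\omega_2(X)$ representing tensor isomorphisms after base change, show it is non-zero, take a simple quotient via Lemma~\ref{LemMaxId}, identify it as a field extension via Lemma~\ref{SimQuoField}, and use finite generation plus algebraic closure to force $K=\mk$. The paper simply cites \cite[\S 3]{Del02} for the algebra structure, the universal property, and (implicitly) the non-vanishing, while you supply an explicit non-vanishing argument via Lemma~\ref{LemTannForm}(ii).

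One imprecision in that step: the identification $\scA\simeq\tilde\omega_2(\mk[G])$ is not correct in general. The coend over $\bT$ and the coend over $\bT\boxtimes\bV\simeq\Rep G$ represent genuinely different functors (tensor natural transformations over $\bT$ versus over $\bT\boxtimes\bV$), and they can differ; already for $\bT=\vecc\subset\svec=\bV$ with $\omega_1=\omega_2$ the inclusion, the former is $\unit$ while the latter is $\mk[\mZ/2]$. What you do have is a canonical algebra morphism $\scA\to\tilde\omega_2(\mk[G])$, induced by restricting tensor natural transformations from $\bT\boxtimes\bV$ to $\bT$ (equivalently, by the functoriality of coends along the inclusion $\bT\hookrightarrow\bT\boxtimes\bV$). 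Since the unit of $\scA$ maps to the unit of $\tilde\omega_2(\mk[G])$, and the latter is non-zero by faithfulness of $\tilde\omega_2$, the morphism still forces $\scA\ne 0$, so the argument goes through with this fix.
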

\begin{proof}
As proved in \cite[\S 3]{Del02}, for two fibre functors $F,G:\bT\to\bV$, the co-end 
$$\Lambda\,:=\,\int^{X\in\bT}G(X)^\vee\otimes F(X)\;\in\,\Ind\bV$$
inherits the structure of an algebra in $\bV$. Furthermore $F,G$ become isomorphic as tensor functors after composition with $\scR\otimes-$, for $\scR\in\Alg\bV$, if and only if there exists an algebra morphism $\Lambda\to\scR$. Now if $\bT$ is finitely generated, so is the algebra $\Lambda$. 

By Lemma~\ref{LemMaxId}, $\Lambda$ has a simple quotient $\scA$, which by Lemma~\ref{SimQuoField} is a field extension $K$ of $\mk$. However, since $\Lambda$ is finitely generated, so is $K$, which means $K=\mk$. In particular, we have an algebra morphism $\Lambda\to\mk=\unit$. This means that $F$ and $G$ are indeed isomorphic.
\end{proof}

 



\section{Representations in abelian categories}\label{SecCat}
We fix an abelian category $\bA$, finite groups $H<G$ and a field $\mk$. 

\subsection{Definitions}\label{DefSecCat}

We will interpret groups as categories with one object where all morphisms are isomorphisms.

\begin{ddef}
A $G$-object in $\bA$ is a functor $G\to \bA$. The abelian category of such functors is denoted by $\Rep(G,\bA)$, the morphism groups by $\Hom_G$ and the forgetful functor by $\Res^G_{\ast}:\Rep(G,\bA)\to\bA$. 
\end{ddef}

Concretely, a $G$-object is of the form $X=(X_0,\phi_X)$, with $X_0=\Res^G_\ast(X)\in\bA$ and $\phi_X: g\mapsto \phi_X^g$ a group homomorphism $G\to\Aut(X_0)$. A morphism $X\to Y$ in $\Rep(G,\bA)$ is a morphism $f:X_0\to Y_0$ in $\bA$ such that $f\circ\phi^g_X=\phi_Y^g\circ f$ for all $g\in G$. 
We obtain a group homomorphism
\begin{equation}\label{GFrg}G\to\Aut(\Res^G_\ast)\,:\; g\mapsto \phi^g,\qquad\mbox{with $(\phi^g)_X=\phi^g_X$ for all $X\in\bA$.}\end{equation}
For $X,Y$ in $\Rep (G,\bA)$, the morphism group $\Hom_G(X,Y)$ can thus be interpreted as the invariants $\Hom(X_0,Y_0)^G$, for the adjoint $G$-action.
We have a fully faithful exact functor
$$\iota_{\bA}:\bA\hookrightarrow \Rep(G,\bA),\quad Y\mapsto (Y,\phi_Y)\;\mbox{with}\; \phi_Y^g:=\id_Y\;\mbox{ for all $g\in G$}. $$
We will often omit the functor $\Res^G_\ast$ and the similarly defined $\Res^G_H$ to simplify notation.

\begin{ex}
We have $\Rep(G,\vecc_{\mk})=\Rep_{\mk} G$.
\end{ex}

\begin{ddef}
Assume that $\bA$ is $\mk$-linear.
For $(M,\rho)\in \Rep_{\mk}G$ with $d=\dim_{\mk}M$ and $X\in\Rep(G,\bA)$, we define $Y:=M\otimes X$ as an object in $\Rep(G,\bA)$ with $Y_0:=\bigoplus_{i=1}^d X_{0}^{(i)}$ for objects $X_0^{(i)}$ in $\bA$ with fixed isomorphisms $\alpha_i:X_0\stackrel{\sim}{\to} X_0^{(i)}$. Furthermore, we write endomorphisms of $Y_0$ in $\bA$ as matrices and set
$$\phi_Y^g\;=\; \left(\rho(g)_{ij} (\alpha_i\circ\phi^g_X\circ\alpha_j^{-1})\right)_{1\le i,j\le d},$$
where $\rho(g)_{ij}\in\mk$ are the matrix elements of $\rho$ with respect to some fixed basis of $M$.
\end{ddef}
Alternatively we can define $M\otimes X$ as the object in $\Rep(G,\bA)$ representing the functor
$$\left(M^\ast\otimes \Hom(X_0,-)\right)^G:\;\Rep(G,\bA)\to\Ab.$$
We then easily find
\begin{equation}
\label{adjM}
\Hom_G(M\otimes X,Y)\;\simeq\;\Hom_G(X, M^\ast\otimes Y).
\end{equation}

If $\bA$ is $\mk$-linear, there is a fully faithful $\mk$-linear functor
$$\bA\,\boxtimes\, \Rep_{\mk}G\;\,{\to}\,\; \Rep(G,\bA),\quad X\boxtimes M\mapsto M\otimes \iota_{\bA}(X).$$
If $\bA$ is also semisimple and schurian then this functor is clearly an equivalence.

\begin{ddef}
\begin{enumerate}[(i)]
\item
The right and left adjoint functors of $\iota_{\bA}$ are denoted by 
$$ {\mathrm{H}}^0(G,-):\Rep(G,\bA)\to \bA \quad\mbox{and}\quad  {\mathrm{H}}_0(G,-):\Rep(G,\bA)\to \bA.$$ Concretely, $ {\mathrm{H}}^0(G,-)$ maps $X$ to the maximal subobject of $X_0$ on which each $\phi_X^g$ acts as the identity, for all $g\in G$, and $ {\mathrm{H}}_0(G,-)$ is defined dually. In symbols this gives
$$ {\mathrm{H}}^0(G,X)\;=\;\bigcap_{g\in G}\ker(\Id_{X_0}-\phi^g_X).$$ 
\item Applying the unit and counit natural transformations, and using $\Res^G_{\ast}\circ\iota_{\bA}\simeq\Id$, yields natural transformations of functors  $\Rep(G,\bA)\to\bA$:
$$ {\mathrm{H}}^0(G,-)\;\Rightarrow\; \Res^G_{\ast}\;\Rightarrow \; {\mathrm{H}}_0(G,-).$$
We denote the image of the composite by $\Triv_G: \Rep(G,\bA)\to\bA.$
\end{enumerate}
\end{ddef}

\begin{ex}\label{ExamInv}
In $\Rep_{\mk}G$, the subquotient $\Triv_G(M)$ of $M\in\Rep_{\mk}G$ is isomorphic to the maximal direct summand of $M$ which has trivial $G$-action. 
\end{ex}

\subsubsection{} Consider the set $I=G/H$ of left cosets and pick a representative $r_i\in G$ for each $i\in I$.
For each $g\in G$ and $i\in I$ we then have some $g(i)\in I$ and $h^g_i\in H$ such that $g r_i=r_{g(i)} h^g_i$. We now also assume that for each $X_0\in\bA$ we have a fixed set of isomorphisms 
$$\{\beta_i^{X_0}: X_0\,\stackrel{\sim}{\to}\, X_0^{(i)}\;|\; i\in I\}\quad\mbox{ in $\bA$}.$$

\begin{ddef}
The functor 
$$\Ind_H^G\,:\; \Rep(H,\bA)\to\Rep(G,\bA)$$
maps an object $X$ in $\Rep(H,\bA)$ to $Y=(Y_0,\phi_Y)$ with $Y_0=\bigoplus_{i\in I}X_0^{(i)}$ and
$$\phi_Y^g\;=\; \left( \delta_{i,g(j)}\,\beta^{X_0}_i\circ\phi_X^{h_j^g}\circ (\beta^{X_0}_j)^{-1} \right)_{i,j\in I}.$$
For a morphism $f$ from $X$ to $Z$ in $\Rep(H,\bA)$ we have $\Ind_H^G(f)=\left(\beta^{Z_0}_i\circ f\circ (\beta^{X_0}_i)^{-1}\right)_{i\in I}$. \end{ddef}
As in the classical case, the functor $\Ind^G_H$ is left and right adjoint to $\Res^G_H$.

\subsection{Elementary properties}

For $g\in G$ we denote by $H^g$ the subgroup $gHg^{-1}<G$. Since $H\simeq H^g$ we can interpret $H$-representations as $H^g$-representations. Concretely, for $X\in \Rep(H,\bA)$, we denote by $X^g$ the object in $\Rep(H^g,\bA)$ which has same underlying object in $\bA$, but has action given by $\phi^{ghg^{-1}}_{X^g}=\phi^h_X$.

\begin{lemma}[Mackey's theorem]\label{LemMack}
For a subgroup $L<G$, we have natural isomorphisms
$$\Res^G_L\circ\Ind^G_HX\;\stackrel{\sim}{\to}\; \bigoplus_{s\in L\backslash G/H} \Ind^L_{L\cap H^s}\circ \Res^{H^s}_{L\cap H^s} X^s,\quad\mbox{for $X\in \Rep(H,\bA)$.}$$

\end{lemma}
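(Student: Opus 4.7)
The plan is to imitate the classical proof of Mackey's theorem by decomposing the underlying object of $\Res^G_L\Ind^G_H X$ along the $L$-orbits on $G/H$ and identifying each summand with the corresponding term on the right-hand side. Since $\Ind^G_H$ in the abstract abelian setting is constructed via the same direct sum of copies of $X_0$ indexed by $G/H$ and the same permutation-matrix action as in the classical vector space case, the combinatorial argument transposes with almost no change.

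First I would choose coset representatives compatibly. For each double coset $s\in L\backslash G/H$, pick representatives $t^{(s)}_1,\dots,t^{(s)}_{m_s}$ of $L/(L\cap H^s)$; since $LsH=\bigsqcup_j t^{(s)}_j sH$, the collection $\{t^{(s)}_j s\}_{s,j}$ forms a complete system of representatives for $G/H$, which I would feed into the definition of $\Ind^G_H$. The underlying object of $\Res^G_L\Ind^G_H X$ then reads $\bigoplus_s Y^{(s)}_0$ with $Y^{(s)}_0=\bigoplus_j X^{(s,j)}_0$, and since left multiplication by $\ell\in L$ sends $t^{(s)}_j sH$ back into the double coset $LsH$, each $Y^{(s)}_0$ is $L$-stable. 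This produces an $L$-equivariant decomposition $\Res^G_L\Ind^G_H X\simeq\bigoplus_{s\in L\backslash G/H} Y^{(s)}$ in $\Rep(L,\bA)$.

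It remains to identify each $L$-object $Y^{(s)}$ with $\Ind^L_{L\cap H^s}\Res^{H^s}_{L\cap H^s}X^s$, which I would do by direct comparison of the defining formulas. Writing $\ell t^{(s)}_j=t^{(s)}_{\ell(j)}u^\ell_j$ with $u^\ell_j\in L\cap H^s$, one has $\ell(t^{(s)}_j s)=(t^{(s)}_{\ell(j)}s)(s^{-1}u^\ell_j s)$ with $s^{-1}u^\ell_j s\in H$; inserted into the defining formula of $\Ind^G_H$, this shows that the $L$-action on $Y^{(s)}$ sends the $j$-th summand to the $\ell(j)$-th via $\phi^{s^{-1}u^\ell_j s}_X$. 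On the other side, the twist satisfies $\phi^{u}_{X^s}=\phi^{s^{-1}us}_X$ for $u\in L\cap H^s$, so substituting this into the formula for $\Ind^L_{L\cap H^s}$ yields exactly the same $L$-action on $\bigoplus_j X^{(s,j)}_0$. Naturality in $X$ is automatic since every component of the isomorphism is built from identities on $X_0$ and the chosen $\beta$ isomorphisms. I expect the main obstacle to be purely notational: one must keep track of the three layers of coset data (representatives of $L\backslash G/H$, of $L/(L\cap H^s)$, and the products $t^{(s)}_j s$ representing $G/H$) simultaneously with the conjugation twist $X^s$, without confusing which group acts where.
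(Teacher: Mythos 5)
Your proposal is correct and is exactly what the paper means when it writes ``The classical proof, see e.g.~\cite[Lemma~III.8.7]{Alperin}, carries over verbatim'': you decompose $G/H$ into $L$-orbits, choose the coset representatives $\{t^{(s)}_j s\}$ compatibly, and match the defining formulas for the two induced actions term by term. You have simply spelled out the verification that the paper leaves implicit; the bookkeeping with the $\beta$-isomorphisms and the twist $\phi^u_{X^s}=\phi^{s^{-1}us}_X$ is handled correctly.
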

\begin{proof}
The classical proof, see e.g.~\cite[Lemma~III.8.7]{Alperin}, carries over verbatim.
\end{proof}

\begin{lemma}\label{LemPfff}
For $X$ in $\Rep(H,\bA)$, the morphisms in $\bA$ given by $(\beta_i^{X_0})_{i\in I}:X\to \Ind^G_HX$ and $((\beta_i^{X_0})^{-1})_{i\in I}:\Ind^G_HX\to X$, induce isomorphisms
$$ {\mathrm{H}}^0(H,X)\;\stackrel{\sim}{\to}\;  {\mathrm{H}}^0(G,\Ind^G_HX)\qquad\mbox{and}\qquad  {\mathrm{H}}_0(G,\Ind^G_HX)\;\stackrel{\sim}{\to}\;  {\mathrm{H}}_0(H,X).$$
\end{lemma}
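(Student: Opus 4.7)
The plan is to establish the $\mathrm{H}^0$-isomorphism by a direct verification from the explicit matrix description of $\phi^g_{\Ind^G_H X}$, and then to deduce the $\mathrm{H}_0$-isomorphism by applying the same argument in $\bA^{\op}$, exploiting that finite direct sums are self-dual and that $\mathrm{H}_0$ in $\bA$ corresponds to $\mathrm{H}^0$ in $\bA^{\op}$.

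For the $\mathrm{H}^0$ part, let $e \in I$ denote the identity coset (one may assume, after rechoosing coset representatives if necessary, that $r_e = 1$, which leaves the morphisms in the lemma unchanged). Set $\delta := (\beta_i^{X_0})_{i \in I} : X_0 \to \bigoplus_{i} X_0^{(i)}$ and $\pi := (\beta_e^{X_0})^{-1} \circ \mathrm{pr}_{e} : \bigoplus_{i} X_0^{(i)} \to X_0$. The three steps are: (a) $\delta$ restricts to a morphism $\tilde\delta : \mathrm{H}^0(H, X) \to \mathrm{H}^0(G, \Ind^G_H X)$; (b) $\pi$ restricts to $\tilde\pi : \mathrm{H}^0(G, \Ind^G_H X) \to \mathrm{H}^0(H, X)$; (c) $\tilde\pi$ and $\tilde\delta$ are mutually inverse.

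For (a), substituting $\delta(x)$ into the formula for $\phi^g_{\Ind^G_H X}$ collapses each row to the single term $\beta_i^{X_0} \circ \phi^{h^g_{g^{-1}(i)}}_X(x)$; since $h^g_{g^{-1}(i)} \in H$, this equals $\beta_i^{X_0}(x) = \delta(x)_i$ for $x \in \mathrm{H}^0(H, X)$. For (b), any $h \in H$ satisfies $h(e) = e$ with $h_e^h = h$, so the $e$-th component of the equation $\phi^h_{\Ind X}(y) = y$ reduces to $\phi^h_X(\beta_e^{-1}(y_e)) = \beta_e^{-1}(y_e)$, exhibiting $\pi(y) \in \mathrm{H}^0(H, X)$. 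The identity $\tilde\pi \circ \tilde\delta = \Id$ is immediate from $\pi \circ \delta = \Id_{X_0}$. For the converse, applied to $G$-invariant $y$, the choice $g = r_i$ gives $g(e) = i$ and $h_e^g = 1$, so the $i$-th component of $\phi^{r_i}(y) = y$ yields $y_i = \beta_i^{X_0} (\beta_e^{X_0})^{-1}(y_e) = \delta(\pi(y))_i$, proving $\tilde\delta \circ \tilde\pi = \Id$.

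The main obstacle is really only notational: each of the three key identifications requires correctly picking off a specific entry from the double-indexed formula for $\phi^g$ and parsing the coset action $g \cdot (r_j H) = r_{g(j)} H$ with cocycle $h^g_j$. Once (a)--(c) are in place, the $\mathrm{H}_0$-isomorphism follows by the same computation applied in $\bA^{\op}$: the roles of $\mathrm{H}^0$ and $\mathrm{H}_0$ swap, the finite direct sum defining $\Ind^G_H$ is self-dual with its matrix action transposed, and the diagonal in $\bA^{\op}$ becomes the morphism $((\beta_i^{X_0})^{-1})_{i \in I}$ in $\bA$, giving exactly the stated isomorphism $\mathrm{H}_0(G, \Ind^G_H X) \xrightarrow{\sim} \mathrm{H}_0(H, X)$.
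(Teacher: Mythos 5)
Your proof is correct, and it is essentially the same computation as the paper's --- both boil down to tracing $G$-equivariance through the explicit cocycle formula for $\phi^g_{\Ind^G_H X}$ --- but the two arguments are wrapped differently in a way worth remarking. The paper argues indirectly: for a trivial $Z=\iota_{\bA}(Z_0)$, it shows composition with $(\beta_i^{X_0})_i$ gives a bijection $\Hom_H(Z,X)\to\Hom_G(Z,\Ind^G_H X)$; since $\mathrm{H}^0(G,-)$ is the right adjoint of $\iota_{\bA}$, this identifies the two $\mathrm{H}^0$'s by Yoneda, with no need to exhibit the inverse. You instead build the inverse by hand as the $e$-component projection $\pi = (\beta_e^{X_0})^{-1}\circ\mathrm{pr}_e$, after normalising $r_e=1$, and verify $\tilde\pi\tilde\delta=\Id$ and $\tilde\delta\tilde\pi=\Id$ directly. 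This is more explicit but requires a small sanity check that your normalisation $r_e=1$ is harmless: changing representatives replaces $\phi_{\Ind X}$ by a conjugate via a diagonal automorphism $\Theta_{ii}=\beta_i\phi_X^{c_i^{-1}}\beta_i^{-1}$, which is the identity on the image of $\mathrm{H}^0(H,X)$ under $(\beta_i)_i$ and carries the old $\mathrm{H}^0(G,\Ind X)$ to the new one, so the conclusion transfers --- this deserves at least a sentence, or else simply avoid the normalisation as the paper does by taking $g\in H^{r_{i_0}}$ and $g=r_ir_{i_0}^{-1}$ for an arbitrary $i_0$. For the $\mathrm{H}_0$ statement the paper says only ``the second is similar''; your observation that it is the $\mathrm{H}^0$ argument in $\bA^{\op}$ (with $\phi^g\mapsto\phi^{g^{-1}}$, matrices transposed, and the diagonal $(\beta_i)_i$ dualising to $((\beta_i)^{-1})_i$) is a clean and genuinely more economical way to make ``similar'' precise, once one notes that $\Rep(G,\bA)^{\op}\simeq\Rep(G,\bA^{\op})$ commutes with $\Ind^G_H$ via the cocycle identity $h^{g^{-1}}_k=(h^g_{g^{-1}(k)})^{-1}$.
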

\begin{proof}
We prove the first property, as the second is similar. Take a trivial $G$-representation $Z$ in $\Rep(G,\bA)$, {\it i.e.} an object in the image of $\iota_{\bA}$. A morphism 
$f$ from $Z$ to $\Ind^G_HX$ in $\bA$ is of the form $(f_i)_{i\in I}$ for some $f_i: Z\to X_0^{(i)}$. Then $f\in \Hom_G(Z,\Ind^G_HX)$ if and only if
$$\beta_{g(j)}^{X_0}\circ \phi_X^{h^g_j}\circ (\beta^{X_0}_j)^{-1}\circ f_j\;=\; f_{g(j)},\quad\mbox{for all $j\in I$ and $g\in G$.}$$

Fix an arbitrary $i_0\in I$. The above equation for $j=i_0$ and arbitrary $g\in H^{r_{i_0}}$ implies that $\varphi:=(\beta_{i_0}^{X_0})^{-1}\circ f_{i_0}$ is in $\Hom_H(Z,X)$. The equation for $j=i_0$ and $g=r_{i}r_{i_0}^{-1}$ for all $i\in I$ then shows that
$f_{i}= \beta_{i}^{X_0}\circ  \varphi$ for all $i\in I$.
We have thus showed that composing with $(\beta_i^{X_0})_{i\in I}:X\to \Ind^G_HX$ in $\bA$ induces an epimorphism
$$\Hom_H(Z, X)\;{\tto}\;\Hom_G(Z, \Ind^G_HX).$$
Since we compose with an monomorphism in $\bA$, the above epimorphism is also a monomorphism.
The fact that composition with $(\beta_i^{X_0})_i$ induces an isomorphism for each such $Z$ concludes the proof.
\end{proof}

\begin{cor}\label{CorIndex}
Assume $\bA$ is $\mk$-linear. 
\begin{enumerate}[(i)]
\item If the image of $|G:H|$ in $\mk$ is zero, we have
$\Triv_G\circ\Ind^G_H=0$.
\item If $|G:H|$ is zero and $|G:L|$ is invertible in $\mk$, for
$L<G$, then
$\Triv_L\circ \Res^G_L\circ\Ind^G_H=0$.
\item If $|G:H|$ is invertible in $\mk$, then $\Triv_G\circ\Ind^G_H\simeq \Triv_H$.
\end{enumerate}
\end{cor}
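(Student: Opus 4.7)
The plan is to prove (i) and (iii) simultaneously by computing $\Triv_G\circ \Ind^G_H$ explicitly via Lemma~\ref{LemPfff}, and then deduce (ii) as an application of Mackey's theorem.

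\textbf{Parts (i) and (iii).} By definition, $\Triv_G(\Ind^G_HX)$ is the image of the composite
\[
 {\mathrm{H}}^0(G,\Ind^G_HX)\;\to\; \Res^G_\ast(\Ind^G_HX)=\bigoplus_{i\in I}X_0^{(i)}\;\to\;  {\mathrm{H}}_0(G,\Ind^G_HX),
\]
which, since the second arrow is an epimorphism, equals the image of $ {\mathrm{H}}^0(G,\Ind^G_HX)\to  {\mathrm{H}}_0(G,\Ind^G_HX)$. Using the isomorphisms from Lemma~\ref{LemPfff}, this is isomorphic to the image of the map $ {\mathrm{H}}^0(H,X)\to {\mathrm{H}}_0(H,X)$ obtained as the composite
\[
 {\mathrm{H}}^0(H,X)\;\hookrightarrow\; X_0\;\xrightarrow{(\beta^{X_0}_i)_{i\in I}}\;\bigoplus_{i\in I}X_0^{(i)}\;\xrightarrow{((\beta^{X_0}_i)^{-1})_{i\in I}}\;X_0\;\twoheadrightarrow\;  {\mathrm{H}}_0(H,X).
\]
The middle composite $X_0\to X_0$ is $\sum_{i\in I}\id_{X_0}=|G:H|\cdot\id_{X_0}$. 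Hence the whole composite equals $|G:H|$ times the canonical map $ {\mathrm{H}}^0(H,X)\to {\mathrm{H}}_0(H,X)$ whose image is $\Triv_H(X)$. If $|G:H|=0$ in $\mk$, the composite is zero, proving (i); if $|G:H|$ is invertible in $\mk$, multiplication by $|G:H|$ is an automorphism and the image is $\Triv_H(X)$, proving (iii).

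\textbf{Part (ii).} Applying Lemma~\ref{LemMack} with $L$ in place of the subgroup, we have
\[
\Res^G_L\circ\Ind^G_HX\;\simeq\;\bigoplus_{s\in L\backslash G/H} \Ind^L_{L\cap H^s}\circ\Res^{H^s}_{L\cap H^s}X^s.
\]
Since $\Triv_L$ is additive on such decompositions (as it is defined as an image of a natural transformation between additive functors), it suffices to show that $\Triv_L\circ \Ind^L_{L\cap H^s}=0$ for every double coset representative $s$. By part (i) this reduces to showing that the index $|L:L\cap H^s|$ is zero in $\mk$. The integer identity
\[
|G:L|\cdot|L:L\cap H^s|\;=\;|G:L\cap H^s|\;=\;|G:H|\cdot|H:L\cap H^s|
\]
(which uses $|H^s|=|H|$) gives the required divisibility: reducing modulo the characteristic of $\mk$, the right-hand side is zero (since $|G:H|=0$ in $\mk$), and since $|G:L|$ is invertible, the factor $|L:L\cap H^s|$ must itself vanish in $\mk$.

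\textbf{Main obstacle.} The only delicate point is bookkeeping: correctly identifying the map $ {\mathrm{H}}^0(H,X)\to {\mathrm{H}}_0(H,X)$ induced by the unit/counit calculation through Lemma~\ref{LemPfff} and recognising the middle arrow as scalar multiplication by $|G:H|$. Once this is in hand, (i) and (iii) are immediate, and (ii) is a straightforward combination of Mackey's formula with the elementary index identity.
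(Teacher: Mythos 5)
Your proof is correct and follows essentially the same route as the paper: parts (i) and (iii) are derived by using Lemma~\ref{LemPfff} to identify the defining map of $\Triv_G\circ\Ind^G_H$ with $|G:H|$ times the defining map of $\Triv_H$, and part (ii) combines Mackey's formula with the Lagrange index identity $|G:L|\cdot|L:L\cap H^s|=|G:H|\cdot|H^s:L\cap H^s|$ to reduce to part (i) applied to $L$. The only difference is cosmetic: you spell out the index bookkeeping for (ii) where the paper just invokes Lagrange, and your $|H:L\cap H^s|$ should strictly read $|H^s:L\cap H^s|$, which you acknowledge via $|H^s|=|H|$.
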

\begin{proof}
Lemma~\ref{LemPfff} implies that there exists a commutative diagram in $\bA$
$$\xymatrix{
 {\mathrm{H}}^0(H,X)\ar[r]^-{\sim}\ar@{^{(}->}[d]&  {\mathrm{H}}^0(G,\Ind^G_HX)\ar@{^{(}->}[dr]&&   {\mathrm{H}}_0(G,\Ind^G_HX)\ar[r]^-{\sim}& {\mathrm{H}}_0(H,X)\\
X\ar@{^{(}->}[rr]&&\Ind^G_HX\ar@{->>}[rr]\ar@{->>}[ru]&&X\ar@{->>}[u],
}$$
such that the composition of the lower horizontal line is $|G:H|$ times $\Id_X$.
The morphism from $ {\mathrm{H}}^0(G, \Ind^G_HX)$ to $ {\mathrm{H}}_0(G,\Ind^G_HX)$ defining $\Triv_G\Ind^G_HX$ is therefore, up to composition with isomorphisms, equal to
 $|G:H|$ times the corresponding morphism from $ {\mathrm{H}}^0(H,X)$ to $ {\mathrm{H}}_0(H,X)$. This proves parts (i) and (iii).

Now we prove part (ii). By Lemma~\ref{LemMack}, the functor $\Res^G_L\circ\Ind^G_H$ is a direct sum of inductions to $L$ from subgroups $L'<L$ which are isomorphic to subgroups of $H$. By assumption and Lagrange's theorem we know that $|L:L'|$ is zero in $\mk$, which implies we can apply part (i) for the group~$L$.
\end{proof}

\begin{lemma}
\label{LemSubQuo}
\begin{enumerate}[(i)]
\item The object $\Triv_GX$ is a subquotient in $\Triv_HX$. 
\item If $H$ is a normal subgroup of $G$, then
$$ {\mathrm{H}}^0(G,-)\;\simeq\; {\mathrm{H}}^0(G/H, {\mathrm{H}}^0(H,-))\quad\mbox{and}\quad  {\mathrm{H}}_0(G,-)\;\simeq\;{\mathrm{H}}_0(G/H, {\mathrm{H}}_0(H,-).$$
\item $\Triv_HX$ is canonically a $G/H$-object and $\Triv_GX$ is a subquotient in $\Triv_{G/H}\Triv_HX$.
\end{enumerate}
\end{lemma}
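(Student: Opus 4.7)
\medskip

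For (i), my plan is to exploit the fact that the defining morphism of $\Triv_G X$ factors through the one for $\Triv_H X$. Since $H<G$, the universal property of $H^0$ and $H_0$ (as adjoints to $\iota_{\bA}$) gives canonical factorisations $H^0(G,X)\hookrightarrow H^0(H,X)$ and $H_0(H,X)\twoheadrightarrow H_0(G,X)$, and the composite
\[
H^0(G,X)\hookrightarrow H^0(H,X)\to X\to H_0(H,X)\twoheadrightarrow H_0(G,X)
\]
equals the canonical $H^0(G,X)\to H_0(G,X)$. Factor the middle three arrows through $\Triv_H X$. Let $Z\subseteq \Triv_H X$ be the image of $H^0(G,X)\to \Triv_H X$; then $\Triv_G X$ is, by construction, the image of $Z$ under $\Triv_H X\hookrightarrow H_0(H,X)\twoheadrightarrow H_0(G,X)$. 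Hence $\Triv_G X$ is a quotient of a subobject of $\Triv_H X$.

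For (ii), assuming $H\triangleleft G$, I would first note that the group homomorphism $\phi\colon G\to\Aut(X_0)$ preserves $H^0(H,X)=\bigcap_{h\in H}\ker(\Id-\phi^h_X)$ as a subobject of $X_0$: if $g\in G$ and $h\in H$, normality gives $g^{-1}hg\in H$, so $\phi^g$ carries the kernel of $\Id-\phi^{g^{-1}hg}_X$ to the kernel of $\Id-\phi^h_X$. By definition $H$ acts trivially on $H^0(H,X)$, so this restricted $G$-action factors through $G/H$, making $H^0(H,-)$ a functor $\Rep(G,\bA)\to\Rep(G/H,\bA)$. The identification $H^0(G,X)\simeq H^0(G/H,H^0(H,X))$ then follows from the universal property: a morphism $Y\to X$ in $\Rep(G,\bA)$ from a trivial $G$-object $Y$ factors uniquely through $H^0(H,X)$, and such a factorisation is $G/H$-equivariant iff it lands in the $G/H$-invariants there. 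The dual argument, using that the $G$-action descends to $H_0(H,X)$ and makes it a $G/H$-object, yields the coinvariant statement.

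For (iii), once (ii) is established, the $G/H$-action on $\Triv_H X$ is automatic: $\Triv_H X$ is the image of the $G/H$-equivariant natural transformation $H^0(H,-)\Rightarrow H_0(H,-)$, hence inherits a $G/H$-structure and sits $G/H$-equivariantly inside $H_0(H,X)$ as a quotient of $H^0(H,X)$. To obtain the subquotient claim, I would iterate the argument of (i). Using (ii), $\Triv_G X$ is the image of
\[
H^0(G/H,H^0(H,X))\to H^0(H,X)\twoheadrightarrow \Triv_H X\hookrightarrow H_0(H,X)\twoheadrightarrow H_0(G/H,H_0(H,X)).
\]
Let $J$ denote the image of the first three arrows in $\Triv_H X$; then $J$ lies inside $H^0(G/H,\Triv_H X)$ because the map $H^0(G/H,H^0(H,X))\to\Triv_H X$ is $G/H$-equivariant and its source is $G/H$-trivial. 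Its image in $H_0(G/H,\Triv_H X)$ is therefore contained in $\Triv_{G/H}(\Triv_H X)$, and a further quotient of it is $\Triv_G X$, as needed.

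The only mild obstacle I anticipate is keeping the right/left exactness asymmetries straight while chasing the diagrams in (iii); the mechanism is entirely formal once (i) and (ii) are in hand, but one has to resist the temptation to assert $H^0(G/H,-)$ preserves the surjection $H^0(H,X)\twoheadrightarrow\Triv_H X$, which it need not. The argument above sidesteps this by working with the image $J$ rather than trying to apply $H^0(G/H,-)$ to $\Triv_H X$ directly.
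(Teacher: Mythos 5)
Your proposal is correct and matches the paper's argument: all three parts are proved by the same diagram chase built on the commutative square relating $H^0(G,-)\to H^0(H,-)$ and $H_0(H,-)\to H_0(G,-)$, with (iii) obtained by augmenting this square with $H^0(G/H,\Triv_H X)$ and $H_0(G/H,\Triv_H X)$. You simply spell out, more explicitly than the (quite terse) original, the point that one must work with the image $J$ rather than apply $H^0(G/H,-)$ to the epimorphism onto $\Triv_H X$, which is a genuine subtlety the paper passes over silently.
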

\begin{proof}
Part (i) follows from the commutative diagram
\begin{equation}\label{eqCD}\xymatrix{
 {\mathrm{H}}^0(H,X)\ar@{->>}[rr]&&\Triv_HX\ar@{^{(}->}[rr]&& {\mathrm{H}}_0(H,X)\ar@{->>}[d]\\
 {\mathrm{H}}^0(G,X)\ar@{^{(}->}[u]\ar[rrrr]&&&& {\mathrm{H}}_0(G,X),
}\end{equation}
where the image of the lower horizontal morphism is $\Triv_GX$. 

Now assume that $H<G$ is normal. Part (ii) is obvious. Clearly the map $ {\mathrm{H}}^0(H,X)\to {\mathrm{H}}_0(H,X)$ is $G/H$-equivariant.
Part (iii) then follows by definition and extending diagram \eqref{eqCD} to include ${\mathrm{H}}^0({G/H}, \Triv_HX)$ and ${\mathrm{H}}_0({G/H}, \Triv_HX)$.
\end{proof}

\begin{ex}
Already for $\bA=\vecc_{\mk}$, the subquotient $\Triv_GX$ will in general not be isomorphic to $\Triv_{G/H}\Triv_HX$, for $H\lhd G$. An example is given by $\mk$ a field of characteristic $2$ and $G=\SG_2\times\SG_2$. Indeed, let $X$ be a $3$-dimensional indecomposable $G$-representation and $H$ one of the copies of $\SG_2$. Then we find $\Triv_GX=0$ but $\Triv_{G/H}\Triv_HX\simeq\mk$.
\end{ex}

Recall the natural automorphisms $\phi^g$ of $\Res^G_\ast$ in equation~\eqref{GFrg}. 

\begin{lemma}\label{DirSumm}Assume that $\bA$ is $\mk$-linear and that $n:=|G:H|$ is invertible in $\mk$.
\begin{enumerate}[(i)]
\item The natural endomorphism $f:=\frac{1}{n}\sum_{i\in I}\phi^{r_i}$ of $\Res^G_\ast$ restricts to $h: {\mathrm{H}}^0(H,-)\Rightarrow  {\mathrm{H}}^0(G,-)$.
\item The natural endomorphism $f':=\frac{1}{n}\sum_{i\in I}\phi^{(r_i^{-1})}$ of $\Res^G_\ast$ yields $h': {\mathrm{H}}_0(G,-)\Rightarrow  {\mathrm{H}}_0(H,-)$.
\item The functor $\Triv_G$ is a direct summand of $\Triv_H$.
\end{enumerate}
\end{lemma}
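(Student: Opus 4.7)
The proof is an abelian-category upgrade of the standard averaging argument; the computational input is the coset identity $gr_i=r_{g(i)}h_i^g$ with $h_i^g\in H$, which converts a $G$-average into an $H$-average after permuting the cosets. For part (i) I would verify, for the inclusion $\iota_H:{\mathrm{H}}^0(H,X)\hookrightarrow X_0$ and any $g\in G$, the chain of equalities
$$\phi^g_X\circ f_X\circ\iota_H\;=\;\tfrac{1}{n}\sum_{i\in I}\phi^{r_{g(i)}}_X\circ\phi^{h_i^g}_X\circ\iota_H\;=\;\tfrac{1}{n}\sum_{i\in I}\phi^{r_{g(i)}}_X\circ\iota_H\;=\;f_X\circ\iota_H,$$
using that $\iota_H$ equalises the $\phi^h_X$ for $h\in H$ and reindexing $i\mapsto g(i)$. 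By the universal property of the equaliser $\iota_G:{\mathrm{H}}^0(G,X)\hookrightarrow X_0$, the morphism $f_X\circ\iota_H$ factors as $\iota_G\circ h_X$, producing the natural transformation $h$; naturality is inherited from that of $f$. Part (ii) is entirely dual: using $r_i^{-1}g=(h'_i)^{-1}r_{g^{-1}(i)}^{-1}$ with $h'_i\in H$ one gets $\pi_H\circ f'_X\circ\phi^g_X=\pi_H\circ f'_X$, and then invokes the universal property of the coequaliser $\pi_G:X_0\twoheadrightarrow{\mathrm{H}}_0(G,X)$ to factor $\pi_H\circ f'_X$ as $h'_X\circ\pi_G$.

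For part (iii) the central observations are that $h$ is a retraction of the canonical inclusion ${\mathrm{H}}^0(G,-)\hookrightarrow{\mathrm{H}}^0(H,-)$, since every $\phi^{r_i}_X$ acts as the identity on ${\mathrm{H}}^0(G,X)$ and therefore $f_X\circ\iota_G=\iota_G$; dually, $h'$ is a section of the canonical quotient $\pi':{\mathrm{H}}_0(H,-)\twoheadrightarrow{\mathrm{H}}_0(G,-)$. Combining the immediate identity $\pi_G\circ f_X=\pi_G$ with $f_X\circ\iota_H=\iota_G\circ h$ from part (i) gives $\pi_G\circ\iota_H=\pi_G\circ\iota_G\circ h$, and passing to images shows that $\pi'$ restricts to a natural epimorphism $\alpha:\Triv_H\twoheadrightarrow\Triv_G$. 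The dual computation, invoking $f'_X\circ\iota_G=\iota_G$ together with $h'\circ\pi_G=\pi_H\circ f'_X$, shows that $h'$ sends $\Triv_GX$ into $\Triv_HX$ and so defines a natural transformation $\beta:\Triv_G\to\Triv_H$. Finally, the composite $\alpha\circ\beta$ is the restriction of $\pi'\circ h'=\id_{{\mathrm{H}}_0(G,-)}$ to $\Triv_G$, hence the identity, which exhibits $\Triv_G$ as a direct summand of $\Triv_H$.

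The main obstacle is bookkeeping rather than conceptual depth: one must systematically replace elementwise manipulations by universal-property arguments and keep track of the several commutative squares linking $f,f',h,h',\iota_H,\iota_G,\pi_H,\pi_G,\pi'$. Once these squares are in place the proof is formally the classical averaging argument.
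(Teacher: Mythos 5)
Your proof is correct and follows essentially the same route as the paper: parts (i) and (ii) are proved by the same coset-averaging computation (the paper names your $f_X\circ\iota_H$ as a morphism $m$ and says ``$\phi^g_X\circ m=m$'' without writing out the reindexing; you make it explicit), and part (iii) is obtained in the same way by observing that the canonical epimorphism ${\mathrm{H}}_0(H,-)\twoheadrightarrow{\mathrm{H}}_0(G,-)$ together with $h'$ restrict to a retraction–section pair between $\Triv_H$ and $\Triv_G$ (the paper packages the needed commutativities into a single $3\times3$ diagram whose right column is your $\alpha,\beta$). No gaps; the only difference is presentational.
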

\begin{proof}
We fix an arbitrary $X$ in $\Rep(G,\bA)$.
First we prove part (i). We define the morphism $m$ in $\bA$ by the commutative diagram
$$\xymatrix{
 {\mathrm{H}}^0(H,\Res^G_HX)\ar@{^{(}->}[rr]\ar[rrd]^m&& \Res^G_\ast X\ar[d]^{f_X}\\
 {\mathrm{H}}^0(G,X)\ar@{^{(}->}[rr]&& \Res^G_\ast X.
}$$
It then follows by direct computation that $\phi^g_X\circ m=m$ for all $g\in G$, which implies that $m$ factors through $ {\mathrm{H}}^0(G, X)$. Part (ii) is proved similarly.

Now we claim that the morphisms $h_X$ and $h'_X$ as defined in parts (i) and (ii), yield a commutative diagram, natural in $X$,
$$\xymatrix{
 {\mathrm{H}}^0(G,X)\ar@{^{(}->}[r]\ar@{^{(}->}[d]\ar@/_3pc/[dd]_{\Id}&X\ar@{->>}[r]\ar[d]^{f'_X}& {\mathrm{H}}_0(G,X)\ar@{^{(}->}[d]^{h'_X}\ar@/^3pc/[dd]^{\Id}\\
 {\mathrm{H}}^0(H,X)\ar@{^{(}->}[r]\ar@{->>}[d]^{h_X}&X\ar@{->>}[r]\ar[d]^{f_X}& {\mathrm{H}}_0(H,X)\ar@{->>}[d]\\
 {\mathrm{H}}^0(G,X)\ar@{^{(}->}[r]&X\ar@{->>}[r]& {\mathrm{H}}_0(G,X),
}$$
where the unlabelled morphisms are from diagram \eqref{eqCD}.
That the left upper square is commutative follows from the observation that $f'_X$ restricts to the identity on $ {\mathrm{H}}^0(G,X)$. The lower left square is commutative by part (i). Furthermore, since $f_X\circ f'_X$ restricts to the identity on $ {\mathrm{H}}^0(G,X)$, the composite of the two morphisms in the left column is the identity, which implies in particular that $h_X$ is an epimorphism. The arguments for the right-hand side of the diagram are identical.

By commutativity, the morphisms in the right column restrict to morphisms between the respective subobjects $\Triv_GX$ and $\Triv_HX$. In particular, $\Triv_HX$ is a retract of $\Triv_GX$.
By naturality, this proves part (iii).
\end{proof}

\begin{lemma}\label{LemIndRes}
If $\bA$ is $\mk$-linear and $|G:H|$ invertible in $\mk$, then the identity functor on $\Rep(G,\bA)$ is a direct summand of $\Ind^G_H\circ\Res^G_H$.
\end{lemma}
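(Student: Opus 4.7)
The plan is to exhibit natural transformations
\[
\eta:\Id_{\Rep(G,\bA)}\Rightarrow \Ind^G_H\circ\Res^G_H\quad\text{and}\quad \epsilon:\Ind^G_H\circ\Res^G_H\Rightarrow \Id_{\Rep(G,\bA)}
\]
whose composite $\epsilon\circ\eta$ equals $n\cdot\Id$ with $n:=|G:H|$. Since $n\in\mk^\times$, the natural transformation $n^{-1}\epsilon$ then splits $\eta$, exhibiting $\Id_{\Rep(G,\bA)}$ as a retract of $\Ind^G_H\circ\Res^G_H$. Abstractly $\eta$ and $\epsilon$ can be taken to be the unit of $\Res^G_H\dashv\Ind^G_H$ and the counit of $\Ind^G_H\dashv\Res^G_H$ respectively, both available from the two-sided adjunction noted just after the definition of $\Ind^G_H$; I would, however, make them explicit in order to compute their composite transparently.

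For $X=(X_0,\phi_X)$ in $\Rep(G,\bA)$, the underlying object of $\Ind^G_H\Res^G_HX$ is $\bigoplus_{i\in I}X_0^{(i)}$ with $I=G/H$. I would define $\eta_X$ componentwise by $\eta_X^i:=\beta^{X_0}_i\circ\phi_X^{r_i^{-1}}:X_0\to X_0^{(i)}$, and $\epsilon_X$ componentwise by $\epsilon_X^i:=\phi_X^{r_i}\circ(\beta^{X_0}_i)^{-1}:X_0^{(i)}\to X_0$. Using the matrix formula for the action on $\Ind^G_H\Res^G_HX$ together with the defining identity $gr_j=r_{g(j)}h_j^g$, one checks $G$-equivariance of $\eta_X$: the $i$-th component of $\phi_Y^g\circ\eta_X$ receives a contribution only from $j=g^{-1}(i)$, and the relation $h^g_jr_j^{-1}=r_{g(j)}^{-1}g$ reduces it to $\beta^{X_0}_i\circ\phi_X^{r_i^{-1}g}$, which matches the $i$-th component of $\eta_X\circ\phi_X^g$. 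The computation for $\epsilon_X$ is dual. Naturality in $X$ is built into the construction.

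The composition $\epsilon_X\circ\eta_X:X_0\to X_0$ is then
\[
\sum_{i\in I}\phi_X^{r_i}\circ(\beta^{X_0}_i)^{-1}\circ\beta^{X_0}_i\circ\phi_X^{r_i^{-1}}\;=\;\sum_{i\in I}\phi_X^{r_ir_i^{-1}}\;=\;n\cdot\Id_{X_0},
\]
as required. The only real obstacle is index bookkeeping in the wreath-product-style definition of $\Ind^G_H$; conceptually this is simply the categorified averaging-over-coset-representatives trick, which is precisely why the invertibility of $|G:H|$ in $\mk$ is needed.
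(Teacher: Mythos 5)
Your proof is correct and matches the paper's argument exactly: the paper also defines the map $X\to\Ind^G_H\Res^G_H(X)$ by $(\beta_i^{X_0}\circ\phi_X^{r_i^{-1}})_{i\in I}$, with a dually defined map in the other direction, and observes that their composite is $|G:H|\cdot\Id$. Your write-up simply supplies the equivariance check that the paper leaves implicit.
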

\begin{proof}
We have a morphism
$$(\beta_i^{X_0}\circ\phi^{(r_i^{-1})}_X)_{i\in I}\;:\; X\to\Ind^G_H\Res^G_H(X)$$
and a similarly defined morphism in the other direction which compose to $|G:H|$ times the identity.
\end{proof}

The following proposition can be thought of as a very incomplete categorical generalisation of Green's correspondence, see e.g.~\cite[Chapter~III]{Alperin}.
\begin{prop}\label{PropGreen}
Assume that $\bA$ is $\mk$-linear and that $p:=\charr(\mk)>0$. Let $P$ denote a Sylow $p$-subgroup of $G$ and $L=N_G(P)$ its normaliser. If $H$ contains $L$, then
$$\Triv_G\;\simeq\;\Triv_H.$$
More precisely, the canonical morphism ${\mathrm{H}}^0(G,-)\Rightarrow \Triv_H$ is an epimorphism and $\Triv_H\Rightarrow {\mathrm{H}}_0(G,-)$ is a monomorphism.
\end{prop}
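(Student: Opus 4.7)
The plan is to reduce first to the case $H=L$ via a direct summand sandwich, and then to prove $\Triv_G\simeq\Triv_L$ using Mackey's theorem (Lemma~\ref{LemMack}) together with Lemmata~\ref{LemIndRes} and~\ref{LemSubQuo} and Corollary~\ref{CorIndex}. Because $L$ contains a Sylow $p$-subgroup $P$ of $G$, each of the indices $|G:L|$, $|G:H|$ and $|H:L|$ is coprime to $p$ and hence invertible in $\mk$. Lemma~\ref{DirSumm}(iii) then supplies canonical direct summand inclusions $\Triv_G\hookrightarrow\Triv_H\hookrightarrow\Triv_L$, so it suffices to show that their composite $\Triv_G\hookrightarrow\Triv_L$ is an isomorphism: both intermediate inclusions must then be isomorphisms too, and the precise epi/mono refinement is read off from the naturality of
$${\mathrm{H}}^0(G,X)\hookrightarrow{\mathrm{H}}^0(H,X)\twoheadrightarrow\Triv_HX\hookrightarrow{\mathrm{H}}_0(H,X)\twoheadrightarrow{\mathrm{H}}_0(G,X)$$
once one knows the canonical $\Triv_GX\hookrightarrow\Triv_HX$ is an isomorphism.

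The heart of the proof is to establish $\Triv_L\circ\Res^G_L\circ\Ind^G_L\simeq\Triv_L$ on $\Rep(L,\bA)$. By Mackey's theorem
$$\Res^G_L\Ind^G_LY\;\simeq\;Y\,\oplus\bigoplus_{s\in L\backslash G/L,\,s\notin L}\Ind^L_{L\cap L^s}\Res^{L^s}_{L\cap L^s}Y^s.$$
For each nontrivial double coset representative $s$ I claim $P\not\subseteq L\cap L^s$: otherwise $s^{-1}Ps\subseteq L$ would be a $p$-subgroup of $L$, hence contained in the unique Sylow $p$-subgroup $P\triangleleft L$ and thus equal to $P$ by order comparison, forcing $s\in N_G(P)=L$ and contradicting $s\notin L$. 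Consequently $|L:L\cap L^s|$ is divisible by $p$, hence zero in $\mk$, and Corollary~\ref{CorIndex}(i) kills the corresponding induced summand under $\Triv_L$. This exploitation of the normality of $P$ in $L$ is the main obstacle of the proof and constitutes the categorical analogue of the standard Green correspondence input.

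To assemble the conclusion, use Lemma~\ref{LemIndRes} to decompose $\Ind^G_L\Res^G_LX\simeq X\oplus C(X)$ functorially in $X$. Specialising the identity above to $Y=\Res^G_LX$ yields $\Triv_LX\oplus\Triv_LC(X)\simeq\Triv_LX$, hence $\Triv_LC(X)=0$; by Lemma~\ref{LemSubQuo}(i) its subquotient $\Triv_GC(X)$ is then also zero. On the other hand, Corollary~\ref{CorIndex}(iii) yields $\Triv_G\Ind^G_LZ\simeq\Triv_LZ$ for every $Z\in\Rep(L,\bA)$, and applied to $Z=\Res^G_LX$ this reads
$$\Triv_LX\;\simeq\;\Triv_G\Ind^G_L\Res^G_LX\;\simeq\;\Triv_GX\oplus\Triv_GC(X)\;\simeq\;\Triv_GX,$$
an isomorphism which a short diagram chase identifies with the canonical direct summand inclusion of Lemma~\ref{DirSumm}(iii), completing the proof.
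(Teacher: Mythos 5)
Your proof is correct and follows essentially the same route as the paper: the core in both is the Mackey decomposition of $\Res^G_L\circ\Ind^G_L$ together with the uniqueness of the Sylow $p$-subgroup $P\lhd L$, so that $\Triv_L$ annihilates every non-trivial double-coset summand via Corollary~\ref{CorIndex}(i). The final assembly is merely reorganized -- you split the object $\Ind^G_L\Res^G_LX\simeq X\oplus C(X)$ via Lemma~\ref{LemIndRes} and then show $\Triv_GC(X)=0$, whereas the paper splits the functor $\Triv_L\circ\Res^G_L\simeq\Triv_G\oplus D$ via Lemma~\ref{DirSumm}(iii) and then shows $D=0$ -- but this uses the same lemmas (\ref{DirSumm}, \ref{LemIndRes}, Corollary~\ref{CorIndex}(iii)) in dual roles and is not a genuinely different argument.
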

\begin{proof}
It suffices to prove the claim for $H=L$, the claim for any intermediate group $L< H< G$ then follows from diagram~\eqref{eqCD}. We will prove that $\mathrm{H}^0(G,X)\to \Triv_L(X)$ is an epimorphism, for every $X$ in $\Rep(G,\bA)$. The dual claim for $\mathrm{H}_0$ is proved identically. By Lemma~\ref{LemIndRes}, it suffices to consider only $X:=\Ind^G_LY$ for $Y\in\Rep(L,\bA)$.
Consider the diagram
$$\xymatrix{
\mathrm{H}^0(G,\Ind^G_LY)\ar@{^{(}->}[r]& \mathrm{H}^0(L,\Res^G_L\Ind^G_L Y)\ar@{->>}[r]\ar[d]&\Triv_L(\Res^G_L\Ind^G_L Y)\ar[d]\\
\mathrm{H}^0(L,Y)\ar[u]^{\sim}\ar@{=}[r]&\mathrm{H}^0(L,Y)\ar@{->>}[r]& \Triv_L(Y),
}$$
where the left isomorphism is given in Lemma~\ref{LemPfff} and the two downwards vertical arrows are induced from the projection onto the direct summand $Y$ in $\Res\Ind Y$ corresponding to the  identity coset in $G/L$. By construction, the right square is commutative and it follows from the definition in Lemma~\ref{LemPfff} that also the left square is commutative. 

By Sylow's theorems, all Sylow subgroups are conjugate.
Since $P\lhd L$, it is the unique Sylow $p$-subgroup of $L$.
By Lemma~\ref{LemMack}, the complement of the above direct summand $Y$ in $\Res^G_L\Ind^G_L Y$ is an object $Z$ in $\Rep(L,\bA)$ which is a direct sum of objects induced from $L^s\cap L$ to $L$, where $s\in G$ is such that $P^s\not=P$. Consequently, $L^s\cap L$ does not contain the Sylow $p$-subgroup of $L$. Corollary~\ref{CorIndex}(i) then implies $\Triv_LZ=0$. In other words, the right 
 downwards arrow is an isomorphism.

The required epimorphism can now be read off from the commutative diagram.
\end{proof}

\begin{lemma}\label{TensorInd}
Assume $\bA$ is $\mk$-linear and take $M\in \Rep_{\mk}G$ and $X\in \Rep(H,\bA)$. We have an isomorphism in $\Rep(G,\bA)$
$$M\otimes \Ind^G_HX\;\stackrel{\sim}{\to}\; \Ind^G_H(M\otimes X).$$
\end{lemma}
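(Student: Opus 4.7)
The plan is to identify both sides as left adjoints of a common functor $\Rep(G,\bA)\to\Rep(H,\bA)$ and conclude by uniqueness of adjoints. The first step is to observe that $M\otimes(-)$ is compatible with restriction: for any $M\in\Rep_\mk G$ and $Y\in\Rep(G,\bA)$, there is a canonical $H$-equivariant isomorphism
$$\Res^G_H(M\otimes Y)\;\simeq\;(\Res^G_HM)\otimes\Res^G_HY$$
in $\Rep(H,\bA)$. This follows directly from the matrix formula defining the action $\phi^g_{M\otimes Y}$, which on an element $h\in H$ depends only on the scalars $\rho(h)_{ij}$ and the morphism $\phi^h_Y$; both sides therefore share the same underlying object $\bigoplus_{j=1}^{d}Y_0^{(j)}$ and the same $H$-action.

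Next, combining this compatibility with Frobenius reciprocity $(\Ind^G_H,\Res^G_H)$ and two instances of the adjunction \eqref{adjM}, one in $\Rep(G,\bA)$ and one in $\Rep(H,\bA)$, produces for every $Y\in\Rep(G,\bA)$ a chain of natural isomorphisms
\begin{align*}
\Hom_G(M\otimes\Ind^G_HX,\,Y) &\;\simeq\;\Hom_G(\Ind^G_HX,\,M^\ast\otimes Y)\\
&\;\simeq\;\Hom_H(X,\,\Res^G_H(M^\ast\otimes Y))\\
&\;\simeq\;\Hom_H(X,\,M^\ast\otimes\Res^G_HY)\\
&\;\simeq\;\Hom_H(M\otimes X,\,\Res^G_HY)\\
&\;\simeq\;\Hom_G(\Ind^G_H(M\otimes X),\,Y).
\end{align*}
The Yoneda lemma then delivers a canonical isomorphism in $\Rep(G,\bA)$ between the two objects representing these functors, which is the assertion of the lemma.

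The main obstacle is the restriction-compatibility of $M\otimes(-)$; this is shallow but does require unwinding the bookkeeping in the definition of $M\otimes X$ (the choice of isomorphisms $\alpha_i:X_0\stackrel{\sim}{\to}X_0^{(i)}$ and the matrix formula for the $G$-action). One could alternatively bypass the adjunction argument and construct the isomorphism of the lemma directly on underlying objects, noting that both $M\otimes\Ind^G_HX$ and $\Ind^G_H(M\otimes X)$ have underlying object canonically indexed by $I\times\{1,\ldots,d\}$ up to the chosen isomorphisms $\alpha$ and $\beta$, and then verifying that the two matrix expressions for $\phi^g$ match after swapping the two summation indices. The adjunction approach has the advantage of suppressing all of this bookkeeping.
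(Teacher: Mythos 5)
Your proposal is correct and follows essentially the same route as the paper: the paper's one-line proof invokes exactly the $(\Ind^G_H,\Res^G_H)$ adjunction together with equation~\eqref{adjM}. You helpfully make explicit the one ingredient the paper leaves tacit, namely that $\Res^G_H$ commutes with $M\otimes(-)$, and correctly observe that this is immediate from the matrix formula defining $\phi^g_{M\otimes Y}$.
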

\begin{proof}
This follows from the adjunction between $\Ind^G_H$ and $\Res^G_H$ and equation~\eqref{adjM}.\end{proof}

\subsubsection{}
 Consider a symmetric monoidal category $\bC$ as in \ref{SMC}.
 For every $X\in\bC$ and $n\in\mN$ the braiding $\gamma$ defines a group homomorphism $\SG_n\to \Aut(\otimes^nX)$. The permutation $(1,2)$ is for instance sent to $\gamma_{XX}\otimes (\otimes^{n-2}\Id_X)$. We can thus interpret `$\otimes^n$' as a (non-additive) functor
\begin{equation}\label{funtens}X\mapsto \otimes^nX,\; \;\bC\to \Rep(\SG_n,\bC).\end{equation}



\begin{lemma}\label{Lemnp1}
The object $\Triv_{\SG_{n+1}}(\otimes^{n+1}X)$ is a subquotient of $\Triv_{\SG_n}(\otimes^nX)\otimes X$. Consequently, $\Triv_{\SG_n}(\otimes^nX)=0$ implies that $\Triv_{\SG_r}(\otimes^rX)=0$ for all $r\ge n$.
\end{lemma}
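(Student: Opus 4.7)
The plan is to view $\SG_n$ as the pointwise stabiliser of $n+1$ inside $\SG_{n+1}$; then the restriction of $\otimes^{n+1}X$ to $\SG_n$ becomes $(\otimes^n X)\otimes X$ with trivial $\SG_n$-action on the final factor. Lemma~\ref{LemSubQuo}(i) immediately yields that $\Triv_{\SG_{n+1}}(\otimes^{n+1}X)$ is a subquotient of $\Triv_{\SG_n}(\otimes^{n+1}X)$, so since subquotients compose, it suffices to realise $\Triv_{\SG_n}(\otimes^{n+1}X)$ as a subquotient of $\Triv_{\SG_n}(\otimes^n X)\otimes X$.

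For this step, the coequaliser defining ${\mathrm{H}}_0$ commutes with $-\otimes X$ by right-exactness, producing a canonical isomorphism ${\mathrm{H}}_0(\SG_n,\otimes^{n+1}X)\simeq {\mathrm{H}}_0(\SG_n,\otimes^n X)\otimes X$. On the other hand, tensoring the inclusion ${\mathrm{H}}^0(\SG_n,\otimes^n X)\hookrightarrow \otimes^n X$ with $\Id_X$ gives a morphism whose image is pointwise $\SG_n$-fixed and therefore lifts to ${\mathrm{H}}^0(\SG_n,\otimes^{n+1}X)$. Via this lift, the composite
\[
{\mathrm{H}}^0(\SG_n,\otimes^n X)\otimes X \;\to\; \Triv_{\SG_n}(\otimes^{n+1}X)\;\hookrightarrow\; {\mathrm{H}}_0(\SG_n,\otimes^n X)\otimes X
\]
equals the canonical map $c:{\mathrm{H}}^0(\SG_n,\otimes^n X)\to {\mathrm{H}}_0(\SG_n,\otimes^n X)$ tensored with $\Id_X$. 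Right-exactness again lets $c\otimes\Id_X$ factor through the surjection onto $\Triv_{\SG_n}(\otimes^n X)\otimes X$, so the image of the composite is simultaneously a subobject of $\Triv_{\SG_n}(\otimes^{n+1}X)$ and a quotient of $\Triv_{\SG_n}(\otimes^n X)\otimes X$, yielding the desired subquotient relation.

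The consequence is then a one-line induction on $r$: if $\Triv_{\SG_n}(\otimes^nX)=0$, then $\Triv_{\SG_n}(\otimes^n X)\otimes X=0$, every subquotient vanishes, and the first part of the lemma forces $\Triv_{\SG_{n+1}}(\otimes^{n+1}X)=0$; iterating yields the claim for all $r\ge n$. The delicate point is verifying that the image of the composite actually exhausts $\Triv_{\SG_n}(\otimes^{n+1}X)$, which amounts to surjectivity of the natural map ${\mathrm{H}}^0(\SG_n,\otimes^n X)\otimes X\to {\mathrm{H}}^0(\SG_n,\otimes^{n+1}X)$; this holds as soon as $-\otimes X$ preserves the relevant kernels, in particular when $X$ is flat — as it is in every tensor-categorical application, where every object is dualisable and hence flat (see \ref{dual}).
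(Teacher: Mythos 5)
Your argument follows the paper's proof closely: both reduce via Lemma~\ref{LemSubQuo}(i) to the subgroup $\SG_n\times 1<\SG_{n+1}$ and then relate $\Triv_{\SG_n}(\otimes^{n+1}X)$ to $\Triv_{\SG_n}(\otimes^nX)\otimes X$ using right-exactness of $-\otimes X$. If anything you are more scrupulous than the paper at the second step: the paper asserts outright that $\Triv_{\SG_n\times 1}(\otimes^{n+1}X)$ is a quotient of $\Triv_{\SG_n}(\otimes^n X)\otimes X$ (adding parenthetically that it is an isomorphism when $X$ is flat), whereas you trace the diagram and explicitly flag that exhausting $\Triv_{\SG_n}(\otimes^{n+1}X)$ needs surjectivity of ${\mathrm{H}}^0(\SG_n,\otimes^nX)\otimes X\to {\mathrm{H}}^0(\SG_n,\otimes^{n+1}X)$; since ${\mathrm{H}}^0$ is a kernel and $-\otimes X$ is in general only right exact, this is a genuine condition, guaranteed precisely when $X$ is flat. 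That is a real point the paper glosses over.

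One wrinkle in the write-up: the clause ``yielding the desired subquotient relation'' is premature. At that point you have produced an object which is a \emph{subobject} of $\Triv_{\SG_n}(\otimes^{n+1}X)$ and a quotient of $\Triv_{\SG_n}(\otimes^nX)\otimes X$; this shows the latter has a subquotient sitting inside the former, but the lemma needs $\Triv_{\SG_n}(\otimes^{n+1}X)$ itself to be a subquotient of $\Triv_{\SG_n}(\otimes^nX)\otimes X$, which requires that subobject to be the whole thing. You do recover immediately in the final sentence by isolating the surjectivity issue, but as phrased the intermediate sentence overclaims. Since every invocation of the lemma in the paper is for a dualisable (hence flat) object, the argument is complete wherever it is actually used.
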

\begin{proof}
This is a special case of Lemma~\ref{LemSubQuo}(i), together with the fact that $\Triv_{\SG_n\times 1}(\otimes^{n+1}X)$ is a quotient of $\Triv_{\SG_n}(\otimes^nX)\otimes X$ (where the quotient map is an isomorphism if $X$ is flat).
\end{proof}

\subsection{Semisimplification of representation categories}
In this subsection we assume that~$\mk$ is a {\em splitting field} for $G$. By this we mean that every indecomposable module of $\mk G$ is absolutely indecomposable. Equivalently, the radical of $\End_G(M)$ is of codimension 1, for every indecomposable $\mk G$-module $M$. Every algebraically closed field is thus a splitting field for any finite group. Recall the semisimplifcation $\Rep G\to\overline{\Rep G}, \; X\mapsto \overline{X}$ of \ref{semisimp}.

\begin{lemma}\label{LemRepG} Consider arbitrary indecomposable $M,N$ in $\Rep G$.
\begin{enumerate}[(i)]
\item The object $\overline{M}$ is simple or zero. Set $n_M=0$ when $\overline{M}=0$ and $n_M=1$ otherwise.
\item If $\overline{M}\simeq \overline{N}$ then either $M\simeq N$ or $\overline{M}=0=\overline{N}$.
\item For $\delta_{MN}$ defined by $\delta_{MN}=1$ if $M\simeq N$ and $\delta_{MN}=0$ otherwise, we have
$$\dim_{\mk}\Triv_G(M^\ast\otimes N)\;=\;{ \delta_{MN}n_M}.$$
\item The category $\overline{\Rep G}$ is schurian.
\end{enumerate}
\end{lemma}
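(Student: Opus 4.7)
The plan rests on the observation that, over a splitting field $\mk$, for every indecomposable $M\in \Rep G$ the algebra $\End_G(M)$ is local with residue field $\mk$, so
\[\End_G(M)\;=\;\mk\cdot\id_M\,\oplus\,\rad\End_G(M),\]
and every element of $\rad\End_G(M)$ is nilpotent, hence has categorical trace $0$. Since $\rad\End_G(M)$ is an ideal of nilpotents, it is automatically contained in the ideal $\cN(M,M)$ of negligible endomorphisms of $M$; by locality, $\cN(M,M)$ equals either $\rad\End_G(M)$ or all of $\End_G(M)$.

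Parts (i), (ii) and (iv) will fall out of this dichotomy. If $\id_M$ is negligible, set $n_M=0$ and $\overline{M}=0$; otherwise $\cN(M,M)=\rad\End_G(M)$, so $\End_{\overline{\Rep G}}(\overline{M})=\mk$, and any object of a semisimple abelian category with endomorphism algebra a field is simple, proving (i) and (iv) simultaneously. For (ii), given $\overline{M}\simeq\overline{N}$ both non-zero, lift inverse isomorphisms to $\alpha\colon M\to N$, $\beta\colon N\to M$; then $\beta\alpha-\id_M\in\cN(M,M)=\rad\End_G(M)$ makes $\beta\alpha$ a unit in the local ring $\End_G(M)$, hence an automorphism, and symmetrically for $\alpha\beta$, so $M\simeq N$.

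For (iii) I would argue in two stages. First, by Example~\ref{ExamInv} and additivity of $\Triv_G$, the dimension $\dim_\mk\Triv_G(V)$ equals the multiplicity of the trivial module $\mk$ as an indecomposable direct summand of $V$. Writing $V=\mk^a\oplus W$ with $W$ having no trivial summand, this multiplicity $a$ equals the rank of the composition pairing
\[\Hom_G(\mk,V)\times\Hom_G(V,\mk)\;\longrightarrow\;\mk,\]
because the pairing is block-diagonal, non-degenerate on the $\mk^a$ block, and identically zero on $W$ (a non-zero composite $\mk\to W\to\mk$ would split off a trivial summand of the indecomposable non-trivial summand of $W$ it passes through). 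Second, for $V=M^\vee\otimes N$ the pairing becomes $(f,g)\mapsto \mathrm{tr}(gf)$ on $\Hom_G(M,N)\times\Hom_G(N,M)$ under the standard adjunctions. If $M\not\simeq N$ then $gf\in\End_G(M)$ is never an isomorphism, so lies in $\rad\End_G(M)$ and has trace $0$, giving rank $0$. If $M=N$ indecomposable, writing $h=c\cdot\id_M+r$ with $r\in\rad\End_G(M)$ gives $\mathrm{tr}(h)=c\dim_\mk M$, so the pairing has rank $1$ precisely when $\dim_\mk M\neq 0$ in $\mk$, and rank $0$ otherwise. Finally, for indecomposable $M$ over a splitting field, $\id_M$ is negligible iff $\mathrm{tr}(f)=0$ for all $f\in\End_G(M)$ iff $\dim_\mk M\equiv 0\bmod\charr\mk$, so this rank is exactly $n_M$.

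The main obstacle is the identification in part (iii) of the multiplicity of the trivial summand with the rank of the trace pairing: the claim that the pairing is identically zero on the non-trivial part $W$ needs the observation that a non-zero composite $\mk\to W_i\to\mk$ through an indecomposable summand $W_i$ would split off $\mk$ from $W_i$ (using that $\End_G(\mk)=\mk$), forcing $W_i\simeq\mk$. Everything else is bookkeeping with the local ring $\End_G(M)$.
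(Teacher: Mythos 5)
Your argument is correct. For parts (i), (ii) and (iv) the strategy coincides with the paper's: both rest on $\End_G(M)$ being a finite-dimensional local algebra with residue field $\mk$ (the splitting-field hypothesis), so that $\End(\overline{M})$ is either zero or a local quotient with residue field $\mk$, and in part (ii) on the fact that a non-isomorphism between non-isomorphic indecomposables composes to a nilpotent. You make the dichotomy $\cN(M,M)\in\{\rad\End_G(M),\,\End_G(M)\}$ explicit where the paper leaves it implicit, which is a reasonable sharpening. The genuine departure is in part (iii). The paper derives it from (ii) and (iv): by (ii) the trivial module is the only indecomposable whose image in $\oRepG$ is $\unit$, so the number of trivial direct summands of $M^\ast\otimes N$ equals $[\overline{M}^\vee\otimes\overline{N}:\unit]$, which equals $\dim\Hom(\overline{M},\overline{N})=\delta_{MN}n_M$ by semisimplicity and schurianness. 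You instead identify $\dim_\mk\Triv_G(M^\ast\otimes N)$ with the rank of the trace pairing $(f,g)\mapsto\tr(gf)$ on $\Hom_G(M,N)\times\Hom_G(N,M)$ and evaluate that rank directly from the local structure of $\End_G(M)$, yielding $0$ when $M\not\simeq N$ and $\dim_\mk M\cdot ab$ modulo the radical when $M=N$. This route is more elementary, is logically independent of (i)--(ii)--(iv), and produces in passing the criterion $\overline{M}=0\iff p\mid\dim_\mk M$ that the paper records only as a remark after the lemma. One minor streamlining: to see the pairing vanishes on the complement $W$ of the trivial isotypic part, you need not pass through an indecomposable summand of $W$; a non-zero composite $\mk\to W\to\mk$ is automatically an isomorphism (as $\End_G(\mk)=\mk$), so $\mk$ would already be a retract of $W$, contradicting the choice of $W$.
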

\begin{proof}
For the entire proof, let $M,N\in\Rep G$ be indecomposable $\mk G$-modules.
By construction, $\End(\overline{M})$ is a quotient of the local algebra $\End_G(M)$ and thus local or zero. Consequently, $\overline{M}$ is either indecomposable or zero. Since $\oRepG$ is semisimple, part (i) follows.
Part (iv) follows from part (i) and the  assumption that $\mk$ is a splitting field for $G$.

Now assume that $M,N$ are not isomorphic and fix a morphism $f:M\to N$. For any morphism $g:N\to M$ we have that $g\circ f$ is not invertible in $\End_G(M)$. Since $\End_G(M)$ is a local and finite dimensional algebra, $g\circ f$ is thus nilpotent. It follows that the morphism $\overline{g}\circ\overline{f}$ of the simple (or zero) object $\overline{M}$ is nilpotent and hence zero. This proves part (ii).

Now let $M,N$ be arbitrary again. As a special case of part (ii), the only indecomposable module in $\Rep G$ which is mapped to $\unit$ in $\overline{\Rep G}$ is the trivial one. By Example~\ref{ExamInv}, we get isomorphisms of vector spaces
$$\Triv_G(M^\ast\otimes N)\;\simeq\; \mk^{\oplus [\overline{M}^\vee\otimes \overline{N}:\unit]}\;\simeq\; \Hom(\overline{M},\overline{N}).$$
Part (iii) then follows from parts (ii) and (iv).\end{proof}

\begin{rem}
It is easy to see that $\overline{M}=0$ precisely when $ \dim_{\mk}M$ is divisible by $p$. 
\end{rem}

\subsubsection{}  For each isomorphism class of indecomposable modules $M$ in $\Rep G$ with $n_M=1$ (as defined in Lemma~\ref{LemRepG}(i)) we choose one representative.
We denote the corresponding set by $\mB\overline{G}\subset\Ob\Rep G$. We can interpret $\mB\overline{G}$ as the canonical basis of the Grothendieck group of $\oRepG$.

\begin{ddef}\label{defss}
Assume that $\mk G$ is of finite representation type and $\bA$ is $\mk$-linear. We define the {\bf semisimplification functor} 
$$S_G:\;\Rep(G,\bA)\,\to\, \bA\boxtimes\oRepG \quad\mbox{by}\quad X\mapsto \bigoplus_{M\in \mB\overline{G}} \left(\Triv_{G}(M^\ast\otimes X)\boxtimes \overline{M}\right). $$
\end{ddef}

\begin{prop}\label{propsimplification}
Assume that $\bA$ is semisimple and schurian. Then the composite of
$$\bA\boxtimes\Rep G\;\stackrel{\sim}{\to}\;   \Rep(G,\bA)  \;\stackrel{S_G}{\to}\;\bA\boxtimes\oRepG $$
is just the product of the identity functor on $\bA$ and $\Rep G\to \oRepG:\,M\mapsto\overline{M}$.
\end{prop}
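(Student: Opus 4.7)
The plan is to trace the composite through on a generic object $X\boxtimes N\in\bA\boxtimes\Rep G$ and verify it lands on $X\boxtimes\overline{N}$, naturally in $X$ and $N$. Under the equivalence $\bA\boxtimes\Rep G\stackrel{\sim}{\to}\Rep(G,\bA)$ from Section~\ref{DefSecCat}, this object corresponds to $N\otimes\iota_\bA(X)$, so by the definition of $S_G$ we want
$$\bigoplus_{M\in\mB\overline{G}}\Triv_G(M^\ast\otimes N\otimes\iota_\bA(X))\,\boxtimes\,\overline{M}\;\simeq\;X\boxtimes\overline{N}.$$

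The central ingredient is the following commutation identity: for any $V\in\Rep_\mk G$ and $X\in\bA$,
$$\Triv_G(V\otimes\iota_\bA(X))\;\simeq\;\Triv_G(V)\otimes X\qquad\text{in $\bA$,}$$
where $\Triv_G(V)\otimes X$ denotes the object $X^{\oplus\dim\Triv_G(V)}$ obtained via the $\mk$-linear structure of $\bA$, functorially in the vector space $\Triv_G(V)$. To prove this I would first decompose $X$ into simple summands in the semisimple schurian category $\bA$, and by additivity of every functor in sight reduce to the case of a simple $X$ with $\End(X)=\mk$. In that case the $\mk$-linear functor $\Psi_X:\vecc_\mk\to\bA$, $W\mapsto W\otimes X$, is fully faithful, and since the semisimple object $V\otimes X=\Psi_X(V)$ is isotypic of type $X$, its subobjects in $\bA$ correspond bijectively to subspaces of $V$ via $\Psi_X$. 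The underlying object of $V\otimes\iota_\bA(X)$ in $\bA$ is $\Psi_X(V)$ with $G$-action $\rho\otimes\id_X$ on embedded subobjects, so the maximal $G$-trivial subobject is $\Psi_X(V^G)$ and the maximal $G$-trivial quotient is $\Psi_X(V_G)$; the image of the canonical comparison morphism between them is then $\Psi_X(\Triv_G(V))=\Triv_G(V)\otimes X$.

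Specialising this identity to $V=M^\ast\otimes N$ and invoking Lemma~\ref{LemRepG}(iii) gives
$$\Triv_G(M^\ast\otimes N\otimes\iota_\bA(X))\;\simeq\;\Triv_G(M^\ast\otimes N)\otimes X,$$
which vanishes unless $M$ appears as an indecomposable summand of $N$ with $n_M=1$, in which case its dimension over $\mk$ records the multiplicity of $M$ in $N$. Summing over $M\in\mB\overline{G}$ and using that $\overline{N}\simeq\bigoplus_{M\in\mB\overline{G}}\Triv_G(M^\ast\otimes N)\otimes_\mk\overline{M}$ in $\overline{\Rep G}$ (once again by Lemma~\ref{LemRepG}), the left-hand side reorganises into $X\boxtimes\overline{N}$. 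Naturality in $X\boxtimes N$ is immediate from the naturality of each isomorphism used.

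The step I expect to require the most care is the commutation identity, and in particular the verification that subobjects of the isotypic object $\Psi_X(V)$ in $\bA$ are exactly the $\Psi_X(W)$ for subspaces $W\subset V$, together with the termwise computation of ${\mathrm{H}}^0(G,-)$ and ${\mathrm{H}}_0(G,-)$ through $\Psi_X$. Once this is in place, the remaining bookkeeping is entirely formal from Lemma~\ref{LemRepG}.
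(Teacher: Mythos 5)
Your proof is correct and follows essentially the same route as the paper's: apply $S_G$ to the image of $X\boxtimes N$ in $\Rep(G,\bA)$ and invoke Lemma~\ref{LemRepG}(iii) to collapse the sum. The only difference is that the paper leaves the commutation identity $\Triv_G(V\otimes\iota_\bA(X))\simeq\Triv_G(V)\otimes X$ tacit, whereas you correctly identify it as the load-bearing step and supply a proof via the fully faithful exact embedding $\Psi_X:\vecc_\mk\to\bA$.
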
 \begin{proof}
For simplicity we consider an indecomposable module $N\in \Rep G$ and some object $X_0\in \bA$. The composite is then
$$X_0\boxtimes N\;\mapsto\; N\otimes X_0\;\mapsto\;  \bigoplus_{M\in \mB\overline{G}} \left(\Triv_{G}(M^\ast\otimes N)\otimes X_0\right)\boxtimes \overline{M}= X_0\boxtimes \overline{N},$$
by Lemma~\ref{LemRepG}
\end{proof}


\section{Local semisimplicity and freeness }\label{LSSTC}
We fix an arbitrary field $\mk$ for the entire section.
\subsection{Definitions}\label{SecDef}

For this subsection we fix a monoidal category $\bC$ as in \ref{SMC}.

\subsubsection{}\label{alphan} For a morphism $\alpha:\unit\hookrightarrow X$ in $\bC$ and $n\in\mN$, we define 
$$\alpha^n\in\Hom(\unit,\Sym^nX)\quad\mbox{as the composition }\; \unit \xrightarrow{\otimes^n\alpha}\otimes^nX\tto \Sym^nX. $$
We can define $\alpha^n$ also as $\alpha^n={\mathrm{H}}_0({\SG_n},\otimes^n\alpha)$, or via the algebra morphism
$$\alpha^\bullet=\oplus_{n}\alpha^n:\;\Sym^\bullet \unit\;\to\; \Sym^\bullet X.$$

\subsubsection{} Fix a short exact sequence $\Sigma$ in $\bC$
$$\Sigma:\;0\to U\to V\to W\to 0,$$
with $U$ and $V$ flat.
 This filtration of length $2$ on $V$ induces a filtration of length $n+1$ on ${\otimes^n}V$ with $\gr(\otimes^n V)\simeq \otimes^n(\gr V)$. The quotient $\Sym^n V$ of ${\otimes^n}V$ is thus also filtered and we get a canonical graded epimorphism
$$\theta_{\Sigma}^n:\Sym^n(\gr V)\;\tto\; \gr( \Sym^n V).$$ If $2\le \charr(\mk)\le n$, then $\theta_{\Sigma}^n$ need a priori not be an isomorphism, since ${\mathrm{H}}_0(\SG_n,-)$ is then only right exact. 

For a monomorphism $\alpha:\unit\hookrightarrow X$ with $X$ flat, 
the morphism~$\alpha^n$ of \ref{alphan} is the degree 1 component of $\theta_{\Sigma}^n$ composed with the inclusion $\gr_1\Sym^nX\hookrightarrow \Sym^nX$.

\subsection{Locally semisimple categories}\label{SecChara}

\begin{ddef}\label{DefLSS}
A tensor category $\bT$ is {\bf locally semisimple} if there exists a symmetric monoidal category~$\bC$ as in \ref{SMC} and a tensor functor $F:\bT\to\bC$ which maps every short exact sequence $\Sigma$ in $\bT$ to a split short exact sequence $F(\Sigma)$.\end{ddef}

By Lemma~\ref{LemFinLen}(ii), all tensor categories which admit fibre functors in the sense of Definition~\ref{DefFibre} are locally semisimple.
We can characterise locally semisimple tensor categories internally as follows.
We freely use the notation and definitions of Subsection~\ref{SecDef} and the tensor functor $F_{\scA}=\scA\otimes-$ from \eqref{extsca}. Some related results can be found in \cite[Proposition 5.3.4]{Sch2}.
\begin{thm}\label{Thm1}
A tensor category $\bT$ is locally semisimple if and only if one of the following equivalent properties is true.
\begin{enumerate}[(i)]
\item For every short exact sequence $\Sigma$ in $\bT$, the epimorphism $\theta_{\Sigma}$ is an isomorphism.
\item For every $X\in\bT$, $n\in\mN$ and non-zero $\alpha\in\Hom(\unit,X)$, the morphism $\alpha^n$ is non-zero.
\item For every short exact sequence $\Sigma$ in $\bT$ there exists a non-zero $\scA=\scA_\Sigma$ in $\Alg{\bT}$ such that $\scA\otimes \Sigma$ splits in $\Mod_{\scA}$.
\item There exists non-zero $\scA\in \Alg{\bT}$ such that for every short exact sequence $\Sigma$ in $\bT$, the sequence $\scA\otimes\Sigma$ splits  in $\Mod_{\scA}$.
\end{enumerate}
\end{thm}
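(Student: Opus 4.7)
The plan is to run the cycle of implications
$$\text{locally semisimple}\;\Rightarrow\;(\mathrm{i})\;\Rightarrow\;(\mathrm{ii})\;\Rightarrow\;(\mathrm{iv})\;\Rightarrow\;\text{locally semisimple},$$
supplemented by the trivial $(\mathrm{iv})\Rightarrow(\mathrm{iii})$ and a short $(\mathrm{iii})\Rightarrow(\mathrm{i})$ to incorporate (iii).

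The easy directions go as follows. For local semisimplicity$\,\Rightarrow\,(\mathrm{i})$ (and simultaneously for $(\mathrm{iii})\Rightarrow(\mathrm{i})$ using $F_{\scA_\Sigma}$ in place of the given splitting functor), fix a tensor functor $F$ splitting $\Sigma$. Since $F$ is exact and symmetric monoidal, $F(\theta^n_\Sigma)=\theta^n_{F(\Sigma)}$; but $\theta^n_{F(\Sigma)}$ is already an isomorphism because on a split filtration the formation of $\gr$ commutes with $\Sym^n$ canonically. Faithfulness and exactness of $F$ (Lemma~\ref{LemFaith}) then reflect this back to $\theta^n_\Sigma$. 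For $(\mathrm{i})\Rightarrow(\mathrm{ii})$, any nonzero $\alpha:\unit\to X$ is a monomorphism (since $\unit$ is simple), and we apply (i) to $\Sigma:0\to\unit\to X\to X/\unit\to 0$, reading off $\alpha^n\neq 0$ from the identification of $\alpha^n$ with a component of $\theta^n_\Sigma$ recorded in Subsection~\ref{SecDef}. Finally $(\mathrm{iv})\Rightarrow$ local semisimplicity is tautological via $F_\scA:\bT\to\Mod_\scA$ from \eqref{extsca}.

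The main step is $(\mathrm{ii})\Rightarrow(\mathrm{iv})$. Given a SES $\Sigma:0\to U\to V\to W\to 0$, I first reduce to $W=\unit$ by replacing $\Sigma$ with $\tilde\Sigma:0\to U\otimes W^\vee\to\tilde V\to\unit\to 0$, the pullback of $\Sigma\otimes W^\vee$ along $\co_W:\unit\to W\otimes W^\vee$; a routine duality check using the snake identities shows that splittings of $\scA\otimes\Sigma$ biject with splittings of $\scA\otimes\tilde\Sigma$ for every $\scA\in\Alg\bT$. For a SES of the form $\Sigma:0\to U\to V\xrightarrow{\pi}\unit\to 0$, I set
$$\scA_\Sigma\;:=\;\Sym^\bullet V^\vee\,\big/\,J,$$
where $J$ is the ideal generated by the morphism $\tau\pi^\vee-\eta:\unit\to\Sym^\bullet V^\vee$, with $\tau:V^\vee\hookrightarrow\Sym^\bullet V^\vee$ the tautological inclusion in degree one and $\eta$ the unit. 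By the universal property of $\Sym^\bullet$, algebra morphisms $\scA_\Sigma\to\scR$ correspond to splittings of $\scR\otimes\Sigma$ in $\Mod_\scR$. To show $\scA_\Sigma\neq 0$, suppose $\eta\in J$: there exists $b:\unit\to\Sym^\bullet V^\vee$ with $b\cdot(\tau\pi^\vee-\eta)=\eta$, and matching components by degree gives $b_0=-\eta$ and $b_n=b_{n-1}\cdot\pi^\vee=-\alpha^n$ for $\alpha:=\pi^\vee$. Hypothesis (ii), applied to the nonzero monomorphism $\alpha:\unit\to V^\vee$ (nonzero because $\pi$ is an epimorphism onto $\unit\neq 0$), forces $\alpha^n\neq 0$ for every $n\in\mN$; but compactness of $\unit$ in $\Ind\bT$ means $b$ factors through a finite summand $\bigoplus_{n\le N}\Sym^n V^\vee$, contradicting $b_n\neq 0$ for every $n$.

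To globalise, take $\scA:=\bigotimes_\Sigma\scA_\Sigma$ indexed over the set of isomorphism classes of SES in $\bT$ (available by essential smallness). Each factor yields a splitting of $\scA\otimes\Sigma$ by base change, so the remaining task is to verify $\scA\neq 0$. Writing $\scA$ as the filtered colimit of its finite sub-tensor products, it suffices to check inductively that $\scR\otimes\scA_\Sigma\neq 0$ for every nonzero $\scR\in\Alg\bT$; this is done by running exactly the same degree-wise argument in $\Mod_\scR$, the decisive input being $\scR\otimes\alpha^n\neq 0$, which follows from $\alpha^n\neq 0$ and the faithfulness of $\scR\otimes-:\Ind\bT\to\Ind\bT$ noted in Subsection~\ref{AlgC}. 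The principal obstacle throughout is the $(\mathrm{ii})\Rightarrow(\mathrm{iv})$ step, both for the explicit construction of $\scA_\Sigma$ as a universal splitting algebra and for the stability of its non-vanishing under iterated base change.
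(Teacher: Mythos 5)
Your proof is correct and takes essentially the same approach as the paper; the one step of substance, constructing a universal splitting algebra $\scA_\Sigma$ for a short exact sequence with $\unit$ at one end, showing it is non-zero using the hypothesis $\alpha^n\not=0$, and then tensoring over all sequences, is exactly what the paper does via Lemma~\ref{CommSplit} and Corollary~\ref{CorLS}. The differences are cosmetic: you present $\scA_\Sigma$ as a quotient $\Sym^\bullet V^\vee/J$ and deduce non-vanishing from the degree-wise computation $b_n=-\alpha^n$ together with compactness of $\unit$, while the paper realises the same algebra as the pushout $\Sym^\bullet X\otimes_{\Sym^\bullet\unit}\unit=\varinjlim_n\Sym^n X$ and observes that the monomorphisms $\alpha^n$ assemble into a monomorphism $\unit\hookrightarrow\scA_\Sigma$; you reduce to the case of $\unit$ as a quotient by pulling back along $\co_W$, the paper uses the equivalent $\Ext^1(X,Y)\simeq\Ext^1(Y^\vee\otimes X,\unit)$; and you verify non-vanishing of the big tensor product by an explicit induction in $\Mod_\scR$, which the paper leaves implicit (it follows since the unit of any non-zero algebra is a monomorphism, tensoring in $\Ind\bT$ is exact, and $\unit$ is compact). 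Your routing of the implications ($(\mathrm{ii})\Rightarrow(\mathrm{iv})$ directly, with $(\mathrm{iv})\Rightarrow(\mathrm{iii})$ trivially) versus the paper's $(\mathrm{ii})\Rightarrow(\mathrm{iii})\Rightarrow(\mathrm{iv})$ is a harmless reorganisation.
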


\begin{rem}\label{RemLSS}\begin{enumerate}[(i)]
\item If $\charr(\mk)=0$, Theorem~\ref{Thm1}(i) shows that all tensor categories are locally semisimple, see also \cite[Lemme~7.14]{Del90}. If $\charr(\mk)>0$, we will improve Theorem~\ref{Thm1}(ii) to Theorem~\ref{Thm1p}.
\item For $p=2$, an example of a tensor category which does not satisfy the properties in Theorem~\ref{Thm1} is given in \cite[Example~3.3]{EHO}, see also~\cite{BE}.
For $p>2$, such tensor categories will appear in future work of Benson, Etingof and Ostrik and of the author.
\item If $\charr(\mk)\not=2$, just as in the proof of Theorem~\ref{Thm1}(i), we can show that the canonical monomorphism $\gr( \Gamma_{(1^n)}X)\hookrightarrow \Gamma_{(1^n)} (\gr X)$ is always an isomorphism for a filtered object $X$ in a locally semisimple tensor category. The theorem thus shows that in case $\theta$ is always an isomorphism, so is `$\phi_-$' in \cite[Question~3.5]{EHO}. 
\end{enumerate}
\end{rem}

\begin{thm}\label{Thm1p}
A tensor category $\bT$ over a field $\mk$ with $p:=\charr(\mk)>0$ is locally semisimple if and only if for each non-zero $\alpha:\unit\to X$ in $\bT$, the morphism $\alpha^p:\unit\to\Sym^pX$ is non-zero.
\end{thm}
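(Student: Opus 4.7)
The forward implication will follow immediately from Theorem~\ref{Thm1}(ii) applied at $n=p$. For the converse, I will assume $\alpha^p\neq 0$ for every non-zero $\alpha:\unit\to X$ in $\bT$, aiming to deduce $\alpha^n\neq 0$ for every $n\in\mN$ and every non-zero $\alpha$, which by Theorem~\ref{Thm1}(ii) will give local semisimplicity.

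My first step is a reduction to the case $n=p^k$. Since $\Sym^\bullet X\in\Alg\bT$ is commutative, one has $\alpha^n=\alpha^m\cdot\alpha^{n-m}$ for all $0\le m\le n$, so $\{n:\alpha^n=0\}$ is upward closed in $\mN$. Consequently, vanishing of $\alpha^n$ for some $n$ forces $\alpha^{p^k}=0$ as soon as $p^k\ge n$, and the contrapositive gives the desired reduction.

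I then prove $\alpha^{p^k}\neq 0$ by induction on $k$, the cases $k=0$ being trivial and $k=1$ being the hypothesis. For $k\ge 2$, I apply the inductive hypothesis to the non-zero morphism $\beta:=\alpha^p:\unit\to\Sym^p X$ to obtain $\beta^{p^{k-1}}=(\alpha^p)^{p^{k-1}}\neq 0$ in $\Sym^{p^{k-1}}(\Sym^p X)$. Iterated multiplication in $\Sym^\bullet X$ provides a canonical morphism $\mu:\Sym^{p^{k-1}}(\Sym^p X)\to\Sym^{p^k}X$, which concretely is the coinvariants map for the wreath inclusion $\SG_p\wr\SG_{p^{k-1}}\hookrightarrow\SG_{p^k}$ from Section~\ref{pgroups}, and satisfies $\mu\circ(\alpha^p)^{p^{k-1}}=\alpha^{p^k}$ by a direct check at the level of $\otimes^{p^k}X$.

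The main obstacle will be showing $\mu$ does not annihilate the copy of $\unit$ cut out by $(\alpha^p)^{p^{k-1}}$, since $\ker\mu$ is typically non-zero. My plan is to exploit that $(\alpha^p)^{p^{k-1}}$ factors through the image $I$ of $\Gamma^{p^k}X\to\Sym^{p^{k-1}}(\Sym^p X)$ (via the $\SG_{p^k}$-invariance of $\otimes^{p^k}\alpha$), and that $\mu|_I$ has image exactly $\Fr_+^{(k)}X\hookrightarrow\Sym^{p^k}X$ by the very definition of $\Fr_+^{(k)}$. I will then compare the induced factorisation $\unit\to I\tto\Fr_+^{(k)}X$ with the factorisation of $\alpha^{p^k}$ through $\Fr_+^{(k)}X$ given by $\otimes^{p^k}\alpha:\unit\to\Gamma^{p^k}X\tto\Fr_+^{(k)}X$; a diagram chase using the wreath inclusion on both the invariants and coinvariants sides, together with the simplicity of $\unit$, should force these two factorisations to coincide. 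The mono $\Fr_+^{(k)}X\hookrightarrow\Sym^{p^k}X$ then transfers the non-vanishing of $(\alpha^p)^{p^{k-1}}$ inside $I$ down to $\alpha^{p^k}\neq 0$, closing the induction.
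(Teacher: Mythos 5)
Your forward direction and the reduction to $n=p^k$ are fine, and the overall plan — iterate $\Sym^p$, identify the iterated object with a wreath-product coinvariants object, and pass from there to $\Sym^{p^k}X$ — is indeed the structure of the paper's proof. But there is a genuine gap at exactly the point you flag as "the main obstacle," and your proposed resolution does not work.

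First, a bookkeeping remark: the ``two factorisations'' you propose to compare are the \emph{same} morphism $\unit\to\Fr_+^{(k)}X$ by construction (both are obtained from $\otimes^{p^k}\alpha$ by passing to a subquotient of $\otimes^{p^k}X$, one via $\Sym^{p^{k-1}}(\Sym^pX)$, one directly), so there is nothing for a diagram chase to ``force''. The real issue is: you know $\unit\to I$ is non-zero, and you want $\unit\to I\tto\Fr_+^{(k)}X$ to be non-zero. The monomorphism $\Fr_+^{(k)}X\hookrightarrow\Sym^{p^k}X$ only helps \emph{after} you know the map into $\Fr_+^{(k)}X$ is non-zero; it does not prevent the copy of $\unit$ inside $I$ from lying in the kernel of $I\tto\Fr_+^{(k)}X$, which a priori can be large. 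This is precisely what must be proved, and simplicity of $\unit$ plus formal naturality of the wreath inclusions gives no control over that kernel.

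The missing ingredient is the categorical analogue of Green's correspondence, Proposition~\ref{PropGreen}, in combination with Lemma~\ref{LemNormSyl}: since the wreath subgroup you are using contains the normaliser of a Sylow $p$-subgroup of $\SG_{p^k}$, the functor $\Triv$ for the wreath subgroup agrees with $\Triv_{\SG_{p^k}}$, and moreover $\Triv_H\Rightarrow {\mathrm{H}}_0(\SG_{p^k},-)$ is a monomorphism. That is exactly the statement that your $I\tto\Fr_+^{(k)}X$ is an isomorphism (in particular injective), which closes the argument. But this is a substantial group-theoretic input proved via Mackey's theorem, Sylow theory and index computations — not something obtainable by a diagram chase. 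The paper's proof of Theorem~\ref{Thm1p} invokes Proposition~\ref{PropGreen} explicitly at this step; without it, your induction does not close.
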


 We fix a tensor category $\bT$ and start the proof of the theorems with some preparatory results. The following lemma is essentially a reformulation of \cite[Exemple~7.12]{Del90}.

\begin{lemma}\label{CommSplit}
Consider a short exact sequence
$$\Sigma:\; 0\to \unit \stackrel{\alpha}{\to} X\to Y\to 0$$
in $\bT$. For $(\scA,m,\eta)\in \Alg{\bT}$, the sequence $\scA\otimes\Sigma$ splits in $\Mod_{\scA}$ if and only if we have an algebra morphism $\Sym^\bullet X\to \scA$ yielding a commutative diagram of algebra morphisms
$$\xymatrix{
\Sym^\bullet X\ar[rr]&& \scA\\
\Sym^\bullet \unit\ar[rr]^\rho\ar[u]^{\alpha^\bullet}&&\unit\ar[u]^{\eta}
}$$
where $\rho$ restricts to the identity $\Sym^1\unit\,\stackrel{=}{\to}\,\unit$ in degree $1$.
\end{lemma}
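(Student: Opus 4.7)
The plan is to pass through a chain of natural bijections, unwinding each condition to a purely $\Ind\bT$-level statement and then recombining. A splitting of $\scA \otimes \Sigma$ in $\Mod_{\scA}$ is the same datum as an $\scA$-linear retraction $r \colon \scA \otimes X \to \scA$ of $\scA \otimes \alpha$.

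Next, I would use the adjunction between $F_{\scA} = \scA \otimes -$ and the forgetful functor $\Mod_{\scA} \to \Ind\bT$ to identify such $\scA$-linear retractions with morphisms $\beta \colon X \to \scA$ in $\Ind\bT$. Explicitly, $r$ and $\beta$ are mutually related by $r = m \circ (\id_{\scA} \otimes \beta)$ and $\beta = r \circ (\eta \otimes \id_{X})$. A short check, using the unit axiom $m \circ (\id_{\scA} \otimes \eta) = \id_{\scA}$ and its mirror, shows that the retraction identity $r \circ (\id_{\scA} \otimes \alpha) = \id_{\scA}$ corresponds precisely to the equation $\beta \circ \alpha = \eta$ in $\Hom_{\Ind\bT}(\unit, \scA)$.

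Since $\scA$ is commutative, the universal property of $\Sym^{\bullet} X$ as the free commutative algebra on $X$ in $\Ind\bT$ then converts these morphisms $\beta \colon X \to \scA$ into algebra morphisms $\widetilde{\beta} \colon \Sym^{\bullet} X \to \scA$, with $\widetilde{\beta}$ restricting to $\beta$ on $\Sym^{1} X = X$. To finish, I would recognise the equation $\beta \circ \alpha = \eta$ as the commutativity of the square in the lemma: both composites $\widetilde{\beta} \circ \overline{\alpha}$ and $\eta \circ \rho$ are algebra morphisms $\Sym^{\bullet} \unit \to \scA$, so by the universal property of $\Sym^{\bullet} \unit$ they are determined by their respective degree one components, namely $\widetilde{\beta}|_{X} \circ \alpha = \beta \circ \alpha$ on the one hand and $\eta \circ \rho|_{\Sym^{1}\unit} = \eta$ on the other. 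Composing the three bijections then closes the loop.

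There is no serious obstacle: the whole argument is formal bookkeeping with adjunctions and universal properties. The only mildly delicate point is that $\scA$ and $\Sym^{\bullet} X$ generally live in $\Ind\bT$ rather than $\bT$, but the relevant tensor product, adjunction and free commutative algebra constructions all extend to $\Ind\bT$ by the formalism recalled in Section~\ref{SecPrel}.
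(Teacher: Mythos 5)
Your proposal is correct and follows essentially the same route as the paper: the paper packages your two bijections (the $(\scA\otimes-,\,\text{forgetful})$ adjunction and the free--commutative-algebra universal property of $\Sym^\bullet$) into a single three-column commutative diagram of $\Hom$-sets, then reads off the splitting condition exactly as you do by matching degree-one components. The only difference is presentational; the mathematical content is identical.
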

\begin{proof}
For any algebra $\scA$ we have a commutative diagram
$$\xymatrix{
\Hom_{\scA}({\scA}\otimes X,{\scA})\ar[r]^-\sim\ar[d]^{-\circ(\Id_{\scA}\otimes \alpha)}&\Hom( X,{\scA})\ar[r]^-\sim\ar[d]^{-\circ\alpha}&\Hom_{alg}(\Sym^\bullet X,{\scA})\ar[d]^{-\circ\alpha^\bullet}\\
\Hom_{\scA}({\scA},{\scA})\ar[r]^-\sim&\Hom( \unit,{\scA})\ar[r]^-\sim&\Hom_{alg}(\Sym^\bullet \unit,{\scA}),
}$$
see \cite[Example~7.9]{Del90}.
A morphism
$f\in \Hom_{\scA}(\scA\otimes X,\scA)$ splits $\scA\otimes\Sigma$
if and only if $(\Id_{\scA}\otimes \alpha)\circ f=\Id_{\scA}$. 
With $g\in\Hom_{alg}(\Sym^\bullet X,{\scA})$ the image of $f$ under the isomorphisms, this condition becomes commutativity of the diagram
$$\xymatrix{
\Sym^\bullet X\ar[rr]^g&& {\scA},\\
\Sym^\bullet \unit\ar[u]^{\alpha^\bullet}\ar[urr]_{\eta\circ\rho}
}$$
which concludes the proof.
\end{proof}

\begin{cor}\label{CorLS}
If for a short exact sequence $\Sigma$ as in Lemma~\ref{CommSplit} we have $\alpha^n\not=0$ for all $n\in\mN$, there exists non-zero $\scA\in \Alg{\bT}$ such that $\scA\otimes\Sigma$ splits in $\Mod_{\scA}$.
\end{cor}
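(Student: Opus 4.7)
My plan is to apply Lemma~\ref{CommSplit} with $\scA$ taken to be the quotient of $\scB := \Sym^\bullet X$ by the principal ideal generated by the morphism $\alpha - \eta : \unit \to \scB$, where $\alpha$ is viewed as the composite $\unit \stackrel{\alpha}{\to} X = \Sym^1 X \hookrightarrow \scB$ and $\eta$ is the unit morphism of $\scB$. Granted that $\scA \neq 0$, the quotient map $\scB \twoheadrightarrow \scA$ is an algebra morphism that identifies $\alpha$ with $\eta$; by multiplicativity it therefore identifies $\alpha^n$ with $\eta^n = \eta$ for every $n \geq 1$. Taking $\rho : \Sym^\bullet \unit \to \unit$ to be the algebra morphism sending the degree-$1$ generator to $1$ (so that $\rho$ restricts to the identity in degree~$1$), the diagram of Lemma~\ref{CommSplit} commutes, and that lemma produces the required splitting of $\scA \otimes \Sigma$ in $\Mod_\scA$.

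The main obstacle is therefore to show $\scA \neq 0$. The principal ideal in question is the image of $\mu_\alpha - \id : \scB \to \scB$, where $\mu_\alpha$ denotes left multiplication by $\alpha$ and thus sends $\Sym^n X$ into $\Sym^{n+1} X$. Since the cokernel of $\mu_\alpha - \id$ coincides with the coequaliser of $\id$ and $\mu_\alpha$, the graded decomposition $\scB = \bigoplus_n \Sym^n X$ identifies $\scA$, as an object of $\Ind\bT$, with the filtered colimit
$$\scA \;\simeq\; \varinjlim_n \Sym^n X$$
whose transition maps are given by $\mu_\alpha$.

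Because the objects of $\bT$ are compact in $\Ind\bT$, we obtain
$$\Hom(\unit, \scA) \;\simeq\; \varinjlim_n \Hom(\unit, \Sym^n X).$$
Under this identification, the composite $\unit \stackrel{\eta}{\to} \scB \twoheadrightarrow \scA$ is represented at stage $n$ by $\alpha^n : \unit \to \Sym^n X$, using that $\alpha^0 = \id_\unit$ and $\mu_\alpha \circ \alpha^n = \alpha^{n+1}$. Since $\alpha^n \neq 0$ for every $n$, this class in the colimit is nonzero, so $\scA \neq 0$, which concludes the proof.
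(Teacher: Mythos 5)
Your proof is correct and follows essentially the same route as the paper's: both pass to the quotient of $\Sym^\bullet X$ by the ideal generated by $\alpha-\eta$ (which is precisely the pushout $\Sym^\bullet X\otimes_{\Sym^\bullet\unit}\unit$ the paper uses), identify it with the telescoping colimit $\varinjlim_n\Sym^n X$ along multiplication by $\alpha$, and observe that the compatible system $\{\alpha^n\}$ witnesses nonvanishing. Your use of compactness of $\unit$ to compute $\Hom(\unit,\scA)$ is a slightly more explicit version of the paper's remark that the $\alpha^n$ assemble to a monomorphism $\unit\hookrightarrow\scA$.
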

\begin{proof}
By Lemma~\ref{CommSplit} it suffices to prove that the pushout in $\Alg{\bT}$
$$\Sym^\bullet X\sqcup_{\Sym^\bullet \unit}\unit\;\simeq\; \Sym^\bullet X\otimes_{\Sym^\bullet \unit}\unit=:\scB$$ is non-zero. 
By construction, in $\Ind\bT$ we have $\scB=\varinjlim \Sym^nX$, where the morphisms in the direct system are given by the composites
$$\Sym^nX\;\stackrel{\Id\otimes\alpha}{\hookrightarrow}\; (\Sym^nX)\otimes X\; \tto\;\Sym^{n+1}X.$$ 
Consequently, the collection of monomorphisms $\{\alpha^n:\unit\to\Sym^nX\}$ yields a monomorphism $\unit\hookrightarrow\scB$, which proves that the pushout is non-zero.
\end{proof}

\begin{proof}[Proof of Theorem~\ref{Thm1}]
Assume we have $F:\bT\to\bC$ as in Definition~\ref{DefLSS}. We then have $F(\theta_\Sigma^n)=\theta_{F^n(\Sigma)}$. Since $F(\Sigma)$ splits, clearly $\theta_{F(\Sigma)}$ is an isomorphism. Since $F$ is faithful, see Lemma~\ref{LemFaith}, it follows that $\theta_{\Sigma}$ is an isomorphism as well. Hence a locally semisimple tensor category satisfies~(i).

Property (i) contains (ii) as a special case. That (ii) implies (iii) follows from Corollary~\ref{CorLS} and the isomorphism between $\Ext^1(X,Y)$ and $\Ext^{1}(Y^\vee\otimes X,\unit)$, for $X,Y\in\bT$, see e.g.~\cite[proof of Lemme~7.14]{Del90}.

If (iii) is true, then for every short exact sequence $\Sigma$ in $\bT$ we have an algebra $\scA_{\Sigma}$ in $\Alg{\bT}$ which splits $\Sigma$. Since $\bT$ is essentially small we can take a set $T$ of short exact sequences such that every short exact sequence in $\bT$ is isomorphic to one in $T$. Then 
$$\scA\;=\;\bigotimes_{\Sigma\in T}\scA_\Sigma\;:=\; \varinjlim_{S}\bigotimes_{\Sigma\in S} \scA_\Sigma\;\in\;\Alg\bT,$$
where $S$ ranges over all finite subsets of $T$, satisfies condition (iv).

If (iv) is satisfied, the tensor functor
$F_{\scA}=\scA\otimes -$ from \eqref{extsca}
 makes $\bT$ locally semisimple.
\end{proof}

\begin{proof}[Proof of Theorem~\ref{Thm1p}]
One direction is a special case of Theorem~\ref{Thm1}(ii). Now assume that $\alpha^p$ is never zero for non-zero $\alpha$ and pick one such $\alpha:\unit\to X$. By iterating $j$ times, we find that the morphism
$$\unit\to \Sym^p(\Sym^p(\cdots \Sym^p(X)\cdots))$$
is non-zero. By iteration of Lemma~\ref{LemSubQuo}(ii), the above morphism can be written as ${\mathrm{H}}_0({Q_j},\otimes^{p^j}\alpha)$, for $Q_j<\SG_{p^j}$ as in \ref{pgroups}. Since $\unit=\otimes^{p^j}\unit$ is in particular $Q_j$-invariant, we actually find that $\Triv_{Q_j}(\otimes^{p^j}\alpha)\not=0$. By Proposition~\ref{PropGreen} and Lemma~\ref{LemNormSyl}, we thus find that $\Triv_{\SG_{p^j}}(\otimes^{p^j}\alpha)\not=0$, so in particular $\alpha^{p^j}= {\mathrm{H}}_0({\SG_{p^j}},\otimes^{p^j}\alpha)\not=0$, for all $j\in\mN$. Since $\alpha^n=0$ implies $\alpha^{n+1}=0$, we thus find that $\alpha^n\not=0$ for all $n\in\mN$. The conclusion now follows from Theorem~\ref{Thm1}(ii).
\end{proof}

\subsection{Locally free objects and splitting algebras} For the entire subsection, we consider tensor categories $\bT$ and $\bV$, with $\bV$ schurian and semisimple. 
\begin{ddef}\label{DefLocFree}
An object $X\in \bT$ is {\bf locally $\bV$-free} if there exist a non-zero $\scA\in\Alg(\bT\boxtimes\bV)$ and $X_0\in\bV$ such that $\scA\otimes X\simeq \scA\otimes X_0$ in $\Mod_{\scA}^{\bT\boxtimes\bV}$.
\end{ddef}

\begin{lemma}\label{FreeTensorN} 
\begin{enumerate}[(i)]
\item If $X,Y\in\bT$ are locally $\bV$-free, then so are $X\oplus Y$, $X\otimes Y$ and $X^\vee$.
\item If $\bV$ is pointed, then the locally $\bV$-free objects form a tensor subcategory of $\bT$.
\item If $\bT$ is locally semisimple, any extension of two locally $\bV$-free objects is again locally $\bV$-free.
\item If $\bV$ is pointed and $\bT$ is locally semisimple, the tensor subcategory of $\bT$ in (ii) is a Serre subcategory.
\end{enumerate}
\end{lemma}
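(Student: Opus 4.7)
I would proceed in the order $(i)\to(iii)\to(ii)\to(iv)$. The main tool throughout is the base-change tensor functor $F_{\scA}=\scA\otimes-$ from \eqref{extsca}, together with the fact (recalled just before \eqref{extsca}) that $-\otimes Z$ is faithful on $\Ind(\bT\boxtimes\bV)$; this ensures that $\scA\otimes\scB$ is non-zero whenever $\scA$ and $\scB$ are, so tensor products of witness algebras remain witnesses.

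For part $(i)$, given witnesses $(\scA,X_0)$ and $(\scB,Y_0)$ for $X$ and $Y$, set $\scC:=\scA\otimes\scB\in\Alg(\bT\boxtimes\bV)$. Tensoring the two defining isomorphisms by $\scB$ and $\scA$ respectively yields $\scC\otimes X\simeq\scC\otimes X_0$ and $\scC\otimes Y\simeq\scC\otimes Y_0$ in $\Mod_{\scC}$. Adding them gives a witness for $X\oplus Y$. For $X\otimes Y$, I would use the base-change identity $\scC\otimes(X\otimes Y)\simeq(\scC\otimes X)\otimes_{\scC}(\scC\otimes Y)$ in $\Mod_{\scC}$ and substitute both witnesses to obtain $(\scC,X_0\otimes Y_0)$. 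For $X^\vee$, Lemma~\ref{LemFaith} says the tensor functor $F_{\scC}$ preserves duals, so dualising $\scC\otimes X\simeq\scC\otimes X_0$ inside $\Mod_{\scC}$ gives $\scC\otimes X^\vee\simeq\scC\otimes X_0^\vee$ with $X_0^\vee\in\bV$.

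For part $(iii)$, assume $\bT$ is locally semisimple. I would first observe that $\bT\boxtimes\bV$ is locally semisimple as well: every short exact sequence in $\bT\boxtimes\bV$ is built from sequences of the form $\Sigma\boxtimes S$ with $\Sigma$ exact in $\bT$ and $S$ simple in $\bV$, and a splitting algebra for $\Sigma$ in $\bT$ lifts through the embedding $\bT\hookrightarrow\bT\boxtimes\bV$. For a short exact sequence $\Sigma\colon 0\to U\to V\to W\to 0$ in $\bT$ with witnesses $(\scA,U_0),(\scB,W_0)$, take $\scC=\scA\otimes\scB$ as in part $(i)$ and apply Theorem~\ref{Thm1}$(iii)$ in $\bT\boxtimes\bV$ to get an algebra $\scD$ with $\scD\otimes\Sigma$ split in $\Mod_{\scD}$. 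Setting $\scE=\scC\otimes\scD$ then gives $\scE\otimes V\simeq\scE\otimes U\oplus\scE\otimes W\simeq\scE\otimes(U_0\oplus W_0)$, witnessing that $V$ is locally $\bV$-free.

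For part $(ii)$, by part $(i)$ it suffices to show closure under subobjects; quotients follow by dualising, since $(X/Y)^\vee\hookrightarrow X^\vee$ and $X^\vee$ is locally $\bV$-free by $(i)$. Suppose $Y\hookrightarrow X$ with witness $(\scA,X_0)$, and use pointedness of $\bV$ to write $X_0=\bigoplus_i L_i$ with each $L_i$ invertible. Then $\scA\otimes Y\hookrightarrow\bigoplus_i\scA\otimes L_i$ in $\Mod_{\scA}^{\bT\boxtimes\bV}$, a subobject of a sum of invertible $\scA$-modules. The strategy is to pass to a simple quotient $\scA\tto\scB$ from Lemma~\ref{LemMaxId} and establish the structural claim that, over such a simple $\scB$, any subobject of $\bigoplus_i\scB\otimes L_i$ in $\Mod_{\scB}^{\bT\boxtimes\bV}$ splits as $\bigoplus_j\scB\otimes L_j'$ for some invertibles $L_j'\in\bV$, producing the witness $(\scB,\bigoplus_j L_j')$ for $Y$. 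Part $(iv)$ is then immediate: closure under subquotients from $(ii)$ and under extensions from $(iii)$ together amount to the Serre property.

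\textbf{Main obstacle.} The technical core, and the step I expect to be most delicate, is the decomposition claim inside part $(ii)$: translating ``$\scB$ simple in $\Alg(\bT\boxtimes\bV)$ and $\bV$ pointed'' into the concrete statement that submodules of a direct sum of invertible $\scB$-modules split as a direct sum of invertible $\scB$-submodules coming from $\bV$. Pointedness is essential here—for non-pointed $\bV$ such as $\ver_p$ with $p>2$, simple objects of $\bV$ cease to be invertible and subobjects of $\scB\otimes X_0$ can be genuinely more complicated than any $\scB\otimes Y_0$ with $Y_0\in\bV$—and the rest of the lemma genuinely rests on this one structural input.
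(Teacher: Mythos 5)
Your plan follows the paper's proof closely in outline, and parts (i), (iii), (iv) are essentially right (though in (iii) the detour through ``$\bT\boxtimes\bV$ is locally semisimple'' is unnecessary: a splitting algebra $\scA_\Sigma$ for $\Sigma$ can be taken in $\Alg\bT$ by Theorem~\ref{Thm1}(iii) directly, then viewed in $\Alg(\bT\boxtimes\bV)$ and tensored with the witness algebras). The issue is part (ii), where you explicitly flag but do not prove the ``structural claim'' that your whole argument rests on; that claim \emph{is} the content of the lemma and must be established.

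The paper fills this in with a short argument you should supply. After replacing $\scA$ by a simple quotient $\scB$ via Lemma~\ref{LemMaxId} (so $\scB$ is simple as an object of $\Mod_{\scB}$), pointedness of $\bV$ gives that each simple $S\in\bV$ is invertible: $S\otimes S^\vee\simeq\unit$. This forces $\scB\otimes S$ to be \emph{simple} in $\Mod_{\scB}^{\bT\boxtimes\bV}$: if $0\neq M\subset\scB\otimes S$, then $M\otimes S^\vee\hookrightarrow\scB\otimes S\otimes S^\vee\simeq\scB$ is a nonzero $\scB$-submodule (using exactness of $-\otimes S^\vee$), hence $M\otimes S^\vee=\scB$, and tensoring back with $S$ gives $M=\scB\otimes S$. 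Consequently $\scB\otimes X_0\simeq\bigoplus_i\scB\otimes L_i$ is a semisimple object of $\Mod_{\scB}$, so any subquotient is a direct sum of some of the $\scB\otimes L_i$ and hence of the form $\scB\otimes Y_0$ for $Y_0\in\bV$. Since $\scB\otimes-$ is exact, $\scB\otimes Y$ is such a subquotient for any subquotient $Y$ of $X$ in $\bT$, producing the witness $(\scB,Y_0)$. Note this handles subquotients directly, making your auxiliary reduction via duals (sub + dual $\Rightarrow$ quotient) unnecessary, though it is correct.

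One more point worth keeping in mind: the reasoning above is precisely where pointedness is used, and your ``main obstacle'' paragraph correctly intuits that it breaks for $\ver_p$ ($p>2$); the fix there, as the paper shows (Proposition~\ref{PropApp}), is to strengthen the hypotheses to ``every object locally $\bV$-free'' rather than asking for a Serre subcategory inside an arbitrary $\bT$.
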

\begin{proof}
The first observation is straightforward. To prove (ii) it thus suffices to show that any subquotient of a locally $\bV$-free object is again locally $\bV$-free. Consider a locally $\bV$-free object $X\in\bT$. By Lemma~\ref{LemMaxId} we may assume that there exists $\scA\in \Alg(\bT\boxtimes\bV)$, simple in $\Mod_{\scA}$, and $X_0\in\bV$ such that $\scA\otimes X\simeq\scA\otimes X_0$. Since every simple object $S\in\bV$ satisfies $S\otimes S^\vee\simeq\unit$, it follows that $\scA\otimes S$ is also simple in $\Mod_{\scA}$. Consequently $\scA\otimes X$ is semisimple and for any subquotient $Y$ of $X$, the subquotient $\scA\otimes Y$ of $\scA\otimes X_0$ must be of the form $\scA\otimes Y_0$ for some $Y_0\subset X_0$.

To prove (iii) consider a short exact sequence $\Sigma:$ $0\to X\to E\to Y\to 0$ in $\bT$ where $X$ and $Y$ are locally $\bV$-free. By Theorem~\ref{Thm1}(iii) and assumption, there exists an algebra $\scA_\Sigma\otimes \scA_X\otimes \scA_Y$ which ensures $E$ is locally $\bV$-free. Claim (iv) is just the combination of (ii) and (iii).
\end{proof}


\begin{ddef}\label{DefSA1}
A {\bf $\bV$-splitting algebra $\scA$ for $\bT$} is a non-zero $\scA\in \Alg(\bT\boxtimes\bV)$ such that $\scA$ splits every short exact sequence in $\bT$ (or equivalently in $\bT\boxtimes\bV$) and for every $X\in \bT$ (or equivalently, for all $X\in\bT\boxtimes\bV$) there exists $X_0\in\bV$ for which $\scA\otimes X\simeq \scA\otimes X_0$ in $\Mod_{\scA}$.
\end{ddef}

When $\bV=\vecc$, we say `splitting algebra', rather than `$\vecc$-splitting algebra'.
Recall from Lemma~\ref{LemLax} that, for $\scA\in\Alg(\bT\boxtimes\bV)$, the object $\Gamma_{\bV}\scA$ is an algebra in $\Ind\bV$ and that $\Gamma_{\bV}$ interpreted as a functor $\Mod_{\scA}^{\bT\boxtimes\bV}\to\Mod_{\scR}^{\bV}$ is canonically lax monoidal, for an algebra isomorphism $\scR\stackrel{\sim}{\to}\Gamma_{\bV}\scA$.

\begin{prop}\label{ThmGen2} Fix $\scR\in \Alg\bV$. Consider the full subcategory of the category of $\scR$-algebras $a:\scR\to\scA$ in $\Ind\bT\boxtimes\bV$ (the coslice category $(\scR\downarrow \Alg\bT\boxtimes\bV)$) for which $\scA$ is $\bV$-splitting and $\Gamma_{\bV}(a)$ is an isomorphism.
There is a fully faithful functor from this category to the category of tensor functors $\bT\to\Mod_{\scR}^{\bV}$, given by
$$\Phi:\,\scA\,\mapsto\,\Phi_{\scA}:=\Gamma_{\bV}\circ (\scA\otimes-).$$
\end{prop}
\begin{proof}
For an arbitrary algebra $\scA$ over $\scR$, we can define a left exact and lax monoidal functor $\Phi_{\scA}:=\Gamma_{\bV}\circ (\scA\otimes-)$ from $\bT$ to $\Mod_{\scR}^{\bV}$. In case $\scA$ satisfies the further conditions in the proposition, then $\Phi_{\scA}$ is monoidal, as follows from Lemma~\ref{LemLax}(iii), and exact since $\Gamma_{\bV}$ is additive. 
 Hence in this case $\Phi_{\scA}$ is a tensor functor, which means that $\Phi$ is well-defined.

There exists a commutative diagram (up to isomorphism) of functors
$$\xymatrix{\Ind(\bT\boxtimes\bV)\ar[rrr]^{M\,\mapsto\, \Gamma_{\bV}(M\otimes-)}\ar[rrrd]&&&\Funct_{\mk}(\bT,\Ind\bV)\ar[d]^{F\mapsto \{X\boxtimes V\,\mapsto\,\Hom(V,FX^\vee)\}}\\
&&&\Funct_{\mk}((\bT\boxtimes\bV)^{\op},\Vecc),
}$$
where the diagonal arrow is the ordinary (fully faithful) embedding of ind-objects in the category of all $\mk$-linear presheaves. It is easy to see that the downwards functor is an equivalence, so the horizontal functor is fully faithful.

In particular, for $\scR$-algebras algebras $a:\scR\to\scA$ and $b:\scR\to\scB$ in $\Alg(\bT\boxtimes\bV)$, we have an isomorphism
 $$\Hom(\scA,\scB)\,\stackrel{\sim}{\to}\,\Nat(\Gamma_{\bV}(\scA\otimes-),\Gamma_{\bV}(\scB\otimes-)),\;\,f\mapsto\psi^f=\Gamma_{\bV}(f\otimes-). $$
 To conclude the proof it suffices to show that $f:\scA\to\scB$ is an $\scR$-algebra morphism if and only if $\psi^f$ is a natural transformation of lax monoidal functors to $\Mod_{\scR}$.  Indeed, it then suffices to apply this to the full subcategory of $(\scR\downarrow \Alg\bT\boxtimes\bV)$ in the proposition.

  It follows easily (and similarly to the next paragraph) that $\psi^f$ is a natural transformation of functors to $\Mod_{\scR}$ if and only if $f$ is a morphism of $\scR$-modules (for the canonical $\scR$-module structure on $\scA$ and $\scB$). From now on we only consider such $f$.
 
 If $f:\scA\to\scB$ is an algebra morphism it follows immediately that $\psi^f$ respects the lax monoidal structures of $\Phi_{\scA}$ and $\Phi_{\scB}$. Now we prove the reverse implication. If $f\circ a\not= b$ (which is equivalent to $\Gamma_{\bV}(f\circ a)\not=\Gamma_{\bV}(b)$), then $\psi^f$ will not yield the required commutative diagram for $\Phi_{\scA}(\unit)\leftarrow \scR\to\Phi_{\scB}(\unit)$. If $f\circ m_{\scA}\not=m_{\scB}\circ (f\otimes f)$, then there exist $U,V\in\bV$ and $X,Y\in\bT$ with morphisms $X^\vee\boxtimes U\to\scA$ and $Y^\vee\boxtimes V\to\scA$, such that the corresponding compositions 
$$(X^\vee\boxtimes U)\otimes (Y^\vee\boxtimes V)\;\to \scA\otimes\scA\;\rightrightarrows\;\scB $$
are not equal. It then follows easily that application of $\psi^f$ leads to a non-commutative diagram
$$\Phi_{\scA}(X)\otimes_{\scR}\Phi_{\scA}(Y)\;\rightrightarrows\;\Phi_{\scB}(X\otimes Y)$$
and hence $\psi^f$ is not a morphism of lax monoidal functors. 
This completes the proof.
\end{proof}

\begin{prop}\label{PropApp} 
Assume that every object in $\bT$ is locally $\bV$-free and $\bV$ satisfies  \eqref{CondVer}.
\begin{enumerate}[(i)]
\item The tensor categories  $\bT$ and $\bT\boxtimes\bV$ are locally semisimple. 
\item There exists a $\bV$-splitting algebra for $\bT$.
\item The tensor category $\bT$ admits a fibre functor $\bT\to\Mod_{\scR}^{\bV}$ over some algebra $\scR\in\Alg\bV$.
\end{enumerate}
\end{prop}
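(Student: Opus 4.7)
The strategy is to prove (ii); parts (i) and (iii) then follow from Theorem~\ref{Thm1}(iv) and Proposition~\ref{ThmGen2} respectively. Since $\bT$ is essentially small, fix a skeleton $\{X_i\}_{i\in I}$, and for each $i$ choose, by local $\bV$-freeness, $\scA_i\in\Alg(\bT\boxtimes\bV)$ and $Y_i\in\bV$ with $\scA_i\otimes X_i\simeq\scA_i\otimes Y_i$. Form $\scA:=\varinjlim_S \bigotimes_{i\in S}\scA_i$ over finite $S\subset I$; by base change $\scA\otimes X\simeq \scA\otimes X_0$ with $X_0\in\bV$ for every $X\in\bT$, and hence (via generators $Y\boxtimes S$) for every $X\in\bT\boxtimes\bV$. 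Apply Lemma~\ref{LemMaxId} to pass to a simple quotient of $\scA$, then apply Lemma~\ref{LemMaxId} and Lemma~\ref{SimQuoField} to $\scR:=\Gamma_{\bV}\scA\in\Alg\bV$ to obtain a field extension $K/\mk$ as a simple quotient of $\scR$. Replace $\scA$ by the pushout $\scA\otimes_{\scR}K$: it is non-zero (the unit $1_K$ lifts non-trivially), still trivializes every object by further base change, and, iterating if necessary, may be arranged so that $\Gamma_{\bV}\scA=K$.

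Next verify the splitting property. For a short exact sequence $\Sigma:0\to U\to V\to W\to 0$ in $\bT\boxtimes\bV$, the sequence $\scA\otimes\Sigma$ in $\Mod_{\scA}^{\bT\boxtimes\bV}$ has terms isomorphic, via the chosen trivializations, to free modules $\scA\otimes_K M_j$ with $M_j\in\Mod_K^{\bV}\simeq\vecc_K\boxtimes\bV$. The adjunction $\scA\otimes_K-\dashv\Gamma_{\bV}$ combined with Lemma~\ref{LemLax}(ii)--(iii), which yields $\Gamma_{\bV}(\scA\otimes_K M)\simeq M$, makes $\scA\otimes_K-$ fully faithful. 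Since $\vecc_K\boxtimes\bV$ is semisimple schurian, every short exact sequence therein splits, so $\scA\otimes_K-$ is automatically exact; being also fully faithful, it reflects short exact sequences. Hence $\scA\otimes\Sigma$ descends to a genuine short exact sequence in $\Mod_K^{\bV}$, which splits, and the splitting transports back through $\scA\otimes_K-$ to split $\scA\otimes\Sigma$ in $\Mod_\scA$. Thus $\scA$ is a $\bV$-splitting algebra, establishing (ii).

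With (ii) in hand, part (i) is Theorem~\ref{Thm1}(iv) applied to this $\scA$, and part (iii) is Proposition~\ref{ThmGen2} applied to $\scA$, yielding the fibre functor $\Phi_\scA=\Gamma_{\bV}\circ(\scA\otimes-):\bT\to\Mod_K^{\bV}$. I expect the main obstacle to be the splitting argument, specifically the verification that $\scA\otimes_K-$ reflects short exact sequences; this rests on its full faithfulness (from the adjunction and Lemma~\ref{LemLax}) and its exactness (automatic from semisimplicity of the source). A secondary subtlety is the iteration in the first paragraph ensuring $\Gamma_{\bV}\scA$ coincides exactly with $K$ rather than a strictly larger subalgebra, which may require a transfinite Zorn's-lemma argument on chains of field-valued quotients.
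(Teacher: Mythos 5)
Your strategy—proving (ii) directly and deducing (i), (iii)—is genuinely different from the paper's, which first establishes local semisimplicity (i) via Theorem~\ref{Thm1}(ii) by showing $\alpha^n\neq 0$ (passing to a simple quotient of the trivializing algebra $\scB$ and using \eqref{CondVer} to deduce that $\Hom_{\scB}(\scB,\scB\otimes S)=0$ for simple $S\not\simeq\unit$, so $\alpha_0$ embeds $\scB$ as a direct summand), and only then obtains (ii) by tensoring $\scB$ with the algebra supplied by Theorem~\ref{Thm1}(iv). Your argument for the splitting property via full faithfulness and exactness of $\scA\otimes_K-$ (reflecting short exact sequences into the semisimple $\Mod_K^{\bV}$) is a clean and appealing alternative once one knows $\Gamma_{\bV}\scA=K$ is a field.

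However, there is a genuine gap in your reduction to $\Gamma_{\bV}\scA=K$. After passing to a simple quotient $\scA$, you set $\scR=\Gamma_{\bV}\scA$, take a simple quotient $\scR\tto K$ with kernel $J$, and assert that $\scA\otimes_{\scR}K$ is nonzero because ``the unit $1_K$ lifts non-trivially.'' This is not justified, and in fact would be \emph{false} whenever $J\neq 0$: since $J\subset\scR\subset\scA$, the ideal $\scA\cdot J$ of $\scA$ is nonzero, hence (by simplicity of $\scA$) equals $\scA$, so $\scA\otimes_{\scR}K=\scA/(\scA\cdot J)=0$. Likewise, the ``iterating if necessary'' argument does not obviously converge and does not address this. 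What actually saves the approach is the nontrivial fact—which you would need to prove—that for $\scA$ simple, $\scR=\Gamma_{\bV}\scA$ is \emph{already} a field: one must first argue (using condition \eqref{CondVer} and the nilpotence of the $\scA$-ideal generated by any simple subobject $S\not\simeq\unit$ of $\scR$) that $\scR$ is an ordinary $\mk$-algebra, and then invoke a Schur-type argument (an endomorphism $m_x$ of the simple $\scA$-module $\scA$ has inverse $m_y$ with $y\in\Hom(\unit,\scA)=\Hom(\unit,\scR)$) to show every nonzero element of $\scR$ is invertible in $\scR$. Without this, the non-vanishing of your pushout—and hence the entire splitting argument—is unsupported.
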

\begin{proof}
For (i) it suffices to prove that $\bT':=\bT\boxtimes\bV$ is locally semisimple.
As every object in $\bT'$ is locally free, by taking an (infinite) tensor product over the set of isomorphism classes of objects, we obtain $\scB\in\Alg\bT'$ such that for every $X\in \bT'$ there exists $X_0\in\bV$ such that $\scB\otimes X\simeq\scB\otimes X_0$.
Now take a monomorphism $\alpha:\unit\hookrightarrow X$ in ${\bT'}$. We will show that $\alpha^n\not=0$ for all $n\ge1$, so the conclusion in (i) will follow from Theorem~\ref{Thm1}(ii).

Observe that any tensor functor $F:{\bT'}\to?$ is faithful and satisfies $F(\alpha)^n=F(\alpha^n)$. In particular, we have $\alpha^n\not=0$ if and only if $(\scB\otimes \alpha)^n\not=0$. We compose $\scB\otimes\alpha$ with an isomorphism between $\scB\otimes X$ and $\scB\otimes X_0$ for some $X_0\in\bV$, which exists by assumption, to get a monomorphism
$$\alpha_0:\;\scB\hookrightarrow \scB\otimes X_0\quad\mbox{in $\Mod_{\scB}$.}$$
We must show that $\alpha_0^n\not=0$. Although this can be shown in general, for convenience we will replace $\scB$ by a simple quotient, as we can do by Lemmata~\ref{LemMaxId} and~\ref{RemTriv}.

We claim that the simplicity of $\scB$ implies that $\Hom_{\scB}(\scB,\scB\otimes S)=0$ for simple $S\in\bV$ when $S\not\simeq\unit$. By duality we can equivalently consider a non-zero morphism $\scB\otimes S\to\scB$ for such $S$, which is automatically an epimorphism. Since $-\otimes_{\scB}-$ is right exact this yields an epimorphism between the respective $n$-th tensor powers, for all $n\in\mN$. Since $\otimes^n_{\scB}\scB=\scB=\Sym^n_{\scB}\scB$ and ${\mathrm{H}}_0(\SG_n,-)$ is right exact, we thus find epimorphisms
$$\scB\otimes\Sym^nS\,\simeq\,\Sym^n_{\scB}(\scB\otimes S)\;\tto\;\scB,\quad\mbox{for all $n\in\mN$.}$$
This is contradicted by assumption \eqref{CondVer}.
Hence the monomorphism $\alpha_0$ is the embedding of the simple direct summand $\scB$ of $\scB\otimes X_0$. It follows that $\alpha_0^n\not=0$.
 


By part (i) we have an algebra $\scA\in\Alg(\bT\boxtimes\bV)$ as in Theorem~\ref{Thm1}(iv). The tensor product $\scA\otimes\scB$ is $\bV$-splitting for $\bT$, proving (ii). Furthermore, (ii) implies (iii) by Proposition~\ref{ThmGen2}.
\end{proof}

\subsection{Neutrality}
As in the previous subsection we consider tensor categories $\bT$ and $\bV$, with $\bV$ schurian and semisimple.
\begin{ddef}\label{defneutsplit}
A {\bf neutral $\bV$-splitting algebra $\scA$ for $\bT$} is an algebra $(\scA,m,\eta)$ in $\Alg(\bT\boxtimes\bV)$ for which $\Gamma_{\bV}(\eta)$ is an isomorphism and such that for each $X\in\bT$ there exists $X_0\in\bV$ for which $\scA\otimes X\simeq \scA\otimes X_0$.
\end{ddef}

By the following lemma, this definition is consistent with Definition~\ref{DefSA1}.
 \begin{lemma}
 If $\scA$ is a neutral $\bV$-splitting algebra, then
 \begin{enumerate}[(i)]
 \item $\scA$ is a $\bV$-splitting algebra;
 \item we have a symmetric monoidal equivalence $\Mod_{\scA}^{\bT\boxtimes\bV}\simeq \Ind\bV$.
 \end{enumerate}
 \end{lemma}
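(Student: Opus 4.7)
For part (i), the algebra $\scA$ is nonzero since $\Gamma_{\bV}(\scA) = \unit \neq 0$, and the local $\bV$-freeness condition extends from $\bT$ to $\bT \boxtimes \bV$ automatically: for $X \otimes V \in \bT \boxtimes \bV$ with $V \in \bV$, if $\scA \otimes X \simeq \scA \otimes X_0$ then $\scA \otimes (X \otimes V) \simeq \scA \otimes (X_0 \otimes V)$ with $X_0 \otimes V \in \bV$. The substantive task is splitting: given a short exact sequence $\Sigma \colon 0 \to X \to Y \to Z \to 0$ in $\bT \boxtimes \bV$, I would first fix isomorphisms $\scA \otimes X \simeq \scA \otimes X_0$, $\scA \otimes Y \simeq \scA \otimes Y_0$, $\scA \otimes Z \simeq \scA \otimes Z_0$ in $\Mod_{\scA}$ with $X_0, Y_0, Z_0 \in \bV$. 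Using the free-forgetful adjunction $\scA \otimes -\dashv \Gamma_{\bV}$ together with the identification $\Gamma_{\bV}(\scA \otimes V) \simeq \Gamma_{\bV}(\scA) \otimes V = V$ for $V \in \Ind \bV$ (from Lemma~\ref{LemLax}(ii) and $\Gamma_{\bV}(\scA) = \unit$), the morphisms of $\scA \otimes \Sigma$, transported through these isomorphisms, take the form $\scA \otimes \alpha$ and $\scA \otimes \beta$ for unique $\alpha \colon X_0 \to Y_0$ and $\beta \colon Y_0 \to Z_0$ in $\Ind \bV$.

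Next I would observe that $\scA \otimes -\colon \Ind \bV \to \Mod_{\scA}^{\bT \boxtimes \bV}$ is exact (bi-exactness of the tensor product on $\Ind(\bT \boxtimes \bV)$) and faithful: any $C \in \Ind\bV$ embeds into $\scA \otimes C$ via the unit $\eta \colon \unit \to \scA$, so $\scA \otimes C = 0$ forces $C = 0$. Hence this functor reflects short exactness, giving $0 \to X_0 \to Y_0 \to Z_0 \to 0$ short exact in $\Ind \bV$. Since $\bV$ is semisimple schurian, $\Ind \bV$ is semisimple as well (equivalent to a product of copies of $\Vecc$ indexed by the simples of $\bV$), so the sequence splits; tensoring a splitting with $\scA$ produces the required splitting of $\scA \otimes \Sigma$ in $\Mod_{\scA}$.

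For part (ii) the plan is to show that the adjoint pair $\Psi := \scA \otimes -\colon \Ind \bV \rightleftarrows \Mod_{\scA}^{\bT \boxtimes \bV} \colon\! \Gamma_{\bV}$ is an equivalence. The unit $V \to \Gamma_{\bV}(\scA \otimes V)$ is an isomorphism by the identification used above, so $\Psi$ is fully faithful. For essential surjectivity, I would let $E \subseteq \Mod_{\scA}$ be the essential image of $\Psi$ and verify successively: (a) $E$ contains every free module $\scA \otimes Y$ with $Y \in \bT \boxtimes \bV$ by neutrality; (b) $E$ is closed under filtered colimits and cokernels, since $\Psi$ as a left adjoint preserves all colimits and every diagram in $E$ lifts to $\Ind \bV$ via the full faithfulness of $\Psi$; (c) consequently $\scA \otimes Y \in E$ for every $Y \in \Ind(\bT \boxtimes \bV)$ by writing $Y$ as a filtered colimit in $\bT \boxtimes \bV$; and (d) any $M \in \Mod_{\scA}$ is the cokernel in $\Mod_{\scA}$ of the bar-construction map $(\mu \otimes \id_M) - (\id_{\scA} \otimes \rho) \colon \scA \otimes \scA \otimes M \to \scA \otimes M$ between two free modules, whence $M \in E$. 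The symmetric monoidal structure on $\Psi$ is supplied by the base-change isomorphism $(\scA \otimes V) \otimes_{\scA} (\scA \otimes W) \simeq \scA \otimes (V \otimes W)$.

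The main obstacle is the essential surjectivity in (ii); the delicate point is verifying closure of $E$ under both filtered colimits and cokernels. This hinges on the full faithfulness of $\Psi$, so that the relevant diagrams in $\Mod_{\scA}$ can be lifted to $\Ind \bV$, after which the left-adjointness of $\Psi$ identifies the resulting colimits with objects of $E$.
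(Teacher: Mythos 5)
Your argument is correct, but it reverses the logical order the paper uses and handles the two parts somewhat differently. The paper proves (ii) first and then obtains (i) as an immediate corollary: once $\Mod_{\scA}^{\bT\boxtimes\bV}\simeq\Ind\bV$ is known, the target of $\scA\otimes-$ is semisimple, so every short exact sequence is automatically split. You instead prove (i) directly, by transporting a short exact sequence $\scA\otimes\Sigma$ through the chosen isomorphisms $\scA\otimes X\simeq\scA\otimes X_0$, using the adjunction isomorphism $\Hom_{\scA}(\scA\otimes X_0,\scA\otimes Y_0)\simeq\Hom_{\Ind\bV}(X_0,Y_0)$ to descend the maps to $\Ind\bV$, and then using that exact faithful functors reflect exactness together with semisimplicity of $\Ind\bV$. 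This is a perfectly valid alternative; it is more hands-on but avoids having (i) depend on (ii). For (ii) the two proofs coincide in substance: both establish full faithfulness from $\Gamma_{\bV}(\scA)=\unit$ and essential surjectivity from the fact that free modules on objects of $\bT\boxtimes\bV$ lie in the image. You spell out density via closure under filtered colimits and cokernels plus the bar resolution, whereas the paper takes the slightly shorter route of noting every $\scA$-module has a presentation by (direct sums of) free modules $\scA\otimes Z$ with $Z\in\bT\boxtimes\bV$, which lie in the image, and then invoking full faithfulness and right exactness of the functor. Your version is a bit more machinery, but the content is the same; the paper's approach buys brevity and lets (i) come for free from (ii).
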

 \begin{proof}
 We start by proving (ii), via the symmetric monoidal functor
 $$F:\;\Ind\bV\;\stackrel{\scA\otimes-}{\to}\;\Mod_{\scA}^{\bT\boxtimes\bV}.$$
 It follows easily from $\Gamma_{\bV}(\scA)=\unit$ that $F$ is fully faithful.
 As for any algebra in $\bT\boxtimes\bV$, every $\scA$-module is a quotient of a direct sum of `free modules' $\scA\otimes Z$ with $Z\in\bT\boxtimes\bV$. By the assumptions in \ref{defneutsplit}, any object in $\Mod_{\scA}$ thus has a presentation by objects in the image of $F$. Since $F$ is fully faithful and right exact, it follows it is dense and therefore an equivalence.
 
 By (ii), the target of $\scA\otimes-:\bT\to\Mod_{\scA}$ is semisimple, so clearly the functor $\scA\otimes-$ splits every short exact sequence. This proves (i). \end{proof}

\begin{thm}\label{ThmFibreNeut}
We have an equivalence from the category of neutral $\bV$-splitting algebras $\scA$ for $\bT$ with the category of tensor functors $\bT\to\bV$, given by
$$\scA\,\mapsto\,\Phi_{\scA}:=\Gamma_{\bV}\circ (\scA\otimes-).$$
\end{thm}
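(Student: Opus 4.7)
The plan is to deduce fully faithfulness from Proposition~\ref{ThmGen2} and to handle essential surjectivity by universally constructing the required algebra from a given tensor functor.

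For full faithfulness, I apply Proposition~\ref{ThmGen2} with $\scR = \unit$, noting that the preceding lemma identifies neutral $\bV$-splitting algebras with the full subcategory of $\bV$-splitting algebras $\scA$ satisfying $\Gamma_{\bV}\scA = \unit$. Proposition~\ref{ThmGen2} then gives a fully faithful functor into the category of tensor functors $\bT \to \Mod_{\unit}^{\bV} = \Ind\bV$. Its image lies in the subcategory of functors valued in $\bV$: indeed, by the defining freeness condition there exists $X_0 \in \bV$ with $\scA \otimes X \simeq \scA \otimes X_0$, and Lemma~\ref{LemLax}(ii) then yields $\Phi_{\scA}(X) \simeq \Gamma_{\bV}(\scA) \otimes X_0 = X_0 \in \bV$.

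For essential surjectivity, given a tensor functor $F: \bT \to \bV$, I would construct $\scA_F \in \Ind(\bT\boxtimes\bV)$ as $\tilde{R}(\unit)$, where $\tilde{R}$ is the right adjoint of the $\mk$-linear cocontinuous extension $\tilde{F}: \Ind(\bT\boxtimes\bV) \to \Ind\bV$ of the exact symmetric monoidal functor $F \boxtimes \Id_{\bV}$. This right adjoint exists by the adjoint functor theorem applied to the locally presentable categories at hand, and being right adjoint to a symmetric monoidal functor it is lax monoidal, equipping $\scA_F$ with a commutative algebra structure governed by the universal property
$$\Hom_{\Ind(\bT\boxtimes\bV)}(Y \boxtimes V,\scA_F) \;\simeq\; \Hom_{\bV}(V, F(Y^\vee)), \qquad Y \in \bT,\; V \in \bV.$$
Equivalently, $\scA_F$ can be described as the co-end $\int^{X \in \bT} X^\vee \boxtimes F(X)$ with its natural algebra structure, in the spirit of the construction used in the proof of Proposition~\ref{PropUnique}.

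It then remains to verify that $\scA_F$ is a neutral $\bV$-splitting algebra with $\Phi_{\scA_F} \simeq F$. Specialising the universal property to $Y = \unit$ gives $\Gamma_{\bV}\scA_F = \unit$; applying Yoneda to the counit of the adjunction produces natural isomorphisms $\scA_F \otimes X \simeq \scA_F \otimes F(X)$ in $\Mod_{\scA_F}$; and since $F$ is exact and $\bV$ semisimple, the split exact sequence $0 \to F(X_1) \to F(X_2) \to F(X_3) \to 0$ pulls back via this identification to a split sequence $\scA_F \otimes \Sigma$ for any short exact $\Sigma$ in $\bT$. Finally, $\Phi_{\scA_F}(X) = \Gamma_{\bV}(\scA_F \otimes X) \simeq \Gamma_{\bV}(\scA_F \otimes F(X)) \simeq \Gamma_{\bV}(\scA_F) \otimes F(X) \simeq F(X)$ by Lemma~\ref{LemLax}(ii). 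The main technical obstacle is checking that this isomorphism respects the monoidal structures -- that is, that $\Phi_{\scA_F} \simeq F$ as tensor functors, not merely as additive functors -- which amounts to tracing the lax monoidal structure on $\tilde{R}$ through the identifications above and applying the tensoriality of $F$.
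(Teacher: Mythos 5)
Your handling of full faithfulness is the same as the paper's (both reduce to Proposition~\ref{ThmGen2}), but for essential surjectivity you take a genuinely different route. The paper invokes Lemma~\ref{LemTannForm}(ii) to identify $\bT\boxtimes\bV$ with the category of representations of an affine group scheme $G$ in $\bV$ under an equivalence that comes equipped with a commutative diagram relating $\omega$ to the forgetful functor, and then takes $\scA=\underline{\cO}$, the regular Hopf-algebra object; the verification that $\underline{\cO}$ is a neutral $\bV$-splitting algebra follows from a single Hom-isomorphism
$\Hom(M,\underline{\cO})\simeq\Hom(\omega(M),\unit)$, which simultaneously gives $\Gamma_{\bV}\underline{\cO}=\unit$, injectivity of $\underline{\cO}$ in $\Ind(\bT\boxtimes\bV)$ (hence splitting), and $\underline{\cO}\otimes X\simeq\underline{\cO}\otimes\iota\omega(X)$. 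Crucially, the isomorphism $\Phi_{\underline{\cO}}\simeq F$ \emph{as tensor functors} then comes essentially for free from the monoidal commutative diagram in Lemma~\ref{LemTannForm}(ii). Your construction of $\scA_F$ via the right adjoint $\tilde R(\unit)$ (or the coend, in the spirit of Proposition~\ref{PropUnique}) produces the same object with the same universal property, and avoids appealing to the full tannakian reconstruction machinery, which is a nice economy; your derivations of $\Gamma_{\bV}\scA_F=\unit$ and $\scA_F\otimes X\simeq\scA_F\otimes F(X)$ and of the splitting from semisimplicity of $\bV$ are all sound.

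However, the step you yourself flag as the ``main technical obstacle''---that the resulting isomorphism $\Phi_{\scA_F}\simeq F$ respects the (lax) monoidal structures---is the one piece of the argument you do not actually carry out, and it is not cosmetic: it is precisely the point where the paper's detour through Lemma~\ref{LemTannForm}(ii) pays its way. To close the gap in your version you would have to exhibit a monoidal natural transformation $F\Rightarrow\Phi_{\scA_F}$ (which is then automatically an isomorphism by~\ref{Defeta}), e.g.\ by tracking how the lax monoidal structure on $\tilde R$ interacts with the multiplication on $\scA_F$, the algebra structure being the one induced by $\tilde R(\unit\otimes\unit\to\unit)$ and the monoidal constraints of $\tilde F\dashv\tilde R$. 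As written, your proof establishes an isomorphism of underlying additive functors but stops short of the tensor-functor statement that the theorem (and Proposition~\ref{ThmGen2}) require. This is a real gap, though not a flaw of strategy: the approach would go through once that verification is supplied.
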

\begin{proof}
By Proposition~\ref{ThmGen2} it suffices to prove that the assignment yields a dense functor. 
Consider a  tensor functor $F:\bT\to\bV$. By Lemma~\ref{LemTannForm}(ii), we can interpret $\bT\boxtimes\bV$ as the category of representations of an affine group scheme $G$ in $\bV$, represented by some $\cO\in\Alg\bV$. Furthermore, the functor $\omega:\bT\boxtimes\bV\to\bV$, induced from $(F,\Id_{\bV})$ is to be interpreted as the functor forgetting the $G$-action

Denote by $\underline{\cO}$ the right regular $\cO$-comodule, which is an object in $\Alg(\bT\boxtimes\bV)$. It follows from the standard properties of Hopf algebras that $\underline{\cO}$ is a neutral $\bV$-splitting algebra for $\bT$. We sketch a proof below.

For any $M\in \bT\boxtimes\bV$, composition with the counit of $\cO$ yields a natural isomorphism 
\begin{equation}\label{epsiloniso}\Hom(M,\underline{\cO})\;\simeq \;\Hom(\omega(M),\unit).
\end{equation} 
We will freely use that the canonical inclusion $\iota:\bV\hookrightarrow \bT\boxtimes\bV$ satisfies $\omega\circ\iota\simeq\Id_{\bV}$.
Equation~\eqref{epsiloniso} thus implies in particular
that  $\Gamma_{\bV}(\underline{\cO})=\unit$. Furthermore, since $\bV$ is semisimple, \eqref{epsiloniso} shows that $\underline{\cO}$ is injective in $\Ind(\bT\boxtimes\bV)$. Therefore $\underline{\cO}\otimes-$ splits every short exact sequence in $\bT\boxtimes\bV$. Finally,  \eqref{epsiloniso} shows that $\underline{\cO}\otimes X\simeq \underline{\cO}\otimes \iota\omega(X)$, for any $X\in\bT$.

In conclusion, the tensor functor $F$ is isomorphic to $\Phi_{\underline{\cO}}=\Gamma_{\bV}\circ(\underline{\cO}\otimes-)$.
 \end{proof} 
 
 The proof of Theorem~\ref{ThmFibreNeut} and Lemma~\ref{LemTannForm} yield the following examples.
 \begin{ex}\label{examkGsplits}
 \begin{enumerate}[(i)]
 \item  The neutral splitting algebras for $\bT$ are the algebras in $\Alg\bT$ isomorphic to $\underline{\mk[G]}$, under an equivalence $\bT\simeq\Rep G$ with $G/\mk$ an affine group scheme.
 \item If $p\not=2$, the neutral $\svec$-splitting algebras for $\bT$ are the algebras in $\Alg(\bT\boxtimes\svec)$ isomorphic to $\underline{\mk[G]}$, under an equivalence $\bT\boxtimes\svec\simeq\Rep G$ with $G$ an affine supergroup scheme.
 \end{enumerate}
 \end{ex}

 
  \begin{cor}\label{CorResA}
Let $\scA$ be a neutral $\bV$-splitting algebra for $\bT$, and $\bT^0\subset\bT$ a tensor subcategory.
\begin{enumerate}[(i)]
\item The algebra $\Gamma_{\bT^0\boxtimes\bV}\scA$ is a neutral $\bV$-splitting algebra for $\bT^0$.
\item For a neutral $\bV$-splitting algebra $\scA^0$ for $\bT^0$, we have  $\Phi_{\scA}|_{\bT^0}\simeq \Phi_{\scA^0}$ as tensor functors
if and only if there exists an algebra morphism $\scA^0\to\scA$.
 \end{enumerate}
 \end{cor}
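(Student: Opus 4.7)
The plan is to deduce both parts from the adjunction $\Gamma_{\bT^0 \boxtimes \bV} \dashv (\Ind(\bT^0\boxtimes\bV)\hookrightarrow\Ind(\bT\boxtimes\bV))$ together with Lemma~\ref{LemLax}, and then leverage Theorem~\ref{ThmFibreNeut} applied to $\bT^0$. Throughout I set $\scR:=\Gamma_{\bT^0\boxtimes\bV}\scA$.

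For part (i), I verify the conditions of Definition~\ref{defneutsplit}. By Lemma~\ref{LemLax}(i), $\scR$ is an algebra in $\bT^0 \boxtimes \bV$. Since $\Gamma_{\bV}$, being the right adjoint to $\Ind\bV\hookrightarrow \Ind(\bT\boxtimes\bV)$, factors as $\Gamma_{\bV}\circ\Gamma_{\bT^0\boxtimes\bV}$, we get $\Gamma_{\bV}(\scR)=\Gamma_{\bV}(\scA)=\unit$. For the splitting property, given $X\in\bT^0$ I choose $X_0\in\bV$ with an isomorphism $\scA\otimes X\simeq \scA\otimes X_0$ in $\Mod_{\scA}^{\bT\boxtimes\bV}$ and apply the lax monoidal functor $\Gamma_{\bT^0\boxtimes\bV}:\Mod_{\scA}\to\Mod_{\scR}$ from Lemma~\ref{LemLax}(iii). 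Lemma~\ref{LemLax}(ii) identifies $\Gamma_{\bT^0\boxtimes\bV}(\scA\otimes Y)$ with $\scR\otimes Y$ for $Y\in\bT^0\boxtimes\bV$, producing the required $\scR\otimes X\simeq \scR\otimes X_0$ in $\Mod_{\scR}^{\bT^0\boxtimes\bV}$.

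For part (ii), I first observe that $\Phi_{\scA}|_{\bT^0}\simeq \Phi_{\scR}$ as tensor functors $\bT^0\to\bV$. Indeed, for $X\in\bT^0$ the same identifications give
\[
\Phi_{\scA}(X)=\Gamma_{\bV}(\scA\otimes X)\simeq \Gamma_{\bV}\Gamma_{\bT^0\boxtimes\bV}(\scA\otimes X)\simeq \Gamma_{\bV}(\scR\otimes X)=\Phi_{\scR}(X),
\]
and this is compatible with the monoidal structures. Next, by the adjunction $\Gamma_{\bT^0\boxtimes\bV}\dashv{}$inclusion together with Lemma~\ref{LemLax}(i), algebra morphisms $\scA^0\to\scA$ in $\Alg(\bT\boxtimes\bV)$ are in bijection with algebra morphisms $\scA^0\to\scR$ in $\Alg(\bT^0\boxtimes\bV)$, since any such morphism has image contained in the maximal subalgebra $\scR\subset\scA$ lying in $\Ind(\bT^0\boxtimes\bV)$. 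Finally, by Theorem~\ref{ThmFibreNeut} applied to $\bT^0$, algebra morphisms $\scA^0\to\scR$ correspond bijectively to natural transformations of tensor functors $\Phi_{\scA^0}\Rightarrow\Phi_{\scR}$, which by~\ref{Defeta} are automatically isomorphisms. Chaining these bijections yields the claimed equivalence.

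The main point requiring care is the compatibility in part (i): that the isomorphism $\Gamma_{\bT^0\boxtimes\bV}(\scA\otimes Y)\simeq \scR\otimes Y$ from Lemma~\ref{LemLax}(ii) respects the $\scR$-module structure induced by the lax monoidal structure of Lemma~\ref{LemLax}(iii), and analogously that the identification $\Phi_{\scA}|_{\bT^0}\simeq\Phi_{\scR}$ exchanges the tensor structures. Both should follow from naturality of the lax monoidal structure, but warrant explicit verification.
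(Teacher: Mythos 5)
Your proof is correct and follows essentially the same route as the paper's: part (i) is a verification of Definition~\ref{defneutsplit} for $\scR=\Gamma_{\bT^0\boxtimes\bV}\scA$ via Lemma~\ref{LemLax}, and part (ii) chains the identification $\Phi_{\scA}|_{\bT^0}\simeq\Phi_{\scR}$ with the full faithfulness in Theorem~\ref{ThmFibreNeut}, the adjunction giving the bijection between morphisms $\scA^0\to\scA$ and $\scA^0\to\scR$, and the groupoid observation from~\ref{Defeta}. The paper's proof is simply a compressed version of the same argument; your extra care about the $\scR$-module and lax monoidal compatibilities is justified by naturality, exactly as you suspect.
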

 \begin{proof}
Lemma~\ref{LemLax}(ii) shows that (i) is true and that $\Phi_{\scA}|_{\bT^0}\simeq \Phi_{\Gamma_{\bT^0\boxtimes\bV}(\scA)}$.
 
  Theorem~\ref{ThmFibreNeut} and the above paragraph show that $\Phi_{\scA}|_{\bT^0}\simeq \Phi_{\scA^0}$ if and only if $\scA^0\simeq\Gamma_{\bT^0\boxtimes\bV}\scA$. The fact that the category of tensor functors $\bT^0\to\bV$ is a groupoid, see~\ref{Defeta}, together with Theorem~\ref{ThmFibreNeut}, shows that $\scA^0\simeq\Gamma_{\bT^0\boxtimes\bV}\scA$ if and only if there exists an algebra morphism $\scA^0\to\Gamma_{\bT^0\boxtimes\bV}\scA$, which is the same as an algebra morphism $\scA^0\to\scA$.
 \end{proof}

\section{Frobenius twists in tensor categories}\label{SecFrob}
Consider an arbitrary field with $p:=\charr(\mk)>0$ and a monoidal category $\bC$ as in \ref{SMC}.


\subsection{The symmetric twist}
\subsubsection{} Recall the functor $\otimes^{p^j}:\bC\to\Rep(\SG_{p^j},\bC )$, $X\mapsto \otimes^{p^j}X$ from \eqref{funtens}.


\begin{ddef}
The $j$-th {\bf symmetric Frobenius twist} is the functor
$$\Fr^{(j)}_{+}=\Triv_{\SG_{p^j}}\circ \otimes^{p^j}:\;\bC\,\to\, \bC .$$
\end{ddef}
We also write $\Fr_+:=\Fr_+^{(1)}$. For an exact (for instance out of a tensor category) tensor functor $F:\bC\to\bD$, the diagram
\begin{equation}
\label{eqCommD}\xymatrix{\bC\ar[rr]^{\Fr_{+}^{(j)}}\ar[d]^F&&\bC \ar[d]^F\\
\bD\ar[rr]^{\Fr_{+}^{(j)}}&&\bD
}\end{equation}
commutes up to natural isomorphism. 

\begin{prop}\label{CoolThm}
For a tensor category $\bT$, the following are equivalent:
\begin{enumerate}[(i)]
\item The tensor category $\bT$ is locally semisimple.
\item The functor $\Fr_+:\bT\to \bT $ is exact.
\item The functor $\Fr_+^{(j)}:\bT\to \bT  $ is exact for every $j\in\mN$.
\end{enumerate}
\end{prop}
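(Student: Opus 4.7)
The plan is to close the loop (iii) $\Rightarrow$ (ii) $\Rightarrow$ (i) $\Rightarrow$ (iii). The first arrow is simply the specialisation $j = 1$. For (ii) $\Rightarrow$ (i), I will appeal to the criterion in Theorem~\ref{Thm1p}: given a nonzero $\alpha:\unit\to X$, which is automatically a monomorphism since $\unit$ is simple in $\bT$, exactness of $\Fr_+$ supplies a monomorphism $\Fr_+(\alpha):\Fr_+(\unit)\to\Fr_+(X)$. The canonical identification $\Fr_+(\unit) = \Triv_{\SG_p}(\otimes^p\unit) \simeq \unit$ turns this into a nonzero morphism $\unit\to\Fr_+(X)$; composing with the tautological inclusion $\Fr_+(X)\hookrightarrow \Sym^pX$ recovers $\alpha^p$, which is therefore nonzero, and Theorem~\ref{Thm1p} then delivers local semisimplicity.

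For (i) $\Rightarrow$ (iii), the crux is that each functor $\Fr_+^{(j)}$ is \emph{additive}. Given $X,Y\in\bT$, the standard $\SG_n$-equivariant orbit decomposition reads
$$\otimes^n(X\oplus Y)\;\simeq\;\bigoplus_{k=0}^n\Ind_{\SG_k\times\SG_{n-k}}^{\SG_n}\bigl((\otimes^kX)\otimes(\otimes^{n-k}Y)\bigr).$$
Taking $n = p^j$, Lucas' theorem gives $\binom{p^j}{k}\equiv 0\pmod p$ whenever $0 < k < p^j$, and Corollary~\ref{CorIndex}(i) then annihilates $\Triv_{\SG_{p^j}}$ on every middle summand. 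Only the two extremal terms survive, yielding a natural isomorphism $\Fr_+^{(j)}(X\oplus Y)\simeq \Fr_+^{(j)}X\oplus\Fr_+^{(j)}Y$. Any functor between additive categories that preserves finite biproducts is automatically additive on morphisms, so $\Fr_+^{(j)}$ is additive and in particular sends split short exact sequences to split short exact sequences.

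To pass from split to arbitrary sequences I would use Theorem~\ref{Thm1}(iv): local semisimplicity produces a nonzero $\scA\in\Alg\bT$ such that $F_\scA=\scA\otimes-:\bT\to\Mod_\scA$ splits every short exact sequence, and by Lemma~\ref{LemFaith} the tensor functor $F_\scA$ is exact and faithful. Given a short exact sequence $\Sigma$ in $\bT$, the sequence $F_\scA\Sigma$ splits in $\Mod_\scA$, so $\Fr_+^{(j)}(F_\scA\Sigma)$ is split (hence exact) by the additivity just established; the commutative square~\eqref{eqCommD} identifies it with $F_\scA(\Fr_+^{(j)}\Sigma)$, and faithfulness plus exactness of $F_\scA$ then forces $\Fr_+^{(j)}\Sigma$ itself to be exact. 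The main obstacle is the additivity step: without the special vanishing supplied by Corollary~\ref{CorIndex}(i) applied to the induced representations from the Young subgroups $\SG_k\times\SG_{p^j-k}$ whose index in $\SG_{p^j}$ is divisible by $p$, there would be no reason for $\Fr_+^{(j)}$ to commute with direct sums.
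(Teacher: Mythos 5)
Your argument is correct and follows essentially the same route as the paper: additivity of $\Fr_+^{(j)}$ via the orbit decomposition and Corollary~\ref{CorIndex}(i), exactness transported through a splitting tensor functor using diagram~\eqref{eqCommD} and faithfulness, and Theorem~\ref{Thm1p} for $(ii)\Rightarrow(i)$ by identifying $\alpha^p$ with $\Fr_+(\alpha)$ composed with $\Fr_+(X)\hookrightarrow\Sym^pX$. The only cosmetic differences are that the paper establishes additivity by decomposing $\otimes^{p^j}(f+g)$ on morphisms rather than $\otimes^{p^j}(X\oplus Y)$ on objects, and that it invokes the defining functor $F:\bT\to\bC$ directly instead of routing through $F_{\scA}$ from Theorem~\ref{Thm1}(iv).
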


Before proving the proposition, we return to the more general case of monoidal categories $\bC$ as in \ref{SMC} and prove that $\Fr_+^{(j)}$ is always additive. It can even be made $\mk$-linear by adjusting the $\mk$-linear structure on the target category, but we will omit this interpretation.
\begin{lemma}\label{AddFrob}
The functor $\Fr_{+}^{(j)}$ is additive. So, for all $X,Y\in\bC$, we have
$$\Fr_{+}^{(j)}(X\oplus Y)\;\simeq\; \Fr_{+}^{(j)}(X)\oplus \Fr_{+}^{(j)}(Y),\quad\mbox{for $j\in\mN$}.$$
\end{lemma}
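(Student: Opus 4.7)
The plan is to expand $\otimes^{p^j}(X\oplus Y)$ as an $\SG_{p^j}$-object and then identify which summands survive the application of $\Triv_{\SG_{p^j}}$. The guiding observation is that the binomial coefficient $\binom{p^j}{k}$ vanishes in $\mk$ for $0<k<p^j$ (by Lucas/Kummer, since $p^j$ has base-$p$ digits $(1,0,\ldots,0)$), which, combined with Corollary~\ref{CorIndex}(i), will kill all the ``mixed'' terms.

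First I would use the distributivity of $-\otimes-$ over $\oplus$ (which follows from right exactness and $\mk$-linearity) to write
\[
\otimes^{p^j}(X\oplus Y)\;\simeq\;\bigoplus_{\epsilon\in\{0,1\}^{p^j}} Z_\epsilon,
\]
where $Z_\epsilon$ is the ordered tensor product with $Y$ in the positions indicated by $\epsilon$ and $X$ elsewhere. The natural $\SG_{p^j}$-action induced by the braiding permutes the summands through its action on $\{0,1\}^{p^j}$, and each orbit (indexed by $k=|\epsilon|$) is transitive with stabiliser the Young subgroup $\SG_{p^j-k}\times\SG_k$. Regrouping by orbit should yield an isomorphism of $\SG_{p^j}$-objects
\[
\otimes^{p^j}(X\oplus Y)\;\simeq\;\bigoplus_{k=0}^{p^j}\Ind^{\SG_{p^j}}_{\SG_{p^j-k}\times\SG_k}\!\bigl((\otimes^{p^j-k}X)\otimes(\otimes^k Y)\bigr).
\]

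Next I would apply $\Triv_{\SG_{p^j}}$ termwise. For $k=0$ and $k=p^j$ the induction is over the whole group, and $\Triv_{\SG_{p^j}}$ returns $\Fr_+^{(j)}(X)$ and $\Fr_+^{(j)}(Y)$ by definition. For the remaining values $0<k<p^j$, the index is $|\SG_{p^j}:\SG_{p^j-k}\times\SG_k|=\binom{p^j}{k}\equiv 0\pmod p$, so Corollary~\ref{CorIndex}(i) yields $\Triv_{\SG_{p^j}}\circ\Ind^{\SG_{p^j}}_{\SG_{p^j-k}\times\SG_k}=0$ on the whole category $\Rep(\SG_{p^j-k}\times\SG_k,\bC)$. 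Putting these together gives the desired decomposition $\Fr_+^{(j)}(X\oplus Y)\simeq \Fr_+^{(j)}(X)\oplus\Fr_+^{(j)}(Y)$, naturally in $X$ and $Y$.

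The only real obstacle is bookkeeping: one must verify that the $k$-th orbit really produces the induced object as defined in Section~\ref{SecCat}, namely that the identifications $\beta_i^{X_0}$ used in Definition of $\Ind^G_H$ match the canonical isomorphisms between the various $Z_\epsilon$ with $|\epsilon|=k$ coming from the braiding of $\bC$. This is routine but requires a careful choice of coset representatives for $\SG_{p^j}/(\SG_{p^j-k}\times\SG_k)$ to align the two constructions; no new idea is needed beyond compatibility of the braiding with the symmetric group action on $\otimes^{p^j}$ established in \eqref{funtens}.
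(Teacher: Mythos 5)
Your argument uses the same key ingredient as the paper's proof --- the vanishing of $\binom{p^j}{k}$ in $\mk$ for $0<k<p^j$ together with Corollary~\ref{CorIndex}(i) --- but applies it at the level of objects rather than morphisms. The paper proves additivity directly on $\Hom$-sets: for $f,g\in\Hom(X,Y)$ it writes $\otimes^{p^j}(f+g)=\sum_{a+b=p^j}\Ind^{\SG_{p^j}}_{\SG_a\times\SG_b}\bigl((\otimes^af)\otimes(\otimes^bg)\bigr)$ (interpreting $\Ind$ as the transfer over coset representatives) and then applies $\Triv_{\SG_{p^j}}$ to kill every term with $0<a<p^j$, giving $\Fr_+^{(j)}(f+g)=\Fr_+^{(j)}(f)+\Fr_+^{(j)}(g)$ at once. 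Your route establishes the displayed isomorphism $\Fr_+^{(j)}(X\oplus Y)\simeq\Fr_+^{(j)}(X)\oplus\Fr_+^{(j)}(Y)$, which in the lemma is phrased as a \emph{consequence} of additivity; to recover the primary claim (additivity on $\Hom$-sets) you still have to check that $\Fr_+^{(j)}$ sends the biproduct \emph{diagram} $(i_X,i_Y,p_X,p_Y)$ for $X\oplus Y$ to a biproduct diagram, not merely the biproduct object to an object abstractly isomorphic to $\Fr_+^{(j)}(X)\oplus\Fr_+^{(j)}(Y)$. This compatibility does hold for your decomposition --- the $k=0$ and $k=p^j$ summands are precisely the images of $\otimes^{p^j}i_X$ and $\otimes^{p^j}i_Y$ --- but it is an additional verification that the morphism-level argument sidesteps, and it is the $\Hom$-set additivity that is actually used downstream, e.g.\ in the proof of Proposition~\ref{CoolThm} to conclude that $\Fr_+^{(j)}\circ F$ is exact when $F$ splits every short exact sequence. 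So the plan is sound and the ideas match the paper's, but you should either finish the biproduct-diagram check or, more economically, run the same computation on morphisms as the paper does.
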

\begin{proof}
For $f,g\in \Hom(X,Y)$ with $X,Y\in\bC$ and $n\in\mN$, we have
$$\otimes^n(f+g)\;=\;\sum_{a+b=n} \Ind^{\SG_n}_{\SG_a\times \SG_b}((\otimes^a f)\otimes (\otimes^b g)).$$

For $n=p^j$, the index $|\SG_n:\SG_a\times\SG_b|$ is given by the binomial coefficient ${p^j\choose a}$. If $a\not\in\{0,p^j\}$, this index ${p^j\choose a}=\frac{p^j}{a}{p^j-1 \choose a-1}$ is divisible by $p$.
By Corollary~\ref{CorIndex}(i),  we thus have
$$\Triv_{\SG_{p^j}}(\otimes^{p^j}(f+g))=\Triv_{\SG_{p^j}}(\otimes^{p^j}f)+\Triv_{\SG_{p^j}}(\otimes^{p^j} g),$$
which demonstrates that the functor is additive.
\end{proof}

\begin{proof}[Proof of Proposition~\ref{CoolThm}]
Assume first that $\bT$ is locally semisimple via the tensor functor $F:\bT\to\bC$. 
By Lemma~\ref{AddFrob} and the assumption that $F$ map every short exact sequence to a split one, the composition $\Fr_{+}^{(j)}\circ F$ is exact. Hence $F\circ \Fr_{+}^{(j)}$ is exact by commutativity of \eqref{eqCommD}. Since $F$ is exact and faithful, the functor $\Fr_{+}^{(j)}:\bT\to\bT  $ is also exact. This proves that (i) implies (iii). Furthermore, property (iii) includes (ii) as a special case.

Now consider a monomorphism $\alpha:\unit\to X$ in $\bT$. We observe that $\alpha^p$ as defined in \ref{alphan} is given by $\Triv_{\SG_p}(\otimes^p\alpha)=\Fr_+(\alpha)$ composed with the monomorphism $\Triv_{\SG_p}(\otimes^pX)\hookrightarrow \Sym^pX$.
Now if $\Fr_{+}$ is exact, then $\Fr_+(\alpha):\unit\to \Fr_+(X)$ is a monomorphism and thus not zero. Consequently $\alpha^p$ is not zero and we apply Theorem~\ref{Thm1p} to show that (ii) implies (i).
\end{proof}

\begin{ex}\label{ExVec} Assume that $\mk$ is perfect.
Take $V\in \vecc$, consider the corresponding algebraic group $\GL(V)$ and the category of algebraic representations $\bT:=\Rep_{\mk}\GL(V)$. 
We have that $\Gamma^nV$, respectively $\Sym^nV$, is isomorphic to the Weyl module $V(n\epsilon_1)$, respectively dual Weyl module ${\mathrm{H}}^0(n\epsilon_1)$, see \cite[\S II.2.16]{Jantzen}. It follows from \cite[Propsition~II.4.13]{Jantzen} that the image of a nonzero morphism from $V(n\epsilon_1)$ to ${\mathrm{H}}^0(n\epsilon_1)$ is the simple module of highest weight $n\epsilon_1$. By \cite[Corollary~II.3.17]{Jantzen} we thus find  $\Fr_+^{(j)}V\simeq V^{(j)}$, where $V^{(j)}$ is the classical $j$-th Frobenius twist of $V$ in $\Rep\GL(V)$.
\end{ex}

From Lemma~\ref{AddFrob} and equation~\eqref{GammaVer} we find the following examples. The first example demonstrates in particular that $\Fr_+$, which is the image of a natural transformation from a lax monoidal to an oplax monoidal functor, is not a monoidal functor when $p\not=2$.
\begin{ex}\label{ExamTwist}
\begin{enumerate}[(i)]
\item If $p\not=2$, for $\bT=\svec$ and $V=V_{\oa}\oplus V_{\ob}\in \svec$, we have $\Fr_+V\simeq V_{\oa}$. 
\item More generally, for $X$ in $\ver_p$, we have $\Fr_+X\simeq \unit^{\oplus [X:\unit]}$.
\item If $p=2$, let $D$ be the triangular Hopf algebra of \cite[Example~3.3]{EHO} and $\bT$ the category of finite dimensional $D$-modules. Then $\Fr_+D=0$, for $D$ the regular $D$-module.
\item The condition that $\mk$ be perfect is required in Example~\ref{ExVec}. For instance, if $\mk=\mF_p(t)$ and $V=\mk$, then $\dim_{\mk}V^{(1)}=p$ but $\dim_\mk\Fr_+V=1$.
\end{enumerate}
\end{ex}

For $j\in\mZ_{>0}$, we denote by $\Fr_+^j:\bC\to\bC$ the composition $\Fr_+\circ\Fr_+\circ\cdots\circ\Fr_+$ of $j$ factors $\Fr_+$.

\begin{lemma}\label{Lempowers}
For all $X\in\bC$, the object $\Fr_+^{(j)}(X)$ is a subquotient of $\Fr_+^j(X)$.
\end{lemma}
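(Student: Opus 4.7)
The plan is to identify $\Fr_+^{(j)}(X)=\Triv_{\SG_{p^j}}(\otimes^{p^j}X)$ with the wreath-product analogue $\Triv_{Q_j}(\otimes^{p^j}X)$ using Proposition~\ref{PropGreen}, and then to exhibit the latter as a subquotient of $\Fr_+^j(X)$ by induction on $j$, using the wreath structure $Q_j=Q_{j-1}\wr\SG_p$ together with Lemma~\ref{LemSubQuo}.

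By Lemma~\ref{LemNormSyl}, $Q_j$ contains the normaliser $N_{\SG_{p^j}}(P_j)$ of a Sylow $p$-subgroup of $\SG_{p^j}$, so Proposition~\ref{PropGreen} yields a natural isomorphism $\Triv_{\SG_{p^j}}\simeq \Triv_{Q_j}$, and hence $\Fr_+^{(j)}(X)\simeq \Triv_{Q_j}(\otimes^{p^j}X)$. For the inductive step, apply Lemma~\ref{LemSubQuo}(iii) to the normal subgroup $Q_{j-1}^{\times p}\lhd Q_j$ with quotient $\SG_p$: this exhibits $\Triv_{Q_j}(\otimes^{p^j}X)$ as a subquotient of $\Triv_{\SG_p}\bigl(\Triv_{Q_{j-1}^{\times p}}(\otimes^{p^j}X)\bigr)$. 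One next compares $\Triv_{Q_{j-1}^{\times p}}(\otimes^p Y)$ with $\otimes^p \Triv_{Q_{j-1}}(Y)$, for $Y=\otimes^{p^{j-1}}X$ equipped with its $Q_{j-1}$-action, via the canonical morphisms built from the universal properties of ${\mathrm{H}}^0$ and ${\mathrm{H}}_0$ and the right exactness of $\otimes^p$. Combining this with the induction hypothesis that $\Triv_{Q_{j-1}}(\otimes^{p^{j-1}}X)$ is a subquotient of $\Fr_+^{j-1}(X)$ should yield the desired conclusion that $\Triv_{Q_j}(\otimes^{p^j}X)$ is a subquotient of $\Triv_{\SG_p}(\otimes^p\Fr_+^{j-1}(X))=\Fr_+^j(X)$. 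The base case $j=1$ is trivially an equality.

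The main obstacle lies in the intermediate comparison between $\Triv_{Q_{j-1}^{\times p}}(\otimes^p(-))$ and $\otimes^p\Triv_{Q_{j-1}}(-)$: in a tensor category all objects are flat and these functors agree, but in a general $\bC$ as in~\ref{SMC} the natural maps between them are not isomorphisms, and one must carefully track which natural maps are monomorphisms or epimorphisms in each layer. The argument should exploit that $\Triv_G$ itself arises as the image of a natural morphism $ {\mathrm{H}}^0(G,-)\Rightarrow {\mathrm{H}}_0(G,-)$ and is only a subquotient (not a direct summand) of the underlying object, so that Lemma~\ref{LemSubQuo}(i)--(iii) together with the functoriality of $\otimes^p$ suffice to propagate the subquotient relation through each step of the induction.
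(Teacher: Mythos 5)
Your argument is exactly the one the paper gives: pass from $\SG_{p^j}$ to $Q_j$ via Lemma~\ref{LemNormSyl} and Proposition~\ref{PropGreen}, then peel off one wreath layer at a time using Lemma~\ref{LemSubQuo}(iii). The paper's entire proof is the one-liner ``The lemma thus follows by iteration of Lemma~\ref{LemSubQuo}(iii),'' and your write-up simply unpacks that iteration; the base case $j=1$ being an equality is also correct.

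The obstacle you flag is, however, genuine and is not resolved by either your proposal or the paper's terse statement. In a general $\bC$ as in~\ref{SMC} the bifunctor $-\otimes-$ is only right exact. This does give ${\mathrm{H}}_0(Q_{j-1}^{\times p},\otimes^p Y)\simeq\otimes^p{\mathrm{H}}_0(Q_{j-1},Y)$, but ${\mathrm{H}}^0$ is a kernel and need not commute with $\otimes^p$; consequently $\Triv_{Q_{j-1}^{\times p}}(\otimes^p Y)$ and $\otimes^p\Triv_{Q_{j-1}}(Y)$ are related only by a comparison morphism that is in general neither a mono nor an epi, and no a priori subquotient relation between them is available. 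Compounding this, $\Triv_{\SG_p}$ itself (being the image of a natural transformation from a left exact to a right exact functor) preserves neither monomorphisms nor epimorphisms, so even a subquotient relation at the inner level would not automatically pass through the outer $\Triv_{\SG_p}$. Your phrase ``should yield the desired conclusion'' therefore covers the real content, which you have not supplied. If one assumes $\bC$ is a tensor category --- the only setting where the lemma is later used, in Remark~\ref{RemMon} and in the example built on \cite{BE} --- then $\otimes$ is bi-exact, ${\mathrm{H}}^0$ does commute with $\otimes^p$ for the product group, hence $\Triv_{Q_{j-1}^{\times p}}(\otimes^p Y)\simeq\otimes^p\Triv_{Q_{j-1}}(Y)$ as $\SG_p$-objects, and your induction closes cleanly. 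As written for a general $\bC$, your proposal reproduces the paper's argument faithfully but leaves exactly the same gap.
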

\begin{proof}
By Lemma~\ref{LemNormSyl} and Proposition~\ref{PropGreen}, we have
$\Fr_+^{(j)} X\simeq\Triv_{Q_j}(\otimes^{p^j}X),$
with $Q_j<\SG_{p^j}$ introduced in \ref{pgroups}. 
The lemma thus follows by iteration of Lemma~\ref{LemSubQuo}(iii).
\end{proof}

For the rest of the subsection, fix a tensor category $\bT$ and $X,Y\in\bT$.

\begin{lemma}\label{Lemj} The object $\Fr_+(X)\otimes \Fr_+(Y)$ is a subquotient of $\Fr_+(X\otimes Y)$ in $\bT$.
\end{lemma}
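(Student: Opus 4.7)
The plan is to identify $\Fr_+(X)\otimes\Fr_+(Y)$ with the $(\SG_p\times\SG_p)$-trivialisation of $(\otimes^pX)\otimes(\otimes^pY)$ and then exploit that, after transport through the braiding, this tensor product also carries the diagonal $\SG_p$-action that computes $\Fr_+(X\otimes Y)$.

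First, I view $\otimes^pX$ as an $(\SG_p\times 1)$-object and $\otimes^pY$ as a $(1\times\SG_p)$-object, so that $A:=(\otimes^pX)\otimes(\otimes^pY)$ becomes an $(\SG_p\times\SG_p)$-object in $\bT$. Applying Lemma~\ref{LemSubQuo}(ii) twice, first with the normal subgroup $\SG_p\times 1$ and then with $1\times\SG_p$, and using that every object of $\bT$ is flat (being dualisable, see \ref{dual}) so that partial (co)invariants commute with tensoring by the other factor, I obtain natural isomorphisms
$$
\mathrm{H}^0(\SG_p\times\SG_p,A)\;\simeq\;\mathrm{H}^0(\SG_p,\otimes^pX)\otimes\mathrm{H}^0(\SG_p,\otimes^pY),
$$
and analogously for $\mathrm{H}_0$. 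Because $-\otimes-$ is bi-exact on $\bT$, the image of a tensor product of morphisms is the tensor product of the images, so
$$
\Triv_{\SG_p\times\SG_p}(A)\;\simeq\;\Triv_{\SG_p}(\otimes^pX)\otimes\Triv_{\SG_p}(\otimes^pY)\;=\;\Fr_+(X)\otimes\Fr_+(Y).
$$

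Next, the standard isomorphism $\otimes^p(X\otimes Y)\simeq A$ built from the braiding intertwines the permutation action of $\SG_p$ on the left-hand side with the restriction of the $(\SG_p\times\SG_p)$-action on $A$ along the diagonal embedding $\Delta\colon\SG_p\hookrightarrow\SG_p\times\SG_p$. Hence
$$
\Fr_+(X\otimes Y)\;=\;\Triv_{\SG_p}(\otimes^p(X\otimes Y))\;\simeq\;\Triv_{\Delta\SG_p}(A).
$$

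Finally, Lemma~\ref{LemSubQuo}(i) applied to the inclusion $\Delta\SG_p<\SG_p\times\SG_p$ shows that $\Triv_{\SG_p\times\SG_p}(A)$ is a subquotient of $\Triv_{\Delta\SG_p}(A)$, and combining the three identifications delivers the claim. The main obstacle is really the first step: verifying that (co)invariants of a tensor product under a product of groups factor as the tensor product of the individual (co)invariants. This rests on right-exactness of $-\otimes-$ together with flatness of all objects in a tensor category; once this splitting is in place, the rest is purely formal manipulation with the diagonal embedding and Lemma~\ref{LemSubQuo}.
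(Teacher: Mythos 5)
Your argument follows exactly the same strategy as the paper's proof: identify $\Fr_+(X)\otimes\Fr_+(Y)\simeq\Triv_{\SG_p\times\SG_p}(A)$ and $\Fr_+(X\otimes Y)\simeq\Triv_{\Delta\SG_p}(A)$ for $A=(\otimes^pX)\otimes(\otimes^pY)$, then apply Lemma~\ref{LemSubQuo}(i) to the diagonal inclusion $\SG_p\hookrightarrow\SG_p\times\SG_p$. The only difference is that you spell out the verification of the first identification (via Lemma~\ref{LemSubQuo}(ii), flatness, and bi-exactness), which the paper leaves implicit; this extra detail is correct.
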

\begin{proof}
We have
$$\Fr_+(X)\otimes \Fr_+(Y)\simeq \Triv_{ \SG_{p}\times \SG_{p}}((\otimes^{p}X)\otimes (\otimes^{p}Y))\quad\mbox{and}\quad \Fr_+(X\otimes Y)\simeq \Triv_{ \SG_{p}}((\otimes^{p}X)\otimes (\otimes^{p}Y)).$$
The conclusion thus follows from Lemma~\ref{LemSubQuo}(i) for the diagonal embedding $\SG_{p}\hookrightarrow \SG_{p}\times \SG_{p}$.\end{proof}

\begin{rem}\label{RemMon} 
\begin{enumerate}[(i)]
\item If we have $p=2$ and $X\in\bT$, we have a short exact sequence
$$0\to\Lambda^2X\to\Gamma^2X\to \Fr_+X\to 0$$
and one can check directly that $\Fr_+$ is a symmetric monoidal functor.
\item By Example~\ref{ExVec}, the commutative diagram~\eqref{eqCommD} and Lemmata~\ref{Lemj} and~\ref{LemFaith}, we find that in tannakian categories we have $\Fr_+(X)\otimes \Fr_+(Y)\simeq\Fr_+(X\otimes Y)$.
\item Example~\ref{ExamTwist}(ii) and Lemma~\ref{Lempowers} similarly show that in tensor categories which admit a fibre functor over an algebra in $\ver_p$, we have $\Fr_+^{(j)}\simeq \Fr_+^j$.
\end{enumerate}
\end{rem}

\begin{lemma}\label{LemSFr}Let $X$ and $S$ be self-dual objects, where $S$ is simple. If $[\otimes^{p^j}X:S]=1=[\Sym^{p^j}X:S]$, then $[\Fr_+^{(j)}X:S]=1$.
\end{lemma}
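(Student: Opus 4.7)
The plan is to track the composition multiplicity of $S$ through the defining factorisation
\[
\Gamma^{p^j}X\;\hookrightarrow\;\otimes^{p^j}X\;\twoheadrightarrow\;\Sym^{p^j}X
\]
of the morphism whose image is $\Fr_+^{(j)}X$. Write $M:=\otimes^{p^j}X$, $\Gamma:=\Gamma^{p^j}X$, $\Sigma:=\Sym^{p^j}X$, and let $M'\subset M$ denote the kernel of $M\twoheadrightarrow\Sigma$, so that $\Fr_+^{(j)}X\simeq \Gamma/(\Gamma\cap M')$.

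First, I would translate the multiplicity hypotheses into constraints on $M'$ and $\Gamma$. From $[M:S]=1=[\Sigma:S]$ and the short exact sequence $0\to M'\to M\to\Sigma\to 0$, the subobject $M'$ of $M$ satisfies $[M':S]=0$; hence $[\Gamma\cap M':S]=0$ as well, and the exact sequence
\[
0\to \Gamma\cap M'\to \Gamma\to\Fr_+^{(j)}X\to 0
\]
gives the key identity $[\Fr_+^{(j)}X:S]=[\Gamma:S]$.

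Next, I would compute $[\Gamma:S]$ using self-duality. Since every object in $\bT$ is dualisable, Lemma~\ref{LemDualSym} applies and yields $\Gamma^{p^j}(X)^\vee\simeq\Sym^{p^j}(X^\vee)$. The self-duality of $X$ then gives $\Gamma^\vee\simeq\Sigma$. Because $(-)^\vee$ is an exact contravariant autoequivalence of $\bT$ that sends simples to simples, composition multiplicities are preserved in the form $[A:T]=[A^\vee:T^\vee]$. Applying this with $T=S$ (self-dual) to $A=\Gamma$ gives $[\Gamma:S]=[\Sigma:S]=1$, and combining with the identity above yields $[\Fr_+^{(j)}X:S]=1$.

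There is no real obstacle here beyond keeping the dualities straight; the only point worth verifying carefully is that the natural morphism $\Gamma\to\Sigma$ is indeed (up to the identifications from self-duality) its own transpose, which is visible from the definitions since both exact sequences \eqref{eqSym} defining $\Sigma$ and the analogous one defining $\Gamma$ are interchanged by dualisation when $X$ is self-dual. Once that is in hand, the multiplicity bookkeeping above is immediate.
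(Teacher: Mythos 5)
Your proof is correct and follows essentially the same approach as the paper: establish $[\Gamma^{p^j}X:S]=1$ via the duality $(\Gamma^{p^j}X)^\vee\simeq\Sym^{p^j}X$ from Lemma~\ref{LemDualSym}, then use $[\otimes^{p^j}X:S]=1$ to locate the unique copy of $S$ inside the image of $\Gamma^{p^j}X\to\Sym^{p^j}X$; your version just makes the multiplicity bookkeeping via $M'$ and $\Gamma\cap M'$ more explicit. The closing concern about the map $\Gamma\to\Sigma$ being its own transpose is unnecessary --- your argument uses only composition multiplicities and the isomorphism type of $\Gamma^\vee$, not any compatibility of the morphism with the duality.
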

\begin{proof}
By self-duality we also have $[\Gamma^{p^j}X:S]=1$. Since $[\otimes^{p^j}X:S]=1$, it follows that the unique subquotient isomorphic to $S$ must be a subquotient of the image of $\Gamma^{p^j}X\to \Sym^{p^j}X$.
\end{proof}

\subsection{The skew symmetric and internal twist}
\subsubsection{} For $X\in\bC$ we can restrict the $\SG_p$-action on $\otimes^pX$ to the subgroup $\CG_p<\SG_p$, yielding
$$\bC\to\Rep(\CG_{p},\bC),\quad X\mapsto \otimes^pX.$$

\begin{ddef}
The {\bf internal Frobenius twist} is the functor
$$\Fr_{{\rm in}}=\Triv_{\CG_p}\circ \otimes^p:\;\bC\,\to\, \bC.$$
\end{ddef}

\begin{lemma}\label{AddFrobin}
The functor $\Fr_{{\rm in}}$ is additive. 
Moreover, a tensor category $\bT$ is locally semisimple if and only if $\Fr_{\innn}:\bT\to\bT$ is exact.
\end{lemma}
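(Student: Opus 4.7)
The plan is to mirror the two-step structure of Lemma~\ref{AddFrob} combined with Proposition~\ref{CoolThm}, with $\CG_p$ taking the role of $\SG_{p^j}$, and to bridge the two halves by exhibiting $\Fr_+$ as a natural direct summand of $\Fr_{\innn}$.

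For additivity, I would reproduce the computation in the proof of Lemma~\ref{AddFrob}. Expanding $\otimes^p(f+g)=\sum_{a+b=p}\Ind^{\SG_p}_{\SG_a\times\SG_b}((\otimes^a f)\otimes(\otimes^b g))$ in $\Rep(\SG_p,\bC)$, restricting to $\CG_p$, and applying $\Triv_{\CG_p}$: for $0<a<p$ the index $\binom{p}{a}=|\SG_p:\SG_a\times\SG_b|$ vanishes in $\mk$ while $|\SG_p:\CG_p|=(p-1)!$ is invertible, so Corollary~\ref{CorIndex}(ii) (applied with $G=\SG_p$, $H=\SG_a\times\SG_b$, $L=\CG_p$) annihilates every intermediate term and only the extremal contributions survive, yielding $\Fr_{\innn}(f)+\Fr_{\innn}(g)$.

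For the forward direction of the exactness characterization, suppose $\bT$ is locally semisimple via a tensor functor $F:\bT\to\bC$ that splits every short exact sequence. Since $F$ is exact and monoidal and since $\Triv_{\CG_p}$ is built from ${\mathrm{H}}^0$, ${\mathrm{H}}_0$ and images, the canonical isomorphism $F\circ\Fr_{\innn}\simeq\Fr_{\innn}\circ F$ holds. The additive functor $\Fr_{\innn}$ sends the split image $F(\Sigma)$ of any short exact sequence $\Sigma$ in $\bT$ to a split sequence in $\bC$, so $F\circ\Fr_{\innn}$ is exact; faithful exactness of $F$ (Lemma~\ref{LemFaith}) then transfers exactness back to $\Fr_{\innn}:\bT\to\bT$.

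The converse is where the real work lies. I would prove that $\Fr_+$ is a natural direct summand of $\Fr_{\innn}$; exactness of $\Fr_{\innn}$ then passes to $\Fr_+$, and Proposition~\ref{CoolThm} finishes the argument. Write $N:=N_{\SG_p}(\CG_p)\simeq\CG_p\rtimes\Aut(\CG_p)$, of order $p(p-1)$, in which $\CG_p$ is the unique Sylow $p$-subgroup. Proposition~\ref{PropGreen}, applied with $G=\SG_p$ and $H=L=N$, gives a natural isomorphism $\Triv_{\SG_p}\simeq\Triv_N$; and because $|N:\CG_p|=p-1$ is invertible in $\mk$, Lemma~\ref{DirSumm}(iii) exhibits $\Triv_N$ as a natural direct summand of $\Triv_{\CG_p}\circ\Res^N_{\CG_p}$ via the averaging idempotent from Lemma~\ref{DirSumm}(i)--(ii). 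Precomposing with $\otimes^p:\bT\to\Rep(\SG_p,\bT)$, viewed as landing in $\Rep(N,\bT)$, makes $\Fr_+$ a natural direct summand of $\Fr_{\innn}$. The one step that warrants care is that the averaging retraction is genuinely natural on all of $\Rep(N,\bT)$, but this is immediate from its object-wise formula, so the summand structure survives precomposition with $\otimes^p$.
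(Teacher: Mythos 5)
Your proof is correct and follows the same structure as the paper's (additivity via Corollary~\ref{CorIndex}(ii) mirroring Lemma~\ref{AddFrob}, forward direction as in Proposition~\ref{CoolThm}, converse via exhibiting $\Fr_+$ as a direct summand of $\Fr_{\innn}$). The one place where you diverge is that you take an unnecessary detour in establishing the direct summand: you route through $N:=N_{\SG_p}(\CG_p)$, invoking Proposition~\ref{PropGreen} to get $\Triv_{\SG_p}\simeq\Triv_N$ and then Lemma~\ref{DirSumm}(iii) for the inclusion $\CG_p<N$ of index $p-1$. This works, but it is not needed: Lemma~\ref{DirSumm}(iii) applies directly with $G=\SG_p$ and $H=\CG_p$, since the index $|\SG_p:\CG_p|=(p-1)!$ is already prime to $p$ and hence invertible in $\mk$. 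That lemma imposes no normality hypothesis on $H$ (the averaging maps in parts (i)--(ii) are defined for an arbitrary subgroup of invertible index), so the single application already gives $\Triv_{\SG_p}$ as a natural direct summand of $\Triv_{\CG_p}\circ\Res^{\SG_p}_{\CG_p}$, and precomposing with $\otimes^p$ does the rest. The Green correspondence (Proposition~\ref{PropGreen}) is the tool reserved for indices that are \emph{not} invertible, as in the passage from $\SG_{p^j}$ to $Q_j$ in Lemma~\ref{Lempowers}, and costs nothing here; dropping it makes your proof identical to the paper's.
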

\begin{proof}
Additivity follows as in Lemma~\ref{AddFrob}, using Corollary~\ref{CorIndex}(ii).
Now consider a tensor category $\bT$. By Lemma~\ref{DirSumm}(iii), the functor $\Fr_{\innn}$ contains $\Fr_+$ as a direct summand. Hence Proposition~\ref{CoolThm} shows that if $\Fr_{\innn}$ is exact, $\bT$ must be locally semisimple. The claim in the other direction follows as in the proof of Proposition~\ref{CoolThm}.
\end{proof}

For the rest of the subsection we assume that $p>2$. 
\begin{ex}\label{ExamTwistin}
Set $\bT=\svec$ and take $V\in \svec$. We have $\Fr_{\innn}(V)=V^{(1)}\simeq V$, the ordinary Frobenius twist of $V$ as a $\mk$-module.
\end{ex}

\begin{ddef}
For $j\in\mN$, the $j$-th {\bf skew symmetric} Frobenius twist is the functor
$$\Fr^{(j)}_{-}=\Triv_{\SG_{p^j}}\circ( \sgn\otimes) \circ \otimes^{p^j}:\;\bC\,\to\, \bC ,\quad X\mapsto \Triv_{\SG_{p^j}}(\sgn\otimes (\otimes^{p^j}X)).$$
\end{ddef}
The following lemma follows from the definition and as above.
\begin{lemma}\label{LemSkewTwist} Take $j\in\mN$.
\begin{enumerate}[(i)]
\item $\Fr_-^{(j)}$ is additive.
\item $\Fr_-^{(j)}$ is exact if $\bC$ is a locally semisimple tensor category.
\item In $\bC\boxtimes\svec$ we have
$$\Fr_{-}^{(j)}(X)\boxtimes \bar{\unit}\;\simeq\; \Fr_+^{(j)}(X\boxtimes \bar{\unit}).$$
\end{enumerate}
\end{lemma}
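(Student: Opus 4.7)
For (i), I would follow the template of Lemma~\ref{AddFrob} verbatim, modulo a single extra manipulation. As there we expand
\[
\otimes^{p^j}(f+g)=\sum_{a+b=p^j}\Ind^{\SG_{p^j}}_{\SG_a\times\SG_b}((\otimes^af)\otimes(\otimes^bg)),
\]
then tensor the whole equality with $\sgn$ and apply $\Triv_{\SG_{p^j}}$. By the projection formula (Lemma~\ref{TensorInd}), $\sgn\otimes\Ind^{\SG_{p^j}}_{\SG_a\times\SG_b}(-)\simeq \Ind^{\SG_{p^j}}_{\SG_a\times\SG_b}(\Res\sgn\otimes -)$, so for $a\notin\{0,p^j\}$ the index $\binom{p^j}{a}=|\SG_{p^j}:\SG_a\times\SG_b|$ is divisible by $p$, and Corollary~\ref{CorIndex}(i) forces $\Triv_{\SG_{p^j}}$ of such a summand to vanish. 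Only the extremal terms $a=0$ or $a=p^j$ survive, giving additivity.

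For (ii), since $\bC$ is locally semisimple there is a tensor functor $F\colon\bC\to\bD$ splitting every short exact sequence (Definition~\ref{DefLSS}). The exact tensor functor $F$ commutes with $\otimes^{p^j}$, with the construction of $\sgn\otimes(-)$ (being a finite direct sum with a modified symmetric-group action), and with $\Triv_{\SG_{p^j}}$ (which is defined as the image of a canonical morphism between finite (co)limits, preserved by exact tensor functors). Thus one has a natural isomorphism $F\circ\Fr_-^{(j)}\simeq \Fr_-^{(j)}\circ F$. Given a short exact sequence $\Sigma$ in $\bC$, $F(\Sigma)$ splits, so $\Fr_-^{(j)}F(\Sigma)$ is a split short exact sequence by (i); hence $F(\Fr_-^{(j)}\Sigma)$ is exact. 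Because $F$ is exact and faithful (Lemma~\ref{LemFaith}), it reflects exactness, and therefore $\Fr_-^{(j)}\Sigma$ is exact.

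For (iii), the point is a book-keeping of the symmetric braiding on $\bC\boxtimes\svec$. Since $p\neq 2$, $p^j$ is odd, so $\otimes^{p^j}\bar\unit=\bar\unit$ as an object of $\svec$, but for every $\sigma\in\SG_{p^j}$ the braiding-induced endomorphism of $\otimes^{p^j}\bar\unit$ equals $\sgn(\sigma)\cdot\id_{\bar\unit}$ (each simple transposition acts by $-1$ on $\bar\unit\otimes\bar\unit$). Since the braiding on the external product factorises, one obtains an isomorphism of $\SG_{p^j}$-objects in $\bC\boxtimes\svec$
\[
\otimes^{p^j}(X\boxtimes\bar\unit)\;\simeq\;\bigl(\sgn\otimes \otimes^{p^j}X\bigr)\boxtimes\bar\unit,
\]
where on the right $\SG_{p^j}$ acts trivially on the $\bar\unit$-factor. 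Applying $\Triv_{\SG_{p^j}}$ and using that $\bar\unit$ carries the trivial action (so the $\Triv$ passes through the outer $\boxtimes\bar\unit$) yields
\[
\Fr_+^{(j)}(X\boxtimes\bar\unit)\;\simeq\;\Triv_{\SG_{p^j}}\bigl(\sgn\otimes\otimes^{p^j}X\bigr)\boxtimes\bar\unit\;=\;\Fr_-^{(j)}(X)\boxtimes\bar\unit.
\]
The only delicate point I expect is verifying the $\SG_{p^j}$-equivariant identification in the displayed formula; this is entirely sign-tracking in the braiding of $\bC\boxtimes\svec$ and is the main (but routine) obstacle.
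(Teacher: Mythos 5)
Your proof is correct and matches what the paper intends: the paper itself offers no details (``follows from the definition and as above''), and your argument supplies exactly the expected adaptations of Lemma~\ref{AddFrob} (with the $\sgn$-twist handled by the projection formula of Lemma~\ref{TensorInd} before invoking Corollary~\ref{CorIndex}(i)) and of the exactness argument in Proposition~\ref{CoolThm}, plus the routine sign-tracking for (iii).
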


\begin{qu}Let $\bT$ be a tensor category.
\begin{enumerate}[(i)]
\item If $p=3$, one finds $\Fr_{\innn}=\Fr_+\oplus \Fr_-$. Is the same equation true for $p>3$?
\item If $p=3$, is $\Fr_{\innn}$ monoidal? Closely related, if $p=3$, is $\Fr_{\innn}\simeq\Fr$, with $\Fr$ as in Definition~\ref{DefFr} below? 
\item Do we have $\Fr_+\circ\Fr_-=0=\Fr_-\circ \Fr_+$?
\end{enumerate}
\end{qu}

\subsection{The external twist}
Recall the semisimplification functor $S$ from Definition~\ref{defss} and the Verlinde category $\ver_p=\overline{\Rep \CG_p}$ in~\ref{VerpSub}.
\begin{ddef}\label{DefFr}
The {\bf external Frobenius twist} is the functor
$$\Fr= S_{\CG_p}\circ \otimes^p :\,\bC\to\bC\boxtimes \ver_p,\quad X\,\mapsto\, \bigoplus_{i=1}^{p-1}\Triv_{\CG_p}(M_i\otimes (\otimes^p X))\boxtimes\overline{M}_i.$$
\end{ddef}
Note that we have $\Fr_+\simeq\Fr$ if $\charr(\mk)=2$.

\begin{lemma}
The functor $\Fr$ is additive. If $\bT$ is a tensor category, then $\Fr$ is exact if and only if $\bT$ is locally semisimple.
\end{lemma}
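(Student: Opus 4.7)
The plan is to handle the additivity and exactness claims as separate arguments, modelled on the proofs of Lemmas~\ref{AddFrob}, \ref{AddFrobin} and Proposition~\ref{CoolThm}. For additivity, I would expand
$$\otimes^p(f+g)=\sum_{a+b=p}\Ind^{\SG_p}_{\SG_a\times\SG_b}((\otimes^af)\otimes(\otimes^bg))$$
in $\Rep(\SG_p,\bC)$ and show that the ``cross terms'' with $1\le a\le p-1$ are killed by $S_{\CG_p}\circ\Res^{\SG_p}_{\CG_p}$. For such $a$, $|\SG_a\times\SG_b|=a!b!$ is coprime to $p$, so by Mackey's theorem (Lemma~\ref{LemMack}) the restriction to $\CG_p$ of an induction from $\SG_a\times\SG_b$ decomposes as a direct sum of inductions $\Ind^{\CG_p}_1(-)$ from the trivial subgroup. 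Combining Lemma~\ref{TensorInd} (which yields $M_i\otimes\Ind^{\CG_p}_1 Y\simeq \Ind^{\CG_p}_1 Y^{\oplus i}$) with Corollary~\ref{CorIndex}(i) applied to $1<\CG_p$ then kills each $\Triv_{\CG_p}(M_i\otimes-)$ on these terms, giving $\Fr(f+g)=\Fr(f)+\Fr(g)$.

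For the exactness claim, my main observation is that $\Fr_+$ is a direct summand of $\Fr$. Indeed, the summand of $\Fr(X)$ indexed by $M_1=\unit$ is $\Fr_{\innn}(X)\boxtimes\unit$, and since $|\SG_p:\CG_p|=(p-1)!$ is invertible in $\mk$, Lemma~\ref{DirSumm}(iii) exhibits $\Fr_+$ as a direct summand of $\Fr_{\innn}$. Hence exactness of $\Fr:\bT\to\bT\boxtimes\ver_p$ forces exactness of $\Fr_+\boxtimes\unit$, and thence of $\Fr_+:\bT\to\bT$ via the fully faithful embedding $\bT\simeq\bT\boxtimes\unit\hookrightarrow\bT\boxtimes\ver_p$; Proposition~\ref{CoolThm} then delivers local semisimplicity. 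For the converse, assume $\bT$ is locally semisimple, witnessed by a tensor functor $F:\bT\to\bD$ that splits every short exact sequence. I would produce a natural isomorphism $(F\boxtimes\Id_{\ver_p})\circ\Fr_\bT\simeq\Fr_\bD\circ F$ by using that each constituent of $\Fr$ (namely $\otimes^p$, $M_i\otimes(-)$, and $\Triv_{\CG_p}$) commutes with the exact tensor functor $F$. Additivity of $\Fr_\bD$ combined with the splitting hypothesis on $F$ makes $\Fr_\bD\circ F$ exact; and because $\bT\boxtimes\ver_p$ is a tensor category, $F\boxtimes\Id_{\ver_p}$ is a tensor functor out of a tensor category, hence exact and faithful by Lemma~\ref{LemFaith}, forcing $\Fr_\bT$ itself to be exact.

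The step I expect to be most delicate is additivity: although it parallels Lemmas~\ref{AddFrob}--\ref{AddFrobin}, the coefficients $M_i$ now sit between $\Triv_{\CG_p}$ and the Mackey decomposition, so Corollary~\ref{CorIndex}(ii) is not directly applicable. The critical bookkeeping is to first absorb the $M_i$ into the induction via Lemma~\ref{TensorInd}, and only then invoke Corollary~\ref{CorIndex}(i). Once additivity is in hand, the equivalence with local semisimplicity is essentially formal, reducing via the direct-summand structure of $\Fr$ to the already-established Proposition~\ref{CoolThm}.
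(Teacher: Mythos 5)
Your proposal is correct and follows the same route the paper intends (which is stated only by reference to Lemmas~\ref{AddFrob}, \ref{AddFrobin} and \ref{TensorInd}): expand $\otimes^p(f+g)$ via transfers from $\SG_a\times\SG_b$, use Mackey to reduce the restriction to $\CG_p$ to inductions from the trivial subgroup, absorb $M_i$ via Lemma~\ref{TensorInd}, and kill the cross terms via Corollary~\ref{CorIndex}(i); then for exactness, identify $\Fr_{\innn}(-)\boxtimes\unit$ (and hence $\Fr_+(-)\boxtimes\unit$ by Lemma~\ref{DirSumm}(iii)) as a direct summand and quote Proposition~\ref{CoolThm}, with the converse transported through a splitting tensor functor as in the proof of Proposition~\ref{CoolThm}. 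The details you supply are exactly the right fleshing-out of the paper's terse pointers, including the correct observation that the presence of the coefficients $M_i$ forces one to pass through Mackey's theorem at the $\CG_p$ level before applying Lemma~\ref{TensorInd} and Corollary~\ref{CorIndex}(i).
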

\begin{proof}
That $\Fr$ is additive follows as in the proof of Lemma~\ref{AddFrob}, using additionally Lemma~\ref{TensorInd}. The statement about locally semisimple tensor categories follows as in Lemma~\ref{AddFrobin}.\end{proof}

\begin{prop}
If $\bT$ is semisimple and schurian, then $\Fr$ coincides with the similarly denoted functor in \cite[Definition~3.5]{Ostrik}.
\end{prop}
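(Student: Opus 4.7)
The plan is to identify both functors explicitly via the isotypic decomposition of $\otimes^p X$ and then invoke Proposition~\ref{propsimplification}. Since $\bT$ is semisimple and schurian, the canonical $\mk$-linear functor $\bT \boxtimes \Rep \CG_p \to \Rep(\CG_p, \bT)$ is an equivalence. Through this equivalence, the $\CG_p$-object $\otimes^p X$ (with its $\CG_p$-action coming from the braiding, as in \eqref{funtens}) decomposes canonically as
\[
\otimes^p X \;\simeq\; \bigoplus_{i=1}^{p} Y_i(X) \boxtimes M_i
\]
for uniquely determined objects $Y_i(X) \in \bT$, where $M_1, \ldots, M_p$ are the indecomposable $\mk\CG_p$-modules listed in \ref{SecCG}.

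Next I would recall the definition in \cite[Definition~3.5]{Ostrik}: for a semisimple schurian tensor category, the Frobenius functor is defined precisely by keeping only the summands indexed by $i < p$ (since $M_p$ is the regular representation, which becomes zero in $\ver_p = \overline{\Rep\CG_p}$ by Lemma~\ref{LemRepG}) and replacing each $M_i$ by its image $\overline{M}_i$ in $\ver_p$. In other words, Ostrik's functor sends $X$ to $\bigoplus_{i=1}^{p-1} Y_i(X) \boxtimes \overline{M}_i$ in $\bT \boxtimes \ver_p$.

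On the other hand, by Definition~\ref{DefFr} our functor is $\Fr = S_{\CG_p} \circ \otimes^p$. Applying Proposition~\ref{propsimplification} to $\bA = \bT$ and $G = \CG_p$, the composite
\[
\bT \boxtimes \Rep \CG_p \;\stackrel{\sim}{\to}\; \Rep(\CG_p, \bT) \;\stackrel{S_{\CG_p}}{\to}\; \bT \boxtimes \ver_p
\]
acts as $\Id_{\bT} \boxtimes (M \mapsto \overline{M})$. Applied to the decomposition displayed above, this produces exactly $\bigoplus_{i=1}^{p-1} Y_i(X) \boxtimes \overline{M}_i$, since $\overline{M}_p = 0$ and $\overline{M}_i$ is simple for $1 \le i \le p-1$. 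This matches Ostrik's description term by term, giving the claimed coincidence of functors. Naturality in $X$ follows because the isotypic decomposition is natural and both constructions are functorial.

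The only substantive step is the matching of the Ostrik definition with the isotypic form above; once that is in place, Proposition~\ref{propsimplification} gives the identification essentially for free. I do not anticipate any real obstacle: one just needs to be careful that the $\CG_p$-equivariant structure on $\otimes^p X$ used by Ostrik coincides with ours (both are generated by cycling the tensor factors using the symmetric braiding), and that the semisimplification map on the Rep($\CG_p$)-factor is implemented on both sides by the same quotient functor $\Rep \CG_p \to \ver_p$.
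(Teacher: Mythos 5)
Your proof is correct and follows the same route as the paper: the paper's own (one-line) proof is ``compare the definitions and apply Proposition~\ref{propsimplification},'' and you have simply written out that comparison explicitly via the isotypic decomposition of $\otimes^p X$ under the equivalence $\bT\boxtimes\Rep\CG_p\simeq\Rep(\CG_p,\bT)$. No gap, same key ingredient.
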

\begin{proof}
This follows by comparing the definitions and applying Proposition~\ref{propsimplification}.
\end{proof}


\section{Characterising locally free objects}\label{SecTanObj}
We fix a field $\mk$ with $p:=\charr(\mk)>0$ and a tensor category $\bT$ over $\mk$. We will provide internal characterisations for locally $\bV$-free objects for $\bV$ equal to $\vecc$ and
$\svec$.

\subsection{Locally free objects}
By {\bf locally free objects} in $\bT$ we refer to locally $\vecc$-free objects. We will also replace $\bT\boxtimes\vecc$ by the equivalent $\bT$ in all definitions. Since $\scA\otimes X\simeq \scA^{\oplus \alpha}$ 
for $X\in\bT$ and some cardinality $\alpha$ implies that $\alpha$ is finite, by Lemma~\ref{LemFaith},
there is no need to specify to `locally free objects of finite rank'.

\begin{ddef}\label{DefTanObj}
For $X\in\bC$, with $\bC$ a monoidal category as in \ref{SMC}, we define 
$$[X]_{\unit} \;=\; \sup\{n\in\mN\,|\, \Lambda^n( \Fr_+^{(j)}X)\not=0,\,\mbox{ for all}\; j\in\mN\}\;\; \in\,\mN\cup\{\infty\}.$$
\end{ddef}


\begin{thm}\label{ThmTanObj}
An object $X\in\bT$  is locally free if and only if one of the following equivalent conditions is satisfied:
\begin{enumerate}[(i)]
\item $[X]_{\unit}\in\mN$ and $\Lambda^rX=0$ if $r>[X]_{\unit}$.
\item \begin{enumerate}
\item there exists $n\in \mN$ such that $\Lambda^nX=0$, and
\item if $\Lambda^n\Fr^{(j)}_+(X)=0$ for some $j,n\in\mN$, then also $\Lambda^nX=0$.
\end{enumerate}
\item There exists a symmetric monoidal category $\bC$ as in \ref{SMC}, a tensor functor $F:\bT\to\bC$ and $m\in\mN$ such that $F(X)\simeq\unit^{\oplus m}$ in $\bC$.
\end{enumerate}
\end{thm}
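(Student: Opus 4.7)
The plan is to prove the cycle (iii)$\Rightarrow$(iv)$\Rightarrow$(ii)$\Leftrightarrow$(i), and close it by the main implication (ii)$\Rightarrow$(iii). For (iii)$\Rightarrow$(iv), take $F := F_{\scA}$ from~\eqref{extsca} with $\scA$ a splitting algebra for $X$: then $F(X)\simeq\scA^{\oplus m}\simeq\unit_{\Mod_\scA}^{\oplus m}$. For (iv)$\Rightarrow$(ii), any tensor functor $F:\bT\to\bC$ is exact and faithful by Lemma~\ref{LemFaith}, so it commutes with the image construction $\Lambda^n$ and, through~\eqref{eqCommD}, with $\Fr_+^{(j)}$. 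Combining Lemma~\ref{AddFrob} (additivity), the identity $\Fr_+^{(j)}\unit\simeq\unit$, and iterated use of~\eqref{PropLam}, one obtains $F(\Lambda^n\Fr_+^{(j)}X)\simeq\unit^{\oplus\binom{m}{n}}$, which vanishes exactly when $n>m$; faithfulness of $F$ then forces both clauses of~(ii).

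For (i)$\Leftrightarrow$(ii), set $N_X:=\min\{n\in\mN:\Lambda^nX=0\}-1\in\mN\cup\{\infty\}$ (the exterior rank of $X$). Since $\Fr_+^{(0)}X\simeq X$ directly from the definition, $[X]_\unit\le N_X$ always. Decomposing the $\SG_{n+1}$-antisymmetriser along the coset decomposition $\SG_{n+1}/\SG_n$ gives an inclusion $\Lambda^{n+1}Y\hookrightarrow\Lambda^nY\otimes Y$, so $\Lambda^nY=0\Rightarrow\Lambda^{n+1}Y=0$; hence the complement of $S:=\{n:\Lambda^n\Fr_+^{(j)}X\neq 0,\ \forall j\}$ in $\mN$ is upward-closed, and $[X]_\unit<\infty$ forces $S=\{0,\ldots,[X]_\unit\}$. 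Under~(i) one deduces $N_X\le[X]_\unit<\infty$, hence $N_X=[X]_\unit$, yielding both parts of~(ii). Conversely, the contrapositive of (ii)(b) reads $\Lambda^nX\neq 0\Rightarrow n\in S$, giving $[X]_\unit\ge N_X$; together with (ii)(a) this yields (i).

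The main implication is (ii)$\Rightarrow$(iii), and I set $m := N_X$. My plan is to construct $\scA\in\Alg\bT$ as the universal algebra classifying $\scA$-linear isomorphisms $\scA\otimes X\simeq\scA^{\oplus m}$: by a Yoneda-type argument in the spirit of Lemma~\ref{CommSplit}, $\scA$ is presented as the quotient of $\Sym(X^{\oplus m}\oplus(X^\vee)^{\oplus m})\in\Alg\Ind\bT$ by the ideal generated by the two families of relations enforcing mutual invertibility; once $\scA\neq 0$, the required local freeness is automatic from the construction. The substantive content — and the main obstacle — is showing $\scA\neq 0$. My plan is to mimic the iteration-of-Frobenius-twists technique of the proof of Theorem~\ref{Thm1p}: after passing to a simple quotient via Lemma~\ref{LemMaxId}, the non-vanishing of $\Lambda^m\Fr_+^{(j)}X$ for every $j$ supplied by~(ii)(b), combined with Lemma~\ref{LemNormSyl} and Proposition~\ref{PropGreen}, produces non-zero ``$p^j$-th determinants'' $\unit\to\Sym^{p^j}(\Lambda^m X)$ for every $j\in\mN$, which in turn assemble into a compatible supply of non-zero elements of arbitrarily high polynomial degree inside $\scA$, preventing collapse. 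Condition~(ii)(b) is precisely what is needed to sustain this iteration in positive characteristic, where, as Example~\ref{ExamTwist}(iii) illustrates, the failure of Frobenius-twist non-vanishing is the genuine obstruction to local freeness.
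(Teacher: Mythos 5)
Your reductions (iii)$\Rightarrow$(iv)$\Rightarrow$(ii) and (i)$\Leftrightarrow$(ii) are correct and more detailed than the paper's own treatment, which dispatches (iv)$\Rightarrow$(ii) and (ii)$\Rightarrow$(i) in one line each. In particular your use of $\Fr_+^{(0)}X\simeq X$ to get $[X]_\unit\le N_X$, and the observation that the complement of the set $S$ is upward-closed via $\Lambda^nY=0\Rightarrow\Lambda^{n+1}Y=0$, correctly establish $[X]_\unit=N_X$ when either of (i) or (ii) holds. (One small slip: the decomposition of the antisymmetriser along $\SG_{n+1}/\SG_n$ realises $\Lambda^{n+1}Y$ as a \emph{quotient} of $\Lambda^nY\otimes Y$, not a subobject; the vanishing implication survives either way.)

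The genuine gap is (ii)$\Rightarrow$(iii), which you acknowledge is only a plan. Your approach of writing down a single universal algebra $\scA$ classifying trivialisations $\scA\otimes X\simeq\scA^{\oplus m}$, and then arguing non-vanishing via ``$p^j$-th determinants'' $\unit\to\Sym^{p^j}(\Lambda^mX)$, is not carried out, and I don't see how the iteration used in Theorem~\ref{Thm1p} (via Lemma~\ref{LemNormSyl} and Proposition~\ref{PropGreen}) transports from $X$ to $\Lambda^mX$: those results operate on $\Triv_{\SG_{p^j}}(\otimes^{p^j}X)$, and the hypothesis of (ii)(b) is about $\Lambda^n\Fr_+^{(j)}X$, not about Frobenius twists of the determinant object. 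In a general tensor category $\Lambda^m\Fr_+^{(j)}X$ is not $\Fr_+^{(j)}(\Lambda^mX)$. Moreover, even if you could produce non-zero elements of arbitrarily high degree in your proposed quotient of $\Sym(X^{\oplus m}\oplus(X^\vee)^{\oplus m})$, you have not argued why they survive modulo the two families of ``mutual invertibility'' relations.

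The paper avoids the all-at-once trivialisation and instead proceeds inductively. Proposition~\ref{Thm2} shows $[X]_\unit\ge d$ iff there is a non-zero $\scA$ with $\scA\otimes X\simeq\scA^{\oplus d}\oplus N$, by induction on $d$: each step peels off a single copy of $\unit$ using Corollary~\ref{CorMj}, whose engine is Deligne's Lemma~\ref{LemDeldelta}. That lemma reduces the existence of the splitting algebra to the non-vanishing, for all $n$, of the composite $\Gamma^nM\hookrightarrow\otimes^nM\tto\Sym^nM$, i.e.\ to $\Triv_{\SG_n}(\otimes^nM)\ne 0$, which by Lemma~\ref{Lemnp1} is exactly $[M]_\unit>0$; the bookkeeping in Lemma~\ref{LemExtra}(i) and equation~\eqref{brackR} ensures the remainder $M$ at each stage still has $[M]_\unit>0$. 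Once Proposition~\ref{Thm2} supplies $\scA\otimes X\simeq\scA^{\oplus d}\oplus N$ with $d=[X]_\unit$, the hypothesis $\Lambda^{d+1}X=0$ from (i) and the direct-sum formula \eqref{PropLam} force $N=0$, giving local freeness. You should replace your sketch of the non-vanishing of the universal algebra by this inductive argument, or at minimum establish an analogue of Lemma~\ref{LemDeldelta} and Corollary~\ref{CorMj} for your universal algebra; as it stands the crucial implication is not proved.
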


\begin{ex}\label{ExLam2}
If $\Lambda^2X=0$ and $X\not=0$, then $X$ is locally free of rank 1. Indeed, by Theorem~\ref{ThmTanObj}(ii) it suffices to check that $\Fr^{(j)}_+X\not=0$ for all $j\in\mN$. This follows from the assumption $\Lambda^2X=0$ which implies that $\Sym^nX=\otimes^nX=\Gamma^nX$ for all $n$, so in particular $\Fr^{(j)}_+X\simeq \otimes^{p^j}X$.
\end{ex}

We will show in Lemma~\ref{LemBE} below that, at least for $p=2$, characterisation \ref{ThmTanObj}(ii) is strict in the sense that one cannot restrict to finite a number of $j\in\mN$.
Theorem~\ref{ThmTanObj}(iii) is also proved, in slightly different generality, in \cite[Proposition~4.3.8]{Sch2}.
Before proving the theorem, we derive some properties for a monoidal category~$\bC$ as in \ref{SMC}.
\begin{lemma}\label{LemExtra}
For $Y,Z\in\bC$, we have
\begin{enumerate}[(i)]
\item $ [\unit\oplus Y]_{\unit}=1+[Y]_{\unit}$;
\item $[Y\oplus Z]_{\unit}=0\;$ if $\;[Y]_{\unit}=0=[Z]_{\unit}$;
\item $[Y]_{\unit}=0$ if and only if there exists $k\in\mN$ such that $\Fr_+^{(j)}Y=0$ for all $j\ge k$.
\end{enumerate}
\end{lemma}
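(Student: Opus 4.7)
The plan is to prove the three parts in the order (iii), (ii), (i). Throughout I will use additivity of $\Fr_+^{(j)}$ (Lemma~\ref{AddFrob}); the identity $\Fr_+^{(j)}\unit\simeq\unit$, which follows from $\otimes^{p^j}\unit\simeq\unit$ with trivial $\SG_{p^j}$-action; and monotonicity of exterior powers, namely $\Lambda^nX=0\Rightarrow\Lambda^{n+1}X=0$. The last is established by factoring the antisymmetriser: writing each $\sigma\in\SG_{n+1}$ uniquely as $\tau_i\pi$ with $\pi\in\SG_n$ and $\tau_i$ ranging over a fixed set of coset representatives produces $A_{n+1}=C\circ(A_n\otimes\Id_X)$ for some endomorphism $C$ of $\otimes^{n+1}X$, so that $A_n=0$ as a morphism forces $A_{n+1}=0$.

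For the reverse implication of (iii), $\Fr_+^{(k)}Y=0$ together with $\Lambda^1W\simeq W$ yields $[Y]_\unit<1$, hence $[Y]_\unit=0$. For the forward implication, the same identity shows that $[Y]_\unit=0$ supplies some $j_0$ with $\Fr_+^{(j_0)}Y=0$; the main content is to propagate this vanishing to all $j\geq j_0$. I induct on $j$: Lemma~\ref{LemSubQuo}(i) applied to the Young subgroup $H=\SG_{p^j}^{\times p}<\SG_{p^{j+1}}$ realises $\Fr_+^{(j+1)}Y=\Triv_{\SG_{p^{j+1}}}(\otimes^{p^{j+1}}Y)$ as a subquotient of $\Triv_H(\otimes^{p^{j+1}}Y)$, and bi-exactness of $\otimes$ applied to the (co)kernel presentations of ${\mathrm{H}}^0,{\mathrm{H}}_0$ identifies the latter with $(\Fr_+^{(j)}Y)^{\otimes p}$, which is zero by the induction hypothesis.

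Part (ii) is then immediate: (iii) supplies $k_Y,k_Z$ with $\Fr_+^{(j)}Y=0$ for $j\geq k_Y$ and similarly for $Z$; additivity of $\Fr_+^{(j)}$ gives $\Fr_+^{(j)}(Y\oplus Z)=0$ for $j\geq\max(k_Y,k_Z)$; and (iii) reversed yields $[Y\oplus Z]_\unit=0$. For (i), additivity together with $\Fr_+^{(j)}\unit\simeq\unit$ gives $\Fr_+^{(j)}(\unit\oplus Y)\simeq\unit\oplus\Fr_+^{(j)}Y$. Then~\eqref{PropLam} (applicable because one summand is $\unit$), combined with $\Lambda^0\unit=\Lambda^1\unit=\unit$ and $\Lambda^i\unit=0$ for $i\geq 2$, simplifies to $\Lambda^n(\unit\oplus W)\simeq\Lambda^nW\oplus\Lambda^{n-1}W$ for $n\geq 1$. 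Monotonicity turns the non-vanishing of this into that of $\Lambda^{n-1}W$; substituting $W=\Fr_+^{(j)}Y$ and taking $\sup$ in $n$ with $\inf$ in $j$ gives $[\unit\oplus Y]_\unit=1+[Y]_\unit$ (with the convention $\infty+1=\infty$).

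The main obstacle I anticipate is the identification $\Triv_H(\otimes^{p^{j+1}}Y)\simeq(\Fr_+^{(j)}Y)^{\otimes p}$ used in the induction step of (iii): it amounts to the fact that $\Triv$ commutes with external tensor products of representations of two commuting groups, which in turn rests on bi-exactness of $\otimes$ in $\bT$. An alternative route, useful if one wants to stay within the right-exact-only setting of Definition~\ref{DefTanObj}, is to replace $H$ by the normal subgroup $Q_j^{\times p}\lhd Q_{j+1}=Q_j\wr\SG_p$ and to combine Lemma~\ref{LemSubQuo}(ii) with Proposition~\ref{PropGreen} and Lemma~\ref{LemNormSyl}.
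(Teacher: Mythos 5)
Your argument is correct, and it is essentially the paper's argument with one part unpacked. Parts~(i) and~(ii) are handled exactly as in the paper: (i) from Lemma~\ref{AddFrob} and equation~\eqref{PropLam}, and (ii) from part~(iii) and Lemma~\ref{AddFrob}. For part~(iii), where the paper simply cites Lemma~\ref{Lemnp1} (which records that $\Triv_{\SG_n}(\otimes^nX)=0$ forces $\Triv_{\SG_r}(\otimes^rX)=0$ for all $r\ge n$, advancing one tensor factor at a time), you instead carry out a direct induction from $\Fr_+^{(j)}$ to $\Fr_+^{(j+1)}$ via the Young subgroup $\SG_{p^j}^{\times p}<\SG_{p^{j+1}}$ and Lemma~\ref{LemSubQuo}(i). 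The two routes rest on the same underlying mechanism. The bi-exactness caveat you raise is apt, since the lemma is stated for $\bC$ as in~\ref{SMC}, where $\otimes$ is only right exact; but this is precisely the delicacy that Lemma~\ref{Lemnp1} already isolates (with the flatness parenthetical in its proof), so invoking it directly, as the paper does, is the cleaner path. Note also that the $Q_j$-variant you propose as a workaround would still need to commute $\Triv_{Q_j^{\times p}}$ past a $p$-fold tensor product, so it does not actually avoid the exactness issue; the clean way to stay in the right-exact setting is to go through Lemma~\ref{Lemnp1}.
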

\begin{proof} 
Part (i) follows from Lemma~\ref{AddFrob} and equation~\eqref{PropLam}.
 Part (iii) follows from Lemma~\ref{Lemnp1}. Part (ii) follows from part (iii) and Lemma~\ref{AddFrob}.
\end{proof}

The following result is in the proof of \cite[Lemme~2.8]{Del02}. This is precisely the part of the proof which does not rely on the assumption of characteristic zero.
\begin{lemma}[Deligne] \label{LemDeldelta}
Assume that $\bC$ admits countable coproducts which are preserved by $-\otimes-$ and let $M\in\bC$ be dualisable. For any $(\scA,m_{\scA},\eta_{\scA})\in\alg{\bC}$, we have that $\scA$ is a direct summand of $\scA\otimes M$ in $\bC_{\scA}$ if and only if there exists an algebra morphism
$$\Sym^\bullet(M)\otimes\Sym^\bullet(M^\vee)\;\stackrel{f}{\to}\; \scA\quad\mbox{with}\quad f\circ \co_M=\eta_{\scA}.$$
\end{lemma}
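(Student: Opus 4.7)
The plan is to translate both sides of the equivalence into the same concrete condition on a pair of morphisms $(\alpha,\beta)$ in $\bC$. The assumption that $\bC$ admits arbitrary coproducts ensures that $\Sym^{\bullet}(M)$ and $\Sym^{\bullet}(M^{\vee})$ exist as objects of $\alg\bC$.

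First I would unpack the left-hand side. A splitting of $\scA\hookrightarrow \scA\otimes M\tto\scA$ in $\bC_{\scA}$ is a pair $(s,g)$ with $s:\scA\to\scA\otimes M$ and $g:\scA\otimes M\to\scA$ in $\bC_{\scA}$ satisfying $g\circ s=\Id_{\scA}$. By the free–forgetful adjunction for $\scA$-modules, $g$ corresponds to a morphism $\alpha\colon M\to\scA$ in $\bC$ via $g=m_{\scA}\circ(\Id_{\scA}\otimes\alpha)$, and $s$ corresponds to $\tilde\beta:=s\circ\eta_{\scA}\in\Hom_{\bC}(\unit,\scA\otimes M)$. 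Using that $M$ is dualisable, $\tilde\beta$ in turn corresponds to a morphism $\beta:M^{\vee}\to\scA$ via $\tilde\beta=(\beta\otimes\Id_{M})\circ\gamma_{M,M^{\vee}}\circ\co_{M}$. Since $\scA$-linear endomorphisms of $\scA$ are determined by their value on $\eta_{\scA}$, the condition $g\circ s=\Id_{\scA}$ becomes $g\circ\tilde\beta=\eta_{\scA}$; a short computation using naturality and the commutativity of $m_{\scA}$ (to absorb the braiding) turns this into
$$m_{\scA}\circ(\alpha\otimes\beta)\circ\co_{M}\;=\;\eta_{\scA}.$$

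Next I would unpack the right-hand side. By the universal property of the symmetric algebra (as used in Lemma~\ref{CommSplit}), algebra morphisms $\Sym^{\bullet}M\to\scA$ correspond naturally to morphisms $M\to\scA$ in $\bC$, and likewise for $M^{\vee}$. Since $\scA$ is commutative, specifying an algebra morphism $f:\Sym^{\bullet}M\otimes\Sym^{\bullet}M^{\vee}\to\scA$ is equivalent to giving its two components $f_{1}=f|_{\Sym^{\bullet}M}$ and $f_{2}=f|_{\Sym^{\bullet}M^{\vee}}$, hence to a pair $(\alpha,\beta)\in\Hom(M,\scA)\times\Hom(M^{\vee},\scA)$, namely $\alpha=f_{1}|_{M}$ and $\beta=f_{2}|_{M^{\vee}}$. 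Restricting $f$ to the $(1,1)$-graded component $M\otimes M^{\vee}\hookrightarrow \Sym^{\bullet}M\otimes\Sym^{\bullet}M^{\vee}$ yields $m_{\scA}\circ(\alpha\otimes\beta)$, so the condition $f\circ\co_{M}=\eta_{\scA}$ is precisely
$$m_{\scA}\circ(\alpha\otimes\beta)\circ\co_{M}\;=\;\eta_{\scA}.$$

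This matches the condition produced from a splitting, so the two conditions are equivalent and the lemma follows. The only real subtlety — and the step I would be most careful with — is verifying that the braiding in $\tilde\beta=(\beta\otimes\Id_{M})\circ\gamma_{M,M^{\vee}}\circ\co_{M}$ is correctly absorbed by the identity $m_{\scA}\circ\gamma_{\scA,\scA}=m_{\scA}$, so that the two occurrences of $(\alpha\otimes\beta)\circ\co_{M}$ really line up on the nose rather than up to a braiding twist. Nothing else requires an argument beyond the universal properties and adjunctions already invoked.
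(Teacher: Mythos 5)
Your proof is correct. The two-step translation---reduce the left-hand splitting data to a pair $(\alpha,\beta)\in\Hom(M,\scA)\times\Hom(M^\vee,\scA)$ via the free--forgetful adjunction for $\scA$-modules and the duality $\Hom(\unit,\scA\otimes M)\simeq\Hom(M^\vee,\scA)$, then reduce the right-hand algebra-morphism data to the same pair via the universal property of $\Sym^\bullet$ and the fact that $\otimes$ is the coproduct of commutative algebras---cleanly identifies both conditions with $m_{\scA}\circ(\alpha\otimes\beta)\circ\co_M=\eta_{\scA}$. You are also right to flag the braiding as the one genuine check: naturality moves $\gamma_{M,M^\vee}$ to $\gamma_{\scA,\scA}$, which is then killed by commutativity of $m_{\scA}$, and nothing else requires care beyond the universal properties.

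For what it's worth, the paper itself gives no proof of this lemma: it explicitly states the result is contained in the proof of \cite[Lemme~2.8]{Del02} (and is singled out as the characteristic-free part of Deligne's argument). Your argument is the same kind of universal-property bookkeeping Deligne performs there, so there is no divergence of method to report; you have simply supplied the details the paper chose to leave to the reference.
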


\begin{lemma}\label{LemTrivABC}
Consider a dualisable object $V$ in $\bC$ with quotient $V\stackrel{\pi}{\tto}W$
 and dualisable subobject $U\stackrel{\iota}{\hookrightarrow}V$. The composition $\pi\circ\iota$ is zero if and only if $(\pi\otimes \iota^t)\circ\co_V$ is zero.
 \end{lemma}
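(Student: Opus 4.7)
The plan is to identify $(\pi\otimes\iota^t)\circ\co_V$ as the image of $\pi\circ\iota$ under the standard adjunction isomorphism
$$\Hom(U,W)\;\stackrel{\sim}{\longrightarrow}\;\Hom(\unit,W\otimes U^\vee),\qquad f\mapsto (f\otimes\Id_{U^\vee})\circ\co_U,$$
which is valid for arbitrary $W$ since $U$ is dualisable (a direct instance of the biadjunction recalled in \ref{dual}). Since this map is a bijection, the claim then reduces to showing that under it, the image of $\pi\circ\iota$ is exactly $(\pi\otimes\iota^t)\circ\co_V$.

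To do this the first key step is the sliding identity
$$(\Id_V\otimes\iota^t)\circ\co_V \;=\; (\iota\otimes\Id_{U^\vee})\circ\co_U \;:\;\unit\to V\otimes U^\vee.$$
I would prove this by substituting the definition of $\iota^t$ from \ref{dual} into the left-hand side and reading the resulting composite
$$\unit\xrightarrow{\co_V}V\otimes V^\vee\xrightarrow{\Id_V\otimes\Id_{V^\vee}\otimes\co_U}V\otimes V^\vee\otimes U\otimes U^\vee\xrightarrow{\Id\otimes\Id\otimes\iota\otimes\Id}V\otimes V^\vee\otimes V\otimes U^\vee\xrightarrow{\Id_V\otimes\ev_V\otimes\Id_{U^\vee}}V\otimes U^\vee,$$
then rearranging by naturality of the tensor product so that the $V^\vee$--$V$--$\ev_V$ composite collapses via the snake relation $(\Id_V\otimes\ev_V)\circ(\co_V\otimes\Id_V)=\Id_V$; what remains is $(\iota\otimes\Id_{U^\vee})\circ\co_U$.

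Post-composing both sides of the identity with $\pi\otimes\Id_{U^\vee}$ now yields
$$(\pi\otimes\iota^t)\circ\co_V \;=\; ((\pi\circ\iota)\otimes\Id_{U^\vee})\circ\co_U,$$
which is precisely the image of $\pi\circ\iota$ under the adjunction isomorphism. Since the latter is a bijection, the left-hand side vanishes if and only if $\pi\circ\iota=0$, concluding the proof.

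The argument is essentially bookkeeping with snake diagrams; the only mild obstacle is keeping track of the tensor positions in the four-fold product $V\otimes V^\vee\otimes U\otimes U^\vee$ when applying the snake relation, but no finiteness or flatness hypotheses on $W$ are required — only dualisability of $U$ (for the adjunction) and of $V$ (for $\co_V$ to exist), both of which are supplied by the hypotheses.
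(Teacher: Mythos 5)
Your proof is correct and follows exactly the same approach as the paper: both use the adjunction isomorphism $\Hom(U,W)\stackrel{\sim}{\to}\Hom(\unit,W\otimes U^\vee)$, $g\mapsto(g\otimes\Id_{U^\vee})\circ\co_U$, and observe that it carries $\pi\circ\iota$ to $(\pi\otimes\iota^t)\circ\co_V$. You have simply spelled out, via the sliding identity $(\Id_V\otimes\iota^t)\circ\co_V=(\iota\otimes\Id_{U^\vee})\circ\co_U$ and the snake relation, the verification that the paper leaves as a one-line assertion.
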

 \begin{proof}
 The canonical isomorphism
 $$\Hom(U,W)\;\stackrel{\sim}{\to}\; \Hom(\unit,  W\otimes U^\vee),\quad g\mapsto (g\otimes {U^\vee})\circ \co_U,$$
 maps $\pi\circ\iota$ to $(\pi\otimes \iota^t)\circ\co_V$.
 \end{proof}

\begin{cor}\label{CorMj}
Assume that $\bC$ admits countable coproducts which are preserved by $-\otimes-$ and let $M\in\bC$ be dualisable with $\Sym^n( M^\vee)$ flat (and hence dualisable) for all $n\in\mN$. There exists a non-zero $\scA\in\alg{\bC}$ for which $\scA$ is a direct summand of $\scA\otimes M$ in $\bC_{\scA}$ if and only if $[M]_{\unit}>0$.
\end{cor}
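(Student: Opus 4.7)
The plan is to apply Lemma~\ref{LemDeldelta} and reduce to analysing the universal pushout algebra, then compare its vanishing to the vanishing of $\Fr_+^{(j)}M$ via duality. By Lemma~\ref{LemDeldelta}, the existence of a non-zero $\scA\in\alg\bC$ with $\scA$ a direct summand of $\scA\otimes M$ in $\bC_\scA$ is equivalent to the non-vanishing of the pushout
$$\scA_0\;:=\;\Sym^\bullet M\otimes \Sym^\bullet M^\vee\,\sqcup_{\Sym^\bullet\unit}\,\unit\;\in\;\alg\bC,$$
where the two arrows $\Sym^\bullet\unit\to\Sym^\bullet M\otimes\Sym^\bullet M^\vee$ and $\Sym^\bullet\unit\to\unit$ are determined by $\co_M$ and $\id_\unit$ respectively. (The reduction uses the unit axiom: any algebra $\scA$ is zero iff $\eta_\scA=0$, since $\eta_\scA=0$ would force $\id_\scA=m_\scA\circ(\eta_\scA\otimes\id)=0$.)

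Mimicking the argument of Corollary~\ref{CorLS}, $\scA_0$ is computed in $\bC$ (using arbitrary coproducts) as the filtered colimit $\scA_0\simeq\varinjlim_n(\Sym^nM\otimes\Sym^nM^\vee)$ with transition given by multiplication by $\co_M$. The canonical morphism $\eta_{\scA_0}:\unit\to\scA_0$ factors at stage $n$ as $\unit\xrightarrow{(\co_M)^n}\Sym^nM\otimes\Sym^nM^\vee\to\scA_0$, where $(\co_M)^n$ denotes the $n$-th power of $\co_M$ inside $\Sym^\bullet M\otimes\Sym^\bullet M^\vee$. Since the transition sends $(\co_M)^n$ to $(\co_M)^{n+1}$, the vanishing of $(\co_M)^N$ propagates to all $n\geq N$; thus $\scA_0\ne 0$ if and only if $(\co_M)^n\ne 0$ for every $n\in\mN$.

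The crucial link to $\Fr_+^{(j)}M$ exploits the flatness assumption. By Lemma~\ref{LemDualSym}, $\Sym^nM^\vee$ is dualisable with dual $\Gamma^nM$, yielding a natural isomorphism
$$\Hom(\unit,\,\Sym^nM\otimes\Sym^nM^\vee)\;\simeq\;\Hom(\Gamma^nM,\,\Sym^nM).$$
A direct diagram chase, using that $\co_M^{\otimes n}$ corresponds to $\id_{\otimes^nM}$ under the duality $(\otimes^nM)^\vee=\otimes^nM^\vee$, and that the epimorphism $\otimes^nM^\vee\twoheadrightarrow\Sym^nM^\vee$ dualises to the inclusion $\Gamma^nM\hookrightarrow\otimes^nM$, identifies $(\co_M)^n$ with the composite $\Gamma^nM\hookrightarrow\otimes^nM\twoheadrightarrow\Sym^nM$. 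For $n=p^j$, the image of this composite is by definition $\Fr_+^{(j)}M$, so $(\co_M)^{p^j}\ne 0$ if and only if $\Fr_+^{(j)}M\ne 0$.

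Assembling the chain of equivalences: $\scA_0\ne 0$ iff $(\co_M)^n\ne 0$ for all $n$, iff $(\co_M)^{p^j}\ne 0$ for all $j\in\mN$ (using propagation to reduce from all $n$ to the subsequence $n=p^j$), iff $\Fr_+^{(j)}M\ne 0$ for all $j$, iff $[M]_\unit>0$ (using that $\Lambda^1=\id$ in Definition~\ref{DefTanObj}, so $[M]_\unit>0$ is witnessed by $n=1$). The main technical delicacy lies in making the filtered-colimit analysis of $\scA_0$ rigorous without presuming $\unit$ is compact in $\bC$; this can be handled by working directly from the explicit cokernel presentation of the colimit and tracking the factorisation of $\eta_{\scA_0}$ through each stage.
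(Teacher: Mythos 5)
Your proof is correct and follows essentially the same route as the paper's: reduce via Lemma~\ref{LemDeldelta} to non-vanishing of the pushout algebra $\Sym^\bullet M\otimes\Sym^\bullet M^\vee\sqcup_{\Sym^\bullet\unit}\unit$, identify $(\co_M)^n$ with the composite $\Gamma^nM\hookrightarrow\otimes^nM\tto\Sym^nM$ via the duality $(\Sym^nM^\vee)^\vee\simeq\Gamma^nM$ from Lemma~\ref{LemDualSym} (the paper packages your diagram chase as Lemma~\ref{LemTrivABC}), and conclude by relating non-vanishing for the subsequence $n=p^j$ to $\Fr_+^{(j)}M\neq0$, hence to $[M]_\unit>0$. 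The only minor variation is that you propagate vanishing of $(\co_M)^n$ through the filtered-colimit transition maps, whereas the paper invokes Lemma~\ref{Lemnp1} on $\Triv_{\SG_n}(\otimes^nM)$ for the same purpose; both are valid, and like the paper you rightly defer the delicate colimit step (non-vanishing of $\scA_0$ given that all $(\co_M)^n\neq0$) to the argument of \cite[Lemme~2.8]{Del02}.
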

\begin{proof}
We start from Lemma~\ref{LemDeldelta}. Like all algebra morphisms, any $f$ as in Lemma~\ref{LemDeldelta} is assumed to satisfy $f\circ\eta=\eta_{\scA}$ with $\eta$ the unit of the algebra $\Sym^\bullet(M)\otimes\Sym^\bullet(M^\vee)$. The existence of $\scA$ is thus equivalent to the quotient of $\Sym^\bullet(M)\otimes\Sym^\bullet(M^\vee)$ with respect to the ideal generated by $(\eta-\co_M)(\unit)$ being non-zero. As argued in the proof of \cite[Lemme~2.8]{Del02} this is equivalent to the composition
$$\unit\;\stackrel{\otimes^n\co_M}{\hookrightarrow}\; (\otimes^nM)\otimes (\otimes^nM^\vee)\;\tto\; \Sym^n(M)\otimes \Sym^n(M^\vee) $$
being non-zero for all $n\in\mN$.

By Lemma \ref{LemTrivABC} this is equivalent to the composition
$${\mathrm{H}}^0(\SG_n,\otimes^nM)=\Gamma^n(M)\hookrightarrow \otimes^nM\tto \Sym^nM={\mathrm{H}}_0({\SG_n},\otimes^n M)$$
being non-zero. The latter just means that $\Triv_{\SG_n}(\otimes^nM)$ is never zero. By Lemma~\ref{Lemnp1} the condition is thus equivalent to $ \Fr_+^{(j)}M\not=0$ for all $j\in\mN$. 
\end{proof}

\begin{prop}\label{Thm2}
For $X\in\bT$ and $d\in\mN$,
we have $[X]_{\unit}\ge d$ if and only if there exists a non-zero $\scA\in \Alg{\bT}$ and $N\in\Mod_{\scA}$ such that
$$\scA\otimes X\;\simeq\; \scA^{\oplus d}\,\oplus \, N,\quad\mbox{in $\Mod_{\scA}$.}$$
\end{prop}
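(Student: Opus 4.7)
I plan to prove the two implications separately. For the forward direction, starting from $\scA\otimes X\simeq \scA^{\oplus d}\oplus N$ in $\Mod_\scA$, I apply $\Fr_+^{(j)}$: additivity (Lemma~\ref{AddFrob}), commutation with the exact tensor functor $\scA\otimes-$ via diagram~\eqref{eqCommD}, and the fact that $\Fr_+^{(j)}$ fixes the unit give $\scA\otimes\Fr_+^{(j)}X\simeq\scA^{\oplus d}\oplus\Fr_+^{(j)}N$. Iterating the ``$X=\unit$'' case of~\eqref{PropLam} in $\Mod_\scA$ (valid because $\Lambda^k\scA=0$ for $k\geq 2$), one finds that $\Lambda^d_\scA\scA^{\oplus d}\simeq\scA$ appears as a direct summand of $\Lambda^d_\scA(\scA^{\oplus d}\oplus\Fr_+^{(j)}N)$. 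Since tensor functors preserve exterior powers, the latter object identifies with $\scA\otimes\Lambda^d\Fr_+^{(j)}X$, and faithfulness of $\scA\otimes-$ (Lemma~\ref{LemFaith}) then forces $\Lambda^d\Fr_+^{(j)}X\neq 0$ for every $j$, so $[X]_\unit\geq d$.

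For the converse I induct on $d$; the base $d=0$ is trivial with $\scA=\unit$ and $N=X$. For $d\geq 1$, since $[X]_\unit\geq 1$, Corollary~\ref{CorMj} supplies a non-zero $\scB\in\Alg\bT$ with $\scB\otimes X\simeq\scB\oplus N_1$ in $\Mod_\scB$. The crucial sub-claim is that $[N_1]_\scB\geq d-1$ in $\Mod_\scB$. Applying $\Fr_+^{(j)}$, $\Lambda^d_\scB$, and the unit case of~\eqref{PropLam} to the decomposition of $\scB\otimes X$, one obtains
$$\scB\otimes\Lambda^d\Fr_+^{(j)}X \;\simeq\; \Lambda^d_\scB\Fr_+^{(j)}N_1\,\oplus\,\Lambda^{d-1}_\scB\Fr_+^{(j)}N_1,$$
and if $\Lambda^{d-1}_\scB\Fr_+^{(j)}N_1$ were zero for some $j$, then so would $\Lambda^d_\scB\Fr_+^{(j)}N_1$ be (the $\SG_d$-antisymmetriser factors through the $\SG_{d-1}\times 1$-antisymmetriser), forcing the left-hand side to vanish and contradicting $[X]_\unit\geq d$ via faithfulness of $\scB\otimes-$. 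Applying the inductive hypothesis to $N_1$ in $\Mod_\scB$ then produces a $\scB$-algebra $\scA$ and an $\scA$-module $M$ with $\scA\otimes_\scB N_1\simeq\scA^{\oplus(d-1)}\oplus M$, whence
$$\scA\otimes X\;\simeq\; \scA\otimes_\scB(\scB\oplus N_1)\;\simeq\; \scA^{\oplus d}\oplus M\quad\text{in }\Mod_\scA.$$

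The main obstacle is justifying this recursion: Proposition~\ref{Thm2} is phrased for tensor categories, whereas $N_1$ lives in the symmetric monoidal category $\Mod_\scB$, in which not every object need be dualisable. However, Corollary~\ref{CorMj}---and hence the entire inductive argument---requires only that the ambient category be a symmetric monoidal category as in~\ref{SMC} admitting arbitrary coproducts and that the object of interest be dualisable with flat symmetric powers of its dual. Both hypotheses propagate through the induction: $N_1$ is a direct summand of the dualisable object $\scB\otimes X$, so dualisable by~\ref{dual}, and $\Sym^n_\scB(N_1^\vee)$ is a direct summand of $\scB\otimes\Sym^n X^\vee$, hence dualisable and therefore flat. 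The recursion is thus legitimate and the induction closes.
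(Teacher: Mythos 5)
Your proof is correct and rests on the same pair of ingredients as the paper's (Corollary~\ref{CorMj} and an induction on $d$), but the two inductions are organised differently in a way that affects the bookkeeping. For the forward direction you unfold an inline proof of Lemma~\ref{LemExtra}(i) combined with the invariance $[\scA\otimes X]_\unit=[X]_\unit$; the paper simply cites those two facts. For the converse, your induction peels one copy of $\scA$ off \emph{first} via Corollary~\ref{CorMj} and then applies the inductive hypothesis to the leftover summand $N_1\in\Mod_\scB$. Since $N_1$ is not an object of a tensor category, this forces you to promote the statement to a claim about arbitrary monoidal categories $\bC$ as in \ref{SMC} (with dualisability and flatness hypotheses on the object), and you correctly verify that these hypotheses propagate along the recursion, in particular that $\Sym^n_\scB(N_1^\vee)$ is flat because it is a summand of the dualisable $\scB\otimes\Sym^n X^\vee$. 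The paper reverses the order: it applies the inductive hypothesis to $X\in\bT$ itself to obtain $\scB\otimes X\simeq\scB^{\oplus(d-1)}\oplus M$, checks $[M]_\unit>0$ by Lemma~\ref{LemExtra}(i) and the invariance under base change, and only then invokes Corollary~\ref{CorMj} once, in $\Mod_\scB$, on $M$. Because the inductive hypothesis is always invoked for the tensor-category object $X$, no strengthened induction hypothesis is needed; Corollary~\ref{CorMj} was already formulated at the level of general $\bC$ as in \ref{SMC}, so the single terminal application in $\Mod_\scB$ goes through directly. Your route is legitimate and the verification that the recursion closes is carried out correctly, but the paper's ordering keeps the statement being proved unchanged throughout, which is the slicker option.
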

\begin{proof}
We start by observing that for any non-zero $\scR\in\Alg\bT$, we have
\begin{equation}\label{brackR}[\scR\otimes X]_{\unit}\;=\;[X]_{\unit},\quad\mbox{for $X\in\bT$,}\end{equation}
since $\scR\otimes-$ is a (faithful exact) tensor functor.
If $\scA^{\oplus d}$ is a direct summand of $\scA\otimes X$, then Lemma~\ref{LemExtra}(i) and equation~\eqref{brackR} imply that $[X]_{\unit}\ge d$.

To prove the other direction, we apply induction on $d$. If $d=0$ there is nothing to prove. Assume that the claim is true for $d-1$. Hence, if $[X]_{\unit}\ge d$ we know that there exists $\scB$ in $\Alg{\bT}$ and $M$ in $\Mod_{\scB}$ such that 
$$\scB\otimes X\;\simeq\;\scB^{\oplus (d-1)}\oplus M.$$
By Lemma~\ref{LemExtra}(i) and equation~\eqref{brackR} we have $[M]_{\unit}>0$. By construction, $\Sym^n_{\scB}(M^\vee)$ is a direct summand of $\scB\otimes\Sym^n(X^\vee)$ and therefore dualisable, so in particular flat.
 By Corollary~\ref{CorMj} for $\bC=\Mod_{\scB}$, there exists $\scA$ in $\alg{\Mod_{\scB}}$, which we can also interpret in $\Alg{\bT}$, for which
$$\scA\otimes X\;\simeq\; \scA\otimes_{\scB}(\scB\otimes X)\;\simeq\; \scA^{\oplus d-1}\oplus \scA\otimes_{\scB}M\;\simeq\;\scA^{\oplus d}\oplus N,$$
which concludes the proof.
\end{proof}

\begin{proof}[Proof of Theorem~\ref{ThmTanObj}]
Take $X$ as in (i) and set $d:=[X]_{\unit}$. By Proposition~\ref{Thm2}, there exists $\scA\in\Alg\bT$ such that
$\scA\otimes X$ is of the form $\scA^{\oplus d}\oplus N$. By assumption and \eqref{PropLam}, we have
$$0\;\simeq\; \scA\otimes \Lambda^{d+1}X\;\simeq\;\Lambda^{d+1}_{\scA}(\scA\otimes X)\;\simeq\;\bigoplus_{i=0}^{d}\left(\Lambda_{\scA}^{i+1}N\right)^{\oplus \binom{d}{i}},$$
which implies $N=0$. Hence (i) implies that $X$ is locally free. If $X$ is locally free, it clearly satisfies (iii).
That (iii) implies (ii) follows from the fact that $F$ is a (faithful exact) tensor functor. That (ii) implies (i) is straightforward.
\end{proof}

\begin{lemma}\label{LemExEx}
For $X,Y\in\bT$ with $[X]_{\unit},[Y]_{\unit}\in\mN$ we have $[X\oplus Y]_{\unit}=[X]_{\unit}+[Y]_{\unit}$.
\end{lemma}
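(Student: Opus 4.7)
The plan is to produce a single algebra $\scB\in\Alg\bT$ that simultaneously makes both $X$ and $Y$ free, and then read off $[X\oplus Y]_\unit$ from the resulting decomposition $\scB\otimes(X\oplus Y)\simeq\scB^{\oplus(d+e)}$, where $d:=[X]_\unit$ and $e:=[Y]_\unit$. First I would apply Theorem~\ref{ThmTanObj} to $X$ to obtain a non-zero $\scA\in\Alg\bT$ with $\scA\otimes X\simeq\scA^{\oplus d}$, and then pass to the monoidal category $\Mod_\scA$. Since $\scA\otimes-:\bT\to\Mod_\scA$ is a faithful exact tensor functor, it commutes with $\Fr_+^{(j)}$ and $\Lambda^n$ and reflects the vanishing of objects, so $[\scA\otimes Y]_\unit=e$ inside $\Mod_\scA$ (with unit $\scA$), and $\Lambda^r(\scA\otimes Y)=0$ for $r>e$.

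Next I would re-apply the implication (i)$\Rightarrow$(iii) of Theorem~\ref{ThmTanObj}, now inside $\Mod_\scA$, to produce a non-zero $\scB\in\alg{\Mod_\scA}$ with $\scB\otimes_\scA(\scA\otimes Y)\simeq\scB^{\oplus e}$. Viewing $\scB$ as an object of $\Alg\bT$ via the composite $\unit\to\scA\to\scB$, it simultaneously splits both objects:
$$\scB\otimes X\;\simeq\;\scB\otimes_\scA\scA^{\oplus d}\;\simeq\;\scB^{\oplus d}\qquad\text{and}\qquad \scB\otimes Y\;\simeq\;\scB^{\oplus e},$$
so $\scB\otimes(X\oplus Y)\simeq\scB^{\oplus(d+e)}$.

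Finally, I would apply the exact tensor functor $\scB\otimes-$ to $\Lambda^n\Fr_+^{(j)}(X\oplus Y)$ for arbitrary $j,n\in\mN$; since this functor commutes with both $\Fr_+^{(j)}$ and $\Lambda^n$, we get
$$\scB\otimes\Lambda^n\Fr_+^{(j)}(X\oplus Y)\;\simeq\;\Lambda^n\bigl(\scB^{\oplus(d+e)}\bigr)\;\simeq\;\scB^{\oplus\binom{d+e}{n}},$$
so faithfulness yields $\Lambda^n\Fr_+^{(j)}(X\oplus Y)\neq 0$ if and only if $n\leq d+e$, giving the desired equality $[X\oplus Y]_\unit=d+e$. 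The main (and essentially only) obstacle is the re-application of Theorem~\ref{ThmTanObj} inside $\Mod_\scA$: the latter is a monoidal category as in \ref{SMC} with coproducts, but is not a tensor category since its unit $\scA$ need not be simple. However, a careful re-reading shows that the proofs of Corollary~\ref{CorMj}, Proposition~\ref{Thm2}, and of (i)$\Rightarrow$(iii) in Theorem~\ref{ThmTanObj} only use the monoidal structure, duality of the object at hand, the existence of coproducts, and the definition of $[-]_\unit$ -- never the simplicity of the unit -- so they go through verbatim in $\Mod_\scA$.
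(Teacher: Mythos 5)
Your first step is a genuine gap. You apply Theorem~\ref{ThmTanObj}\,(i)$\Rightarrow$(iii) to extract $\scA\otimes X\simeq \scA^{\oplus d}$, but condition~(i) of that theorem requires \emph{both} $[X]_\unit\in\mN$ \emph{and} $\Lambda^rX=0$ for $r>[X]_\unit$. The hypothesis of Lemma~\ref{LemExEx} gives only the first part; it does \emph{not} say that $X$ is locally free, so you are not entitled to a free decomposition. The objects $X_n$ in the lemma following Theorem~\ref{ThmTanObj} (coming from \cite{BE} in characteristic $2$) satisfy $\Fr_+^{(n)}X_n=0$ and hence $[X_n]_\unit=0$, yet $X_n\neq 0$ is not locally free, so your assumption genuinely fails in the intended generality. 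The same issue recurs in your claim ``$\Lambda^r(\scA\otimes Y)=0$ for $r>e$,'' which again uses local freeness of $Y$ rather than $[Y]_\unit=e$.

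What is available from the hypothesis is Proposition~\ref{Thm2}: there exist $\scA$ and $M,N\in\Mod_\scA$ with $\scA\otimes X\simeq \scA^{\oplus d}\oplus M$ and $\scA\otimes Y\simeq \scA^{\oplus e}\oplus N$ (one combines the two algebras via a tensor product and uses \eqref{brackR}). The residue pieces $M,N$ need not vanish, and the content of the lemma is precisely that they contribute nothing: by Lemma~\ref{LemExtra}(i) together with \eqref{brackR}, $[M]_\unit=0=[N]_\unit$, and then Lemma~\ref{LemExtra}(ii) gives $[M\oplus N]_\unit=0$, whence $[\scA\otimes(X\oplus Y)]_\unit=d+e$ and the conclusion follows by \eqref{brackR}. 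This is the paper's argument; your iterated base-change idea is sound in spirit and close to the proof of Proposition~\ref{Thm2} itself, but it cannot produce a \emph{free} splitting from the lemma's hypotheses, so the final computation $\Lambda^n(\scB^{\oplus(d+e)})$ is not reached. (Your secondary worry about $\Mod_\scA$ not being a tensor category is handled in the paper by having Proposition~\ref{Thm2} and Corollary~\ref{CorMj} do the work inside $\Mod_\scB$ already, so it is not the sticking point.)
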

\begin{proof}
By Proposition~\ref{Thm2}, there exists a non-zero $\scA\in\Alg\bT$ such that
$$\scA\otimes X\;\simeq\;  \scA^{\oplus [X]_{\unit}}\oplus M\quad\mbox{and}\quad
\scA\otimes Y\;\simeq\; \scA^{\oplus [Y]_{\unit}}\oplus N,$$
for $M,N$ in $\Mod_{\scA}$. By Lemma~\ref{LemExtra}(i) and equation~\eqref{brackR} we find that $[M]_{\unit}=0=[N]_{\unit}$. By Lemma~\ref{LemExtra}(i) and (ii) we then find $[\scA\otimes (X\oplus Y)]_{\unit}=[X]_{\unit}+[Y]_{\unit}$ and the conclusion follows from equation~\eqref{brackR}.
\end{proof}

\begin{lemma}\label{LemBE}
Assume $\mk$ is algebraically closed with $\charr(\mk)=2$. For each $n\in\mZ_{>0}$ there exists a tensor category $\bT_n$ with $X_n\in \bT_n$ such that $\Lambda^3X_n=0$ and
$$\Lambda^d \Fr^{(j)}_+X_n=0\quad\Rightarrow\quad \Lambda^d X_n=0,\qquad\mbox{for all $d\in\mN$ and $j< n$,}$$
but $X_n$ is not locally free.
\end{lemma}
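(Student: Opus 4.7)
My plan is to build the examples by iterating the triangular Hopf algebra construction from \cite[Example~3.3]{EHO} that already underlies Example~\ref{ExamTwist}(iii). Fix $\mk$ algebraically closed with $\charr(\mk)=2$. For the base case $n=1$, take $\bT_1$ to be the category of finite-dimensional modules over the triangular Hopf algebra $D$ from \cite[Example~3.3]{EHO}, and let $X_1$ be a suitable 2-dimensional generator with $\Fr_+ X_1=0$. Then $\Lambda^3 X_1=0$ holds for dimension reasons, the implication in (2) is vacuous (no $j<1$), and Lemma~\ref{Lempowers} gives $\Fr_+^{(j)} X_1=0$ for every $j\ge 1$ while $\Lambda^1 X_1=X_1\neq 0$; by Theorem~\ref{ThmTanObj}(ii) this shows $X_1$ is not locally free.

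For $n\ge 2$ the plan is to take $\bT_n$ to be representations of an appropriately iterated version $D_n$ of $D$ (for instance the restricted enveloping algebra of a height-$n$ restricted Lie algebra, or a wreath-type iteration whose Frobenius height is precisely $n$), equipped with a triangular structure extending that of $D$. The object $X_n$ will be a carefully chosen 2-dimensional object (the natural ``tautological'' module at the top level of the iteration) so that its Frobenius twists shift one level down the tower. The goal is to arrange the construction so that for every $j<n$, the twist $\Fr_+^{(j)}(X_n)$ is isomorphic, or at least has the same $\Lambda$-profile, as $X_n$ (nonzero in degrees $1$ and $2$, zero in degree $\ge 3$), while $\Fr_+^{(n)}(X_n)=0$.

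Granting such a construction, verification of the three listed properties is then immediate: (1) follows from $\dim X_n=2$; (3) follows from Theorem~\ref{ThmTanObj}(ii)(b) applied with $d=1$ and $j=n$, using $X_n\neq 0$ and $\Fr_+^{(n)}(X_n)=0$; and (2) holds because, for each $j<n$, the $\Lambda$-pattern of $\Fr_+^{(j)}(X_n)$ matches that of $X_n$ so the hypothesis $\Lambda^d\Fr_+^{(j)}(X_n)=0$ forces $d\ge 3$, whence also $\Lambda^d X_n=0$ by (1).

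The main obstacle will be arranging the iterated Hopf algebra $D_n$ so that $\Fr_+^{(j)}(X_n)$ genuinely preserves the $\Lambda$-profile of $X_n$ for all $j<n$ while being annihilated exactly at step $n$. Since by Proposition~\ref{CoolThm} the functor $\Fr_+$ is not exact on any non-locally-semisimple $\bT_n$, this cannot be reduced to a formal recursion: one must really compute $\Fr_+^{(j)}$ via the short exact sequence $0\to\Lambda^2X\to\Gamma^2X\to\Fr_+X\to 0$ of Remark~\ref{RemMon}(i) and track how the non-split filtration propagates under iteration. The combinatorics of the indecomposable summands of $\otimes^{2^j}X_n$ and their $\SG_{2^j}$-equivariant structure, governed by the Hopf algebra structure of $D_n$, is where the work lies.
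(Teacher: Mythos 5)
Your proposal for $n\ge 2$ is not a proof but an acknowledged plan: you write ``granting such a construction'' and then list the main obstacle (computing the iterated twists through non-split filtrations) without resolving it. There is no candidate Hopf algebra $D_n$ on the table, and it is far from clear that one exists; the relevant examples in the literature are not constructed by iterating Hopf algebras. Even your base case $n=1$ has a gap: the claim that $\Lambda^3 X_1 = 0$ ``for dimension reasons'' does not follow, because $\Lambda^3$ in $\Rep D$ is computed from the braiding (involving the nontrivial $R$-matrix), not from the forgetful functor to $\vecc$, which is monoidal but not \emph{symmetric} monoidal. The vanishing of $\Lambda^3$ of a given small module has to be computed, not read off the underlying dimension.

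The paper's actual argument does not construct the categories at all; it invokes the Benson--Etingof categories $\mathcal{C}_{2n}$ of \cite[Theorem~2.1]{BE}, which come equipped with a self-dual simple object $X_n$ and a list of precomputed multiplicity and exterior-power facts ($\Lambda^2 X_n = \unit$, $\Lambda^3 X_n = 0$, $[\otimes^{2^j}X_n : X_{n-j}] = 1$, etc.). From these, Lemma~\ref{LemSFr} pins down the Frobenius twist ($[\Fr_+^{(j)}X_n : X_{n-j}] = 1$), and Lemma~\ref{Lempowers} together with $\Fr_+ X_n = X_{n-1}$ gives $\Fr_+^{(j)}X_n = X_{n-j}$ exactly, which delivers all three desired properties in one stroke. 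So the correct strategy is to cite an existing family of tensor categories with known combinatorics rather than to build a new tower of triangular Hopf algebras; and the hard part that you flag (``where the work lies'') is precisely the part that \cite{BE} has already done. If you want to pursue your approach you would first have to decide whether $\mathcal{C}_{2n}$ for $n\ge 2$ is even of the form $\Rep H$ for a triangular Hopf algebra $H$, which is not obvious and is not needed for the paper's proof.
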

\begin{proof}
For $n\in\mN$, let $\bT_n$ be the tensor category $\mathcal{C}_{2n}$ of \cite[Theorem~2.1]{BE},  which contains $\cC_{2n-2}$ as subcategory if $n>0$. For $X_n$ we take the similarly named self-dual simple object in \cite[2.1(iii)]{BE}, so $X_n=0$ if and only if $n=0$. It follows from \cite[2.1(vi) and (ix)]{BE} that
\begin{equation}\label{eqBE}[\otimes^{2^j}X_n:X_{n-j}]=1\,\mbox{ and }\;[\otimes^{2^i}X_n:X_{n-j}]=0,\quad\mbox{for $i<j<n$}.\end{equation}
By \cite[2.1(iii) and (vii)]{BE} we have $\Lambda^2X_n=\unit$ and $\Lambda^3X_n=0$, if $n>0$. The former of the two equations implies $[\Sym^{2^j}X_n:X_{n-j}]=1$, by equations~\eqref{eqSym} and~\eqref{eqBE}. By Lemma~\ref{LemSFr}, we thus have $[\Fr^{(j)} X_n:X_{n-j}]=1$ for $j<n$. It follows easily from \cite[2.1(xi)]{BE} that $\Fr^{j}X_n=X_{n-j}$ for $j\le n$. By Lemma~\ref{Lempowers}, we thus have $\Fr^{(j)}X_n=X_{n-j}$ for $j\le n$.
\end{proof}

\subsection{Locally super free objects} In this subsection we assume that $p\not=2$.

\begin{ddef}
For $X\in\bC$ with $\bC$ as in \ref{SMC} we define $[X]_{\bar{\unit}}\in\mN\cup\{\infty\}$ as 
$$[X]_{\bar{\unit}} \;=\; \sup\{n\in\mN\,|\, \Gamma^n( \Fr_-^{(j)}X)\not=0,\,\mbox{ for all}\; j\in\mN\}.$$
\end{ddef}


\begin{prop}\label{Thm2s}
For $X\in\bT$ and $d,d'\in\mN$,
we have $[X]_{\unit}\ge d$ and $[X]_{\bar\unit}\ge d'$ if and only if there exists non-zero $\scA\in \Alg(\bT\boxtimes \svec)$ and $N\in\Mod_{\scA}$ such that
$$\scA\otimes X\;\simeq\; \scA\otimes(\unit^{\oplus d}\oplus\bar{\unit}^{\oplus d'})\,\oplus \, N,\quad\mbox{in $\Mod_{\scA}$.}$$
\end{prop}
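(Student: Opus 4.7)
The plan is to adapt the inductive proof of Proposition~\ref{Thm2} to the super setting, exploiting the tensor autoequivalence $-\otimes\bar{\unit}$ of $\bT':=\bT\boxtimes\svec$ to reduce extractions of $\bar{\unit}$-summands to Corollary~\ref{CorMj}. The key preliminary step is to show that, for every $Y\in\bT'$,
\begin{equation}\label{eqswap}
[Y\otimes\bar{\unit}]_{\unit}\,=\,[Y]_{\bar{\unit}}\qquad\text{and}\qquad [Y\otimes\bar{\unit}]_{\bar{\unit}}\,=\,[Y]_{\unit}.
\end{equation}
To prove this I would first observe, by direct computation using that $\otimes^{p^j}\bar{\unit}\simeq\bar{\unit}$ carries the sign action of $\SG_{p^j}$ (since $p^j$ is odd and $\gamma_{\bar{\unit}\bar{\unit}}=-1$), an internal form of Lemma~\ref{LemSkewTwist}(iii): in $\bT'$ one has $\Fr_{\pm}^{(j)}(Y\otimes\bar{\unit})\simeq \Fr_{\mp}^{(j)}(Y)\otimes\bar{\unit}$. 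Combined with the identity $\Lambda^n(Z\otimes\bar{\unit})\simeq \Gamma^n(Z)\otimes\bar{\unit}^{\otimes n}$, which holds without any flatness assumption by passing to the ${\mathrm{H}}^0$ alternative definition of $\Lambda^n$ available for $p\neq 2$, this yields~\eqref{eqswap}.

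Armed with~\eqref{eqswap}, direct computation gives $[\unit]_{\unit}=[\bar{\unit}]_{\bar{\unit}}=1$ and $[\unit]_{\bar{\unit}}=[\bar{\unit}]_{\unit}=0$, while the analog of Lemma~\ref{LemExEx} for $[\cdot]_{\bar{\unit}}$ follows by applying Lemma~\ref{LemExEx} to $X\otimes\bar{\unit}\oplus Y\otimes\bar{\unit}$ and invoking~\eqref{eqswap}. The easy direction ($\Leftarrow$) is then immediate from these two additivity statements, combined with the fact that $[\scA\otimes-]_{\unit}=[-]_{\unit}$ and $[\scA\otimes-]_{\bar{\unit}}=[-]_{\bar{\unit}}$ (both following from $\scA\otimes-$ being a faithful exact tensor functor, as in the proof of Proposition~\ref{Thm2}).

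For the hard direction ($\Rightarrow$), I would induct on $d+d'$, the base case $d=d'=0$ being trivial. For the step with $d>0$, the induction hypothesis applied to $(d-1,d')$ produces $\scB$ and $M$ with
$$\scB\otimes X\;\simeq\;\scB^{\oplus(d-1)}\oplus(\scB\otimes\bar{\unit})^{\oplus d'}\oplus M.$$
The additivity computation then forces $[M]_{\unit}\geq 1$. Since $\Sym^n_{\scB}(M^\vee)$ is a direct summand of $\scB\otimes\Sym^n(X^\vee)$ and hence dualisable (as in the proof of Proposition~\ref{Thm2}), Corollary~\ref{CorMj} applied inside $\Mod_{\scB}$ yields $\scA\in\alg{\Mod_{\scB}}$ such that $\scA$ is a direct summand of $\scA\otimes_{\scB} M$; base-changing to $\scA$ delivers the required decomposition. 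When instead $d'>0$, the induction hypothesis on $(d,d'-1)$ together with additivity gives a remainder $M$ with $[M]_{\bar{\unit}}\geq 1$; by~\eqref{eqswap} then $[M\otimes\bar{\unit}]_{\unit}\geq 1$, so Corollary~\ref{CorMj} produces $\scA$ for which $\scA$ is a direct summand of $\scA\otimes_{\scB}(M\otimes\bar{\unit})$, and tensoring both sides with $\bar{\unit}$ identifies $\scA\otimes\bar{\unit}$ as a summand of $\scA\otimes_{\scB} M$. Assembling completes the step.

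The main obstacle is establishing the swap~\eqref{eqswap} cleanly: one must be careful in tracking the various definitions of $\Lambda^n$ in positive characteristic, but working through the ${\mathrm{H}}^0$ alternative (valid for $p\neq 2$) keeps the computation transparent. Once~\eqref{eqswap} is in hand, the remaining argument is a straightforward double-induction adaptation of the proof of Proposition~\ref{Thm2}.
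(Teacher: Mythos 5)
Your proposal is correct and follows essentially the same route the paper intends: the paper's proof is the single sentence ``This is proved similarly to Proposition~\ref{Thm2}, using Lemma~\ref{LemSkewTwist}(iii),'' and your argument is exactly that induction, with Lemma~\ref{LemSkewTwist}(iii) (plus the identity $\Lambda^n(Z\otimes\bar{\unit})\simeq\Gamma^n(Z)\otimes\bar{\unit}^{\otimes n}$, valid for $p\neq2$ via the ${\mathrm{H}}^0$ description of $\Lambda^n$) supplying the swap~\eqref{eqswap} needed to reduce $\bar{\unit}$-extractions to Corollary~\ref{CorMj}. You merely spell out the details that the paper leaves implicit.
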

\begin{proof}
This is proved similarly to Proposition~\ref{Thm2}, using Lemma~\ref{LemSkewTwist}(iii).
\end{proof}

Following \cite[\S 3.3]{SchappiNew}, for $a,b\in\mZ_{>0}$, we consider the element $\Sigma^{a,b}$ in the group ring $\mZ S_{ab}$ of the symmetric group which is the (unnormalised) Young symmetriser corresponding to the partition $(a^b)$. For an object $X$ in a symmetric monoidal category $\bC$ as in \ref{SMC}, we have the corresponding endomorphism $\Sigma_X^{a,b}$ of $\otimes^{ab} X$. In particular, $\Lambda^n X$ is the image of $\Sigma^{1,n}_X$.

\begin{thm}\label{ThmSTO}
An object $X\in\bT$ is locally $\svec$-free if and only if $([X]_{\unit},[X]_{\bar{\unit}})\in\mN\times\mN$ and $\Sigma_X^{[X]_{\bar{\unit}}+1,[X]_{\unit}+1}=0$.
\end{thm}

\begin{proof}
One direction of the claim follows from \cite[Lemma~4.2.5]{SchappiNew}. Now assume that $(m,n):=([X]_{\unit},[X]_{\bar{\unit}})$ is as in the theorem.
 By Proposition~\ref{Thm2s}, there exists non-zero $\scA\in\Alg(\bT\boxtimes\svec)$ for which 
 $$\scA\otimes X\;\simeq\; \scA\otimes(\unit^{\oplus m}\oplus\bar{\unit}^{\oplus n})\,\oplus \, N.$$
By \cite[Proposition~3.3.6]{SchappiNew}, the fact that $\Sigma_X^{[X]_{\bar{\unit}}+1,[X]_{\unit}+1}=0$, and hence $\Sigma_{\cA\otimes X}^{[X]_{\bar{\unit}}+1,[X]_{\unit}+1}=0$ implies that $N=0$.
 \end{proof}
 
 \begin{ex}\label{ExS2}
 If $\Sym^2X=0$ then $X$ is locally $\svec$-free of rank $0|1$.
 \end{ex}


\section{Internal characterisations}\label{SecTanCat}

\subsection{Tannakian categories} 
Fix a field $\mk$ of characteristic $p=\charr(\mk)>0$.
The following generalises \cite[Th\'eor\`eme~7.1]{Del90} to fields of positive characteristic.
\begin{thm}\label{Thm3}
For a tensor category $\bT$ the following conditions are equivalent:
\begin{enumerate}[(i)]
\item $\bT$ is tannakian.
\item For every $X$ in $\bT$,
\begin{enumerate}
\item there exists $n\in \mN$ such that $\Lambda^nX=0$;
\item if $\Lambda^n\Fr^{(j)}_+(X)=0$ for some $j,n\in\mN$, then also $\Lambda^nX=0$.
\end{enumerate}
\item Every $X$ in $\bT$ is locally free.
\end{enumerate}
\end{thm}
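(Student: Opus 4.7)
The plan is to deduce (ii) $\Leftrightarrow$ (iii) directly from Theorem~\ref{ThmTanObj} applied object-by-object, to obtain (iii) $\Rightarrow$ (i) by invoking Proposition~\ref{PropApp} with $\bV=\vecc$, and to establish (i) $\Rightarrow$ (ii) by pulling back the internal conditions through a fibre functor valued in a vector-space category.

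For (ii) $\Leftrightarrow$ (iii), observe that both clauses of (ii) in Theorem~\ref{Thm3} are precisely the clauses (a) and (b) of Theorem~\ref{ThmTanObj}(ii), quantified uniformly in $X\in\bT$, and Theorem~\ref{ThmTanObj} identifies these two conditions on $X$ with local freeness of $X$. For (iii) $\Rightarrow$ (i), I would apply Proposition~\ref{PropApp}(iii) with $\bV=\vecc$: the hypothesis \eqref{CondVer} holds vacuously since $\vecc$ has no simple object other than $\unit$, so the proposition produces a fibre functor $\bT\to\Mod_{\scR}^{\vecc}\simeq\Mod_{\scR}$ for some $\scR\in\Alg\vecc$, which by Definition~\ref{DefTann} exhibits $\bT$ as tannakian.

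For (i) $\Rightarrow$ (ii), fix a fibre functor $F:\bT\to\Mod_{\scR}$. Using Lemmata~\ref{LemMaxId} and~\ref{SimQuoField} together with Remark~\ref{RemTriv}, I would replace $\scR$ by a simple quotient, which is a field extension $K/\mk$, thereby obtaining a fibre functor $F:\bT\to\vecc_{K}$ (this is essentially Lemma~\ref{LemFinLen}(ii)). By Lemma~\ref{LemFaith} this $F$ is exact and faithful; being symmetric monoidal and exact it commutes with the formation of $\Lambda^n$, and by diagram~\eqref{eqCommD} it commutes with $\Fr^{(j)}_+$. For any $X\in\bT$, $F(X)$ is finite-dimensional over $K$, so $\Lambda^{n}F(X)=0$ for $n$ larger than $\dim_{K}F(X)$, whence $\Lambda^{n}X=0$ by faithfulness; this is clause~(a). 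For clause~(b), Example~\ref{ExVec} identifies $\Fr^{(j)}_+F(X)$ with the classical Frobenius twist $F(X)^{(j)}$, which has the same $K$-dimension as $F(X)$. Hence $\Lambda^{n}\Fr^{(j)}_+X=0$ forces $\Lambda^{n}\Fr^{(j)}_+F(X)=0$, which forces $n>\dim_{K}F(X)$, giving $\Lambda^{n}F(X)=0$ and therefore $\Lambda^{n}X=0$.

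The main obstacle would have been identifying the functor $\Fr^{(j)}_+$ on an arbitrary $\scR$-module (where the twisted $\scR$-action makes a direct computation awkward), but the reduction to a residue field via Lemmata~\ref{LemMaxId} and~\ref{SimQuoField} circumvents this entirely: only the vector-space identification of Example~\ref{ExVec} is needed. With these inputs, the three implications close the cycle (ii) $\Rightarrow$ (iii) $\Rightarrow$ (i) $\Rightarrow$ (ii).
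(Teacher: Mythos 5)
Your proof is correct and follows essentially the same route as the paper: (ii) $\Rightarrow$ (iii) via Theorem~\ref{ThmTanObj}, (iii) $\Rightarrow$ (i) via Proposition~\ref{PropApp}(iii) with $\bV=\vecc$, and (i) $\Rightarrow$ (ii) by reducing a fibre functor to one valued in $\vecc_K$ (which is exactly Lemma~\ref{LemFinLen}(ii), as you note) and then checking the two clauses there using exactness, faithfulness, and the compatibility diagram~\eqref{eqCommD}. Your explicit use of Example~\ref{ExVec} to see that $\Fr^{(j)}_+$ preserves dimension over $K$ merely spells out what the paper's one-line remark about $\vecc_K$ already contains.
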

\begin{proof}
First we prove that (i) implies (ii). If $\bT$ is Tannakian, it admits a tensor functor to $\vecc_{K}$ for some field extension $K/\mk$, by Lemma~\ref{LemFinLen}. The properties in (ii) are satisfied in $\vecc_K$, since the objects $\Lambda^nX$ and $\Fr_+^{(j)}X$ are the same for $\vecc_K$ considered as a $K$-linear or $\mk$-linear category. By Lemma~\ref{LemFaith} and diagram~\eqref{eqCommD}, they are thus satisfied in $\bT$ as well. 
 Theorem~\ref{ThmTanObj} states that (ii) implies (iii).
 Proposition~\ref{PropApp}(iii) shows that (iii) implies (i).
\end{proof}

\begin{prop}\label{TannSub} 
Any tensor category $\bT$ has a unique maximal tannakian subcategory, the tensor subcategory of locally free objects. If $\bT$ is locally semisimple, the latter is a Serre subcategory.
\end{prop}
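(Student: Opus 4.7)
The plan is to identify the full subcategory $\bT_{\mathrm{lf}}\subseteq\bT$ of locally free objects as the unique maximal tannakian subcategory. I would split the argument into three claims: (a) $\bT_{\mathrm{lf}}$ is a tensor subcategory, and is Serre when $\bT$ is locally semisimple; (b) $\bT_{\mathrm{lf}}$ is itself tannakian; (c) every tannakian subcategory of $\bT$ is contained in $\bT_{\mathrm{lf}}$.

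Claim (a) is immediate from Lemma~\ref{FreeTensorN}, applied with $\bV=\vecc$: since $\vecc$ is pointed (its unique simple up to isomorphism is $\unit$), part (ii) gives the tensor subcategory property, and part (iv) gives the Serre property in the locally semisimple case. For claim (b) I would aim to verify condition (iii) of Theorem~\ref{Thm3} inside $\bT_{\mathrm{lf}}$. By definition each $X\in\bT_{\mathrm{lf}}$ is locally free \emph{in $\bT$}, hence by Theorem~\ref{ThmTanObj} satisfies the internal condition (ii) on $\Lambda^n X$ and $\Fr_+^{(j)}X$ there. The key observation is that this condition is genuinely intrinsic: the fully faithful exact tensor inclusion $\bT_{\mathrm{lf}}\hookrightarrow\bT$ commutes with $\Lambda^n$ and $\Fr_+^{(j)}$, so the condition also holds for $X$ viewed in $\bT_{\mathrm{lf}}$. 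Running Theorem~\ref{ThmTanObj} in the reverse direction inside $\bT_{\mathrm{lf}}$ then gives that $X$ is locally free in $\bT_{\mathrm{lf}}$, and Theorem~\ref{Thm3}(iii)$\Rightarrow$(i) concludes that $\bT_{\mathrm{lf}}$ is tannakian.

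For claim (c), I would take an arbitrary tannakian subcategory $\bU\subseteq\bT$. Theorem~\ref{Thm3}(i)$\Rightarrow$(iii) applied to $\bU$ supplies, for each $X\in\bU$, an algebra $\scA\in\Alg\bU\subseteq\Alg\bT$ with $\scA\otimes X\simeq\scA^{\oplus n}$ in $\Mod_\scA^{\bU}$. The same isomorphism persists under the fully faithful inclusion $\Mod_\scA^{\bU}\hookrightarrow\Mod_\scA^{\bT}$, so $X$ is locally free in $\bT$, i.e.\ $X\in\bT_{\mathrm{lf}}$. This gives $\bU\subseteq\bT_{\mathrm{lf}}$ and therefore both maximality and uniqueness.

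The only step with any content is the intrinsic-versus-ambient passage in (b): one must be confident that local freeness of $X\in\bT_{\mathrm{lf}}$ really does not depend on whether $X$ is viewed in $\bT$ or in $\bT_{\mathrm{lf}}$. Theorem~\ref{ThmTanObj} is precisely engineered to bridge this, since its condition (ii) depends only on the tensor category generated by $X$. Everything else is bookkeeping with the definitions in Section~\ref{TensSub} and the preceding lemmas, so I do not anticipate any serious obstacle.
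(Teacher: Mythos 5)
Your proposal is correct and follows essentially the same route as the paper: reduce to Lemma~\ref{FreeTensorN} for the tensor/Serre subcategory claims, and invoke Theorem~\ref{Thm3} for tannakianness and maximality. The only difference is that you make explicit the (correct) observation that "locally free" is an intrinsic property because of the characterisation in Theorem~\ref{ThmTanObj}(ii) — a point the paper's two-line proof leaves implicit; note that step (b) could be shortened by applying Theorem~\ref{Thm3}(ii)$\Rightarrow$(i) directly inside $\bT_{\mathrm{lf}}$ rather than going back through condition (iii).
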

\begin{proof}
By Theorem~\ref{Thm3} it suffices to prove that the full subcategory of locally free objects is a tensor subcategory, respectively a Serre subcategory.
These are special cases of Lemma~\ref{FreeTensorN}. \end{proof}

\begin{rem}
\begin{enumerate}[(i)]
\item  By Lemma~\ref{FreeTensorN}(ii), we can simplify Theorem~\ref{Thm3} as follows. A tensor category $\bT$ is tannakian if and only if $\bT=\langle E\rangle$ for a set $E$ of locally free objects in $\bT$. In particular, if $\bT$ is finitely generated, $\bT=\langle Y\rangle$ for $Y\in\bT$, it suffices to check condition~\ref{Thm3}(ii) on $Y$. 
\item In subsequent work in \cite[Proposition~7.2]{CE}, it is shown that if $p>2$ and $\mk$ is algebraically closed, the condition that $\bT$ be locally semisimple is redundant for the maximal tannakian subcategory to be a Serre subcategory. Note that for $p=2$, the condition is necessary, see  \cite[Example~3.3]{EHO} or \cite{BE}.
\end{enumerate}
\end{rem}

\subsection{Super tannakian categories}
Fix a field $\mk$ of characteristic $p=\charr(\mk)>2$.
\begin{thm}\label{ThmSuper}
For a tensor category $\bT$, the following conditions are equivalent:
\begin{enumerate}[(i)]
\item $\bT$ is super tannakian.
\item For every $X\in \bT$, we have that $(m,n):=([X]_{\unit},[X]_{\bar{\unit}})\in\mN\times\mN$ and $\Sigma_X^{n+1,m+1}=0$.
\item Every $X$ in $\bT$ is locally $\svec$-free.
\end{enumerate}
\end{thm}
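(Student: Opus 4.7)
The plan is to mirror the three-step argument used for Theorem~\ref{Thm3}, substituting Theorem~\ref{ThmSTO} for Theorem~\ref{ThmTanObj} and using $\bV=\svec$ in Proposition~\ref{PropApp}.

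\textbf{Step 1: (i) $\Rightarrow$ (ii).} Assume $\bT$ is super tannakian, so it admits a fibre functor $\bT\to\Mod_{\scR}^{\svec}$ for some $\scR\in\Alg\svec$. I first verify that $\svec$ satisfies \eqref{CondVer}: the only non-trivial simple object is $\bar{\unit}$, and since the braiding gives $\gamma_{\bar{\unit}\bar{\unit}}=-1$, the morphism $\gamma-1=-2$ is invertible for $p\ne 2$, so $\Sym^2\bar{\unit}=0$. Hence by Lemma~\ref{LemFinLen}(ii) we may compose to obtain a tensor functor $F:\bT\to\vecc_K\boxtimes\svec$ for some field extension $K/\mk$. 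Every object of $\vecc_K\boxtimes\svec$ has the form $\unit^{\oplus a}\oplus\bar{\unit}^{\oplus b}$. Using Example~\ref{ExamTwist}(i), Lemma~\ref{AddFrob} and Lemma~\ref{LemSkewTwist}(iii) one computes $\Fr_+^{(j)}(\unit^{\oplus a}\oplus\bar{\unit}^{\oplus b})\simeq\unit^{\oplus a}$ and (dually) $\Fr_-^{(j)}$ picks out the odd part, whence $[Y]_{\unit}=a,[Y]_{\bar{\unit}}=b$ and $\Gamma_{((b+1)^{a+1})}Y=0$ by the standard computation for super vector spaces. The conditions in (ii) therefore hold in $\vecc_K\boxtimes\svec$. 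Since $F$ is exact and faithful (Lemma~\ref{LemFaith}) and commutes (up to isomorphism) with all the functors $\Lambda^n$, $\Gamma_\lambda$, $\Fr_{\pm}^{(j)}$ (cf.~diagram~\eqref{eqCommD}), the conditions pull back to $\bT$.

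\textbf{Step 2: (ii) $\Rightarrow$ (iii).} This is precisely the content of Theorem~\ref{ThmSTO}.

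\textbf{Step 3: (iii) $\Rightarrow$ (i).} Since $\svec$ satisfies \eqref{CondVer} (verified above), and since every object of $\bT$ is locally $\svec$-free, Proposition~\ref{PropApp}(iii) produces a fibre functor $\bT\to\Mod_{\scR}^{\svec}$ for some $\scR\in\Alg\svec$. By Definition~\ref{DefTann}, this says exactly that $\bT$ is super tannakian.

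\textbf{Main obstacle.} The only delicate step is (i) $\Rightarrow$ (ii), where one must carefully verify both the finiteness of $[Y]_{\unit},[Y]_{\bar{\unit}}$ and the vanishing of $\Gamma_{((b+1)^{a+1})}Y$ on objects $Y=\unit^{\oplus a}\oplus\bar{\unit}^{\oplus b}$ of the semisimple target $\vecc_K\boxtimes\svec$; the computation reduces to tracking how the Frobenius twists and Schur-type functors $\Gamma_\lambda$ act on the odd line $\bar{\unit}$, which is controlled by Example~\ref{ExamTwist}(i), Lemma~\ref{LemSkewTwist}(iii) and equation~\eqref{PropLam}. The remaining implications are essentially formal consequences of the general machinery already assembled in Sections~\ref{LSSTC} and~\ref{SecTanObj}.
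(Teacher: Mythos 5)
Your proposal is correct and follows essentially the same route as the paper: the paper's proof states that (i)~$\Rightarrow$~(ii) is proved as in Theorem~\ref{Thm3} (i.e.\ via Lemma~\ref{LemFinLen} to get a tensor functor to $\vecc_K\boxtimes\svec$ and then pulling the conditions back along the faithful exact functor), (ii)~$\Rightarrow$~(iii) is Theorem~\ref{ThmSTO}, and (iii)~$\Rightarrow$~(i) is Proposition~\ref{PropApp}. Your fleshing-out of Step~1 — checking \eqref{CondVer} for $\svec$ and computing $\Fr_\pm^{(j)}$, $[Y]_{\unit}$, $[Y]_{\bar\unit}$ and $\Gamma_\lambda$ on $\unit^{\oplus a}\oplus\bar\unit^{\oplus b}$ — is exactly what the paper leaves implicit and is all sound.
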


\begin{proof}[Proof of Theorem~\ref{ThmSuper}]
That (i) implies (ii) is proved as in the proof of Theorem~\ref{Thm3}. That (ii) implies (iii) is in Theorem~\ref{ThmSTO}. That (iii) implies (i) is in Proposition~\ref{PropApp}.
\end{proof}

\begin{prop}
Any tensor category $\bT$ has a unique maximal super tannakian subcategory. If $\bT$ is locally semisimple, the latter is a Serre subcategory.
\end{prop}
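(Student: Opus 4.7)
The plan is to mirror the proof of Proposition~\ref{TannSub} essentially verbatim, substituting $\svec$ for $\vecc$ throughout and invoking the super analogues of the ingredients used there. Concretely, the candidate maximal super tannakian subcategory is the full subcategory $\bT^{\svec}\subseteq\bT$ consisting of all locally $\svec$-free objects.

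First I would show that $\bT^{\svec}$ is a tensor subcategory of $\bT$. Since $p\neq 2$, the category $\svec$ is pointed (its only simple objects are $\unit$ and $\bar{\unit}$, both invertible), so Lemma~\ref{FreeTensorN}(ii) directly applies with $\bV=\svec$ to give closure under subquotients, direct sums, tensor products and duals. Next, Theorem~\ref{ThmSuper} combined with the fact (\ref{TensSub}) that a tensor subcategory is itself a tensor category characterises the super tannakian subcategories of $\bT$ as precisely those tensor subcategories all of whose objects are locally $\svec$-free (as objects of $\bT$, hence also as objects of the subcategory, since local $\svec$-freeness is witnessed by the same algebra). Therefore $\bT^{\svec}$ is super tannakian, and by construction it contains every other super tannakian subcategory, yielding existence and uniqueness of the maximal one.

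For the Serre property under the assumption that $\bT$ is locally semisimple, I would simply apply Lemma~\ref{FreeTensorN}(iv) with $\bV=\svec$, which gives that $\bT^{\svec}$ is closed under extensions in $\bT$, and hence a Serre subcategory. No obstacle is expected here, since all the heavy lifting was done in Lemma~\ref{FreeTensorN} (whose part (iii) used Theorem~\ref{Thm1}(iii) to combine splitting algebras for the extension and for the two locally free objects into a single algebra witnessing local $\svec$-freeness of the middle term) and in Theorem~\ref{ThmSuper}. The proof is therefore a short assembly:

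\begin{proof}
Let $\bT^{\svec}$ denote the full subcategory of $\bT$ whose objects are locally $\svec$-free. Since $p\neq 2$, the category $\svec$ is semisimple schurian and pointed, so Lemma~\ref{FreeTensorN}(ii) shows that $\bT^{\svec}$ is a tensor subcategory of $\bT$. By Theorem~\ref{ThmSuper}, $\bT^{\svec}$ is itself super tannakian, and any super tannakian subcategory $\bT'\subseteq\bT$ consists, again by Theorem~\ref{ThmSuper}, of objects which are locally $\svec$-free in $\bT'$, hence also in $\bT$ (via the same splitting algebra). Therefore $\bT'\subseteq \bT^{\svec}$, proving that $\bT^{\svec}$ is the unique maximal super tannakian subcategory of $\bT$. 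If $\bT$ is locally semisimple, Lemma~\ref{FreeTensorN}(iv) applied with $\bV=\svec$ shows that $\bT^{\svec}$ is a Serre subcategory of $\bT$.
\end{proof}
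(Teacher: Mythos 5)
Your proof is correct and is essentially the same as the paper's, which simply says ``mutatis mutandis Proposition~\ref{TannSub}''; you have spelled out the substitutions (Theorem~\ref{ThmSuper} in place of Theorem~\ref{Thm3}, $\svec$ in place of $\vecc$, and the observation that $\svec$ is pointed so Lemma~\ref{FreeTensorN}(ii),(iv) apply) accurately.
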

\begin{proof}
Mutatis mutandis Proposition~\ref{TannSub}.\end{proof}

\begin{ex}\label{ExPointed}
Let $\bV$ be a semisimple pointed tensor category. In particular, for simple $S\in\bV$, the object $\otimes^nS$ is simple, for all $n\in\mN$. We thus either have $\Sym^2S=0$ or $\Lambda^2S=0$. By Examples~\ref{ExLam2} and \ref{ExS2}, $S$ is locally $\svec$-free. It follows from Theorem~\ref{ThmSuper} that $\bV$ is super tannakian. If $p=2$ one shows similarly that any pointed semisimple tensor category is tannakian, since then $\Gamma^2X=0$ implies $\Lambda^2X=0$, see Remark~\ref{RemMon}(i), and thus $X=0$.
\end{ex}

\subsection{Affine group schemes} In order to proceed to the last section on neutrality of tannakian categories we need a short interlude on affine group schemes.
We refer to \cite{Wa} for the basic notions and to \cite{Mas} for the corresponding results for supergroup schemes. We prove some facts which are presumably well-known, but for which we did not find references.

Consider an affine supergroup scheme $G$ with homomorphism $p:\mZ/2\to G$ inducing the grading on $\mk[G]$, see \cite[\S 8.19]{Del90}. We denote by $\Rep(G,p)$  the category of $G$-representations in $\svec$ which yield the canonical $\mZ/2$-action on super spaces via $p$. As a special case, we can consider an affine group scheme as an affine supergroup scheme and set $p$ to be the trivial homomorphism.  Then $\Rep(G,p)$ corresponds to the ordinary category of $G$-representations in $\vecc$. For a closed sub(super)group $H<G$ we denote by $\Rep(G,p)^H$ the tensor subcategory of all representations for which $H$ is in the kernel. \begin{thm}\label{ThmSubCat}
Consider an affine supergroup scheme $G$ with $p:\mZ/2\to G$ as above. There is a bijection between closed normal subgroups $N\lhd G$ and tensor subcategories of $\Rep(G,p)$:
$$N\;\mapsto\; \Rep(G,p)^N.$$
Furthermore, the essential image of the canonical tensor functor $\Rep(G/N,p)\to\Rep(G,p)$ is precisely $\Rep(G,p)^N$.
\end{thm}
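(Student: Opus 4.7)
The plan is to reduce everything to Tannakian reconstruction for affine supergroup schemes (a special case of Lemma~\ref{LemTannForm} with $\bV=\svec$, combined with the faithful-flatness criterion from~\cite{DM} as extended to the super setting in~\cite{Mas}). First I would check the easy direction. For a closed normal $N\lhd G$, the subcategory $(\Rep(G,p))^N$ is closed under subquotients, tensor products, duals and direct sums, hence is a tensor subcategory in the sense of~\ref{TensSub}. A representation $V\in \Rep(G,p)$ has $N$ in its kernel if and only if its $G$-action factors through $G/N$, and $p$ then descends uniquely to $\mZ/2\to G/N$; this gives a tensor equivalence between $\Rep(G/N,p)$ and $(\Rep(G,p))^N$, settling the essential-image statement.

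For the bijection, the nontrivial step is to show every tensor subcategory $\mathcal{S}\subseteq\Rep(G,p)$ arises as $(\Rep(G,p))^N$. I would apply Lemma~\ref{LemTannForm}(i) (with $\bV=\svec$) to the restriction $\omega_\mathcal{S}:\mathcal{S}\to\svec$ of the forgetful functor. This produces an affine supergroup scheme $H:=\Autt(\omega_\mathcal{S})$ with its canonical homomorphism $p_H:\mZ/2\to H$, together with a tensor equivalence $\mathcal{S}\simeq\Rep(H,p_H)$ compatible with the forgetful functors. Restricting automorphisms of $\omega$ to $\mathcal{S}$ gives a natural homomorphism $\varphi:G\to H$ intertwining $p$ and $p_H$. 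Define $N:=\ker\varphi$, a closed normal subgroup of $G$.

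The main obstacle is proving $\mathcal{S}=(\Rep(G,p))^N$. The inclusion $\mathcal{S}\subseteq(\Rep(G,p))^N$ is automatic, since every $V\in\mathcal{S}$ is by construction an $H$-representation and therefore has $N$ in its $G$-kernel. The reverse inclusion is equivalent to $\varphi$ being faithfully flat, for then $H\cong G/N$ and so $\mathcal{S}\simeq\Rep(H,p_H)\simeq \Rep(G/N,p)$ is precisely $(\Rep(G,p))^N$ by the first paragraph. The standard criterion says that $\varphi:G\to H$ is faithfully flat if and only if the pullback tensor functor $\varphi^\ast:\Rep(H,p_H)\to\Rep(G,p)$ is fully faithful and its essential image is closed under subobjects in the target. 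Under our identifications, $\varphi^\ast$ is exactly the inclusion $\mathcal{S}\hookrightarrow\Rep(G,p)$: it is fully faithful because $\mathcal{S}$ is a full subcategory, and closed under subobjects because $\mathcal{S}$ is a tensor subcategory.

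Finally, the assignment $N\mapsto(\Rep(G,p))^N$ is injective because $N$ can be recovered from the subcategory as the intersection $\bigcap_{V\in(\Rep(G,p))^N}\ker(G\to\underline{\mathrm{Aut}}(V))$; this equals $N$ since representations of $G/N$ separate its points, which again appeals to Tannakian reconstruction. The hard part throughout is the faithful-flatness step; once that is in place, both halves of the theorem fall out of the correspondence between quotient Hopf superalgebras and closed normal subgroup schemes.
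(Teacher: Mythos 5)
Your proof follows the paper's own strategy: both use Lemma~\ref{LemTannForm}(i) to realise a tensor subcategory as $\Rep(H,p_H)$ for a supergroup scheme $H$, and both invoke \cite[Proposition~2.21(a)]{DM} (full faithfulness plus closure under subobjects of the pullback functor implies faithful flatness) to identify $G\to H$ as a quotient map, hence $H\simeq G/N$ for a closed normal $N$. The only cosmetic difference is that you set $N=\ker\varphi$ up front and then argue $H\cong G/N$, whereas the paper first establishes the injectivity $\mk[H]\hookrightarrow\mk[G]$ and then obtains $N$ from \cite[Corollary~16.3]{Wa}.
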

\begin{proof}
The statement about the essential image follows immediately from the existence of the quotient $G/N$ in \cite[Theorem~16.3]{Wa} and its universality, see \cite[Theorem~15.4]{Wa}.
 
For any tensor subcategory $\bT\subset \Rep(G,p)$, composition of the inclusion functor and the forgetful functor $\Rep(G,p)\to\svec$ yields a fibre functor $\bT\to\svec$. By Lemma~\ref{LemTannForm}(i), there exists a super group scheme $H$ under $\mZ/2$ with a tensor equivalence $\Rep(H,p)\simeq\bT$. Since the functor $\Rep(H,p)\simeq\bT\hookrightarrow \Rep(G,p)$ admits a commutative diagram (up to isomorphism) with the forgetful functors to $\svec$, it follows from Lemma~\ref{LemTannForm}(i) that it induces a homomorphism $G\to H$ under $\mZ/2$, which in turns induces the functor $\Rep(H,p)\to \Rep(G,p)$, up to isomorphism.
By \cite[Proposition~2.21(a)]{DM}, the morphism $\mk[H]\to\mk[G]$ is injective, so by definition in \cite[\S 15.1]{Wa}, $H$ is a quotient of $G$ and hence of the form $G/N$ for a closed normal subgroup $N$, see \cite[Corollary~16.3]{Wa}. 

By the first paragraph, the above assignment of a normal subgroup to a tensor subcategory is a two-sided inverse of the function in the theorem.
\end{proof}

\begin{cor}\label{CorLatt}
Suppose that under the bijection in Theorem~\ref{ThmSubCat}, we have $N_1\mapsto \bT_1$ and $N_2\mapsto\bT_2$, for closed normal subgroups $N_1,N _2\lhd G$. Then we also have $N_1\cap N_2\mapsto \langle \bT_1,\bT_2\rangle$ and $N_1N_2\mapsto \bT_1\cap\bT_2$.
\end{cor}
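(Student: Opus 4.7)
The plan is to reduce the corollary to the simple observation that the bijection of Theorem~\ref{ThmSubCat} is inclusion-reversing, and then chase through the universal properties of $\langle \bT_1,\bT_2\rangle$ (the smallest tensor subcategory containing both) and $\bT_1\cap\bT_2$ (the largest tensor subcategory contained in both), and symmetrically of $N_1N_2$ (the smallest closed normal subgroup containing both, realised as the image of the multiplication morphism $N_1\times N_2\to G$) and $N_1\cap N_2$ (the largest closed normal subgroup contained in both).

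First, I would record the inclusion-reversing property: if $N\subseteq N'$ are closed normal subgroups of $G$, then trivially every representation on which $N'$ acts trivially has $N$ acting trivially, so $(\Rep(G,p))^{N'}\subseteq (\Rep(G,p))^N$. Conversely, if $\bT\subseteq\bT'$ are tensor subcategories corresponding under the bijection to $N,N'$, then the same inequality combined with bijectivity forces $N'\subseteq N$. Hence the bijection is an order-reversing bijection of posets.

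Next, for the claim $N_1\cap N_2\mapsto\langle\bT_1,\bT_2\rangle$: let $M\lhd G$ be the closed normal subgroup corresponding to $\langle\bT_1,\bT_2\rangle$. Since $\bT_i\subseteq\langle\bT_1,\bT_2\rangle$, order-reversal gives $M\subseteq N_i$ for $i=1,2$, hence $M\subseteq N_1\cap N_2$. In the other direction, $N_1\cap N_2\subseteq N_i$ gives $\bT_i=(\Rep(G,p))^{N_i}\subseteq(\Rep(G,p))^{N_1\cap N_2}$ for $i=1,2$, and by minimality $\langle\bT_1,\bT_2\rangle\subseteq(\Rep(G,p))^{N_1\cap N_2}$, so order-reversal yields $N_1\cap N_2\subseteq M$. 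Thus $M=N_1\cap N_2$. The case $N_1N_2\mapsto\bT_1\cap\bT_2$ is entirely analogous: letting $K\lhd G$ correspond to $\bT_1\cap\bT_2$, the inclusions $\bT_1\cap\bT_2\subseteq\bT_i$ give $N_i\subseteq K$, whence $N_1N_2\subseteq K$ since $K$ is a closed normal subgroup containing both; conversely $N_i\subseteq N_1N_2$ gives $(\Rep(G,p))^{N_1N_2}\subseteq\bT_i$, hence $(\Rep(G,p))^{N_1N_2}\subseteq\bT_1\cap\bT_2$, so $K\subseteq N_1N_2$.

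The only real point to verify carefully is the existence and normality of $N_1N_2$ as a closed subgroup scheme, namely that the scheme-theoretic image of the multiplication morphism $N_1\times N_2\to G$ is a closed normal subgroup of $G$ containing both $N_i$ and contained in every closed normal subgroup containing both; this is standard for affine (super)group schemes via \cite{Wa,Mas}. Once that is in hand, the proof is purely formal lattice-theoretic bookkeeping, so I do not anticipate a real obstacle.
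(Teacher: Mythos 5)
Your proof is correct and takes essentially the same route as the paper: the paper's proof simply observes that the bijection of Theorem~\ref{ThmSubCat} is order-reversing and that both posets are lattices, so joins and meets are swapped, which is exactly the universal-property chase you spell out in detail (together with the sensible caveat about existence of $N_1N_2$ as a closed normal subgroup scheme, which the paper takes for granted).
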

\begin{proof}
By construction, the bijection is order reversing, for the inclusion orders on both sets. The partially ordered sets are actually lattices, so the join and meet will be interchanged.
\end{proof}

\begin{cor}\label{CorFG}
For $p:\mZ/2\to G$ as in Theorem~\ref{ThmSubCat} and $\bT=\Rep(G,p)$, the following are equivalent:
\begin{enumerate}[(i)]
\item $\bT$ is finitely generated as a tensor category;
\item every tensor subcategory of $\bT$ is finitely generated;
\item $G$ is algebraic, {\it i.e.} $\mk[G]$ is finitely generated as an algebra.
\end{enumerate}

\end{cor}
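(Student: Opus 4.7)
The plan is to show (ii) $\Rightarrow$ (i) $\Rightarrow$ (iii) $\Rightarrow$ (ii); the first implication is immediate upon taking $\bT' = \bT$ in (ii).

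For (i) $\Leftrightarrow$ (iii), the pivotal observation is that for any $X \in \bT$, the closed normal subgroup $N_X \lhd G$ corresponding via Theorem~\ref{ThmSubCat} to the tensor subcategory $\langle X \rangle$ coincides with the kernel of the representation $G \to \GL(X)$. Indeed, this kernel acts trivially on $X$, hence on $X^\vee$, on tensor products and direct sums thereof, and on all subquotients, so $\langle X \rangle$ lies in $\Rep(G,p)^{\ker(G \to \GL(X))}$; conversely, $N_X$ acts trivially on $X$ by definition, hence is contained in $\ker(G \to \GL(X))$. Thus $\bT = \langle X \rangle$ precisely when $X$ is a faithful finite-dimensional representation. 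Since an affine (super)group scheme is algebraic if and only if it admits such a representation (see \cite{Wa} and its super-analogue in \cite{Mas}), this gives the equivalence of (i) and (iii).

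For (iii) $\Rightarrow$ (ii), any tensor subcategory $\bT' \subseteq \bT$ takes the form $\Rep(G/N, p)$ for some closed normal subgroup $N \lhd G$ by Theorem~\ref{ThmSubCat}. Since $G$ is algebraic, so is the quotient $G/N$ (standard, see \cite[Theorem~16.3]{Wa} for the ordinary case and \cite{Mas} for the super case). Applying the implication (iii) $\Rightarrow$ (i) just established to the pair $(G/N, p)$ exhibits $\bT' = \Rep(G/N, p)$ as finitely generated.

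The main obstacle lies outside the reviewed material, in the super-group technology: one must invoke that quotients of algebraic affine supergroup schemes remain algebraic (equivalently, that Hopf subalgebras of finitely generated commutative Hopf superalgebras are finitely generated). This is classical but does not follow from Theorem~\ref{ThmSubCat} alone, so the cleanest route is to quote it from \cite{Mas}. All remaining steps are formal consequences of the Galois-type correspondence of Theorem~\ref{ThmSubCat} and the standard characterisation of algebraicity via faithful finite-dimensional representations.
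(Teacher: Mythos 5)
Your proof is correct, but your treatment of (iii)~$\Rightarrow$~(ii) takes a genuinely different route from the paper. The paper argues via noetherianity: finite generation of $\mk[G]$ gives the descending chain condition on closed normal subgroup schemes, which under the order-reversing Galois correspondence of Theorem~\ref{ThmSubCat} translates into the ascending chain condition on tensor subcategories, and ACC directly forces every tensor subcategory (being the filtered union of its finitely generated tensor subcategories $\langle X\rangle$) to equal one of its finitely generated members. Your version instead realises each tensor subcategory as $\Rep(G/N,p)$, invokes that quotients of algebraic affine (super)group schemes are algebraic, and then reapplies (iii)~$\Rightarrow$~(i) to $G/N$. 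You correctly flag the pivotal external input: this approach requires the Takeuchi-type theorem that Hopf subalgebras of finitely generated commutative Hopf (super)algebras are finitely generated, which is indeed classical but not a formal consequence of the correspondence. The paper's noetherianity argument sidesteps that theorem entirely, which is its chief advantage; your argument, in exchange, gives a cleaner structural picture by identifying the subcategory generated by $X$ with $\Rep(G/\ker(G\to\GL(X)),p)$. Your identification of $N_X$ with $\ker(G\to\GL(X))$ is correct and is essentially the content of \cite[Proposition~2.20]{DM}, which the paper cites for (i)~$\Rightarrow$~(iii); your (i)~$\Leftrightarrow$~(iii) is thus the same argument as the paper's, just made explicit.
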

\begin{proof}

For brevity we leave out reference to `super'.
Condition (iii) implies that the topological space underlying the scheme $G=\Spec \mk[G]$ is noetherian. As the poset of closed normal subgroups is a sub-poset of the poset of all closed subspaces, the implication $(iii)\Rightarrow (ii)$ follows from Theorem~\ref{ThmSubCat}. Clearly (ii) implies (i). That (i) implies (iii) is in \cite[Proposition~2.20(ii)]{DM}.
\end{proof}


\begin{prop}\label{PropSheaf}
Let $G$ be an affine supergroup scheme and $N_1,N_2$ closed normal subgroups with $N_1\cap N_2=1$. The canonical homomorphism
$$G\;\to\;G/N_1\times_{G/N_1N_2}G/N_2$$
is an isomorphism. Equivalently, we have $\mk[G/N_1]\otimes_{\mk[G/N_1N_2]}\mk[G/N_2]\stackrel{\sim}{\to}\mk[G]$.
\end{prop}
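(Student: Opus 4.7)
My plan is to prove the proposition by exhibiting the canonical morphism $\psi:G\to G/N_1\times_{G/N_1N_2}G/N_2$ as an isomorphism of (super)group schemes; the stated identity of Hopf algebras then follows on applying $\mk[-]$, which converts fibre products of affine (super)schemes into pushouts of commutative algebras. The core idea is to realise $\psi$ as a morphism between two $N_2$-torsors over a common base $G/N_2$, and then invoke the standard fact that any morphism of torsors for the same group scheme over a common base is automatically an isomorphism (fppf-locally on the base both torsors are trivial, and a morphism between trivial torsors is translation by a group element, hence invertible).

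The preliminary step is a group-theoretic observation: since $N_1,N_2$ are normal in $G$ with $N_1\cap N_2=1$, they commute in $G$. On $R$-points this is the usual computation $[N_1(R),N_2(R)]\subseteq N_1(R)\cap N_2(R)=\{e\}$, applied functorially. Consequently the multiplication map $N_1\times N_2\to N_1N_2$ is an isomorphism of closed normal sub(super)group schemes of $G$, and the canonical morphism $N_2\to N_1N_2/N_1$ is an isomorphism of group schemes.

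Next I would recall that for any closed normal subgroup $N\lhd G$ of an affine (super)group scheme, the quotient $G\to G/N$ is a faithfully flat $N$-torsor; this is Waterhouse \cite{Wa} for ordinary groups and Masuoka \cite{Mas} in the super case. Applying this twice produces the two torsors we need: $G\to G/N_2$ is an $N_2$-torsor, and $G/N_1\to G/N_1N_2$ is an $N_2$-torsor via the identification $(N_1N_2)/N_1\simeq N_2$ from the previous paragraph. Base change of the latter along $G/N_2\to G/N_1N_2$ gives the $N_2$-torsor structure on the second projection $G/N_1\times_{G/N_1N_2}G/N_2\to G/N_2$.

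Finally one checks that $\psi$ fits into this picture equivariantly: the right $N_2$-action on $G$ is sent by $\psi$ to translation on the first factor $G/N_1$ of the fibre product (the $N_2$-action on $G/N_2$ being trivial), and $\psi$ evidently commutes with the projection to $G/N_2$. Thus $\psi$ is a morphism of $N_2$-torsors over $G/N_2$, hence an isomorphism, completing the proof. I expect the main obstacle to be making the torsor structures, in particular the faithful flatness of $G\to G/N$ and the isomorphism $N_1N_2\simeq N_1\times N_2$, rigorous in the super setting; fortunately these are precisely the ingredients developed in \cite{Mas}, so once the right references are marshalled the argument is largely formal.
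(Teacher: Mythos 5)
Your proof is correct but takes a genuinely different route from the paper's. You exhibit the canonical map $\psi$ as a morphism of $N_2$-torsors over $G/N_2$: the preliminary observation that $N_1,N_2$ commute and $N_1\times N_2\xrightarrow{\sim}N_1N_2$ gives $N_1N_2/N_1\cong N_2$, so that $G\to G/N_2$ and the base change of $G/N_1\to G/N_1N_2$ along $G/N_2\to G/N_1N_2$ are both $N_2$-torsors, and a morphism of torsors for the same group over the same base is automatically an isomorphism. The paper instead argues at the level of functors of points: writing $D^G_N$ for the naive group-functor quotient, one checks by elementary group theory that $G\cong D^G_{N_1}\times_{D^G_{N_1N_2}}D^G_{N_2}$, invokes that each $D^G_N$ is a fat subfunctor of the sheaf quotient $G/N$ (Waterhouse, Theorem~15.5), deduces that the map in question is a fat-subfunctor inclusion, and concludes since $G$, being affine, is an fpqc-sheaf and therefore admits no proper fat-subfunctor inclusions of itself. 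Both arguments rest on faithfully-flat descent, and both implicitly require your preliminary step $N_1\times N_2\xrightarrow{\sim}N_1N_2$ --- in the paper's proof this is what guarantees $(N_1N_2)(R)=N_1(R)N_2(R)$, which the surjectivity half of the naive group-theoretic isomorphism needs. One small point in your write-up: since $G\to G/N$ need not be of finite presentation when $G$ is not algebraic, the torsors here are trivialised after a faithfully flat (fpqc) base change rather than an fppf one, though this does not affect the torsor-isomorphism conclusion. Your torsor formulation is the more geometric of the two and perhaps the more portable; the paper's is shorter in context because the fat-subfunctor lemmas from Waterhouse are already being marshalled for the surrounding results.
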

\begin{proof}For brevity we write the proof only for groups.
For a closed normal subgroup $N\lhd G$, denote by $D^G_N$ the quotient of $G$ and $N$ as functors $\mathbf{Alg}_{\mk}\to\Grp$. By definition of $G/N$, see \cite[\S 16.3]{Wa}, $D^G_N$ is a subfunctor of $G/N$. We clearly have an isomorphism of group functors
$$G\;\stackrel{\sim}{\Rightarrow}\;D^G_{N_1}\times_{D^G_{N_1N_2}}D^G_{N_2}.$$
As spelled out in \cite[Theorem~15.5]{Wa}, $D^G_N$ is a `fat subfunctor' of $G/N$. It follows as an easy exercise that the right-hand side of the above equation is therefore a fat subfunctor of $G/N_1\times_{G/N_1N_2}G/N_2$ and consequently that the homomorphism in the proposition is the inclusion of a fat subfunctor. However, since $G$ is itself a sheaf for the fpqc topolgy on $(\mathbf{Alg}_{\mk})^{\op}$, see \cite[\S 15.6]{Wa}, such an inclusion of $G$ must be an isomorphism.
\end{proof}

\begin{lemma}\label{LemGNk} If $\mk$ is algebraically closed, $G$ is an algebraic supergroup over $\mk$ and $N$ a closed normal subgroup, then the canonical left exact sequence
$$1\to N(\mk)\to G(\mk)\to G/N(\mk)\to 1$$
is exact.
\end{lemma}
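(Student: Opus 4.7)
The plan is to verify surjectivity of $\pi(\mk):G(\mk)\to (G/N)(\mk)$, since left-exactness of the displayed sequence is built into the definition of $N$ as the scheme-theoretic kernel of the quotient morphism $\pi:G\to G/N$, see \cite[\S 16]{Wa}.

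First I would invoke the faithful flatness of $\pi$: by \cite[Theorem~15.5]{Wa} in the classical case and \cite{Mas} in the super setting, $\mk[G]$ is faithfully flat over $\mk[G/N]$. Given a $\mk$-point $y:\mk[G/N]\to\mk$, I form the fiber super algebra
$$F\;:=\;\mk[G]\otimes_{\mk[G/N],y}\mk,$$
which is a quotient of $\mk[G]$ by an ideal, hence finitely generated as a super $\mk$-algebra because $G$ is algebraic by assumption, see Corollary~\ref{CorFG}. Faithful flatness forces $F\neq 0$, and by construction a $\mk$-point of $G$ lifting $y$ is exactly a super algebra morphism $F\to \mk$.

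Finally, since $\mk$ is purely even, such morphisms are in bijection with ordinary $\mk$-algebra morphisms out of the commutative quotient $\overline{F}:=F/(F_{\ob})$, which is still a finitely generated (plain) $\mk$-algebra. The unit $1\in F$ is bosonic and so survives in $\overline{F}$, whence $\overline{F}\neq 0$. As $\mk$ is algebraically closed, Hilbert's Nullstellensatz produces a $\mk$-algebra morphism $\overline{F}\to\mk$, which lifts to the desired element of $G(\mk)$ mapping to $y$.

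The only real obstacle is ensuring that faithful flatness of $\pi$ persists in the super setting; granting the analogue of \cite[Theorem~15.5]{Wa} from \cite{Mas}, the argument is a standard reduction to the classical Nullstellensatz.
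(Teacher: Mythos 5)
Your argument is correct, but it takes a genuinely different route from the paper, which simply cites \cite[Theorem~15.2]{Wa} for the classical case and then \cite[Theorem~3.13(3)]{Mas} to reduce the super case to it. You instead reprove the classical surjectivity from scratch: faithful flatness of $\mk[G]$ over $\mk[G/N]$ (which, as a citation, is \cite[Theorem~14.1]{Wa} rather than Theorem~15.5) forces the fibre $F$ to be nonzero, and the Nullstellensatz supplies a $\mk$-point. You then handle the super case directly by passing to the bosonic quotient $\overline{F}=F/(F_{\ob})$. One small step worth spelling out is why $\overline{F}\neq 0$: in a supercommutative algebra with $\charr\mk\neq 2$, odd elements square to zero, so $F_{\ob}^2$ is a nil ideal of the commutative ring $F_{\oa}$ and therefore proper; hence $\overline{F}=F_{\oa}/F_{\ob}^2\neq 0$. (This is also the point at which your argument implicitly uses that the super case is only invoked when $\charr\mk\neq 2$, consistent with the paper's conventions for $\svec$.) The trade-off is clear: your proof is self-contained and makes the mechanism (faithful flatness plus Nullstellensatz, reduction to the even part) transparent, whereas the paper gains brevity by delegating both the classical and the super ingredients to the literature.
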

\begin{proof}For groups, this is
\cite[Theorem~15.2]{Wa}. For supergroups, the result is a consequence of the former and \cite[Theorem~3.13(3)]{Mas}.
\end{proof}

 \subsection{Neutrality over algebraically closed fields} Fix an {\em algebraically closed} field $\mk$ of arbitrary characteristic.
Deligne announced in \cite{Del02} that over $\mk$ all tannakian categories are neutral. However, the proof was deemed `too painful' to add. In his letter \cite{Del11}, Deligne sketched the argument, and it was written out in more detail by the author in [Appendix A, arXiv:1812.02452v2]. In this section we present a variation of this argument based on the notion of splitting algebras. 

\begin{thm}\label{NewThmNeut}
If $\mk=\overline{\mk}$ then any (super) tannakian category is neutral. Moreover, any two tensor functors to $\vecc$ or $\svec$ from a fixed tensor category are isomorphic.
\end{thm}

For finitely generated tensor categories, Theorem~\ref{NewThmNeut} is proved in \cite[Corollaire~6.20]{Del90} and \cite[Proposition~4.5]{Del02}. 

\begin{lemma}\label{SplitFG}
Let $\bT$ be a finitely generated (super) tannakian tensor category.
\begin{enumerate}[(i)]
\item $\bT$ admits a neutral ($\svec$-)splitting algebra, unique up to isomorphism.
\item For a neutral ($\svec$-)splitting algebra $\scA$ for $\bT$ and a tensor subcategory $\bT'\subset \bT$ any algebra endomorphism of $\Gamma_{\bT'}\scA$ lifts to one of $\scA$.
\end{enumerate}
\end{lemma}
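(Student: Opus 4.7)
For part (i), the plan is to combine the classical neutrality results with the machinery of splitting algebras: by \cite[Corollaire~6.20]{Del90} in the tannakian case and \cite[Proposition~4.5]{Del02} in the super tannakian case, any finitely generated (super) tannakian category over an algebraically closed field $\mk$ admits a fibre functor $F:\bT\to\bV$, for $\bV$ being $\vecc$ or $\svec$ respectively. Theorem~\ref{ThmFibreNeut} then yields, essentially surjectively, a neutral $\bV$-splitting algebra $\scA$ for $\bT$ with $\Phi_{\scA}\simeq F$. For uniqueness, I would use Proposition~\ref{PropUnique} (applicable since $\mk=\overline{\mk}$, $\bT$ is finitely generated, and $\bV$ satisfies \eqref{CondVer}), which shows fibre functors $\bT\to\bV$ are unique up to isomorphism; the equivalence furnished by Theorem~\ref{ThmFibreNeut} then promotes this to uniqueness of $\scA$.

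For part (ii), the plan is to translate into the language of (super)group schemes and invoke Lemma~\ref{LemGNk}. Via Lemma~\ref{LemTannForm} and Example~\ref{examkGsplits}, $\scA$ will be isomorphic to the regular comodule algebra $\underline{\mk[G]}$ under an equivalence $\bT\boxtimes\bV\simeq\Rep G$ for an affine (super)group scheme $G/\mk$. Since $\bT$, and hence $\bT\boxtimes\bV$, is finitely generated, Corollary~\ref{CorFG} ensures that $G$ is algebraic. The tensor subcategory $\bT'\subset\bT$ induces a tensor subcategory of $\bT\boxtimes\bV$, which by Theorem~\ref{ThmSubCat} corresponds to a unique closed normal subgroup $N\lhd G$ with $\bT'\boxtimes\bV\simeq \Rep(G/N)$; combined with Corollary~\ref{CorResA}(i), this identifies $\Gamma_{\bT'}\scA$ with $\underline{\mk[G/N]}$. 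Under these identifications, an algebra morphism on $\Gamma_{\bT'}\scA$ (taking values in $\unit$) is precisely a $\mk$-point of $G/N$, and lifting to an algebra morphism on $\scA$ amounts to lifting along the canonical map $G(\mk)\to (G/N)(\mk)$, which is surjective by Lemma~\ref{LemGNk}.

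The hard part will be the super case, where one must keep track of the parity homomorphism $p:\mZ/2\to G$ throughout the identifications, verify that the supergroup analogues of Theorem~\ref{ThmSubCat} and Lemma~\ref{LemGNk} apply (as furnished by the results of \cite{Mas} cited in the paper), and ensure that the super version of Example~\ref{examkGsplits}(ii) yields a compatible identification of $\Gamma_{\bT'}\scA$ with $\underline{\mk[G/N]}$. It is also worth making explicit that we are interpreting ``algebra morphism'' as a morphism into $\unit$, i.e.\ as a $\mk$-point of the relevant (super)group scheme; this is the form of the statement that will be needed for the subsequent application to Theorem~\ref{NewThmNeut}, where compatible $\mk$-points of an exhaustive system of algebraic quotients of an arbitrary (super) tannakian $\bT$ must be glued into a global fibre functor.
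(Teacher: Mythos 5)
Your proposal follows the same route as the paper's proof: for part (i), neutrality of finitely generated (super) tannakian categories from \cite{Del90} and \cite{Del02}, uniqueness from Proposition~\ref{PropUnique}, and Theorem~\ref{ThmFibreNeut} to translate into the language of splitting algebras; for part (ii), the chain Theorem~\ref{ThmFibreNeut}, Lemma~\ref{LemTannForm}, Theorem~\ref{ThmSubCat}, Corollary~\ref{CorFG}, ultimately reducing to surjectivity of $G(\mk)\to (G/N)(\mk)$, which is Lemma~\ref{LemGNk}. That is correct.

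One remark you make is, however, a misreading of the statement and should be corrected. You interpret ``algebra morphism on $\Gamma_{\bT'}\scA$'' as an algebra morphism $\Gamma_{\bT'}\scA\to\unit$ in $\Alg(\bT'\boxtimes\bV)$, calling this a $\mk$-point of $G/N$. This cannot be the intended reading: under the identification of $\Gamma_{\bT'}\scA$ with the regular comodule algebra $\underline{\mk[G/N]}$, a morphism to $\unit$ in $\Alg(\bT'\boxtimes\bV)$ would be a $G/N$-equivariant algebra character of $\mk[G/N]$, and for nontrivial $G/N$ there are none (an equivariant character would have to be fixed by all translations). The intended reading, used in the paper's proof of this lemma and required for the application in Lemma~\ref{LemSlick} (where an automorphism of $\scA_{12}$ is lifted to $\scA_1$), is ``algebra endomorphism of $\Gamma_{\bT'}\scA$'' in the category of $\unit$-algebras. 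Theorem~\ref{ThmFibreNeut} identifies these endomorphisms with automorphisms of the fibre functor $\Phi_{\Gamma_{\bT'}\scA}$, and Lemma~\ref{LemTannForm} identifies those with $(G/N)(\mk)$ (concretely: right translations on $\underline{\mk[G/N]}$, which are $G/N$-equivariant with respect to the left regular coaction). Both descriptions end up parameterised by $(G/N)(\mk)$, which is why your final reduction to Lemma~\ref{LemGNk} is still correct; but the objects themselves differ, and one should not confuse an endomorphism of the equivariant algebra with a character of the underlying ordinary algebra.
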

\begin{proof}
By Theorem~\ref{ThmFibreNeut}, part (i) is a reformulation of the claim that every finitely generated (super) tannakian category is neutral (as we observed above) and that fibre functors to $\vecc$ or $\svec$ are unique up to isomorphism, see Proposition~\ref{PropUnique}.

By Theorem~\ref{ThmFibreNeut}, endomorphisms of a neutral splitting algebra correspond to the automorphisms of the associated fibre functor. By Lemma~\ref{LemTannForm}, the are the rational $\mk$-points of the corresponding affine (super)group scheme. For $\bT$, this group scheme, say $G$, is algebraic by Corollary~\ref{CorFG}. By Theorem~\ref{ThmSubCat}, the subcategory $\bT'$ corresponds to a normal subgroup $N\lhd G$ and algebra endomorphisms of $\Gamma_{\bT'}\scA$ are to be identified with $\mk$-points of $G/N$. Part (ii) is therefore a reformulation of Lemma~\ref{LemGNk}.
\end{proof}

\begin{lemma}\label{LemSlick}
Consider a tensor category $\bT$ with tensor subcategories $\bT_1$ and $\bT_2$ such that $\langle\bT_1,\bT_2\rangle=\bT$ and $\bT_2$ is finitely generated. 
\begin{enumerate}[(i)]
\item If $\scA_1$ and $\scA_2$ are neutral ($\svec$-)splitting algebras of respectively $\bT_1$ and $\bT_2$, then there exists an algebra morphism $\scA_{12}:=\Gamma_{\bT_2}\scA_1\to \scA_2$. For any such morphism the associated $\scA:=\scA_1\otimes_{\scA_{12}}\scA_2$ is a neutral ($\svec$-)splitting algebra for $\bT$.
\item For a neutral ($\svec$-)splitting algebra $\scA$ for $\bT$, any endomorphism of $\Gamma_{\bT_1}\scA$ lifts to $\scA$.
\end{enumerate}
\end{lemma}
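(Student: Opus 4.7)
The argument naturally divides into proving the two clauses of the lemma in sequence.

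For the existence of $\scA_{12}\to\scA_2$, I would first observe that $\bT_2$ is neutral (super) tannakian (via the splitting algebra $\scA_2$), and being finitely generated, Corollary~\ref{CorFG} forces every tensor subcategory of $\bT_2$---in particular $\bT_1\cap\bT_2$---to be finitely generated. By Corollary~\ref{CorResA}(i), both $\scA_{12}=\Gamma_{\bT_1\cap\bT_2}\scA_1$ and $\Gamma_{\bT_1\cap\bT_2}\scA_2$ are neutral $\bV$-splitting algebras for $\bT_1\cap\bT_2$; Lemma~\ref{SplitFG}(i) yields an isomorphism between them, and composing it with the counit inclusion $\Gamma_{\bT_1\cap\bT_2}\scA_2\hookrightarrow\scA_2$ (an algebra morphism by Lemma~\ref{LemLax}(i)) gives the desired $\scA_{12}\to\scA_2$.

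For the second assertion, set $\scA:=\scA_1\otimes_{\scA_{12}}\scA_2$; the task is to verify the two conditions of Definition~\ref{defneutsplit}, namely (A) $\Gamma_\bV(\scA)=\unit$ and (B) for every $X\in\bT$ there is $X_0\in\bV$ with $\scA\otimes X\simeq\scA\otimes X_0$ in $\Mod_\scA$. Clause (B) on the subcategory $\bT^\sharp\subseteq\bT$ generated by $\bT_1\cup\bT_2$ under tensor products, duals, and direct sums is immediate: for $X\in\bT_i$ the isomorphism $\scA_i\otimes X\simeq\scA_i\otimes F_i(X)$ in $\Mod_{\scA_i}$ (with $F_i:=\Phi_{\scA_i}$) gives, under $\scA\otimes_{\scA_i}-$, an isomorphism $\scA\otimes X\simeq\scA\otimes F_i(X)$ in $\Mod_\scA$, and this propagates through tensor, dual, and direct sum since $\bV$ is closed under these. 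For clause (A), I would translate to the Tannakian side: via Lemma~\ref{LemTannForm}(ii) and Example~\ref{examkGsplits}, $\bT_i\boxtimes\bV\simeq\Rep H_i$ for affine (super)groups $H_i=\Autt(F_i)$, with $\scA_i\simeq\underline{\mk[H_i]}$; the map $\scA_{12}\to\scA_i$ corresponds to a quotient $H_i\twoheadrightarrow H_{12}$. Hence $\scA$ identifies with the regular comodule $\underline{\mk[G]}$ of the fibre product $G:=H_1\times_{H_{12}}H_2$ (cf.~Proposition~\ref{PropSheaf}), whose $\Gamma_\bV$ is $\unit$ since the invariants under the regular representation are the constants. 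Clause (B) is then completed on arbitrary $X\in\bT$ via $F(X):=\Gamma_\bV(\scA\otimes X)$: left exactness of $\Gamma_\bV$ combined with closure of $\bV$ under subobjects in $\Ind\bV$ places $F(X)\in\bV$ for any subquotient $X$ of some $W\in\bT^\sharp$, yielding the required $\scA\otimes X\simeq\scA\otimes F(X)$.

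The principal obstacle is clause (A): realizing $\scA\cong\underline{\mk[G]}$ must be carried out at the level of Hopf algebras in $\Ind\bV$ embedded in $\Ind(\bT\boxtimes\bV)$, because one cannot presuppose the ambient equivalence $\bT\boxtimes\bV\simeq\Rep G$---that equivalence is itself the sought conclusion (via Lemma~\ref{LemTannForm}(ii) applied to $\Phi_\scA$) rather than a hypothesis. A secondary subtlety is the subquotient step of (B), which requires matching $\scA\otimes X\subset\scA\otimes W$ with $\scA\otimes F(X)$ using the induced structure of $\Mod_\scA$ on objects of the form $\scA\otimes V_0$.
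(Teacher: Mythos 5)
Your first clause (existence of a morphism $\scA_{12}\to\scA_2$) is argued exactly as in the paper, and correctly. The problems are in the second clause, and you have in fact correctly diagnosed the central one yourself without resolving it.

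The paper does not attempt a direct verification of Definition~\ref{defneutsplit} for $\scA$, precisely because, as you note, the identification $\scA\simeq\underline{\mk[G]}$ with $G=H_1\times_{H_{12}}H_2$ cannot be read off at the level of Hopf algebras in $\Ind(\bT\boxtimes\bV)$ without already having the very equivalence $\bT\boxtimes\bV\simeq\Rep G$ one is trying to produce. Instead the proof reduces first to the case where $\bT_1$ (hence $\bT$) is finitely generated; then $\bT$ \emph{is} equivalent to $\Rep G$ for an algebraic $G$ (by Lemma~\ref{LemTannForm} and the already-established neutrality for finitely generated tannakian categories), so Theorem~\ref{ThmSubCat} and Corollary~\ref{CorLatt} translate $\bT_1,\bT_2$ into normal subgroups $N_1,N_2$ with $N_1\cap N_2=1$, and Proposition~\ref{PropSheaf} delivers $\scA_1\otimes_{\scA_{12}}\scA_2\simeq\mk[G]$ directly. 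The general case is then handled by writing $\scA_1$ as a filtered colimit over finitely generated tensor subcategories $\bT_1^\alpha\subset\bT_1$ with $\bT_1^\alpha\cap\bT_2=\bT_1\cap\bT_2$, and noting that the pushout commutes with this colimit and that $\unit$ is compact (giving $\Gamma_\bV(\scA)=\unit$ from the colimit). This two-step reduction is the missing idea in your proposal; without it the approach stalls at exactly the point you flag.

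Two further gaps. First, the lemma asserts the conclusion for \emph{any} algebra morphism $\scA_{12}\to\scA_2$, not just for the one you construct. The paper's finitely-generated case handles this: a different choice differs by an automorphism of $\scA_{12}$, which by Lemma~\ref{SplitFG}(ii) lifts to $\scA_1$ and so induces an isomorphism between the two pushouts. Your argument does not address this. Second, your treatment of clause~(B) on subquotients of $\bT^\sharp$ is not a proof: it is not clear from what you write why $\scA\otimes X\simeq\scA\otimes F(X)$ for a subquotient $X$, since $\scA\otimes W$ need not be semisimple in $\Mod_\scA$ a priori, and to argue this you would essentially need to have already shown $\scA$ is a splitting algebra (so that exact sequences split after tensoring) --- which is part of what you are proving. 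In the paper's route this difficulty never arises because the identification $\scA\simeq\mk[G]$ (or the colimit version thereof) gives the splitting-algebra property all at once.
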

\begin{proof} We leave out `super' from the proof for brevity. We will freely use that morphisms between splitting algebras (for a fixed tensor category) are always isomorphisms, by Theorem~\ref{ThmFibreNeut}. 

We prove part (i). Since $\bT_1$ and $\bT_2$ are (neutral) tannakian, $\bT=\langle \bT_1,\bT_2\rangle$ is also tannakian, by Lemma~\ref{FreeTensorN}(ii) and Theorem~\ref{Thm3}. We also observe that $\bT_1\cap\bT_2$ is finitely generated by Corollary~\ref{CorFG}. 

First we consider the special case where $\bT_1$ is finitely generated as well. Then the tannakian category $\bT$ is also finitely generated, so of the form $\Rep G$, for an algebraic group $G/\mk$. By Theorem~\ref{ThmSubCat}, we can associate normal subgroups $N_i\lhd G$ to $\bT_i\subset \bT$. By Corollary~\ref{CorLatt}, we have $N_1\cap N_2=1$. By the uniqueness of neutral splitting algebras for $\bT_i$ in Lemma~\ref{SplitFG}(i) and Example~\ref{examkGsplits} we find the left and right vertical isomorphisms in the diagram
$$\xymatrix{
\scA_1\ar[d]^{\sim}&&\ar@{_{(}->}[ll]\scA_{12}\ar[d]^{\sim}&&\scA_2\ar[d]^{\sim}\\
\mk[G/N_1]&&\ar@{_{(}->}[ll]\mk[G/N_1N_2]\ar@{^{(}->}[rr]&&\mk[G/N_2].
}$$
The horizontal morphisms are the inclusion of the subalgebra $\scA_{12}\subset\scA_1$ and the Hopf algebra morphisms induced from $G/N_i\to G/(N_1N_2)$. By \cite[\S 16.3]{Wa}, $\mk[G/N_1N_2]$ is the maximal submodule of $\mk[G/N_1]$ on which $N_1N_2/N_1$ acts trivially. Theorem~\ref{ThmSubCat} thus yields the middle vertical isomorphism which makes the left square commutative. We then choose the morphism $\scA_{12}\to\scA_2$ which creates another commutative square in the above diagram. By Proposition~\ref{PropSheaf}, the algebra $\scA_1\otimes_{\scA_{12}}\scA_2$ is then isomorphic to $\mk[G]$ and hence indeed a splitting algebra.
 Now assume we take a different algebra morphism $\scA_{12}\to\scA_2$. It must be a composite $\scA_{12}\to \Gamma_{\bT_1}\scA_2\hookrightarrow \scA_2$, where the first arrow must be an isomorphism. This means that our new $\scA_{12}\to\scA_2$ is equal to our first choice, up to precomposition with an automorphism of $\scA_{12}$. This automorphism lifts to $\scA_1$ by Lemma~\ref{SplitFG}(ii). This lift yields a canonical isomorphism between the two algebras of the form $\scA_1\otimes_{\scA_{12}}\scA_2$, hence the second is also a neutral splitting algebra.

Now we consider the general case of part (i), meaning that $\bT_1$ need not be finitely generated. Take the set $\{\bT_1^\alpha\}$ of all finitely generated tensor subcategories of $\bT_1$ which contain $\bT_1\cap\bT_2$, and set $\scA^\alpha_1:=\Gamma_{\bT_1^\alpha}\scA_1$. By definition, we have $\scA_{12}\subset\scA_1^\alpha$. Since $\bT_1=\cup_\alpha\bT_1^\alpha$, we have
$$\scA_1\;\simeq\;\varinjlim_\alpha \scA_1^\alpha.$$
By  Corollary~\ref{CorResA}(i), the algebra $\scA_1^\alpha$, respectively $\scA_{12}$ and $\Gamma_{\bT_1}\scA_2$, are neutral splitting algebras for  $\bT_1^\alpha$ respectively $\bT_1\cap\bT_2$. Since $\bT_1\cap\bT_2$ is finitely generated, Lemma~\ref{SplitFG}(i) allows to choose an algebra morphism $\scA_{12}\stackrel{\sim}{\to}\Gamma_{\bT_1}\scA_2 \hookrightarrow \scA_2$.
By the previous paragraph and the fact $\bT^\alpha_1\cap\bT_2=\bT_1\cap\bT_2$, each $\scA^\alpha_1\otimes_{\scA_{12}}\scA_2$ is a neutral splitting algebra for $\langle \bT^\alpha_1,\bT_2\rangle$ and hence
$$\scA\;=\;\scA_1\otimes_{\scA_{12}}\scA_2\;\simeq\;\varinjlim_\alpha (\scA^\alpha_1\otimes_{\scA_{12}}\scA_2)$$
is a splitting algebra for $\bT=\cup_\alpha \langle \bT^\alpha_1,\bT_2\rangle$. By definition of $\Ind\bT$, $\unit$ is compact, so
$$\Hom(\unit, \scA)\simeq \varinjlim_\alpha \Hom(\unit,\scA^\alpha_1\otimes_{\scA_{12}}\scA_2)\simeq\mk,$$
and $\scA$ is neutral.

For part (ii), we consider the canonical morphism 
\begin{equation}\label{eqGGG}\Gamma_{\bT_1}\scA\,\otimes_{\Gamma_{\bT_1\cap\bT_2}\scA}\,\Gamma_{\bT_2}\scA\;\to\;\scA.\end{equation}
By part (i), the left-hand side is again a neutral splitting algebra, so the morphism is an isomorphism. Any automorphism of $\Gamma_{\bT_1}\scA$ restricts to an automorphism of $\Gamma_{\bT_1\cap\bT_2}\scA$ and the latter lifts to an automorphism of $\Gamma_{\bT_2}\scA$ by Lemma~\ref{SplitFG}(ii). Hence any automorphism of $\Gamma_{\bT_1}\scA$ lifts to the left-hand side of \eqref{eqGGG}. Since the isomorphism~\eqref{eqGGG} is canonically under $\Gamma_{\bT_1}\scA$, claim (ii) follows.
\end{proof}


\begin{proof}[Proof of Theorem~\ref{NewThmNeut}]
For brevity we leave out reference to `super' in the proof. Let $\bT$ be a tannakian category. We consider the {\em set} $\Ob\bT/\hspace{-1.5mm}\sim$ of isomorphism classes of objects in $\bT$. By the well-ordering theorem we can choose a well-order $\preceq$ on $\Ob\bT/\hspace{-1.5mm}\sim$ and we denote by $\mathbb{O}=(\Ob\bT/\hspace{-1.2mm}\sim,\preceq)$ the corresponding well-ordered set which we interpret as a category in the canonical way. For each $X\in\mO$ we choose a (unique up to isomorphism by Lemma~\ref{SplitFG}(i)) neutral splitting algebra $\scA_X$ of $\langle X\rangle \subset \bT$, which we consider as an object in $\Alg\bT$.

Now we define a functor
$$A:\,\mathbb{O}\,\to\,\Alg\bT$$
with the properties:
\begin{enumerate}[(a)]
\item For every $X\in\mO$, the algebra $A(X)$ is a neutral splitting algebra for $\langle\preceq X\rangle$.
\item $A(X_0)=\scA_{X_0}$, for the unique minimal element $X_0\in\mO$ under $\preceq$.
\item If we have a cover $Y\prec X$ in $\preceq$, then  $A(X)=A(Y)\otimes_{\Gamma_{\langle X\rangle}A(Y)}\scA_{X}$ for some algebra morphism $\Gamma_{\langle X\rangle}A(Y)\to\scA_{X}$ as in Lemma \ref{LemSlick}(i). The morphism $A(X)\to A(Y)$ is the canonical one.

\item If $X$ has no predecessor in $\mO$, then we consider $B:=\varinjlim_{Y\prec X}A(Y)$, which by construction is a neutral splitting algebra for $\langle\prec X\rangle$. Furthermore, we set $A(X):=B\otimes_{\Gamma_{\langle X\rangle}B}\scA_{X}$ for some algebra morphism $\Gamma_{\langle X\rangle}B\to\scA_{X}$ as in Lemma \ref{LemSlick}(i). We have canonical morphisms $A(Y)\to A(X)$ (which factor through $B$), for all $Y\prec X$ by construction.
\end{enumerate}
That such a functor exists follows from the principle of transfinite recursion (and the axiom of choice).
Now it follows that $\bT$ admits a neutral splitting algebra $\varinjlim_{\mO}A$, so by Theorem~\ref{ThmFibreNeut}, $\bT$ is neutral tannakian.

Now consider a tensor category $\bT$ with two neutral splitting algebras $\scA^1$ and $\scA^2$. We have corresponding functors $\mO\to\Alg\bT$ given by $X\mapsto \Gamma_{\langle\preceq X\rangle}\scA^i$. Using transfinite recursion as above, together with lemma \ref{LemSlick}(ii), we can construct a natural transformation between the two functors, which yields an algebra morphism between their colimits $\scA^1$ and $\scA^2$. By Theorem~\ref{ThmFibreNeut} the above observations show that between any two tensor functors $\bT\to\vecc$ there exists a natural transformation of tensor functors (hence an isomorphism).
\end{proof}

\begin{rem}
Together with \cite[Theorem~3.2(b)]{DM}, Theorem~\ref{NewThmNeut} demonstrates that an affine group scheme $G$ over an algebraically closed field $\mk$ admits no non-trivial torsors over $\mk$ (principal homogeneous spaces). For the special case with $G$ an algebraic group, this is demonstrated in \cite[\S 18.4]{Wa}.
\end{rem}


\subsection*{Acknowledgement} The author thanks Pavel Etingof, Akira Masuoka, Daniel Sch\"appi, Ross Street, Catharina Stroppel and Geordie Williamson for interesting discussions and Victor Ostrik and both referees for very useful comments on the previous versions of the manuscript. The author thanks Daniel Sch\"appi for spotting an error in the previous version of Theorem~\ref{ThmSuper} and helping to fix it. The research was partially carried out during a visit to the Max Planck Institute for Mathematics and supported by ARC grants DE170100623 and DP180102563.

\end{document}